\newcommand{\normafoam}[2]{#1}
\title{A local \texorpdfstring{$\mathfrak{gl}_{1|1}$}{gl11}-action on odd Khovanov homology}
\author{Mark Ebert \an Léo Schelstraete}
\date{\vspace{-5ex}}
\begin{document}

\maketitle

\begin{abstract}
  We show that odd Khovanov homology carries an action of the super Lie algebra $\mathfrak{gl}_{1|1}$, given extra choice of markings on the link.
  Moreover, we show that this action arises from an action on super $\mathfrak{gl}_{2}$-foams, in the extended-TQFT framework developed by the second author and Vaz; in particular, it extends to tangles.
  Finally, we relate the action to torsion $\mathbb{Z}/n\mathbb{Z}$ in pretzel links $P(n,n,-n)$. In particular, this shows that all torsion can appear in odd Khovanov homology.
\end{abstract}

\tableofcontents

\newpage

\section{Introduction}
\label{sec:introduction}

\subsection{Overview}

Quantum link homologies are homology theories for links in $S^3$ that arise as categorifications of polynomial link invariants associated with quantum groups.
Following Khovanov's construction of a categorification of the Jones polynomial \cite{Khovanov_CategorificationJonesPolynomial_2000}, several related homology theories have been developed.
These constructions are closely connected with higher representation theory and have led to many fruitful interactions between these fields.

\medbreak

Actions on these homologies, or on the categories underlying them, have been studied by various authors in different contexts and with different motivations.

Gorsky, Oblomkov, and Rasmussen \cite{GOR_StableKhovanovHomology_2013} conjectured that certain colored link homologies have graded dimensions given by the characters of representations of affine Lie algebras.
An $\slt$-action on triply-graded homology was constructed by Gorsky, Hogancamp and Mellit \cite{GHM_TautologicalClassesSymmetry_2024} used to show certain symmetries of triply-graded homology, giving a new proof of a conjecture of Dunfield, Gukov, and Rasmussen \cite{DGR_SuperpolynomialKnotHomologies_2006}; this action is further studied in \cite{CG_StructuresHOMFLYPTHomology_2024}.

Actions of Steenrod algebras have been constructed on even and odd Khovanov homology \cite{LS_SteenrodSquareKhovanov_2014,Schutz_TwoSecondSteenrod_2022}, induced from (or at least motivated by) the existence of (odd) Khovanov stable homotopy type \cite{LS_KhovanovStableHomotopy_2014,HKK_FieldTheoriesStable_2016,SSS_OddKhovanovHomotopy_2020}. See \cite{Rajapakse_SteenrodSquaresEven_2025} for recent developments.

For annular theories, Grigsby, Licata and Wehrli \cite{GLW_AnnularKhovanovHomology_2018} constructed an action the $\slt$ current algebra on annular Khovanov homology, while Grigsby and Wehrli constructed an action of $\gloo$ on odd annular Khovanov homology \cite{GW_Action$mathfrakgl1|1$Odd_2020}.

In \cite{KR_PositiveHalfWitt_2016}, Khovanov and Rozansky constructed an action of the positive half of the Witt algebra $\fW^+$ on triply-graded homology.
Inspired by this action, Qi, Robert, Wagner and Sussan \cite{QRS+_Symmetries$mathfrakgl_N$foams_2024a} constructed an action of $\fW^\infty_{-1}=\fW^+\cup\langle L_{-1}\rangle$ on equivariant $\fgl_N$-foams \cite{MSV_$mathfrakslN$linkHomology$N_2009,Khovanov_Sl3LinkHomology_2004,RW_ClosedFormulaEvaluation_2020}, where $L_{-1}$ is the degree $-2$ operator in the Witt algebra $\fW$; Guérin and Roz \cite{GR_ActionWittAlgebra_2025} later extended this action to equivariant Khovanov--Rozansky homology \cite{KR_MatrixFactorizationsLink_2008}, building on \cite{QRS+_SymmetriesEquivariantKhovanovRozansky_2023}.
Over a field of characteristic $p$, one can restrict to non-equivariant parameters, and the degree $2$ operator $L_1\in\fW^+$ recovers the $p$-DG structure used in
\cite{%
  % Khovanov_HopfologicalAlgebraCategorification_2016,
  QRS+_CategorificationColoredJones_2021,%
  QS_PdifferentialGradedLink_2022%
} to categorify the (resp.\ colored) Jones polynomial at root of unity.
On the other hand, the operator $L_{-1}$ recovers Wang's extension of Shumakovitch operation \cite{Wang_$mathfrakslN$LinkHomology_2024}.
These work have lead to certain topological applications and structural properties; see \cite{QRS+_SymmetriesEquivariantKhovanovRozansky_2023,QRS+_RemarksInfinitesimalSymmetries_2024,Roz_$mathfraksl_2$ActionLink_2023}.

In connection with some of the above work, Elias and Qi realised that various categories appearing in higher representation theory carried an $\slt$-action \cite{EQ_CategorifyingHeckeAlgebras_2020,EQ_Actions$mathfraksl_2$Algebras_2023}. In a related direction, Grlj and Lauda recently constructed an action of the positive Witt algebra on simply-laced categorified quantum groups \cite{GL_ActionWittAlgebra_2025}.
% \cite{BC_SteenrodStructuresCategorified_2018}

In this article, in analogy with this line of work, we describe a  $\gloo$-action on \emph{odd} Khovanov homology.

\medbreak

Odd Khovanov homology \cite{ORS_OddKhovanovHomology_2013} is a homological invariant of links. As (even) Khovanov homology, it categorifies the Jones polynomial. While the two theories are identical over $\bF_2$, they are distinct over $\bZ$, in the sense that one can find pair of knots distinguished by one theory but not the other \cite{Shumakovitch_PatternsOddKhovanov_2011}.
It was discovered in an attempt to lift to the integers the Ozsváth--Szabó spectral sequence from Khovanov homology to the Heegaard--Floer homology of the branched double cover \cite{OS_HeegaardFloerHomology_2005}.
While the existence of this spectral sequence remains conjectural, odd Khovanov homology is thought as more closely related to Heegaard--Floer theory than its even counterpart.
Various authors have explored odd Khovanov homology; see \cite{NP_OddKhovanovHomology_2020,Spyropoulos_HochschildCohomologyOdd_2025,Spyropoulos_JonesWenzlProjectorsOdd_2024,MW_FunctorialityOddGeneralized_2024} for recent structural results using this original construction.

Since its discovery, odd Khovanov homology has been expected to relate to various odd analogues in higher representation theory, and in particular to so-called ``supercategorification'' \cite{EL_OddCategorification$U_qmathfraksl_2$_2016}; see e.g.\ \cite{%
  EKL_OddNilHeckeAlgebra_2014,%
  BE_SuperKacMoody_2017,%
  % BE_MonoidalSupercategories_2017,
  BK_OddGrassmannianBimodules_2022,%
  % Ebert_NewPresentationOsp1|2polynomial_2022,
  % EQ_DifferentialGradedOdd_2016,
  EL_DGStructuresOdd_2020,%
  EL_OddCategorification$U_qmathfraksl_2$_2016,
  ELV_DerivedSuperequivalencesSpin_2022,
  ENW_RealSpringerFibers_2021,
  % KKO_SupercategorificationQuantumKacMoody_2013,
  % KKO_SupercategorificationQuantumKac_2014,
  LR_OddificationCohomologyType_2014,
  % NV_OddKhovanovsArc_2018,
}.
An explicit connection in that direction was given in \cite{SV_OddKhovanovHomology_2023}, where the second author and Vaz gave a foamy construction of odd Khovanov homology.
The main players are \emph{super $\glt$-foams}, gathering together as the super-2-category $\sfoam$; they lead to an invariant of tangles in the homotopy category of $\sfoam$.
A \emph{super-2-category} is a structure akin to a linear 2-category, but where 2-morphisms have parities and the interchange law only hold up to sign.
In the original construction of odd Khovanov homology, signs depend on whether a saddle is a split or a merge (a global data); in the foamy construction, parities only depend on wether a saddle is a zip or an unzip (a local data).
Despite these conceptual differences, the two constructions lead to the same invariant (when restricted to links).

Moreover, each construction comes in two flavours. The original construction is either ``type X'' or ``type Y'';
and the super-2-category $\sfoam$ admits a (essentially unique) variant, denoted $\sfoam'$ \cite{Schelstraete_RewritingModuloDiagrammatic_2025}.
Through the isomorphism between the two constructions, $\sfoam$ relates to type Y, while $\sfoam'$ relates to type X.
On the topological side, the existence of these variants comes from a sign choice ambiguity on so-called ``ladybug squares'', similar to the choice ambiguity appearing in Khovanov stable homotopy type \cite{LS_KhovanovStableHomotopy_2014}.
Despite this ambiguity, type X and type Y have been shown to be isomorphic \cite{Beier_IntegralLiftStarting_2012,Putyra_2categoryChronologicalCobordisms_2014}.

\medbreak

In work in progress, Migdail and Wehrli \cite{MW_ModuleStructureOdd_} (building on Migdail's PhD thesis \cite{Migdail_FunctorialityOddKhovanov_2025}) define an action of the first homology group of the branched double cover of the link, and study some of its topological consequences.
We learned about their work while preparing this manuscript; see \cref{rem:literature_comparison_Migdail_Wehrli} for details on how our work relates with theirs.

We now summarize our result:
\medbreak

\noindent \textsc{Extended abstract:}
\begin{enumerate}[(i)]
  \item There exists a $\gloo$-action on the super-2-category $\sfoam$ which gives rise to a $\gloo$-action on (the foamy construction of) odd Khovanov homology, well-defined for any tangle and choice of ``markings'' (see below).
  \item Markings behave differently in type X (i.e.\ $\sfoam'$) and type Y (i.e.\ $\sfoam$); while the type X and type Y odd Khovanov homologies are isomorphic, they are \emph{not} expected to be $\gloo$-equivariantly isomorphic.
  \item When restricting to links and comparing with the original construction of odd Khovanov homology, part of that action recovers Migdail and Wehrli's action of the first homology group of the branched double cover of the link.
  \item The pretzel link $P(n,n,-n)$ has torsion $\bZ/n\bZ$; this copy lies in the image of the $\gloo$-action. In particular, all torsions appear in odd Khovanov homology.
\end{enumerate}
Through (iii), item (ii) recovers a similar observation by Migdail and Wehrli in their work in progress \cite{MW_ModuleStructureOdd_}. Item (iv) in particular answers a question of Shumakovitch \cite{Shumakovitch_PatternsOddKhovanov_2011}, who showed that $P(n,n,-n)$ had torsion $\bZ/n\bZ$ for small $n$ and suggested this was a general pattern. Through (iii) again, this extends a remark of Migdail and Wehrli \cite{MW_ModuleStructureOdd_}, who showed that torsion in $P(3,3,-3)$ lies in the image of their action.

\newpage

\subsection{Results}

We now describe our results in more details.
Throughout we work over any ring in which $2$ is invertible; alternatively, one can ignore the condition that $2$ is invertible by restricting to $\sloo\subset\gloo$ (see \cref{rem:local_invariant_2_invertible}).
Recall that the super Lie algebra $\gloo$ has generators $\liee$, $\lief$, $\lieh_1$ and $\lieh_2$; see \cref{ex:defn_gloo}.

The $\gloo$-action depends on a choice of ``markings'' on the tangle.
Namely, a \emph{choice of markings} is a choice of diagram together with points on this diagram, each endowed with a triple of scalars $(\alpha,\beta_1,\beta_2)$ with $\alpha=\beta_1+\beta_2$. These scalars $\alpha$, $\beta_1$ and $\beta_2$ correspond to twists of the action of $\lief$, $\lieh_1$ and $\lieh_2$, respectively.

The super-2-category of super $\glt$-foams is reviewed in \cref{defn:foam_and_sfoam} and \cref{defn:review_foam}, which we write as $\sfoam$ (and $\sfoam'$ its variant) in this introduction for simplicity; see also \cref{defn:twisted_foams} and \cref{defn:colimit_of_foam} for the relevant versions with markings.

\begin{bigtheorem}[\cref{thm:topological_invariance} and \cref{lem:dot_slide_lemma}]
  \label{thm:main_local_action}
  There exists a $\gloo$-action on $\sfoam$, given on generators in \cref{tab:definition_action_foam_intro}, which extends to a $\gloo$-action on (the foamy construction of) Khovanov homology for any tangle and choice of markings. Moreover, the action is invariant under any move not involving the markings, under markings sliding along strands, and under markings sliding accross crossings as follows (here $\omega=(\alpha,\beta_1,\beta_2)$):
  \begin{gather*}
    \tikzpic{
      \webncr
      \node[green_mark] (B) at (.3,.15) {};
      \node[below={-1pt} of B] {\scriptsize $\omega$};
    }[scale=.7][(0,.5*.7)]
    \simeq^{\gloo}
    \tikzpic{
      \webncr
      \node[green_mark] (B) at (1-.3,1-.15) {};
      \node[above={-1pt} of B] {\scriptsize $\omega$};
    }[scale=.7][(0,.5*.7)]
    \quad\an\quad
    \tikzpic{
      \webpcr
      \node[green_mark] (B) at (.3,.15) {};
      \node[below={-1pt} of B] {\scriptsize $\omega$};
    }[scale=.7][(0,.5*.7)]
    \simeq^{\gloo}
    \tikzpic{
      \webpcr
      \node[green_mark] (B) at (1-.3,1-.15) {};
      \node[above={-1pt} of B] {\scriptsize $-\omega$};
      \node[green_mark] (C) at (1-.3,.15) {};
      \node[below={-1pt} of C] {\scriptsize $2\omega$};
    }[scale=.7][(0,.5*.7)]
    \;.
  \end{gather*}
  Here $\simeq^{\gloo}$ denotes isomorphism in the relative homotopy category (\cref{defn:formal_relative_homotopy_category}).
  Considering $\sfoam'$ instead exchanges the role of the overcross and the undercross in the above statement.
\end{bigtheorem}

\begin{table}
  \def\spc{2ex}
  \def\wdth{1.5cm}
  \centering
  \begin{tabular}{@{}l@{\hskip 5ex}*{4}{r@{\hskip 5ex}}r@{}}
    &
    $\vcenter{\hbox{\includegraphics[width=\wdth]{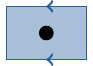}}}$
    &
    $\vcenter{\hbox{\includegraphics[width=\wdth]{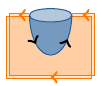}}}$
    &
    $\vcenter{\hbox{\includegraphics[width=\wdth]{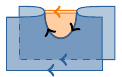}}}$
    &
    $\vcenter{\hbox{\includegraphics[width=\wdth]{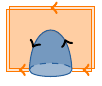}}}$
    &
    $\vcenter{\hbox{\includegraphics[width=\wdth]{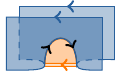}}}$
    \\*[2ex]
    \midrule
    %%%%%%%%%%%%%%%%%%%%%%%%%%%%%%
    $\lief$
    &
    $0$
    &
    $\vcenter{\hbox{\includegraphics[width=\wdth]{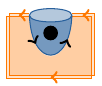}}}$
    &
    $\vcenter{\hbox{\includegraphics[width=\wdth]{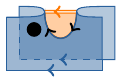}}}$
    &
    $-\vcenter{\hbox{\includegraphics[width=\wdth]{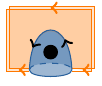}}}$
    &
    $\vcenter{\hbox{\includegraphics[width=\wdth]{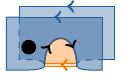}}}$
    \\*[\spc]
    %%%%%%%%%%%%%%%%%%%%%%%%%%%%%%
    $\liee$
    &
    $\vcenter{\hbox{\includegraphics[width=\wdth]{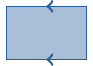}}}$
    &
    $0$
    &
    $0$
    &
    $0$
    &
    $0$
    \\*[\spc]
    %%%%%%%%%%%%%%%%%%%%%%%%%%%%%%
    $\lieh_1$
    &
    $-\vcenter{\hbox{\includegraphics[width=\wdth]{foam/dot.pdf}}}$
    &
    $\vcenter{\hbox{\includegraphics[width=\wdth]{foam/cup.pdf}}}$
    &
    $0$
    &
    $0$
    &
    $-\vcenter{\hbox{\includegraphics[width=\wdth]{foam/unzip.pdf}}}$
    \\*[\spc]
    %%%%%%%%%%%%%%%%%%%%%%%%%%%%%%
    $\lieh_2$
    &
    $\vcenter{\hbox{\includegraphics[width=\wdth]{foam/dot.pdf}}}$
    &
    $0$
    &
    $\vcenter{\hbox{\includegraphics[width=\wdth]{foam/zip.pdf}}}$
    &
    $-\vcenter{\hbox{\includegraphics[width=\wdth]{foam/cap.pdf}}}$
    &
    $0$
    \\*[\spc]
    % \cmidrule(r){2-6}
    %%%%%%%%%%%%%%%%%%%%%%%%%%%%%%
    % $\lieh\coloneqq\lieh_1+\lieh_2$
    % &
    % $0$
    % &
    % \includegraphics[width=\wdth]{foam/cup.pdf}
    % &
    % \includegraphics[width=\wdth]{foam/zip.pdf}
    % &
    % $-$\includegraphics[width=\wdth]{foam/cap.pdf}
    % &
    % $-$\includegraphics[width=\wdth]{foam/unzip.pdf}
  \end{tabular}
  \caption{Definition of the action of $\gloo$ by derivation on the generators of $\sfoam$.
  The source is given on the top row, and the target on the associated row; for instance, we have $\lief\cdot\vcenter{\hbox{\includegraphics[width=1cm]{foam/dot.pdf}}}=0$.}
  \label{tab:definition_action_foam_intro}
\end{table}

Note that while markings can ``freely'' overcross, the rule for undercrossing is more intricate; in fact, one can check that it cannot both freely overcross \emph{and} undercross (\cref{rem:marking_cannot_undercross}).
It follows that (see \cref{thm:main_global_action}):

\begin{corollary}
  The homology theories for marked tangles associated to $\sfoam$ and $\sfoam'$ are isomorphic, but in general 
   \emph{not} $\gloo$-equivariantly isomorphic.
\end{corollary}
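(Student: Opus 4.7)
The plan is to separate the two assertions. The underlying isomorphism of the two homology theories (forgetting the $\gloo$-structure) is essentially already known: at the level of links, type X and type Y odd Khovanov homology are isomorphic by \cite{Beier_IntegralLiftStarting_2012,Putyra_2categoryChronologicalCobordisms_2014}, and the analogous statement for tangles in the foam framework of \cite{SV_OddKhovanovHomology_2023} can be obtained by the same kind of argument, since the difference between $\sfoam$ and $\sfoam'$ ultimately amounts to a choice of sign on the ladybug squares.

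For the non-equivariance, the main ingredient is the asymmetry observed in \cref{thm:main_local_action}: in $\sfoam$, markings slide freely through overcrossings but acquire the corrections $-\omega$ and $2\omega$ when sliding through undercrossings, while in $\sfoam'$ the two crossing types exchange roles. I would first observe that the natural candidate for an equivariant isomorphism, namely the one induced by the non-equivariant identification above, cannot be $\gloo$-equivariant: if it were, both crossing rules would have to match, which they do not. To strengthen this to the non-existence of \emph{any} equivariant isomorphism, the plan is to exhibit an explicit marked tangle (or link) on which the two theories produce genuinely distinct $\gloo$-module structures, distinguished by a concrete invariant such as the action of $\lief$ on a specific homology class, or the weight decomposition with respect to $\lieh_1, \lieh_2$.

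The main obstacle will be choosing such an example that is tractable enough to compute by hand and for which the crossing asymmetry survives passage to homology rather than being absorbed by a change of basis. Natural candidates are small links in which a marking is forced to traverse a crossing of each type (so that the differing corrections cannot simultaneously be trivialized), or one of the pretzel links $P(n,n,-n)$ appearing in item (iv) of the extended abstract, where the $\bZ/n\bZ$ torsion class provides a canonical test vector on which to evaluate and compare the two actions.
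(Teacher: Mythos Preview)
Your approach matches the paper's. The paper does not supply a formal proof of this corollary; it is stated in the introduction as a consequence of the marking-slide asymmetry in \cref{thm:main_local_action} together with \cref{thm:main_global_action}, and in the extended abstract the claim is even phrased as ``not \emph{expected} to be $\gloo$-equivariantly isomorphic.'' The isomorphism half is exactly the citation you give.

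For the non-equivariance half, your plan is sound but you are looking harder than necessary for the explicit example. The paper's own concrete witness is \cref{rem:marking_cannot_undercross}: take two parallel strands, put a positive and a negative crossing between them, close up on both sides with split/merge webs, and place markings $\omega$ and $-\omega$ on the two outer arcs. In $\sfoam$ you can use the free overcrossing slide to cancel the markings; in $\sfoam'$ you can use the free undercrossing slide to move both markings to the \emph{other} pair of arcs. The punchline is that the resulting two marked webs (markings cancelled vs.\ markings $2\omega$ and $-2\omega$ on the other arcs) are \emph{not} $\gloo$-equivariantly isomorphic, so the same marked link yields non-isomorphic $\gloo$-structures in the two theories. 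This is simpler than the pretzel computation and is what the paper has in mind, even though it stops short of writing out the final comparison of $\gloo$-modules after applying the representable functor.

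One point worth flagging: you correctly distinguish between ``the natural isomorphism is not equivariant'' and ``no equivariant isomorphism exists.'' The paper's argument, as written, establishes the former cleanly but leaves the latter at the level of the example above; neither the paper nor your proposal fully closes that gap with a computed invariant, and the soft phrasing in the abstract suggests the authors are aware of this.
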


As a natural odd analogue to the work of Elias and Qi \cite{EQ_Actions$mathfraksl_2$Algebras_2023}, and having in mind the work of Grlj and Lauda \cite{GL_ActionWittAlgebra_2025}, one wonders:

\begin{question}
  Are there other actions of super Lie algebras appearing in supercategorification, for instance on super Kac--Moody 2-categories \cite{BE_SuperKacMoody_2017}?
\end{question}

There is a unifying approach between even and odd Khovanov homology, replacing signs by scalars and super structures by graded structures.
In particular, there exists a \emph{graded-2-category $\gfoam$ of graded $\glt$-foams}, which specializes both to $\glt$-foams and super $\glt$-foams.
Working in this framework allows an explicit comparison of the two theories.

The action of $\liee$ does not work in the even setting, for a simple reason. For grading reason, it must act on dots as $\liee(\textdot)=\lambda\id$ for some scalar $\lambda$, but by the Leibniz rule, $\liee(\textdot^2)=\lambda\;\textdot+\lambda\;\textdot=2\lambda\;\textdot$, which contradicts $\textdot^2=0$ (at least if $2\lambda$ is invertible).
In the super context, the super Leibniz rule replaces ``$+$'' by ``$-$'', and hence there is no contradiction.
This is parallel to the fact that the $\slt$-action in (non-equivariant) $\fgl_p$-Khovanov--homology \cite{QRS+_SymmetriesEquivariantKhovanovRozansky_2023} is well-defined only over a field of characteristic $p$.

Nonetheless, one can define the action excluding $\liee$, and this can be unified at the level of graded $\glt$-foams.
For that purpose, we define $\grgl^\geq_2$ as a certain \emph{graded Lie algebra} (a structure that interpolates between Lie algebras and super Lie algebras; this is \emph{not} just a Lie algebra with a grading) interpolating between $\fgl_2$ and $\gloo$.
The homology interpolating even and odd Khovanov homology is known as \emph{covering} (or \emph{generalized}) \emph{Khovanov homology} \cite{Putyra_2categoryChronologicalCobordisms_2014}.

\begin{proposition}
  There exists a $\grgl^\geq_2$-action on $\gfoam$, which extends to a $\grgl^\geq_2$-action on covering $\glt$-Khovanov homology for any tangle and choice of markings.
  Moreover, the action is invariant under any move not involving the markings and under markings sliding along strands, away from crossings.
\end{proposition}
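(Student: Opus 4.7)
The plan is to parallel the proof of \cref{thm:main_local_action}, replacing the super Leibniz rule by the graded Leibniz rule throughout. First I would define the action of the generators $\lief$, $\lieh_1$ and $\lieh_2$ of $\grgl^\geq_2$ on the generating 2-morphisms of $\gfoam$ by a table analogous to \cref{tab:definition_action_foam_intro}, with the interpolating scalar of the covering framework inserted so that specialization to the even parameter recovers an action on $\glt$-foams and specialization to the odd parameter recovers the action of \cref{thm:main_local_action} on $\sfoam$.

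The first main step is to check that this prescription defines a derivation on $\gfoam$, i.e.\ that the image of each defining foam relation vanishes. This reduces to a finite computation: apply the graded Leibniz rule to each relation and collapse using the $\grgl^\geq_2$-bracket. The exclusion of $\liee$ is forced precisely here --- applying any hypothetical $\liee$ to the dot-square relation $\textdot^2=0$ and expanding by the graded Leibniz rule produces an obstruction whose interpolating prefactor degenerates to $2$ in the even limit, which is exactly the issue discussed in the paragraph preceding the statement.

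The second main step is to check that the action commutes with the chain maps assigning to each tangle diagram its covering $\glt$-complex, so that it descends to an action on covering Khovanov homology. As in the super case, this reduces to checking that each local chain map (cups, caps, saddles and the Reidemeister chain-homotopy equivalences) is $\grgl^\geq_2$-equivariant up to homotopy; these verifications are the graded analogues of those for $\sfoam$ and carry over once the signs and scalars of the covering framework are tracked systematically. Invariance under markings sliding along a strand away from crossings follows from locality of the action together with the fact that it commutes with dot migration along facets.

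The main obstacle is the sign and scalar bookkeeping in the graded setting: one must verify that the graded Leibniz rule with the chosen interpolating parameter is simultaneously compatible with the $\grgl^\geq_2$-bracket and with \emph{all} foam relations, in such a way that both the $\fgl_2$ and the $\gloo$ specializations are recovered correctly. This is a direct but tedious case-by-case verification; the restriction to $\grgl^\geq_2$ (i.e.\ excluding $\liee$) is precisely what makes it consistent.
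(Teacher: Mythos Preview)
Your outline matches the paper's approach and is essentially correct: define the action on generating foams (\cref{lem:generic_derivations_on_graded_foam}), verify the bracket relations (\cref{lem:commutator_graded_case}, \cref{cor:generic_action_on_graded_foam}), and then check that the differentials and the Reidemeister chain maps are $\fg$-equivariant (\cref{thm:topological_invariance} and its supporting lemmas).

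The one ingredient you underdescribe is how markings enter. The paper handles this via a \emph{family of twists} (\cref{subsubsec:twisting_g2categories}, \cref{subsec:twist_foam}): a marking $(\alpha,\beta_1,\beta_2)$ on a web $W$ contributes $\tau_W(\lief)=\alpha\cdot(\text{dot on }W)$ and $\tau_W(\lieh_i)=\beta_i\cdot\id_W$, and the action on a foam $F$ between marked webs is deformed to $g\cdot_\tau F=\tau(g)\circ F+g\cdot F-\bil(g,F)\,F\circ\tau(g)$ (\cref{prop:family_of_twists_gives_g2cat}). That this is still a $\fg$-action requires the flatness condition of \cref{defn:family_of_twists}, verified in \cref{lem:family_of_twists_foam}. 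Your remark that marking slide follows from ``dot migration along facets'' is on the right track---the precise statement is \cref{lem:twist_dot_slide_cup_cap}---but without the twist formalism the phrase ``an action for any choice of markings'' has no content, so this step is more than bookkeeping.
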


Note that in the graded case, markings do not seem to verify any particular crossing slide relation\footnote{Although see the relation in the proof of \cref{lem:csq_dot_slide_lemma}, which holds in general.}; the result that markings can slide over crossing is specific to the odd case.

\medbreak

Next, we compare with the original construction of odd Khovanov homology.
Our construction provides a certain ``$\gloo$-equivariant homotopy equivalence of complexes $\glOKh(T)$'' associated to a tangle with markings; to compare with the original construction, we need to apply a homology functor, given by the composition of the standard homology functor and a representable functor.
We denote $\slOKh^Y(L)$ the type Y (original construction of) odd Khovanov homology.

\begin{bigtheorem}[\cref{thm:equivalence_local_global}]
  \label{thm:main_global_action}
  Let $L$ be an oriented link and $D$ a diagram of $L$.
  There exists a $\gloo$-action on (the original construction of) odd Khovanov homology, for any oriented link and choice of markings.
  Moreover, there is a $\gloo$-equivariant isomorphism
  \begin{gather*}
    H_\bullet\Hom(\emptyset,\glOKh(D))\cong^{\gloo}\slOKh^Y(D).
  \end{gather*}
  Similarly, we have a $\gloo$-equivariant isomorphism considering $\sfoam'$ and type X instead.
\end{bigtheorem}

This allows us to relate with other constructions appearing in literature; see \cref{rem:literature_comparison_Shumakovitch} for comparison with Shumakovitch's operation $\nu$ \cite{Shumakovitch_TorsionKhovanovHomology_2014}, \cref{rem:literature_comparison_Migdail_Wehrli} for comparison with Manion's work \cite{Manion_SignAssignmentTotally_2014}\footnote{We thank Stephan Wehrli for pointing out that reference to us.} and Migdail and Wehrli's work in progress \cite{MW_ModuleStructureOdd_,Migdail_FunctorialityOddKhovanov_2025}, and \cref{rem:literature_comparison_Grigsby_Wehrli} for comparison with Grigsby and Wehrli's $\gloo$-action on odd annular Khovanov homology \cite{GW_Action$mathfrakgl1|1$Odd_2020}.

\medbreak

As noticed by Shumakovitch \cite{Shumakovitch_PatternsOddKhovanov_2011}, even and odd Khovanov homology typically have very different torsions.
As an example of that heuristics, Shumakovitch noticed that for certain pretzel links, reduced even and odd Khovanov homologies have the same torsion-free part, with only odd Khovanov homology having a non-trivial torsion part.
In particular, he computed that $P(n,n,-n)$ had $\bZ/n\bZ$ torsion in odd Khovanov homology for small $n\in\bN$, and asked whether this was a general pattern.

We verify this expectation, and relate it to our $\gloo$-action:

\begin{bigtheorem}
  \label{thm:main_pretzel}
  \def\webscl{.4}
  Let $n\in\bN$.
  The odd Khovanov homology of the pretzel link
  \begin{gather*}
    P(n,n,-n)\coloneqq
    \tikzpic{
      \webpcr\node at (2,.5) {$\ldots$};\webpcr[3][0]
      \webpcr[0][2]\node at (2,2.5) {$\ldots$};\webpcr[3][2]
      \webncr[0][4]\node at (2,4.5) {$\ldots$};\webncr[3][4]
      \draw[web1] (0,0) to[out=180,in=180] (0,5);
      \draw[web1] (0,1) to[out=180,in=180] (0,2);
      \draw[web1] (0,3) to[out=180,in=180] (0,4);
      \draw[web1] (4,0) to[out=0,in=0] (4,5);
      \draw[web1] (4,1) to[out=0,in=0] (4,2);
      \draw[web1] (4,3) to[out=0,in=0] (4,4);
      \node[green_mark] at (0,5) {};
      \node[above left=-1pt] at (0,5) {\scriptsize $(1,\frac{1}{2},\frac{1}{2})$};
      \node[green_mark] at (4,5) {};
      \node[above right=-1pt] at (4,5) {\scriptsize $(-1,-\frac{1}{2},-\frac{1}{2})$};
      \draw [decorate,decoration={brace,amplitude=5pt,mirror,raise=1ex}]
      (0,0) -- (4,0) node[midway,yshift=-1.5em]{$n$};
    }[scale=\webscl][(0,2*\webscl)]
  \end{gather*}
  has torsion $\bZ/n\bZ$. Moreover, this copy of $\bZ/n\bZ$ lies in the image of the action of $\lief\in\gloo$, for the given choice of markings (for both type X or type Y).
\end{bigtheorem}

In particular, the $\gloo$-action is non-trivial on $P(n,n,-n)$. As mentioned above, Migdail and Wehrli \cite{Migdail_FunctorialityOddKhovanov_2025} have shown an analoguous statement using their action (see \cref{rem:literature_comparison_Migdail_Wehrli}), for the pretzel knots $P(3,3,-3)$ and $P(3,4,-3)$.

It follows that:

\begin{corollary}
  All torsions appear in odd Khovanov homology.
\end{corollary}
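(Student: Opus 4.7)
The proof plan is essentially a direct appeal to \cref{thm:main_pretzel}. The claim ``all torsions appear'' should be parsed as: for every integer $n\geq 2$, there exists a link whose odd Khovanov homology contains a $\bZ/n\bZ$-summand (equivalently, has non-trivial $n$-torsion). With this reading, the corollary is an immediate consequence of \cref{thm:main_pretzel}: for each $n\geq 2$, take $L=P(n,n,-n)$, and by the theorem $\slOKh(L)$ has $\bZ/n\bZ$-torsion.

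The only thing worth spelling out is that the family $\{P(n,n,-n)\}_{n\geq 2}$ covers \emph{every} modulus $n$, so that no prime power (or, more generally, no cyclic group) is missed. Since the structure theorem for finitely generated abelian groups expresses the torsion part of any such group as a direct sum of cyclic groups $\bZ/n_i\bZ$, obtaining each cyclic summand individually is what is meant by ``all torsions appear''; no further argument (e.g.\ via connect sums) is needed to make the statement precise, although one could remark that by taking connect sums together with the Künneth-type behaviour of odd Khovanov homology, any fixed finite abelian group can be realised as a subgroup of $\slOKh$ of some link.

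There is no genuine obstacle here: the entire content of the corollary is absorbed into \cref{thm:main_pretzel}, whose proof is the actual work. I would therefore present the corollary as a one-line deduction, with perhaps a parenthetical reminder of the prior state of affairs — namely, that before this result it was known (through Shumakovitch's computations \cite{Shumakovitch_PatternsOddKhovanov_2011}) only that $\bZ/n\bZ$-torsion appeared for small $n$, so that what is new is the uniform realization for all $n$ via a single explicit family.
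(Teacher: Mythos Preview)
Your proposal is correct and matches the paper's approach: the corollary is stated immediately after \cref{thm:main_pretzel} with no separate proof, as it is a direct consequence of the theorem exhibiting $\bZ/n\bZ$-torsion in $\slOKh(P(n,n,-n))$ for every $n$. Your unpacking of what ``all torsions appear'' means is accurate and your additional remarks are reasonable context, though the paper itself leaves the deduction implicit.
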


To the authors' knowledge, this result has not appeared in the literature.
In contrast, and to the authors' knowledge again, it is not known whether all torsions appear in even Khovanov homology, in spite of active research on the question; see e.g.\ \cite{Shumakovitch_TorsionKhovanovHomology_2014,PS_TorsionKhovanovHomology_2014,MPS+_SearchTorsionKhovanov_2018,MS_ArbitrarilyLargeTorsion_2021}.

\begin{question}
  How much of the torsion in odd Khovanov homology can be explained by the $\gloo$-action?
\end{question}

\subsection{Organization}

\Cref{sec:action_on_foams} describes the action on $\sfoam$, \cref{sec:action_on_homology} describes the action on (the foamy definition of) odd Khovanov homology, \cref{sec:global_action_odd_khovanov} compares with the original construction when restricting to links, and \cref{sec:computation} does the torsion computation for pretzel links.

\subsection{Acknowledgments}

We thank Alexis Guérin, You Qi, Louis-Hadrien Robert, Felix Roz, Josh Sussan for discussing their previous work, and we thank Stephan Wehrli for discussing his work in progress with Jacob Migdail.
L.S.\ is supported by the Max Planck Institute for Mathematics (Bonn, Germany). M.E. is supported by the Simons Collaboration Award 994328 “New Structures in Low-Dimensional Topology.”

\section{Actions on super and graded \texorpdfstring{$\glt$}{gl2}-foams}
\label{sec:action_on_foams}

\subsection{Graded structures}

% \begin{verbatim}
%   NOTATIONS
%   - generic ring: \ring
%   - generic group: \grp
%   - elements of \grp: g, h
%   - generic bilinear map: \bil
%   - scalars: \lambda, (\mu)

%   - generic category: A
%   - objects in category: u, v, w
%   - morphisms in category: \alpha, \beta, \gamma
  
%   - generic 2-category: \cA
%   - objects in 2-categories: i, j, k
%   - 1-morphisms in 2-categories: u, v, w
%   - 2-morphisms in 2-categories: \alpha, \beta, \gamma

%   - vertical composition: \circ
%   - horizontal composition: \otimes
% \end{verbatim}

% \newpage

In this subsection, we describe the super, and more generally graded, analogue of various structures familiar in the commutative setting.
After defining graded associative algebras and graded Lie algebras, we review graded-2-categories from \cite{SV_OddKhovanovHomology_2023}, and define a $\fg$-2-category as a graded-2-category endowed with an action of a graded Lie algebra $\fg$; this specializes to the notion of $\slt$-categories from \cite{EQ_Actions$mathfraksl_2$Algebras_2023}.
Finally, we describe the graded analogue of twists \cite{KR_PositiveHalfWitt_2016,QRS+_Symmetries$mathfrakgl_N$foams_2024a}.

We fix throughout a commutative ring $\ring$, an abelian group $\grp$ and a pairing $\bil\colon\grp\times\grp\to\ring^\times$, that is, a bilinear map.
We further assume that $\bil$ is \emph{symmetric}, in the sense that
\[\bil(g,h)\bil(h,g)=1\quad\forall g,h\in\grp.\]
We write $\deg(v)$ the degree of an element $v$ in a $G$-graded object, although we often abuse notation and write $\mu(\deg v,\deg w)$ simply as $\mu(v,w)$.
Given two $G$-graded $\ring$\nbd-modules $M$ and $N$, we write $\Hom(M,N)$ the $\ring$\nbd-module of degree-preserving $\ring$-linear maps between $M$ and $N$, and $\uHom(M,N)$ the $\grp$-graded $\ring$-module of all $\ring$-linear maps, not necessarily degree-preserving.
We write $\End(M)\coloneqq\Hom(M,M)$ and $\uEnd(M)\coloneqq\uHom(M,M)$.

We denote $\Mod_{\grp,\bil}$ the closed symmetric monoidal category of $\grp$-graded $\ring$-modules and deg\-ree-preserving linear maps.
Its monoidal structure is the usual one on $G$-graded $\ring$-modules; note that it does not depend on $\bil$, and we write $\Mod_{\grp}=\Mod_{\grp,\bil}$ when considered only as a monoidal category.
The symmetric structure is given by $(x,y)\mapsto\bil(x,y)(y,x)$ and the inner Hom is given by $\uHom$.
% \[\underline{\Hom}_{\Mod_{\grp,\bil}}(M,N)=\uHom(M,N).\]

We denote $\uMod_{\grp,\bil}$ the symmetric monoidal category whose objects are $\grp$-graded $\ring$-modules (the same as $\Mod_{\grp,\bil}$) and with $\uHom(M,N)$ as $G$-graded homspace between $M$ and $N$.
In other words, the category $\uMod_{\grp,\bil}$ is the $\Mod_{\grp,\bil}$-enriched category determined by the closed monoidal structure of $\Mod_{\grp,\bil}$; as an $\Mod_{\grp,\bil}$-enriched category, its underlying category is $\Mod_{\grp,\bil}$ (see e.g.\ \cite[section~3.4]{Riehl_CategoricalHomotopyTheory_2014}).

We sometimes simplify notation and write $\Mod=\Mod_{\grp,\bil}$ and $\uMod=\uMod_{\grp,\bil}$.

\subsubsection{Graded associative algebras}
\label{subsubsec:graded_associative_algebras}

\begin{definition}
  A \emph{$\grp$-graded (associative) algebra} is a unital associative algebra object in the monoidal category $\Mod_{\grp}$.
\end{definition}

That is, a $\grp$-graded algebra is a unital and associative algebra $(A,\cdot_A,1_A)$, such that $A$ is $\grp$-graded as a $\ring$-module, the multiplication is degree-preserving and the unit has trivial degree.
Similarly, a \emph{morphism of $\grp$-graded algebras} is a morphism of unital associative algebra objects in the monoidal category $\Mod_{\grp}$; that is, a degree-preserving linear map preserving the unit and the product.

\medbreak

Let $M$ be a $\grp$-graded $\ring$-module. The algebra $\uEnd(M)$ of linear maps on $M$ has a canonical structure of $\grp$-graded algebra. If $A$ is a $\grp$-graded algebra, an \emph{action of $A$ on $M$} is morphism of $\grp$-graded algebra $A\to\uEnd(M)$. We say that $M$ is an \emph{$A$-module}, and a \emph{morphism of $A$-modules} is a degree-preserving linear map intertwining the actions.

\begin{definition}
  \label{defn:graded-commutative}
  A \emph{$(\grp,\bil)$-graded commutative algebra} is a commutative unital associative algebra object in the symmetric monoidal category $\Mod_{\grp,\bil}$.
\end{definition}

That is, a $(\grp,\bil)$-graded commutative algebra is a $\grp$-graded algebra where for every homogeneous $x$ and $y$, we have $xy=\bil(x,y)yx$.
Note that a $\grp$-graded algebra is always an algebra, while a $(\grp,\bil)$-graded commutative algebra needs not be a commutative algebra.

\subsubsection{Graded Lie algebras}
\label{subsubsec:graded_lie_algebras}

\begin{definition}
  A \emph{$(\grp,\bil)$-graded Lie algebra} is a Lie algebra object in the symmetric monoidal category $\Mod_{\grp,\bil}$.
\end{definition}

That is, a $(\grp,\bil)$-graded Lie algebra is a $\grp$-graded $\ring$-module $\fg$ equipped with a degree-preserving map $[-,-]\colon\fg\otimes\fg\to\fg$ such that
\begin{gather*}
  [x,y]+\bil(x,y)[y,x]=0\\
  % \bil(y,x)\bil(z,x)\bil(z,y)[x,[y,z]]+\bil(z,y)[y,[z,x]]+\bil(y,x)[z,[x,y]]=0\\
  % \Leftrightarrow
  [x,[y,z]]+\bil(x,y+z)[y,[z,x]]+\bil(x+y,z)[z,[x,y]]=0
\end{gather*}
Similarly, a \emph{morphism of $\grp$-graded Lie algebras} is a morphism of Lie algebra objects in the monoidal category $\Mod_{\grp}$; that is, a degree-preserving linear map preserving the bracket.

\medbreak

Let $A$ be a $\grp$-graded algebra.
We endow $A$ with the structure of a $(\grp,\bil)$-graded Lie algebra, stating that:
\begin{gather*}
  [f,g]\coloneqq f\circ g -\bil(f,g)\;g\circ f.
\end{gather*}
This applies in particular if $A=\uEnd(M)$ for some $\grp$-graded $\ring$-module $M$.
If $\fg$ is a $(\grp,\bil)$-graded Lie algebra, an \emph{action of $\fg$ on $M$} is a morphism of $(\grp,\bil)$-graded Lie algebras $\fg\to\uEnd(M)$. We say that $M$ is a \emph{$\fg$-module}, and a \emph{morphism of $\fg$-modules} is a degree-preserving linear map intertwining the $\fg$-action.
Given two $\fg$-modules $M$ and $N$, we write $\Hom^\fg(M,N)$ the $\ring$-module of morphisms of $\fg$-modules.
Abusing notation, we denote $\uHom(M,N)$ the $G$-graded $\ring$-module of all linear maps, now endowed with the following $\fg$-action:
\begin{equation}
  \label{eq:inner_hom_g_categories}
  g\cdot \alpha \coloneqq\tau_g^M\circ\alpha - \bil(g,\alpha)\;\alpha\circ\tau_g^N,
\end{equation}
for $g\in\fg$ and $\alpha\in\uHom(M,N)$, and where $\tau_g^M$ (resp.\ $\tau_g^N$) denotes the action of $g$ on $M$ (resp.\ $N$).

\begin{example}
  \label{ex:Lie_algebra_as_graded_algebra}
  If $(\grp,\bil)$ is trivial, a $(\grp,\bil)$-graded Lie algebra is a Lie algebra over $\ring$. If only $\bil$ is trivial, a $(\grp,\bil)$-graded Lie algebra is a Lie algebra over $\ring$ equipped with a $\grp$-grading.
\end{example}

\begin{example}[super Lie algebra]
  \label{ex:super_Lie_algebra_as_graded_algebra}
  If $\grp=\bZ/2\bZ=\{\ov{0},\ov{1}\}$ and $\bil(n,m) = (-1)^{nm}$, a $(\grp,\bil)$-graded Lie algebra is a super Lie algebra over $\ring$.
  In this setting, we often write $\abs{v}\coloneqq\deg v$.
  Explicitly, a \emph{Lie superalgebra} is a super vector space $\fg$ endowed with a bilinear degree-preserving map $[-,-]\colon\fg\otimes \fg\to\fg$, satisfying the following axioms:
  \begin{IEEEeqnarray*}{Cl}
    [v,w] = -(-1)^{\abs{v}\abs{w}}[w,v]&\text{graded symmetry}\\{}
    [u,[v,w]] + (-1)^{\abs{u}(\abs{v}+\abs{w})}
    [v,[w,u]] + (-1)^{\abs{w}(\abs{u}+\abs{v})}[w,[u,v]] 
    = 0
    \qquad&\text{graded Jacobi identity}
  \end{IEEEeqnarray*}
\end{example}

\begin{example}[$\gloo$]
  \label{ex:defn_gloo}
  The Lie superalgebra $\gloo$ is presented by generators $\{\lieh_1,\lieh_2,\liee,\lief\}$, where $\abs{\lieh_1}=\abs{\lieh_2}=\ov{0}$ and $\abs{\liee}=\abs{\lief}=\ov{1}$, and relations
  \begin{align*}
    [\liee,\lief] &= \lieh_1+\lieh_2 & [\liee,\liee]&=[\lief,\lief]=[\lieh_i,\lieh_j]=0\\
    [\lieh_1,\liee] &=\liee & [\lieh_1,\lief] &=-\lief\\
    [\lieh_2,\liee] &=-\liee & [\lieh_2,\lief] &=\lief
  \end{align*}
\end{example}

\begin{example}[$\sloo$]
  \label{ex:defn_sloo}
  Setting $\lieh\coloneqq\lieh_1+\lieh_2$ defined the Lie super algebra $\sloo$ as a sub-algebra $\sloo\subset\gloo$.
  In other words, the Lie superalgebra $\sloo$ presented by generators $\{\lieh,\liee,\lief\}$, where $\abs{\lieh}=\ov{0}$ and $\abs{\liee}=\abs{\lief}=\ov{1}$, and relations
  \begin{align*}
    [\liee,\lief] &= \lieh & [\liee,\liee]&=[\lief,\lief]=[\lieh,\lieh]=0\\
    [\lieh,\liee] &=0 & [\lieh,\lief] &=0
  \end{align*}
\end{example}

Anticipating, we give some specific data for $\ring$, $\grp$ and $\bil$ which will be used in the definition of graded $\glt$-foams, and define certain ``covering'' Lie algebras.

\begin{definition}
  \label{defn:graded_structure_foam}
  Let $\ringfoam$ be a commutative ring together with three invertible elements $X$, $Y$ and $Z\in{\ringfoam}^\times$ such that $X^2=Y^2=1$.
  Given this data, let $\bilfoam$ be the following bilinear form for the abelian group $G\coloneq \bZ^2$:
  \begin{align*}
    \bilfoam\colon \bZ^2\times\bZ^2 &\to {\ringfoam}^\times,\\
    ((a,b),(c,d)) &\mapsto X^{ac}Y^{bd} Z^{ad-bc}.
  \end{align*}
  We say ``restrict to the even case'' to mean choosing $X=Y=Z=1$, and ``restrict to the odd, or super, case'' to mean choosing $X=Z=1$ and $Y=-1$.
\end{definition}

\begin{example}[$\grgl_2$]
  \label{ex:defn_covering_gl2}
  Let $\ringfoam$ and $\bilfoam$ as in \cref{defn:graded_structure_foam}.
  Let $\grgl_2$, called \emph{covering $\fgl_2$}, be the $(\bZ^2,\bilfoam)$-graded Lie algebra defined as follows.
  As a $\ringfoam$-module, $\grgl_2$ is generated by the following homogeneous vectors:
  \begin{gather*}
    \deg(\lief)=(1,1),\;
    \deg(\liee)=(-1,-1),\;
    \deg(\lieh_1)=(0,0)
    \an\deg(\lieh_2)=(0,0).
  \end{gather*}
  The structure of graded Lie algebra is then given as follows:
  \begin{align*}
    [\liee,\lief] &= \lieh_1+XY\lieh_2 & [\liee,\liee]&=[\lief,\lief]=[\lieh_i,\lieh_j]=0\\
    [\lieh_1,\liee] &=\liee & [\lieh_1,\lief] &=-\lief\\
    [\lieh_2,\liee] &=-\liee & [\lieh_2,\lief] &=\lief. 
  \end{align*}
  % Leibniz: (automatically of if two of f,g,h are equal)
  % [e,[h_1,h_2]]: e-e=0
  % [f,[h_1,h_2]]: f-f=0
  % [e,[f,h_1]]: [e,f]+\bil(e,f)[f,e]=0
  We further denote $\grgl_2^-\coloneqq\langle\lief\rangle$ and $\grgl_2^{\leq}\coloneqq\langle\lief,\lieh_1,\lieh_2\rangle$, and $\grgl_2^+\coloneqq\langle\liee\rangle$ and $\grgl_2^{\geq}\coloneqq\langle\liee,\lieh_1,\lieh_2\rangle$.
  Restricting to even and odd, we have $\grgl_2^{\leq}\vert_{X=Y=Z=1}=\fgl_2^{\leq}$ and $\grgl_2^{\leq}\vert_{X=Z=1,Y=-1}=\gloo$, respectively.
\end{example}

\begin{example}[$\grsl_2$]
  \label{ex:defn_covering_sl2}
  Following \cref{ex:defn_covering_gl2}, set $\lieh\coloneqq\lieh_1-XY\lieh_2$. The $(\bZ^2,\bilfoam)$-graded Lie algebra $\grsl_2\subset\grgl_2$, called \emph{covering $\fsl_2$}, is defined as generated by $\lief$, $\liee$ and $\lieh$.
  In other words, it has the following defining relations:
  \begin{align*}
    [\liee,\lief] &= \lieh & [\liee,\liee]&=[\lief,\lief]=[\lieh,\lieh]=0\\
    [\lieh,\liee] &=(1+XY)\liee & [\lieh,\lief] &=-(1+XY)\lief
  \end{align*}
  Evaluating to even recovers $\slt\subset\glt$, while evaluating to odd recovers $\mathfrak{sl}_{1|1}\subset \mathfrak{gl}_{1|1}$.
  Note that when working over a field of characteristic two, $\fsl_2=\fsl_{1|1}$.
  Similarly to \cref{ex:defn_covering_gl2}, one can define $\grsl_2^-$, $\grsl_2^{\leq 0}$, $\grsl_2^+$ and $\grsl_2^{\geq 0}$.
\end{example}

\subsubsection{$\fg$-categories}
\label{subsubsec:g-categories}

We denote $\gMod$ the closed symmetric monoidal category of $\fg$-mo\-dules and morphisms of $\fg$\nbd-mo\-dules.
Its closed symmetric monoidal structure coincides with the closed symmetric monoidal structure of $\Mod_{\grp,\bil}$ via the forgetful functor; to complete the definition of the structure, it suffices to define the relevant $\fg$-actions.
For the monoidal structure, the $\fg$-action on the monoidal unit $\ring$ is trivial, and the $\fg$-action on the tensor product $M\otimes N$ is defined as
\[
g\cdot (m\otimes n)\coloneqq (g\cdot m)\otimes n + \bil(g,m) \,m\,(g\otimes n).
\]
One could view this symmetric monoidal structure as coming from some graded Hopf structure on the enveloping algebra of $\fg$; we omit this point of view.
The inner Hom is $\uHom$ with the structure of $\fg$-module given in \eqref{eq:inner_hom_g_categories}.

% Recall that given a monoidal category $\cV$, there is a notion of $\cV$-enriched category (see e.g.\ \cite[Chapter~6]{Borceux_HandbookCategoricalAlgebra_1994a}).

\begin{definition}
  A \emph{$\fg$-category} (resp.\ a \emph{$\fg$-functor}) is a $(\gMod)$-enriched category (resp.\ a $(\gMod)$-enriched functor).
\end{definition}

Note that this definition does not depend on the symmetric structure on $\gMod$.

We unpack the definition.
Given the forgetful functor $\gMod\to\Mod_{\grp}$, a $\fg$-category $A$ is in particular a $\grp$-graded category.
In addition, the $\fg$-category $A$ carries a family of linear maps
\begin{equation}
  \label{eq:g-cat-family-of-actions}
\fg\to \uEnd(\Hom_A(u,v))
\end{equation}
for each pair of objects $(u,v)$, that satisfies the \emph{$(\grp,\bil)$-graded Leibniz rule}:
\begin{equation}
  \label{eq:graded_Leibniz_rule}
  g\cdot (\alpha\circ\beta) = (g\cdot \alpha)\circ \beta + \bil(g,\alpha)\, f\circ(g\cdot \beta),
\end{equation}
where $\alpha$ and $\beta$ are suitably composable morphisms of $A$.
Whenever a $\grp$-graded category $A$ is equipped with a family of $\fg$-module morphisms as in \eqref{eq:g-cat-family-of-actions} satisfying the graded Leibniz rule \eqref{eq:graded_Leibniz_rule}, we say that \emph{$\fg$ acts by derivation on $A$}.

\begin{lemma}
  A $\fg$-category is the same as $\grp$-graded category equipped with an action of $\fg$ by derivation.\hfill\qed
\end{lemma}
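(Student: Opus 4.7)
The plan is to unpack the definition of $\gMod$-enrichment and verify that it recovers exactly the structure described. Recall that a $\gMod$-enriched category $A$ consists of a collection of objects, a hom-object $\Hom_A(u,v)\in\gMod$ for each pair $(u,v)$, composition morphisms $\Hom_A(v,w)\otimes\Hom_A(u,v)\to\Hom_A(u,w)$ in $\gMod$, and unit morphisms $\ring\to\Hom_A(u,u)$ in $\gMod$, subject to the standard associativity and unit axioms.

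First I would apply the forgetful functor $\gMod\to\Mod_{\grp}$ to the hom-objects; this immediately produces an underlying $\grp$-graded category. The $\fg$-module structure on each $\Hom_A(u,v)$ is precisely a family of linear maps as in \eqref{eq:g-cat-family-of-actions}. It then remains to show that the requirement that composition and units be morphisms in $\gMod$ is equivalent to the graded Leibniz rule \eqref{eq:graded_Leibniz_rule}.

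For composition, recall that the $\fg$-action on a tensor product $M\otimes N$ in $\gMod$ is given by $g\cdot(m\otimes n)=(g\cdot m)\otimes n+\bil(g,m)\,m\otimes(g\cdot n)$. The condition that the composition map $\Hom_A(v,w)\otimes\Hom_A(u,v)\to\Hom_A(u,w)$ intertwines the $\fg$-actions thus reads, for homogeneous $\alpha,\beta$,
\[
g\cdot(\alpha\circ\beta)=(g\cdot\alpha)\circ\beta+\bil(g,\alpha)\,\alpha\circ(g\cdot\beta),
\]
which is precisely \eqref{eq:graded_Leibniz_rule}. For the units, the $\fg$-action on $\ring$ is trivial, so the unit morphism being $\fg$-equivariant amounts to $g\cdot\id_u=0$ for all $g\in\fg$; but applying the Leibniz rule to the equality $\id_u=\id_u\circ\id_u$ (using $\deg\id_u=0$, hence $\bil(g,\id_u)=1$) yields $g\cdot\id_u=2(g\cdot\id_u)$, so this vanishing is automatic and need not be stated separately.

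Conversely, given a $\grp$-graded category together with an action of $\fg$ by derivation, the same computation shows that the hom-spaces, with their $\fg$-module structure, assemble into a $\gMod$-enrichment. The only real obstacle is keeping the bookkeeping of signs and degrees straight when matching the tensor product $\fg$-action against the Leibniz rule; once definitions are expanded, the equivalence is tautological.
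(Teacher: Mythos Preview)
Your proof is correct and is precisely the unpacking the paper intends: the paper states the lemma with \qed and no proof, having already written out the same translation (forget to $\Mod_\grp$, composition being a $\gMod$-morphism $\Leftrightarrow$ Leibniz rule, units forced to act trivially) in the paragraphs immediately preceding it. Your treatment of the unit is the same observation the paper records as a remark just after the lemma.
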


\begin{remark}
  If $w$ is an object of $A$, it follows from the graded Leibniz rule that $g\cdot \id_w = g\cdot(\id_w\circ\id_w) = g\cdot \id_w + g\cdot \id_w$, so that $g\cdot \id_w=0$.
\end{remark}

\begin{example}
  \label{ex:ugMod}
  Let $\ugMod$ be the symmetric monoidal category whose objects are $\fg$-modules and with $\uHom(M,N)$ as the $\fg$-module homspace between $M$ and $N$.
  By definition, the category $\ugMod$ is a $\fg$-category.
  In fact, it is the $(\gMod)$-enriched category determined by the closed monoidal structure on $\gMod$, whose underlying category (as a $(\gMod)$-enriched category) is $\gMod$.
\end{example}

\begin{definition}
  \label{defn:g_equivariant}
  Let $A$ be a $\fg$-category. A morphism $\alpha$ is said to be \emph{$\fg$-equivariant} if $g\cdot\alpha=0$ for all $g\in\fg$.
\end{definition}

If $A=\ugMod$, then a morphism $\alpha$ is $\fg$-equivariant in the sense of \cref{defn:g_equivariant} if and only if it is $\fg$-equivariant in the usual sense, that is, if $\alpha$ intertwines the $\fg$-action on its source and target.

% Given a $\fg$-category $A$, one can consider the contravariant Yoneda embedding:
% \begin{align}
%   \label{eq:yoneda_embedding}
%   Y^{\mathrm{op}}\colon A^{\mathrm{op}}&\to \Fun(A,\gMod),\\
%   w&\mapsto\Hom_A(-,w),
% \end{align}

\subsubsection{$\fg$-2-categories}
\label{subsubsec:g-2-categories}

Recall that if $\cV$ is a symmetric monoidal category, then the category $\cV\md\cC at$ of $\cV$-enriched categories is itself symmetric monoidal, and one can enriched over $\cV\md\cC at$. A \emph{$\cV$-enriched 2-category} is a $(\cV\md\cC at)$-enriched category.

\begin{definition}[{\cite[Remark~2.7]{SV_OddKhovanovHomology_2023}}]
  A $(\grp,\bil)$-graded-2-category is a $(\Mod_{\grp,\bil})$-enriched 2-cate\-gory.
\end{definition}

Unpacking the definition, a $(\grp,\bil)$-graded-2-category is akin to a $\grp$-graded $\ring$-linear strict 2-category, except that the interchange law is replaced by the \emph{graded interchange law}:
\begin{equation*}
  \tikzpic{
    \draw (0,2) node[above] {\scriptsize $v'$} to
      node[fill=black,circle,inner sep=2pt,pos=.3] {}
      node[right,pos=.3] {\scriptsize $\beta$}
        (0,0) node[below] {\scriptsize $u'$};
    \draw (1,2) node[above] {\scriptsize $v$} to
      node[fill=black,circle,inner sep=2pt,pos=.7] {}
      node[right,pos=.7] {\scriptsize $\alpha$}
        (1,0) node[below] {\scriptsize $u$};
  }[scale=0.8]
  \;=\;\bil(\deg\alpha,\deg\beta)
  \tikzpic{
    \draw (0,2) node[above] {\scriptsize $v'$} to
      node[fill=black,circle,inner sep=2pt,pos=.7] {}
      node[right,pos=.7] {\scriptsize $\beta$}
        (0,0) node[below] {\scriptsize $u'$};
    \draw (1,2) node[above] {\scriptsize $v$} to
      node[fill=black,circle,inner sep=2pt,pos=.3] {}
      node[right,pos=.3] {\scriptsize $\alpha$}
      (1,0) node[below] {\scriptsize $u$};
  }[scale=0.8]
\end{equation*}

\begin{definition}
  A \emph{$\fg$-2-category} (resp.\ $\fg$-2-functor) is a $(\gMod)$-enriched 2-category (resp.\ $(\gMod)$-enriched 2-functor).
  A \emph{$\fg$-monoidal category} is a one-object $\fg$-2-category.
\end{definition}

We unpack the definition.
A $\fg$-2-category is in particular a $(\grp,\bil)$-graded-2-category, denoting its horizontal (resp.\ vertical) composition by $\otimes$ (resp.\ $\circ$). In addition, for each pair of objects $(x,y)$ the hom-category $\Hom(x,y)$ is a $\fg$-category.
Furthermore, the action of $\fg$ satisfy the $(\grp,\bil)$-graded Leibniz rule with respect to the horizontal composition;
equivalently, the action commutes with horizontal whiskering:
\begin{equation}
  \label{eq:axiomC_gcategories}
  \fg\cdot(\id_u\otimes\alpha\otimes\id_v) = \id_u\otimes(\fg\cdot\alpha)\otimes\id_v,
\end{equation}
where $u,v$ are 1-morphisms and $\alpha$ is a 2-morphism, suitably composable.

A $(\grp,\bil)$-graded-2-category $\cA$ equipped with a family of $\fg$-module morphisms
\[\fg\to\uEnd(\Hom_\cA(u,v))\]
indexed by pair of 1-morphisms $(u,v)$ with the same source and target, such that the action of $\fg$ defines an action by derivation on each Hom-category $\Hom_\cA(i,j)$ for pair of objects $(i,j)$, and furthermore verifies axiom \eqref{eq:axiomC_gcategories}, we say that \emph{$\fg$ acts by derivation on $\cA$}.

\begin{lemma}
  \label{lem:equivalent_defn_g2cat}
  A $\fg$-2-category is the same as a $(\grp,\bil)$-graded-2-category equipped an action of $\fg$ by derivation.\hfill\qed
\end{lemma}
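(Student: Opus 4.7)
The plan is to unpack the definition of a $(\gMod)$-enriched 2-category and match it with the structure of a $(\grp,\bil)$-graded-2-category equipped with a $\fg$-action by derivation, mimicking the proof sketch of the analogous statement for $\fg$-categories stated just above \cref{ex:ugMod}. Since we already know (from the cited lemma) how to decode $\gMod$-enrichment at the level of 1-cells, the only genuinely new point is to track how the horizontal composition interacts with the $\fg$-action.

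First, suppose $\cA$ is a $\fg$-2-category. Composing the enrichment with the forgetful functor $\gMod\to\Mod_{\grp,\bil}$ turns $\cA$ into a $(\grp,\bil)$-graded-2-category. Moreover, for each pair of objects $(i,j)$ the hom-category $\Hom_\cA(i,j)$ inherits a $\gMod$-enrichment, hence by the previous lemma a $\fg$-action by derivation on its Hom-modules. This accounts for the vertical graded Leibniz rule. It remains to see that axiom \eqref{eq:axiomC_gcategories} holds: this is precisely the statement that horizontal whiskering is $\fg$-equivariant, which follows from the fact that horizontal composition is a $\gMod$-enriched functor, and in particular a morphism of $\fg$-modules, in each variable separately.

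Conversely, starting from a $(\grp,\bil)$-graded-2-category $\cA$ with a $\fg$-action by derivation, the previous lemma promotes each $\Hom_\cA(i,j)$ to a $\fg$-category. To upgrade to a $\gMod$-enriched 2-category, we must check that each horizontal composition map is a morphism of $\fg$-modules on the tensor product of Hom-modules. Using the tensor $\fg$-action
\begin{gather*}
g\cdot(\alpha\otimes\beta)=(g\cdot\alpha)\otimes\beta+\bil(g,\alpha)\,\alpha\otimes(g\cdot\beta),
\end{gather*}
this amounts to a Leibniz rule for $\otimes$. Factoring $\alpha\otimes\beta=(\alpha\otimes\id)\circ(\id\otimes\beta)$ (valid up to the graded interchange law), applying the vertical Leibniz rule, and using that $g\cdot\id=0$, one reduces this full horizontal Leibniz rule to the whiskering condition \eqref{eq:axiomC_gcategories}. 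The two assignments are clearly mutually inverse.

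The only real content is this last reduction: expressing the horizontal Leibniz rule in terms of whiskerings plus vertical Leibniz, which is where the tensor $\fg$-module structure on $\gMod$ does its work. Everything else is a formal unpacking of the enrichment, parallel to the one-dimensional case already recorded in the paper.
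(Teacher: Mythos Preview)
Your proposal is correct and follows precisely the approach the paper has in mind: the lemma is stated with an immediate \qed, treating it as a formal unpacking of the enriched definitions parallel to the 1-categorical case, and your argument carries out exactly that unpacking. The only substantive step---reducing the horizontal Leibniz rule to the whiskering axiom \eqref{eq:axiomC_gcategories} via the factorisation $\alpha\otimes\beta=(\alpha\otimes\id)\circ(\id\otimes\beta)$ and the vertical Leibniz rule---is the content the paper alludes to just above the lemma when it says ``equivalently, the action commutes with horizontal whiskering''.
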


\begin{example}
  \label{ex:sl2category_as_gcategory}
  Following up on \cref{ex:Lie_algebra_as_graded_algebra}, if $\ring=\bZ$, if $(\grp,\bil)=(\bZ,1)$ and if $\fg=\fsl_2$ equipped with the $\bZ$-grading $\abs{\lief}=2$, $\abs{\liee}=-2$ and $\abs{\lieh}=0$, then a $\fg$-monoidal category is an $\slt$-category in the sense of \cite{EQ_Actions$mathfraksl_2$Algebras_2023}.
\end{example}

\begin{example}
  \label{ex:dg_as_gcategory}
  Let $(\grp,\bil)=(\bZ,1)$ and $\ring$ a ring of characteristic $p$.
  If $\fg=\ring\partial$ is the one-dimensional abelian $(\grp,\bil)$-graded Lie algebra concentrated in degree $\abs{\partial}=2$, a $\fg$-monoidal category is a graded monoidal category equipped with an action by derivation $\partial$ of degree $2$.
  If this action is $p$-nilpotent, then this category is a $p$-DG-category in the sense of hopfological algebra \cite{Khovanov_HopfologicalAlgebraCategorification_2016,KQ_ApproachCategorificationSmall_2015,Qi_HopfologicalAlgebra_2014}.
\end{example}

\begin{example}
  \label{ex:super_dg_as_gcategory}
  Let $(\grp,\bil)$ as in \cref{ex:super_Lie_algebra_as_graded_algebra}.
  If $\fg=\ring\partial$ is the one-dimensional abelian super Lie algebra concentrated in degree $\abs{\partial}=\ov{1}$, then a $\fg$-2-category is a dg-2-super\-ca\-te\-gory in the sense of \cite{EL_DGStructuresOdd_2020}.
\end{example}

\begin{example}
  \label{ex:graded_commutative_dg_algebra}
  A $\fg$-2-category with one object and one morphism is a $(\grp,\bil)$-graded-com\-mu\-ta\-tive algebra equipped with an action of $\fg$ by derivation.
  In the setting of , then $(\grp,\bil)$-graded-commutativity recovers graded-commutativity in the usual sense, and if further the action of $\partial$ is nilpotent, we recover the notion of a graded-com\-mu\-ta\-tive DG-algebra.
\end{example}

\begin{remark}
  \label{rem:derivation_uniquely_defn_on_generators_of_A}
  If $\cA$ is a $(\grp,\bil)$-graded-2-category defined by generators and relations, an action by derivation is soly determined by the action on the generators. Conversely, to define an action by derivation, it suffices to define it on the generators and verify that it preserves the defining relations.
  The graded interchange law needs not be verified: it follows from the graded Leibniz rule that any action by derivation preserves the graded interchange law.
\end{remark}

\begin{remark}
  \label{rem:action_uniquely_defn_by_generators_of_g}
  If $\cA$ is a $(\grp,\bil)$-graded-2-category and $\fg$ is a $(\grp,\bil)$-graded Lie algebra defined by generators and relations, an action of $\fg$ on $\cA$ by derivation is solely determined by the action of the generators of $\fg$.
  Conversely, to define an action $\fg$ on $\cA$ by derivation, it suffices to define it on the generators of $\fg$, and verify that it satisfies the defining relations of $\fg$.
  % Indeed, the bracket of two elements that verify the graded Leibniz rule also verifies the graded Leibniz rule, the bracket of two elements that commute with the horizontal whiskering also commute with the horizontal whiskering.
  % \begin{align*}
  %   [f,g](xy)
  %   =&(f\circ g -\bil(f,g)g\circ f)(xy)
  %   =fg(xy)-\bil(f,g)gf(xy)
  %   \\
  %   =&fg(x)y+\bil(f,g(x))g(x)f(y)+\bil(g,x)f(x)g(y)+\bil(f,x)\bil(g,x)xfg(y)
  %   \\
  %   &{}-\bil(f,g)\big[gf(x)y+\bil(g,f(x))f(x)g(y)+\bil(f,x)g(x)f(y)+\bil(g,x)\bil(f,x)xgf(y)\big]
  %   \\
  %   =&[f,g](xy)+\bil(f+g,x)x[f,g](y)
  % \end{align*}
\end{remark}

\subsubsection{Twisting $\fg$-2-categories}
\label{subsubsec:twisting_g2categories}

Let $\cA$ be a $\fg$-2-category.
Consider a family of degree-preserving linear maps
\[\tau = (\tau_w\colon\fg\to\End_\cA(w))_w,\]
indexed by 1-morphisms $w$ of $\cA$.
We say that $\tau$ is \emph{flat} if for each $w$, we have
\begin{gather*}
  \tau_w([g,h])=g\cdot\tau_w(h)-\bil(g,h)\, h\cdot\tau_w(g),
\end{gather*}

\begin{definition}
  \label{defn:family_of_twists}
  Let $\cA$ be a $\fg$-2-category.
  A family $\tau$ as above is said to be a \emph{a family of twists} if it is flat, satisfies the Leibniz rule and has a graded-com\-mu\-ta\-tive image.
\end{definition}

Here ``satisfies the Leibniz rule'' means that $\tau_{u\otimes v}(g) = \tau_{u}(g)\otimes v + u\otimes\tau_v(g)$ and ``has a graded-com\-mu\-ta\-tive image'' means that the image of each $\tau_w$ is $(\grp,\bil)$-graded-com\-mu\-ta\-tive (see \cref{defn:graded-commutative}).

\begin{remark}
  \label{rem:twist_only_check_on_generators}
  A family of twists is determined by its value on generators of 1-morphisms. Moreover, flatness and graded-com\-mu\-ta\-tive image need only be checked on the generators.
  % \begin{align*}
  %   \tau_{u\otimes v}([g,h]) 
  %   &= \tau_{u}([g,h])\otimes v + u\otimes\tau_v([g,h])\\
  %   &=\big(g\cdot\tau_u(h)-\bil(g,h)\, h\cdot\tau_u(g)\big)\otimes v 
  %   + u\otimes\big(g\cdot\tau_v(h)-\bil(g,h)\, h\cdot\tau_v(g)\big)\\
  %   &= g\cdot\tau_{u\otimes v}(h)-\bil(g,h)\, h\cdot\tau_{u\otimes v}(g)
  % \end{align*}
\end{remark}

\begin{proposition}
  \label{prop:family_of_twists_gives_g2cat}
  Let $\cA$ be a $\fg$-2-category and $\tau$ a family as above.
  For each pair of 1-morphisms $(u,v)$ with the same source and target, define a degree-preserving linear map
  \[\fg\to\uEnd(\Hom_\cA(u,v)),\quad g\mapsto g\cdot_\tau (-)\]
  where for $\alpha\colon u\to v$  a 2-morphism in $\cA$:
  \begin{gather*}
    g\cdot_\tau \alpha \coloneqq \tau_v(g)\circ\alpha + g\cdot \alpha - \bil(g,\alpha)\,\alpha\circ\tau_u(g).
  \end{gather*}
  Let $\cA^\tau$ be the underlying $(\grp,\bil)$-graded-2-category of $\cA$ equipped with this family of maps. If $\tau$ is a family of twists, then $\cA^\tau$ is a $\fg$-2-category.
\end{proposition}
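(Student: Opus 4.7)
The plan is to verify the three properties needed by \cref{lem:equivalent_defn_g2cat} and the discussion preceding it: that $g \cdot_\tau$ acts by derivation on vertical composition in each Hom-category, that it satisfies the horizontal whiskering axiom \eqref{eq:axiomC_gcategories}, and that the assignment $g \mapsto g \cdot_\tau$ is a morphism of $(\grp,\bil)$-graded Lie algebras $\fg \to \uEnd(\Hom_\cA(u,v))$.

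The first two checks reduce to direct computations. For the graded Leibniz rule $g \cdot_\tau (\beta \circ \alpha) = (g \cdot_\tau \beta) \circ \alpha + \mu(g, \beta)\, \beta \circ (g \cdot_\tau \alpha)$, one expands both sides using the three-term definition of $g \cdot_\tau$; the two contributions of the form $\beta \circ \tau_v(g) \circ \alpha$ appearing on the right-hand side cancel, and the remaining terms reduce exactly to the graded Leibniz rule already enjoyed by the untwisted action $g \cdot (-)$. For horizontal whiskering, one computes $g \cdot_\tau (\alpha \otimes \id_v)$, splits $\tau_{u' \otimes v}(g) = \tau_{u'}(g) \otimes \id_v + \id_{u'} \otimes \tau_v(g)$ using the Leibniz rule assumed of $\tau$, moves $\tau_v(g)$ across $\alpha$ via the graded interchange law, and invokes $g \cdot \id_v = 0$; the cross terms of shape $\alpha \otimes \tau_v(g)$ cancel between the first and third summands of the definition, leaving $(g \cdot_\tau \alpha) \otimes \id_v$. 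Left whiskering is handled analogously.

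The bracket preservation is the substance of the proposition and the place where flatness and graded-commutative image of $\tau$ enter essentially. It is cleanest to write
\[ g \cdot_\tau \;=\; \delta_g \,+\, \operatorname{ad}_{\tau(g)}, \]
where $\delta_g(\alpha) \coloneqq g \cdot \alpha$ is the untwisted derivation and $\operatorname{ad}_{\tau(g)}(\alpha) \coloneqq \tau_v(g) \circ \alpha - \mu(g, \alpha)\, \alpha \circ \tau_u(g)$ is a graded commutator inside $\uEnd(\Hom_\cA(u,v))$. Then $[g \cdot_\tau, h \cdot_\tau]$ expands into four graded commutators. The summand $[\delta_g, \delta_h]$ equals $\delta_{[g,h]}$ because $\fg$ already acts on $\cA$. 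The summand $[\operatorname{ad}_{\tau(g)}, \operatorname{ad}_{\tau(h)}]$ reduces, by a direct computation using the graded interchange law, to $\operatorname{ad}_{[\tau(g),\tau(h)]}$, which vanishes by the graded-commutative image hypothesis. Finally, the two mixed summands $[\delta_g, \operatorname{ad}_{\tau(h)}]$ and $[\operatorname{ad}_{\tau(g)}, \delta_h]$ combine, via the graded Leibniz rule for $\delta_g$ applied to $\tau_v(h) \circ \alpha$ and $\alpha \circ \tau_u(h)$, into $\operatorname{ad}_{g \cdot \tau(h) - \mu(g,h)\, h \cdot \tau(g)}$, which by flatness coincides with $\operatorname{ad}_{\tau([g,h])}$. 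Summing the four contributions gives $[g \cdot_\tau, h \cdot_\tau] = \delta_{[g,h]} + \operatorname{ad}_{\tau([g,h])} = [g,h] \cdot_\tau$, as required.

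The main obstacle is bookkeeping the $\mu$-signs in the mixed summand computation, where one repeatedly swaps $g$, $h$ and $\alpha$ past one another and relies on $\mu(g,h)\mu(h,g) = 1$ and bilinearity of $\mu$ to effect the cancellations. By \cref{rem:twist_only_check_on_generators} and \cref{rem:derivation_uniquely_defn_on_generators_of_A}, each of the three checks can be reduced to generators of $\cA$ and of $\fg$ if desired, but the argument sketched above works uniformly.
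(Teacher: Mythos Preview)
Your proof is correct and follows the same three-step structure as the paper: verify bracket preservation, the vertical Leibniz rule, and horizontal whiskering. The paper carries out the bracket computation by brute-force expansion of $g\cdot_\tau(h\cdot_\tau\alpha)-\mu(g,h)\,h\cdot_\tau(g\cdot_\tau\alpha)$ into eight labelled terms and then tracks which survive; your decomposition $g\cdot_\tau=\delta_g+\operatorname{ad}_{\tau(g)}$ organises the same cancellations more transparently, making visible exactly where each hypothesis (flatness, graded-commutative image, the existing $\fg$-action) enters. One small quibble: the identity $[\operatorname{ad}_{\tau(g)},\operatorname{ad}_{\tau(h)}]=\operatorname{ad}_{[\tau(g),\tau(h)]}$ uses only associativity of vertical composition, not the graded interchange law.
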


\begin{proof}
  We first check that the action is well-defined; that is, each map $\fg\to\uEnd_\ring(\Hom_\cA(u,v))$ is a $\fg$-morphism.
  For a 2-morphism $\alpha\colon u\to v$, we compute:
  \begin{IEEEeqnarray*}{rCl}
    g\cdot_\tau(h\cdot_\tau \alpha)
    &=&
    g\cdot_\tau(\tau_v(h)\circ \alpha + h\cdot \alpha - \bil(h,\alpha)\;\alpha\circ\tau_u(h))
    \\[1ex]
    &=&
    \tau_v(g)\;(\tau_v(h)\circ\alpha + h\cdot \alpha - \bil(h,\alpha)\;\alpha\circ\tau_u(h))
    \\
    &&{}+
    g\cdot(\tau_v(h)\circ\alpha + h\cdot \alpha - \bil(h,\alpha)\;\alpha\circ\tau_u(h))
    \\
    &&{}-\bil(g,h+\alpha)\;
    (\tau_v(h)\circ\alpha + h\cdot \alpha - \bil(h,\alpha)\;\alpha\circ\tau_u(h))\;
    \tau_u(g)
    \\[1ex]
    &=&
    \tau_v(g)\;(\underline{\tau_v(h)\circ\alpha}_1 + \underline{h\cdot \alpha}_2 - \underline{\bil(h,\alpha)\;\alpha\circ\tau_u(h)}_3)
    \\
    &&{}+
    \underline{(g\cdot\tau_v(h))\circ\alpha}_4 + \underline{\bil(g,h)\;\tau_v(h)\circ(g\cdot \alpha)}_2
    + \underline{g\cdot (h\cdot \alpha)}_5
    \\
    &&{}- \bil(h,\alpha) \big[\underline{(g\cdot \alpha)\circ\tau_u(h)}_6 + \underline{\bil(g,\alpha)\;\alpha\;g\cdot \tau_u(h)}_7\big]
    \\
    &&{}-\bil(g,h+\alpha)\;
    (\underline{\tau_v(h)\;\alpha}_3 + \underline{h\cdot \alpha}_6 - \underline{\bil(h,\alpha)\;\alpha\;\tau_u(h)}_8)\;
    \tau_u(g)
  \end{IEEEeqnarray*}
  Here we labelled each term with a number according to how they simplify in the computation below:
  \begin{IEEEeqnarray*}{rCl}
    g\cdot_\tau(h\cdot_\tau \alpha)-\bil(g,h)h\cdot_\tau(g\cdot_\tau \alpha)
    &=&
    \underline{\tau_v([g,h])\;\alpha}_4 + \underline{[g,h]\cdot \alpha}_5 - \underline{\bil(h+g,\alpha)\;\alpha\;\tau_u([g,h])}_7
    \\
    &=& [g,h]\cdot_\tau \alpha.
  \end{IEEEeqnarray*}
  Terms 4 and 7 simplify thanks to flatness, term 5 simplify as $\cdot$ is an action of $\fg$, and the remaining terms cancel, with terms 1 and 8 cancelling thanks to graded commutativity.

  Following \cref{lem:equivalent_defn_g2cat},
  it remains to check that the $\fg$-action verifies the Leibniz rule and commutes with horizontal whiskering.
  The former follows from graded Leibniz rule for $\cdot$, and the latter follows from the fact that $\tau$ .
\end{proof}

% \begin{remark}
%   It follows directly from the graded Leibniz rule that:
%   \begin{align*}
%     \partial^2(xy)&= \partial\big[\partial(x)y+\bil(\partial,x)x\partial(y)\big]\\
%     &=\partial^2(x)y+\big[\bil(\partial,\partial(x))+\bil(\partial,x)\big]\partial(x)\partial(y)+\bil(\partial,x)^2x\partial^2(y)\\
%     &=\partial^2(x)y+\big[\bil(\partial,\partial)+1\big]\bil(\partial,x)\partial(x)\partial(y)+\bil(\partial,x)^2x\partial^2(y)
%   \end{align*}
%   Assume $\partial^2(x)=\partial^2(y)=0$.
%   On the one hand, if $\bil(\partial,\partial)=-1$ (super case), then $\partial^2(xy)=0$.
%   On the other hand, if $\bil(\partial,\partial)=1$ and if the characteristic of $\ring$ is two ($2$-DG case), then $\partial^2(xy)=0$.
%   In particular, if we are in one of the above cases and $\partial^2$ is zero on a set of generators of $A$, then $(A,\partial)$ is a $(\grp,\bil)$-graded dg-algebra.
% \end{remark}

\subsection{Review of graded \texorpdfstring{$\glt$}{gl2}-foams}
\label{subsec:review_graded_foam}

In this subsection, we review the graded-2-category of $\glt$-foams $\pregfoam_d$ as introduced in \cite{SV_OddKhovanovHomology_2023}, and refer to \textit{op.\ cit.} for further details.

Fix a positive integer $d\in\bN$.
The objects of $\pregfoam_d$ are
\begin{gather*}
  \ob(\pregfoam_d)\coloneqq
  \bigsqcup_{k\in\bN}\{\lambda\in\{1,2\}^k\mid \lambda_1+\ldots+\lambda_k=d\}.
\end{gather*}
For each $\lambda\in\ob(\pregfoam_d)$ with $k$ coordinates, we label its coordinates with
\[l_\lambda\colon\{1,\ldots,k\}\to\{1,\ldots,d\},\]
setting $l_\lambda(i)=\sum_{j<i}\lambda_j+1.$
For instance, $l_{(1,1,2,1)}=(1,2,3,5)$.
In other words, the label $l_\lambda(i)$ is a sort of ``weighted coordinate'', where coordinate with value $2$ counts double.
Foreseeing the diagrammatics, we call this label the \emph{colour} of the coordinate.

The 1-morphisms of $\pregfoam_d$ are \emph{directed $\glt$-webs} (or simply \emph{webs}), such as:
\begin{center}
  \tikzpic{
    \webid[0][1.5][1]\webs[1][1.5]\webm[2][1.5]\webid[3][1.5][1]
    \webm\webid[1][0][1]\webid[2][0][1]\webs[3][0]
  }[xscale=.5,yscale=.4]
\end{center}
In general, a web is obtained from \emph{merge webs} ($M\coloneqq\tikzpic{\webm}[xscale=.5,yscale=.4]$) and \emph{split webs} ($S\coloneqq\tikzpic{\webs}[xscale=.5,yscale=.4]$), by adding single lines ($\tikzpic{\draw[web1] (0,0) to (1,0);}[xscale=.5,yscale=.4]$) and double lines ($\tikzpic{\draw[web2] (0,0) to (1,0);}[xscale=.5,yscale=.4]$) on top and on the bottom and then composing horizontally.
Note that we read webs from right to left.
Our webs are \emph{directed}, in the sense that when reading from right to left, the vertical cross section always has the same width (counting double the double lines); that is, the integer $d$ is fixed.
We sometimes emphasize that point by orienting our webs from right to left.
A web $W$ has an underlying unoriented flat tangle diagram, denoted $\undcomp(W)$, given by forgetting the double lines and the orientation.

We now turn to the 2-morphisms of $\pregfoam_d$.
For convenience, and in contrast to the introduction, we shall use the shading diagrammatics \cite{Schelstraete_OddKhovanovHomology_2024,Schelstraete_RewritingModuloDiagrammatic_2025} throughout the rest of the paper.
It is given by projecting $\glt$-foams onto the plane along the front-to-back direction, and recording only the seams and 2-facets:
\begin{IEEEeqnarray*}{CcC}
  \vcenter{\hbox{\includegraphics[width=2.5cm]{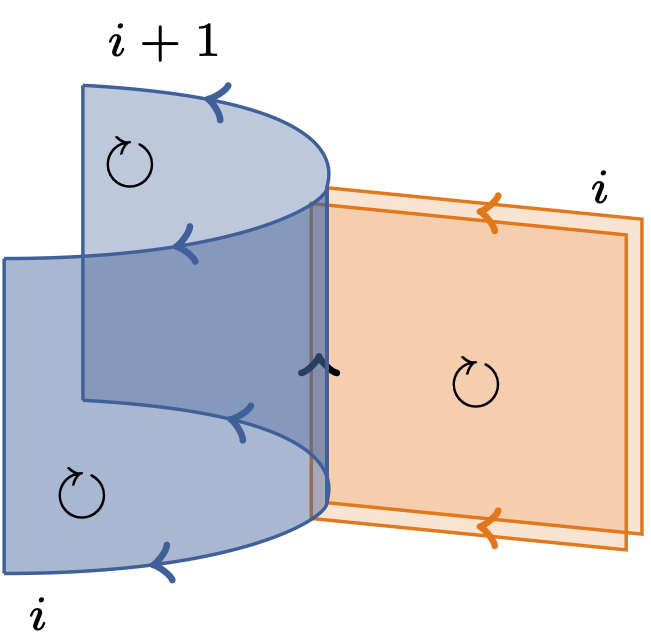}}}
  &
  \mspace{30mu}
  \leftrightarrow
  \mspace{30mu}
  &
  \tikzpic{\frst\node[left] at (0,1) {\footnotesize $i$};}[scale=1.5]
  \\*
  \text{\small $\glt$-foams}
  &&
  \text{\small shading diagrammatics}
\end{IEEEeqnarray*}
Recall the data $\ringfoam$, $\grpfoam$ and $\bilfoam$ from \cref{defn:graded_structure_foam}.

% \ringfoam for ring in foams
% \grpfoam for group in foams
% \bilfoam for bilinear map in foams

\begin{definition}
  \label{defn:review_foam}
  The $(\bZ^2,\bilfoam)$-graded-2-category $\pregfoam_d$ has its $(\bZ^2,\bilfoam)$-graded structure given as in \cref{defn:graded_structure_foam} and is presented with generators given in \cref{fig:string_generators_graded_foams} and relations given in \cref{fig:rel_diagfoam_graded}. 
\end{definition}

The \emph{quantum grading} is defined as $\qdeg(a,b)=a+b$ where $(a,b)$ is the $\grpfoam$-grading.
Although the quantum grading is defined from the $\grpfoam$-grading, we view it as a distinct grading. We denote $\gfoam_d$ the additive $q$-shifted closure of $\pregfoam$; that means we allow formal direct sums and shifts in the quantum grading on objects, and restrict to foams with quantum degree zero (see \cite[subsection~2.1]{SV_OddKhovanovHomology_2023} for details).
Compare to \cite{SV_OddKhovanovHomology_2023}, our notation is such that $\pregfoam_d=\gfoam_d^{\text{\cite{SV_OddKhovanovHomology_2023}}}$ 
and $\gfoam_d=((\underline{\gfoam_d})^\oplus_q)^{\text{\cite{SV_OddKhovanovHomology_2023}}}$.

Recall from \cref{defn:graded_structure_foam} what we mean by ``restrict to odd'' and ``restrict to even''.

\begin{definition}
  \label{defn:foam_and_sfoam}
  We denote $\prefoam_d=\pregfoam_d\vert_{X=Y=Z=1}$ the restriction of $\pregfoam_d$ to even and $\presfoam_d=\pregfoam_d\vert_{X=Z=1,Y=-1}$ the restriction of $\pregfoam_d$ to odd.
  We similarly define $\foam_d$ and $\sfoam_d$.
\end{definition}

This article mainly deals with three graded Lie algebras: the Lie algebra $\fg=\fgl_2^\leq$, the super Lie algebra $\fg=\gloo$, and the graded Lie algebra $\fg=\grgl_2^\leq$.
For each of these cases, we write $\fg\foam_d$ for $\foam_d$, for $\sfoam_d$ and for $\gfoam_d$, respectively.
We shall use similar notations throughout, depending on the choice of $\fg$.

\begin{remark}[monoidal 2-categorical structure]
  \label{rem:monoidal-2-categorical-structure}
  One could gather the graded-2-categories $\pregfoam_d$ together as a certain ``monoidal graded-2-category'', leveraging the canonical graded-2-functors
  \[
    \pregfoam_{d_1}\times\pregfoam_{d_2}\to\pregfoam_{d_1+d_2}
  \]
  given on the pair $(F_1,F_2)$ by putting $F_1$ in front of $F_2$; in shading diagrammatics, it amounts to shifting the labels of $F_2$ and superposing the diagrams. 
  While we avoid making this precise here, certain parts of our discussion implicitly use this extra monoidal structure. We refer to it as the \emph{front-back composition}, and denote it $\square$.
\end{remark}

\begin{remark}
  \label{rem:variant_graded_gl2_foams}
  There exists a variant of $\pregfoam_d$, denoted $\widetilde{\gfoam'_d}$ and with the same generators and relations, except for the following two relations:
  \begin{gather*}
    \tikzpic{
      \funzip[0][1]\fzip\node at (.8+.5,-.3+1) {\scriptsize $i$};
    }[scale=.7]
    =
    XYZ
    \mspace{10mu}
    \tikzpic{
      \fdot\node at (.2,-.2) {\scriptsize $i$};
    }[][(0,0)]
    + Z
    \mspace{10mu}
    \tikzpic{
      \fdot[0][0][2]\node at (.5,-.2) {\scriptsize $i+1$};
    }[][(0,0)]
    \quad\an\quad
    \tikzpic{
      \flst[0][3]\flst[0][2]\flst[0][1]\flst[0][0]
      \frst[.5][3][2]\frst[.5][2][2]\frst[.5][1][2]\frst[.5][0][2]
      \flst[1.5][3][2]\flst[1.5][2][2]\flst[1.5][1][2]\flst[1.5][0][2]
      \frst[2][3]\frst[2][2]\frst[2][1]\frst[2][0]
      \node[below=-2pt] at (0,0) {\scriptsize $i$};
      \node[below=-2pt] at (1.5,0) {\scriptsize $i+1$};
    }[scale=.35][(0,2*.35)]
    \;=\;XYZ^{-1}\;
    \tikzpic{
      \fzip[0][3][2]
      \funzip[0][0][2]
      \begin{scope}[scale=2]
        \fcup[-.25][1]
        \fcap[-.25][0]
      \end{scope}
      \node[below=-2pt] at (-.5,0) {\scriptsize $i$};
      \node[below=-2pt] at (1,0) {\scriptsize $i+1$};
    }[scale=.35][(0,2*.35)]
    \;.
  \end{gather*}
  It was shown in \cite{Schelstraete_RewritingModuloDiagrammatic_2025} that $\pregfoam_d$ and $\widetilde{\gfoam'_d}$ are to only two deformations of $\foam_d$, in a suitable sense.
  When comparing with the classical definition of odd Khovanov homology \cite{ORS_OddKhovanovHomology_2013}, working with $\pregfoam_d$ gives type Y odd Khovanov homology, while working with $\widetilde{\gfoam'_d}$ gives type X odd Khovanov homology.
  See also \cref{subsec:comparison_local_global}.
\end{remark}

\begin{figure}[p]
  \centering

  % SHADING DIAGRAMMATICS GENERATORS
  \begin{equation*}
    \allowdisplaybreaks
    \begin{array}{cc}
      \begin{tabular}{*{5}{c@{\hskip 7ex}}c}
        $\tikzpic{\fdot\node at (.2,-.2) {\scriptsize $i$};}$
        &
        $\tikzpic{\fcup\node at (1,.4) {\scriptsize $i$};}$
        &
        $\tikzpic{\fcap\node at (1,.4) {\scriptsize $i$};}$
        &
        $\tikzpic{\fzip\node at (1,.6) {\scriptsize $i$};}$
        &
        $\tikzpic{\funzip\node at (1,.6) {\scriptsize $i$};}$
        \\*[1ex]
        \small dot &\small cup & \small cap &  \small zip & \small unzip
        \\*[1ex]
        $(1,1)$ & $(0,-1)$ & $(-1,0)$ & $(1,0)$ & $(0,1)$
      \end{tabular}
      &
      \\[7ex]
      \begin{tabular}{*{4}{c@{\hskip 3ex}}c}
        %%%% downward crossing %%%%
        $\tikzpic{
          \fbcro
          \node[left=-3pt] at (0,0) {\scriptsize $i$};
          \node[right=-3pt] at (1,0){\scriptsize $j$};
          % \node at (1.4,0) {\scriptsize $\lambda$};
        }$
        &
        %%%% leftward crossing %%%%
        $\tikzpic{
          \flcro
          \node[left=-3pt] at (0,0) {\scriptsize $i$};
          \node[right=-3pt] at (1,0){\scriptsize $j$};
          % \node at (1.4,0) {\scriptsize $\lambda$};
        }$
        &
        %%%% upward crossing %%%%
        $\tikzpic{
          \ftcro
          \node[left=-3pt] at (0,0) {\scriptsize $i$};
          \node[right=-3pt] at (1,0){\scriptsize $j$};
          % \node at (1.4,0) {\scriptsize $\lambda$};
        }$
        &
        %%%% rightward crossing %%%%
        $\tikzpic{
          \frcro
          \node[left=-3pt] at (0,0) {\scriptsize $i$};
          \node[right=-3pt] at (1,0){\scriptsize $j$};
          % \node at (1.4,0) {\scriptsize $\lambda$};
        }$
        &
        if $\abs{i-j}>1$
        \\*[1ex]
        % \small \stackunder{\text{downward}}{\text{crossing}} & \small \stackunder{\text{rightward}}{\text{crossing}} & \small \stackunder{\text{upward}}{\text{crossing}} & \small \stackunder{\text{leftward}}{\text{crossing}}
        \small \text{downward crossing} & \small \text{leftward crossing} & \small \text{upward crossing} & \small \text{rightward crossing}
        \\*[1ex]
        $(0,0)$ & $(0,0)$ & $(0,0)$ & $(0,0)$
      \end{tabular}
    \end{array}
  \end{equation*}
  \vspace*{-.5cm}

  \caption{Generators in $\pregfoam_d$. Each generator has a grading in $\bZ\times\bZ$.}
  \label{fig:string_generators_graded_foams}

  % SHADING DIAGRAMMATICS RELATIONS
  \def\scl{.7}
  \newcommand{\ins}{0pt}% space between diagram and text
  \newcommand{\outs}{7pt}% space between two (set of) relation(s)
  \begin{gather*}
    %%%%% BRAID-LIKE MOVES %%%%%
    \tikzpic{
      \draw[foamdraw1]  +(0,-.75) node[below] {\textcolor{black}{\scriptsize $i$}}
        .. controls (0,-.375) and (1,-.375) .. (1,0)
        .. controls (1,.375) and (0, .375) .. (0,.75);
      \draw[foamdraw2]  +(1,-.75) node[below] {\textcolor{black}{\scriptsize $j$}}
        .. controls (1,-.375) and (0,-.375) .. (0,0)
        .. controls (0,.375) and (1, .375) .. (1,.75);
    }[scale=.8*\scl][(0,0)]
    \;=\;
    \tikzpic{
      \draw[foamdraw1] (0,-.75) node[below] {\textcolor{black}{\scriptsize $i$}} to (0,.75);
      \draw[foamdraw2] (1,-.75) node[below] {\textcolor{black}{\scriptsize $j$}} to (1,.75);
      % \node at (1.3,0) {\scriptsize $\lambda$};
    }[scale=.8*\scl][(0,0)]
    \mspace{80mu}
    \tikzpic{
      \draw[foamdraw1]  +(0,0)node[below] {\textcolor{black}{\scriptsize $i$}}
        .. controls (0,0.5) and (2, 1) ..  +(2,2);
      \draw[foamdraw3]  +(2,0)node[below] {\textcolor{black}{\scriptsize $k$}}
        .. controls (2,1) and (0, 1.5) ..  +(0,2);
      \draw[foamdraw2]  (1,0)node[below] {\textcolor{black}{\scriptsize $j$}}
        .. controls (1,0.5) and (0, 0.5) ..  (0,1)
        .. controls (0,1.5) and (1, 1.5) ..  (1,2);
      % \node at (2.1,1) {\scriptsize $\lambda$};
    }[scale=.7*\scl][(0,1*.7*\scl)]
    \;=\;
    \tikzpic{
      \draw[foamdraw1]  +(0,0)node[below] {\textcolor{black}{\scriptsize $i$}}
        .. controls (0,1) and (2, 1.5) ..  +(2,2);
      \draw[foamdraw3]  +(2,0)node[below] {\textcolor{black}{\scriptsize $k$}}
        .. controls (2,.5) and (0, 1) ..  +(0,2);
      \draw[foamdraw2]  (1,0)node[below]{\textcolor{black} {\scriptsize $j$}}
        .. controls (1,0.5) and (2, 0.5) ..  (2,1)
        .. controls (2,1.5) and (1, 1.5) ..  (1,2);
      % \node at (2.3,1) {\scriptsize $\lambda$};
    }[scale=.7*\scl][(0,1*.7*\scl)]
    \\[\ins]
    \text{\footnotesize braid-like relations}
    \\[\outs]
    %%%%% PITCHFORKS %%%%%
    \tikzpic{
      \draw[foamdraw1](-.5,.4) to (0,-.3);
      \draw[foamdraw2] (0.3,-0.3)
        to[out=90, in=0] (0,0.2)
        to[out = -180, in = 40] (-0.5,-0.3);
      \node at (-0.5,-.5) {\scriptsize $j$};
      \node at (0,-.5) {\scriptsize $i$};
      % \node at (0.4,0) {\scriptsize $\lambda$};
    }[scale=1.2*\scl][(0,0)]
    \;=\;
    \tikzpic{
      \draw[foamdraw1](.6,.4) to (.1,-.3);
      \draw[foamdraw2] (0.6,-0.3)
        to[out=140, in=0] (0.1,0.2)
        to[out = -180, in = 90] (-0.2,-0.3);
      \node at (-0.2,-.5) {\scriptsize $j$};
      \node at (0.1,-.5) {\scriptsize $i$};
      % \node at (0.7,0) {\scriptsize $\lambda$};
    }[scale=1.2*\scl][(0,0)]
    \mspace{80mu}
    \tikzpic{
      \draw[foamdraw1](-.5,-.3) to (0,.4);
      \draw[foamdraw2] (0.3,0.4)
        to[out=-90, in=0] (0,-0.1)
        to[out = 180, in = -40] (-0.5,0.4);
      \node at (-0.5,.6) {\scriptsize $j$};
      \node at (-0.05,.6) {\scriptsize $i$};
      % \node at (0.5,0.1) {\scriptsize $\lambda$};
    }[scale=1.2*\scl][(0,0)]
    \;=\;
    \tikzpic{
      \draw[foamdraw1](.6,-.3) to (.1,.4);
      \draw[foamdraw2] (0.6,0.4)
        to[out=-140, in=0] (0.1,-0.1)
        to[out = 180, in = -90] (-0.2,0.4);
      \node at (-0.25,.6)  {\scriptsize $j$};
      \node at (0.15,.6) {\scriptsize $i$};
      % \node at (0.7,0.1) {\scriptsize $\lambda$};
    }[scale=1.2*\scl][(0,0)]
    \\[\ins]
    \text{\footnotesize pitchfork relations}
    \\[\outs]
    %%%%% ZIGZAGS %%%%%
    \begingroup
      \tikzpic{
        \fcap[0][1]\flst[2][1]\flst\fzip[1][0]
        \node[left=-3pt] at (2,2) {\scriptsize $i$};
      }[scale=.5][(0,1*.5)]
      =
      \tikzpic{
        \flst[0][1]\flst\node[left=-3pt] at (0,2) {\scriptsize $i$};
      }[scale=.5][(0,1*.5)]
      \mspace{30mu}
      \tikzpic{
        \frst[0][1]\fcap[1][1]\fzip[0][0]\frst[2][0]
        \node[left=-3pt] at (0,2) {\scriptsize $i$};
      }[scale=.5][(0,1*.5)]
      = X
      \tikzpic{
        \frst[0][1]\frst\node[left=-3pt] at (0,2) {\scriptsize $i$};
      }[scale=.5][(0,1*.5)]
      \mspace{30mu}
      \tikzpic{
        \funzip[0][1]\frst[2][1]\frst\fcup[1][0]
        \node[left=-3pt] at (2,2) {\scriptsize $i$};
      }[scale=.5][(0,1*.5)]
      = Z^2
      \tikzpic{
        \frst[0][1]\frst\node[left=-3pt] at (0,2) {\scriptsize $i$};
      }[scale=.5][(0,1*.5)]
      \mspace{30mu}
      \tikzpic{
        \flst[0][1]\funzip[1][1]\fcup[0][0]\flst[2][0]
        \node[left=-3pt] at (0,2) {\scriptsize $i$};
      }[scale=.5][(0,1*.5)]
      =
      YZ^2\;
      \tikzpic{
        \flst[0][1]\flst\node[left=-3pt] at (0,2) {\scriptsize $i$};
      }[scale=.5][(0,1*.5)]
    \endgroup
    \\[\ins]
    \text{\footnotesize zigzag relations (or adjunction relations)}
    \\[\outs]
    %%%%% DOT-TYPE RELATIONS %%%%%
    \begin{IEEEeqnarraybox}{cCcCc}
      %%%%% DOT ANNIHILATION %%%%%
      \left(\tikzpic{
        \fdot\node at (.2,-.2) {\scriptsize $i$};
      }[][(0,0)]\right)^2
      =
      0
      &\mspace{50mu}&
      %%%%% DOT SWAP %%%%%
      \tikzpic{
        \fdot[-1][1]\node at (-.7,.8) {\scriptsize $i$};
        \frst[0][1]\frst\node[left=-3pt] at (0,2) {\scriptsize $i$};
      }[scale=.5][(0,1*.5)]
      =\;
      \tikzpic{
        \fdot[-1][1][2]\node at (-.7,.5) {\scriptsize $i+1$};
        \frst[0][1]\frst\node[left=-3pt] at (0,2) {\scriptsize $i$};
      }[scale=.5][(0,1*.5)]
      &\mspace{50mu}&
      %%%%% DOT SLIDE %%%%%
      \tikzpic{
        \fdot[-1][1][2]\node at (-.7,.8) {\scriptsize $j$};
        \frst[0][1]\frst\node[left=-3pt] at (0,2) {\scriptsize $i$};
      }[scale=.5][(0,1*.5)]
      \;=\;
      \tikzpic{
        \fdot[1][1][2]\node at (-.7+2,.8) {\scriptsize $j$};
        \frst[0][1]\frst\node[left=-3pt] at (0,2) {\scriptsize $i$};
      }[scale=.5][(0,1*.5)]
      \mspace{20mu}
      \text{if }j\neq i,i+1
      \\[\ins]
      \text{\footnotesize dot annihilation}
      &&
      \text{\footnotesize dot migration}
      &&
      \text{\footnotesize dot slide}
    \end{IEEEeqnarraybox}
    \\[\outs]
    %%%%% BUBBLES %%%%%
    \begin{IEEEeqnarraybox}{cCc}
      %%%%% COUNTER-CLOCKWISE BUBBLES %%%%%
      \tikzpic{
        \fdot[.5][1]\node at (.2+.5,-.2+1) {\scriptsize $i$};
        \fcap[0][1]\fcup\node at (.8+.5,-.3+1) {\scriptsize $i$};
      }[scale=.7]
      =
      1
      \mspace{40mu}
      \tikzpic{
        \fcap[0][1]\fcup\node at (.8+.5,-.3+1) {\scriptsize $i$};
      }[scale=.7]
      =
      0
      & \mspace{70mu} &
      %%%%% CLOCKWISE BUBBLES %%%%%
      \begingroup
        \tikzpic{
          \funzip[0][1]\fzip\node at (.8+.5,-.3+1) {\scriptsize $i$};
        }[scale=.7]
        =
        {Z}
        \mspace{10mu}
        \tikzpic{
          \fdot\node at (.2,-.2) {\scriptsize $i$};
        }[][(0,0)]
        + XYZ
        \mspace{10mu}
        \tikzpic{
          \fdot[0][0][2]\node at (.5,-.2) {\scriptsize $i+1$};
        }[][(0,0)]
      \endgroup
      \\[\ins]
      \text{\footnotesize evaluation of bubbles}
      &&
      \text{\footnotesize evaluation of shaded disks}
    \end{IEEEeqnarraybox}
    \\[\outs]
    %%%%% SHADING RELATED RELATION %%%%%
    \begin{IEEEeqnarraybox}{cCc}
      %%%%% NECK-CUTTING RELATION %%%%%
      \begingroup
        \tikzpic{
          \flst[0][1]\flst\node[left=-3pt] at (0,2) {\scriptsize $i$};
          \frst[1][1]\frst[1][0];
        }[scale=.7][(0,1*.7)]
        \;=\;
        \tikzpic{
          \fdot[.5][1.8]\fcup[0][1]\fcap
          \node[left=-3pt] at (0,2) {\scriptsize $i$};
        }[scale=.7][(0,1*.7)]
        \;+\;
        \tikzpic{
          \fdot[.5][.2]\fcup[0][1]\fcap
          \node[left=-3pt] at (0,2) {\scriptsize $i$};
        }[scale=.7][(0,1*.7)]
      \endgroup
      & \mspace{120mu} &
      %%%%% ISOTOPY RELATION %%%%%
      \begingroup
      \tikzpic{
          \flst[0][3]\flst[0][2]\flst[0][1]\flst[0][0]
          \frst[.5][3][2]\frst[.5][2][2]\frst[.5][1][2]\frst[.5][0][2]
          \flst[1.5][3][2]\flst[1.5][2][2]\flst[1.5][1][2]\flst[1.5][0][2]
          \frst[2][3]\frst[2][2]\frst[2][1]\frst[2][0]
          \node[below=-2pt] at (0,0) {\scriptsize $i$};
          \node[below=-2pt] at (1.5,0) {\scriptsize $i+1$};
        }[scale=.35][(0,2*.35)]
        \;=\;Z^{-1}\;
        \tikzpic{
          \fzip[0][3][2]
          \funzip[0][0][2]
          \begin{scope}[scale=2]
            \fcup[-.25][1]
            \fcap[-.25][0]
          \end{scope}
          \node[below=-2pt] at (-.5,0) {\scriptsize $i$};
          \node[below=-2pt] at (1,0) {\scriptsize $i+1$};
        }[scale=.35][(0,2*.35)]
      \endgroup
      \\[\ins]
      \text{\footnotesize neck-cutting relation}
      &&
      \text{\footnotesize squeezing relation}
    \end{IEEEeqnarraybox}
  \end{gather*}
  \vspace*{-.5cm}

  \caption{Relations in $\pregfoam_d$. We omit the objects labelling the regions of each diagram: this avoids clutter and emphasizes that relations are independent of the ambient object. If no shading is given, the relation holds for all shadings. In the case of the braid-like and pitchfork relations, colours should be so that the crossings exist.}
  \label{fig:rel_diagfoam_graded}
\end{figure}

\subsection{Generic derivations and actions}
\label{subsec:generic_derivation_action}

In this subsection, we define derivations on the graded-2-category $\pregfoam_d$ of graded $\glt$-foams generically, depending on a family of parameters. We then give minimal conditions so that these derivations gather into an action of $\grsl_{2}^{\leq}$ by derivation on $\pregfoam_d$.
We do the same analysis when restricting to the odd case $\presfoam_d$, extending to an action of $\gloo$.

\subsubsection{Graded case}
\label{subsubsec:generic_derivation_action_graded}

\begin{lemma}
  \label{lem:generic_derivations_on_graded_foam}
  Let $\varf\coloneqq\{\varf^i\}_{1\leq i\leq d-1}$, $\delh$ and $\varh\coloneqq\{\varh^i\}_{1\leq i\leq d-1}$ be scalars in $\ringfoam$.
  The graded-2-category $\pregfoam_d$ admits the following graded derivations $\sff_{\varf}$ and $\sfh_{\delh,\varh}$, of degree $(1,1)$ and $(0,0)$ respectively, and defined on the generators (\cref{fig:string_generators_graded_foams}) as zero on crossings and as:
  \begin{center}
    \def\spc{2ex}
    \begin{tabular}{@{}l@{\hskip 5ex}*{4}{r@{\hskip 4ex}}r@{}}
      &
      $\tikzpic{\fdot\node at (.2,-.2) {\scriptsize $i$};}[scale=.5]$
      &
      $\tikzpic{\fcup\node at (1,.4) {\scriptsize $i$};}[scale=.5]$
      &
      $\tikzpic{\fzip\node at (1,.4) {\scriptsize $i$};}[scale=.5]$
      &
      $\tikzpic{\fcap\node at (1,.6) {\scriptsize $i$};}[scale=.5]$
      &
      $\tikzpic{\funzip\node at (1,.6) {\scriptsize $i$};}[scale=.5]$
      \\*[2ex]
      \midrule
      %%%%%%%%%%%%%%%%%%%%%%%%%%%%%%
      $\sff_{\varf}$
      &
      $0$
      &
      $\varf^i\;\tikzpic{\fcup\fdot[.5][.8]\node at (1,.4) {\scriptsize $i$};}[scale=.5]$
      &
      $\varf^i\;\tikzpic{\fzip\fdot[-.3][.7]\node at (1,.4) {\scriptsize $i$};}[scale=.5]$
      &
      $-\varf^iXZ\;\tikzpic{\fcap\fdot[.5][.2]\node at (1,.6) {\scriptsize $i$};}[scale=.5]$
      &
      $-\varf^iYZ\;\tikzpic{\funzip\fdot[-.3][.3]\node at (1,.6) {\scriptsize $i$};}[scale=.5]$
      \\*[\spc]
      %%%%%%%%%%%%%%%%%%%%%%%%%%%%%%
      % $\lieh_1$
      % &
      % $-\;\tikzpic{\fdot\node at (.2,-.2) {\scriptsize $i$};}[scale=.5]$
      % &
      % $\tikzpic{\fcup\node at (1,.4) {\scriptsize $i$};}[scale=.5]$
      % &
      % $0$
      % &
      % $0$
      % &
      % $-\;\tikzpic{\funzip\node at (1,.6) {\scriptsize $i$};}[scale=.5]$
      % \\*[\spc]
      % %%%%%%%%%%%%%%%%%%%%%%%%%%%%%%
      % $\lieh_2$
      % &
      % $\tikzpic{\fdot\node at (.2,-.2) {\scriptsize $i$};}[scale=.5]$
      % &
      % $0$
      % &
      % $\tikzpic{\fzip\node at (1,.4) {\scriptsize $i$};}[scale=.5]$
      % &
      % $-\;\tikzpic{\fcap\node at (1,.6) {\scriptsize $i$};}[scale=.5]$
      % &
      % $0$
      % \\*[\spc]
      % \cmidrule(r){2-6}
      % %%%%%%%%%%%%%%%%%%%%%%%%%%%%%%
      $\sfh_{\delh,\varh}$
      &
      $-\delh\;\tikzpic{\fdot\node at (.2,-.2) {\scriptsize $i$};}[scale=.5]$
      &
      $(\delh-\varh^i)\;\tikzpic{\fcup\node at (1,.4) {\scriptsize $i$};}[scale=.5]$
      &
      $-\varh^i\;\tikzpic{\fzip\node at (1,.4) {\scriptsize $i$};}[scale=.5]$
      &
      $\varh^i\;\tikzpic{\fcap\node at (1,.6) {\scriptsize $i$};}[scale=.5]$
      &
      $-(\delh-\varh^i)\;\tikzpic{\funzip\node at (1,.6) {\scriptsize $i$};}[scale=.5]$
    \end{tabular}
  \end{center}
\end{lemma}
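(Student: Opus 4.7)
The plan is to invoke \cref{rem:derivation_uniquely_defn_on_generators_of_A}: since $\pregfoam_d$ is presented by the generators of \cref{fig:string_generators_graded_foams} subject to the relations of \cref{fig:rel_diagfoam_graded}, defining $\sff_{\varf}$ and $\sfh_{\delh,\varh}$ on generators and extending by the graded Leibniz rule automatically respects the graded interchange law. What remains is to check that each defining relation is preserved by $\sff_\varf$ and by $\sfh_{\delh,\varh}$, in the sense that applying the derivation to both sides gives equal 2-morphisms in $\pregfoam_d$. Degrees must be verified first: one checks directly that $\sff_\varf$ has degree $(1,1)$ and $\sfh_{\delh,\varh}$ has degree $(0,0)$ on each generator, which is just bookkeeping of the gradings listed in \cref{fig:string_generators_graded_foams}.

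The main work is a case-by-case check of each relation. The braid-like and pitchfork relations are immediate since the derivations vanish on all crossings and the graded Leibniz rule then gives zero on both sides (no web generator appears). For the zigzag relations, one applies the Leibniz rule and uses the fact that the prefactors $1$, $X$, $Z^2$, $YZ^2$ are scalars commuting with the derivation; for $\sff_\varf$ the two terms produced on the left-hand side (one from each cup/cap or zip/unzip generator) should cancel against each other using the specific signs $-XZ$ and $-YZ$ in the definition, which is precisely what the statement of the lemma is calibrated to ensure. For $\sfh_{\delh,\varh}$ the scalars telescope: the contributions from the cap/cup (or zip/unzip) pair sum to $\pm\delh$ or $0$, matching the identity web on the right.

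For the dot relations, $\sff_\varf$ sends $\textdot^2$ to $\varf^i\textdot + \bilfoam(\sff_\varf,\textdot)\textdot \sff_\varf(\textdot)$; the sign in the graded Leibniz rule interacts with $\deg(\textdot)=(1,1)$ so that $\sff_\varf(\textdot^2)=(1-XY)\varf^i\textdot$, which vanishes against the relation $\textdot^2=0$ only when working where the relation is imposed (it lies in the ideal $\textdot^2$). For $\sfh_{\delh,\varh}$, dot annihilation gives $-2\delh\textdot^2=0$, and dot migration and dot slide reduce to matching scalar coefficients between the two sides, using the $(0,0)$-degree of $\sfh_{\delh,\varh}$. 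The evaluation of bubbles and the evaluation of shaded disks require a Leibniz expansion producing sums of dotted diagrams; one then uses the evaluation relations again to rewrite and checks that the scalar outcome is preserved. The neck-cutting and squeezing relations are handled the same way; the $Z^{-1}$ in squeezing combines cleanly because both derivations act by scalars on each side.

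The hard part will be the careful bookkeeping of the signs $X$, $Y$, $Z$ arising from the graded Leibniz rule combined with the bidegrees of the generators, especially in the zigzag and evaluation relations, where several generators of nontrivial bidegree appear in the same diagram. In each such check one must track $\bilfoam$-signs coming from sliding the derivation past generators sitting to its right in the vertical composition, then match to the prefactors appearing in \cref{fig:rel_diagfoam_graded}. No relation requires more than a finite diagrammatic expansion followed by one application of a scalar identity in $\ringfoam$, so the verification terminates; what is essential is that the choices of signs $-XZ$, $-YZ$ in the definition of $\sff_\varf$ and the signs $\delh-\varh^i$, $-\varh^i$ in the definition of $\sfh_{\delh,\varh}$ are precisely the values that make every relation survive.
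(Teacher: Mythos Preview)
Your approach is the same as the paper's: invoke the remark that a derivation is determined on generators, then run through the defining relations of \cref{fig:rel_diagfoam_graded}. The paper does exactly this, with explicit computations for the non-trivial cases (bubble evaluations, shaded disk, neck-cutting, squeezing) for $\sff_\varf$, and a cleaner ``sum-of-scalars'' argument for $\sfh_{\delh,\varh}$.

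There is, however, a genuine error in your treatment of dot annihilation for $\sff_\varf$. You write that $\sff_\varf(\textdot^2)=\varf^i\textdot + \bilfoam(\sff_\varf,\textdot)\,\textdot\,\sff_\varf(\textdot)$ and then claim this equals $(1-XY)\varf^i\textdot$. But by the very definition you are proving, $\sff_\varf(\textdot)=0$, so the Leibniz rule gives $\sff_\varf(\textdot^2)=\sff_\varf(\textdot)\circ\textdot+\bilfoam(\sff_\varf,\textdot)\,\textdot\circ\sff_\varf(\textdot)=0$ trivially. Your computation seems to confuse $\sff_\varf$ with the degree-$(-1,-1)$ derivation $\sfe_\vare$ from the super case (where $\sfe(\textdot)=\vare\cdot\id$). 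Also, your escape clause ``it lies in the ideal $\textdot^2$'' is not valid: $(1-XY)\varf^i\textdot$ is not in that ideal, so if your formula were right the relation would genuinely fail in the graded (non-super) setting.

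A smaller point: pitchfork relations do involve cup/cap-type generators, not only crossings, so ``no web generator appears'' is inaccurate; the check is still immediate because the same cup/cap appears identically on both sides. For $\sfh_{\delh,\varh}$, the paper's argument is cleaner than your case-by-case: since $\sfh$ has degree $(0,0)$ and multiplies each generator by a scalar, it multiplies any diagram by the \emph{sum} of those scalars, so preserving a relation reduces to matching two sums of scalars. This immediately handles all cases and transparently explains which relations constrain which parameters (zigzag forces cap/zip and cup/unzip scalars to be opposite; neck-cutting forces the linear relation among dot, cup and cap scalars; dot migration forces the dot scalar to be independent of $i$).
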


\begin{proof}
  We show that $\sff_{\varf}$ is well-defined.
  Thanks to \cref{rem:derivation_uniquely_defn_on_generators_of_A}, it suffices to check it locally on the defining relations (\cref{fig:rel_diagfoam_graded}).
  It is straightforward for braid-like relations, pitchfork relations, dot annihilation, dot migration, dot slide, and evaluation of dotted bubbles.
  For the other evaluations, we have:
  \def\scl{.5}
  \begin{IEEEeqnarray*}{rCl}
    %%%%% COUNTER-CLOCKWISE BUBBLES %%%%%
    \sff_{\varf}\left(
      \tikzpic{
        \fcap[0][1]\fcup\node at (.8+.5,-.3+1) {\scriptsize $i$};
      }[scale=\scl]
    \right)
    &=&
    -\varf^iXZ \;
    \tikzpic{
        \fdot[.5][1]\node at (.2+.5,-.2+1) {\scriptsize $i$};
        \fcap[0][1]\fcup\node at (.8+.5,-.3+1) {\scriptsize $i$};
      }[scale=\scl]
    +\bilfoam\big((1,1),(-1,0)\big)
    \varf^i\;
    \tikzpic{
        \fdot[.5][1]\node at (.2+.5,-.2+1) {\scriptsize $i$};
        \fcap[0][1]\fcup\node at (.8+.5,-.3+1) {\scriptsize $i$};
      }[scale=\scl]
    =
    0
    \\[2ex]
    %%%%% CLOCKWISE BUBBLES %%%%%
    \sff_{\varf}\left(
      \tikzpic{
        \funzip[0][1]\fzip\node at (.8+.5,-.3+1) {\scriptsize $i$};
      }[scale=\scl]
    \right)
    &=&
    -\varf^iYZ
    \mspace{10mu}
    \tikzpic{
        \fdot[-.5][1]\node at (.2-.5,-.2+1) {\scriptsize $i$};
        \funzip[0][1]\fzip\node at (.8+.5,-.3+1) {\scriptsize $i$};
      }[scale=\scl]
    +\bilfoam\big((1,1),(0,1)\big)
    \varf^i
    \mspace{10mu}
    \tikzpic{
        \fdot[-.5][1]\node at (.2-.5,-.2+1) {\scriptsize $i$};
        \funzip[0][1]\fzip\node at (.8+.5,-.3+1) {\scriptsize $i$};
      }[scale=\scl]
    =0
  \end{IEEEeqnarray*}
  The neck-cutting gives:
  \begin{gather*}
    \sff_{\varf}\left(
      \tikzpic{
        \fdot[.5][1.8]\fcup[0][1]\fcap
        \node[left=-3pt] at (0,2) {\scriptsize $i$};
      }[scale=\scl][(0,1*\scl)]
      \;+\;
      \tikzpic{
        \fdot[.5][.2]\fcup[0][1]\fcap
        \node[left=-3pt] at (0,2) {\scriptsize $i$};
      }[scale=\scl][(0,1*\scl)]
    \right)
    =
    \Big[-\varf^iXZ \bilfoam\big((1,1),(1,0)\big)+\varf^i\Big]\;
    \tikzpic{
        \fdot[.5][.2]
        \fdot[.5][1.8]\fcup[0][1]\fcap
        \node[left=-3pt] at (0,2) {\scriptsize $i$};
      }[scale=\scl][(0,1*\scl)]
    =0
  \end{gather*}
  Finally, the squeezing relation gives:
  \begin{IEEEeqnarray*}{rCl}
    \sff_{\varf}\left(
      \tikzpic{
        \fzip[0][3][2]
        \funzip[0][0][2]
        \begin{scope}[scale=2]
          \fcup[-.25][1]
          \fcap[-.25][0]
        \end{scope}
        \node[below=-2pt] at (-.5,0) {\scriptsize $i$};
        \node[below=-2pt] at (1,0) {\scriptsize $i+1$};
      }[scale=.35][(0,2*.35)]
    \right)
    &=&
    \varf^{i+1}
    \tikzpic{
      \fzip[0][3][2]
      \funzip[0][0][2]
      \begin{scope}[scale=2]
        \fcup[-.25][1]
        \fcap[-.25][0]
      \end{scope}
      \node[below=-2pt] at (-.5,0) {\scriptsize $i$};
      \node[below=-2pt] at (1,0) {\scriptsize $i+1$};
      \fdot[-.2][3.7][2][1.5]
    }[scale=.35][(0,2*.35)]
    +
    \varf^i
    \bilfoam\big((1,1),(1,0)\big)
    \tikzpic{
      \fzip[0][3][2]
      \funzip[0][0][2]
      \begin{scope}[scale=2]
        \fcup[-.25][1]
        \fcap[-.25][0]
      \end{scope}
      \node[below=-2pt] at (-.5,0) {\scriptsize $i$};
      \node[below=-2pt] at (1,0) {\scriptsize $i+1$};
      \fdot[1-.5][3][1][1.5]
    }[scale=.35][(0,2*.35)]
    \\
    &&{}-\varf^iXZ
    \bilfoam\big((1,1),(1,-1)\big)
    \tikzpic{
      \fzip[0][3][2]
      \funzip[0][0][2]
      \begin{scope}[scale=2]
        \fcup[-.25][1]
        \fcap[-.25][0]
      \end{scope}
      \node[below=-2pt] at (-.5,0) {\scriptsize $i$};
      \node[below=-2pt] at (1,0) {\scriptsize $i+1$};
      \fdot[1-.5][1][1][1.5]
    }[scale=.35][(0,2*.35)]
    -\varf^{i+1}YZ
    \bilfoam\big((1,1),(0,-1)\big)
    \tikzpic{
      \fzip[0][3][2]
      \funzip[0][0][2]
      \begin{scope}[scale=2]
        \fcup[-.25][1]
        \fcap[-.25][0]
      \end{scope}
      \node[below=-2pt] at (-.5,0) {\scriptsize $i$};
      \node[below=-2pt] at (1,0) {\scriptsize $i+1$};
      \fdot[-.2][.3][2][1.5]
    }[scale=.35][(0,2*.35)]
    \\[1ex]
    &=&
    \Big[\varf^{i+1}\normafoam{+}{-}\varf^i\normafoam{-}{+}\varf^i-\varf^{i+1}\Big]
    \tikzpic{
      \fzip[0][3][2]
      \funzip[0][0][2]
      \begin{scope}[scale=2]
        \fcup[-.25][1]
        \fcap[-.25][0]
      \end{scope}
      \node[below=-2pt] at (-.5,0) {\scriptsize $i$};
      \node[below=-2pt] at (1,0) {\scriptsize $i+1$};
      \fdot[-.2][3.7][2][1.5]
    }[scale=.35][(0,2*.35)]
    =0
  \end{IEEEeqnarray*}

  We show that $\sfh_{\delh,\varh}$ is well-defined.
  Given that $\sfh_{\delh,\varh}$ has trivial grading and it acts on each generator by multiplication with a certain scalar, its action on a generic diagram amounts to multiplying this diagram with the sum of the scalars associated to each of its generators.
  With this remark, braid-like relations, pitchfork relations, dot annihilation, dot slide and evaluation of undotted bubbles are straightforward, and do not depend on the choice of scalars.
  Zigzag relations force the scalars associated to the cup and unzip (resp.\ the cap and zip) to be opposite of one another.
  Dot migration forces the scalar associated to the dot to be independent of $i$.
  Neck-cutting imposes a linear relation between the scalars associated to the dot, the cup and the cap.
  All the conditions above lead to the choice of scalars given in the lemma.
  One check compatibility with the remaining relations similarly (squeezing, evaluation of dotted bubbles and evaluation of shaded disks).

  This concludes.
\end{proof}

\begin{remark}[unicity of $\sff$ and $\sfh$]
  \label{rem:unicity_generic_graded}
  Recall the front-back composition from \cref{rem:monoidal-2-categorical-structure}.
  It is natural to ask for derivations to satisfy a Leibniz rule with respect to this composition as well. If so, then each derivation of degree $(1,1)$ is of the form $\sff_{\varf}$, where moreover all variables $\varf^i$ are equal.
  % in principle, the action of f could have "far-away" dots
  Similarly, in this case each derivation of degree $(0,0)$ is of the form $\sfh_{\delh,\varh}$, where moreover all variables $\varh^i$ are equal.
\end{remark}

\begin{lemma}
  \label{lem:commutator_graded_case}
  The commutators of the derivations $\sff_{\varf}$ and $\sfh_{\delh,\varh}$ defined in \cref{lem:generic_derivations_on_graded_foam} are
  \[[\sfh_{\delh,\varh},\sff_{\varf}]=-\delh\sff_{\varf}
  \quad\an\quad
  [\sff_{\varf},\sff_{\varf}]=[\sfh_{\delh,\varh},\sfh_{\delh',\varh'}]=0\]
  for any choice of (family of) parameters $\varf$, $(\delh,\varh)$ and $(\delh',\varh')$.
\end{lemma}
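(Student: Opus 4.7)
The plan is to reduce each identity to a finite check on the generators of $\pregfoam_d$. In any $(\bZ^2, \bilfoam)$-graded-2-category, the graded commutator $[D_1, D_2] \coloneqq D_1 \circ D_2 - \bilfoam(d_1, d_2)\, D_2 \circ D_1$ of two graded derivations $D_i$ of degrees $d_i$ is again a graded derivation of degree $d_1 + d_2$; this is a direct (if slightly tedious) computation from the graded Leibniz rule and the symmetry of $\bilfoam$, parallel to the super Lie algebra case. Combined with \cref{rem:derivation_uniquely_defn_on_generators_of_A}, it therefore suffices to verify each of the three identities on the generators in \cref{fig:string_generators_graded_foams}. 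The crossings can be ignored since both $\sff_{\varf}$ and $\sfh_{\delh, \varh}$ vanish on them; only the dot, cup, cap, zip, and unzip need to be checked.

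The identity $[\sfh_{\delh,\varh}, \sfh_{\delh',\varh'}] = 0$ is immediate: both derivations have degree $(0,0)$, so the bracket reduces to the ordinary commutator, and since each $\sfh$ acts on every generator by a scalar, the two scalar actions commute. For $[\sfh_{\delh, \varh}, \sff_{\varf}] = -\delh\, \sff_{\varf}$ we have $\bilfoam((0,0),(1,1)) = 1$, so again the bracket is the ordinary commutator. On the dot, both sides vanish. For each other generator $G$, write $\sff(G) = c_G \cdot \hat G$, where $\hat G$ denotes $G$ with an added dot on one strand. Applying the graded Leibniz rule for $\sfh$ to $\hat G = \text{dot} \circ G$ (or $G \circ \text{dot}$, as appropriate), the action of $\sfh$ on the added dot contributes $-\delh\, c_G\, \hat G$, while its action on the rest of $G$ produces a $\varh^i$-contribution that cancels exactly against $\sff(\sfh(G))$. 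A sample check for the cup: $\sfh\sff(\text{cup}) = \varf^i[-\delh + (\delh - \varh^i)]\,\text{(dotted cup)} = -\varf^i \varh^i\,\text{(dotted cup)}$ and $\sff\sfh(\text{cup}) = (\delh - \varh^i)\varf^i\,\text{(dotted cup)}$, so $[\sfh, \sff](\text{cup}) = -\delh\, \varf^i\,\text{(dotted cup)} = -\delh\, \sff(\text{cup})$. The cap, zip, and unzip cases follow the same pattern, carrying along additional $X, Y, Z$-factors from $\sff$ that do not affect the cancellation.

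For $[\sff_{\varf}, \sff_{\varf}]$, note that $\bilfoam((1,1),(1,1)) = XY$, so $[\sff, \sff] = (1 - XY)\, \sff^2$. Since $[\sff, \sff]$ is a graded derivation, it suffices to show it vanishes on generators, and it is cleanest to verify that $\sff^2$ itself vanishes there. On the dot this is immediate. On each other generator, expanding $\sff(\sff(G))$ via the graded Leibniz rule produces one term involving $\sff(\text{dot}) = 0$ and one term containing two dots stacked on a single strand, which vanishes by the dot annihilation relation $(\text{dot})^2 = 0$. The only (minor) obstacle throughout is bookkeeping the signs from $\bilfoam$ when applying the graded Leibniz rule with $\sff$ of nontrivial degree $(1,1)$ against generators whose degrees involve $\pm 1$s, but these factor cleanly and do not affect the final vanishing in any case.
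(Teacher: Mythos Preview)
Your proof is correct and follows essentially the same approach as the paper: reduce to generators via \cref{rem:derivation_uniquely_defn_on_generators_of_A} (using that the commutator of graded derivations is again a graded derivation), then verify directly. The only cosmetic difference is that for $[\sfh_{\delh,\varh},\sff_{\varf}]=-\delh\,\sff_{\varf}$ the paper packages your cup computation into a uniform observation: since $\sfh_{\delh,\varh}$ acts on any generator $D$ by a scalar $\lambda_D$ and $\sff_{\varf}$ adds a dot (whose $\sfh$-scalar is $-\delh$), one has $\sfh\sff(D)-\sff\sfh(D)=(\lambda_D-\delh)\sff(D)-\lambda_D\sff(D)=-\delh\,\sff(D)$ without having to treat cup, cap, zip, unzip separately.
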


\begin{proof}
  Thanks to \cref{rem:derivation_uniquely_defn_on_generators_of_A}, it suffices to check the equalities on generators.
  Checking the claimed equalities amounts to straightforward computation.
  We give another argument for the relation $[\sfh_{\delh,\varh},\sff_{\varf}]=-\delh\sff_{\varf}$.
  Recall from the previous proof that $\sfh_{\delh,\varh}$ acts by multiplying a diagram by the sum of scalars associated to its generators; in particular, for any generator $D$, $\sfh_{\delh,\varh}$ acts by a certain scalar $\lambda_D$.
  On the other hand, the action of $\sff_{\varf}$ on the generator $D$ ``adds a dot'', up to scalar. It follows that 
  \[[\sfh_{\delh,\varh},\sff_{\varf}](D)=\sfh_{\delh,\varh}\sff_{\varf}(D)-\sff_{\varf}\sfh_{\delh,\varh}(D)
  =(\lambda_D-\delh)\sff(D)-\lambda_D\sff(D)=-\delh\sff(D).\]
  This concludes.
\end{proof}

With the help of \cref{rem:action_uniquely_defn_by_generators_of_g}, it follows that:

\begin{corollary}
  \label{cor:generic_action_on_graded_foam}
  For any choice of parameters $\varf$, $\varh$ and $\varh'$ as in \cref{lem:generic_derivations_on_graded_foam},
  The application
  \[\{\lief\mapsto \sff_{\varf},\lieh_1\mapsto\sfh_{1,\varh},\lieh_2\mapsto\sfh_{-1,\varh'}\}\]
  % (resp.\ the application $\{\liee\mapsto \sff_{\varf},\lieh_1\mapsto\sfh_{-1,\varh},\lieh_2\mapsto\sfh_{1,\varh'}\}$)
  defines an action of $\grgl_{2}^{\leq}$
  % (resp.\ $\grgl_{2}^{\geq}$)
  by derivation on the graded-2-category $\pregfoam_d$.
  \hfill\qed
\end{corollary}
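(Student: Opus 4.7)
The plan is to invoke \cref{rem:action_uniquely_defn_by_generators_of_g}: since $\grgl_{2}^{\leq}=\langle\lief,\lieh_1,\lieh_2\rangle$ is defined by generators and relations, to specify an action of $\grgl_{2}^{\leq}$ by derivation on $\pregfoam_d$ it suffices to assign to each generator a graded derivation of the correct degree and then verify that these derivations satisfy the defining relations of $\grgl_{2}^{\leq}$.

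The first step is to observe that the assignments are of the correct degree: by \cref{lem:generic_derivations_on_graded_foam}, $\sff_{\varf}$ has degree $(1,1)=\deg(\lief)$ and $\sfh_{\delh,\varh}$ has degree $(0,0)=\deg(\lieh_i)$, so $\sfh_{1,\varh}$ and $\sfh_{-1,\varh'}$ are both valid candidates for $\lieh_1$ and $\lieh_2$.

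The second step is to check the bracket relations of $\grgl_{2}^{\leq}$ (read off from \cref{ex:defn_covering_gl2}, restricted to the subalgebra not containing $\liee$):
\begin{gather*}
  [\lief,\lief]=0,\qquad [\lieh_i,\lieh_j]=0,\qquad [\lieh_1,\lief]=-\lief,\qquad [\lieh_2,\lief]=\lief.
\end{gather*}
These all follow immediately from \cref{lem:commutator_graded_case}: the first two are exactly the vanishing commutators stated there, and the last two come from the formula $[\sfh_{\delh,\varh},\sff_{\varf}]=-\delh\sff_{\varf}$ by specializing $\delh=1$ and $\delh=-1$ respectively.

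Since no other relations hold in $\grgl_{2}^{\leq}$ (note in particular that the relation $[\liee,\lief]=\lieh_1+XY\lieh_2$ does not need to be checked, as $\liee\notin\grgl_{2}^{\leq}$), this completes the verification. There is no real obstacle here, as everything has been reduced to the previous two lemmas; the main content was established in \cref{lem:generic_derivations_on_graded_foam} and \cref{lem:commutator_graded_case}.
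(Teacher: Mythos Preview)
Your proposal is correct and follows exactly the approach the paper intends: the paper simply writes ``With the help of \cref{rem:action_uniquely_defn_by_generators_of_g}, it follows that:'' before stating the corollary with a \qed, leaving the verification of the relations from \cref{lem:commutator_graded_case} implicit. Your write-up is a faithful unpacking of this.
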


We pick a standard choice of action:

\begin{definition}
  \label{defn:action_on_foam_graded_case}
  We view $\pregfoam_d$ as a $\grgl_{2}^{\leq}$-2-category with the action of $\grgl_{2}^{\leq}$ by derivation given in \cref{defn:action_on_foam_super_case} (ignoring the action of $\liee$).
\end{definition}

\subsubsection{Super case}
\label{subsubsec:generic_derivation_action_super}

\begin{lemma}
  \label{lem:generic_derivation_on_super_foam}
  Let $\vare\in\ringfoam$ be a choice of parameter.
  The super-2-category $\presfoam_d$ admits the derivation $\sfe_{\vare}$, defined on the generators (\cref{fig:string_generators_graded_foams}) as zero on crossings and as:
  \begin{center}
    \def\spc{2ex}
    \begin{tabular}{@{}l@{\hskip 5ex}*{4}{r@{\hskip 4ex}}r@{}}
      &
      $\tikzpic{\fdot\node at (.2,-.2) {\scriptsize $i$};}[scale=.5]$
      &
      $\tikzpic{\fcup\node at (1,.4) {\scriptsize $i$};}[scale=.5]$
      &
      $\tikzpic{\fzip\node at (1,.4) {\scriptsize $i$};}[scale=.5]$
      &
      $\tikzpic{\fcap\node at (1,.6) {\scriptsize $i$};}[scale=.5]$
      &
      $\tikzpic{\funzip\node at (1,.6) {\scriptsize $i$};}[scale=.5]$
      \\*[2ex]
      \midrule
      %%%%%%%%%%%%%%%%%%%%%%%%%%%%%%
      $\sfe_{\vare}$
      &
      $\normafoam{\vare}{(-1)^i\vare}\id_\emptyset$
      &
      $0$
      &
      $0$
      &
      $0$
      &
      $0$
    \end{tabular}
  \end{center}
\end{lemma}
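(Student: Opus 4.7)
The plan is to invoke \cref{rem:derivation_uniquely_defn_on_generators_of_A}, reducing the well-definedness of $\sfe_{\vare}$ to checking that the assignment on generators preserves each defining relation of $\presfoam_d$ (\cref{fig:rel_diagfoam_graded}). Since $\sfe_{\vare}$ vanishes on every generator except the dot, the super Leibniz rule turns any such check into a sum of terms obtained by replacing one dot by $\vare\,\id_\emptyset$, each weighted by a sign coming from $\bilfoam$. In the super specialization we have $\bilfoam((a,b),(c,d)) = (-1)^{bd}$, and both the dot (of degree $(1,1)$) and $\sfe_{\vare}$ (of degree $(-1,-1)$) are odd.

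Most of the relations should be dispatched quickly. The braid-like, pitchfork, zigzag and squeezing relations involve only generators killed by $\sfe_{\vare}$, so both sides vanish. Dot migration and dot slide each carry one dot with identical surrounding foam on each side, so $\sfe_{\vare}$ yields $\vare$ times the same dot-free foam on each side. Evaluation of bubbles sends the dotted bubble to $\vare$ times the undotted bubble, which vanishes by the companion relation, and neck-cutting produces $2\vare$ times an undotted bubble on the right-hand side, which likewise vanishes, matching $\sfe_{\vare}$ of the dot-free left-hand side.

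The two points requiring actual care are dot annihilation and the evaluation of shaded disks, and both crucially use the super specialization. For dot annihilation, $(\text{dot})^2 = 0$, the super Leibniz rule gives
\[
\sfe_{\vare}(\text{dot}\circ \text{dot}) \;=\; \vare\,\text{dot} \,+\, \bilfoam(\sfe_{\vare},\text{dot})\,\vare\,\text{dot} \;=\; \vare\,\text{dot} - \vare\,\text{dot} \;=\; 0,
\]
using that $\bilfoam$ evaluates to $-1$ on odd-odd pairs; in the even case the analogous expression would equal $2\vare\,\text{dot}$, obstructing the derivation.

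The main obstacle I anticipate is the evaluation of shaded disks, which expresses an unzip-zip composition as $Z$ times a dot on the strand labelled $i$ plus $XYZ$ times a dot on the facet labelled $i+1$. Applying $\sfe_{\vare}$ sends the left-hand side to $0$, whereas the right-hand side becomes $\vare(Z+XYZ)\,\id_\emptyset$, whose vanishing requires $Z(1+XY)=0$. This holds precisely in the super specialization $X=Z=1$, $Y=-1$, and is the unique obstruction to defining an analogous $\sfe$ on $\pregfoam_d$. Once this cancellation is secured, the remaining verifications reduce to routine bookkeeping of super signs via $\bilfoam$.
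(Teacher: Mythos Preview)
Your overall approach matches the paper's: reduce to checking the defining relations via the super Leibniz rule. The treatments of dot annihilation and evaluation of shaded disks are correct (and your observation that the latter needs $1+XY=0$ is exactly the point).

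There is, however, a genuine error in your neck-cutting verification. You claim the right-hand side yields $2\vare$ times an ``undotted bubble'', which then vanishes. Both parts of this are wrong. First, the two dotted cup--cap terms in neck-cutting carry the dot at \emph{different} heights (one above the cup--cap, one below), so when $\sfe_\vare$ reaches the lower dot it must pass over the cup, which has degree $(0,-1)$ and is therefore odd in the super specialization; this contributes a sign $\bilfoam(\sfe_\vare,\text{cup})=-1$. Hence the two terms give $+\vare$ and $-\vare$ times the undotted cup--cap, and cancel directly --- this is the paper's $\vare[1-1]=0$. Second, the undotted cup--cap appearing here is the composite $\text{cup}\circ\text{cap}$, an endomorphism of the digon web; it is \emph{not} the bubble $\text{cap}\circ\text{cup}$ (an endomorphism of the double line), and it does not vanish. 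So the relation is preserved, but for the reason the paper gives (super sign cancellation), not the one you gave. This is precisely parallel to the dot annihilation check you did correctly: the mechanism is the same super Leibniz sign, not an appeal to an auxiliary relation.

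A minor remark: you call the shaded-disk cancellation ``the unique obstruction'' to defining $\sfe$ on $\pregfoam_d$, but as you yourself note, dot annihilation and (once correctly computed) neck-cutting also require the super specialization.
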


\begin{proof}
  \def\scl{.5}
  It suffices to check that $\sfe_{\vare}$ is compatible with the defining relations (\cref{fig:rel_diagfoam_graded}).
  Relations that do not involve dots are straightforward.
  Compatibility with dot annihilation and neck-cutting relation essentially follows from the fact that $\sfe_{\vare}$ is a super derivation:
  \begin{gather*}
    \sfe_{\vare}\left(\;
    \tikzpic{
      \fdot[0][1]\fdot\node at (.2,-.2) {\scriptsize $i$};
    }[scale=\scl]
    \right)
    =
    \vare[1-1]\;\tikzpic{\fdot}
    =0
    \mspace{10mu}\an\mspace{10mu}
    \sfe_{\vare}\left(
      \tikzpic{
        \fdot[.5][1.8]\fcup[0][1]\fcap
        \node[left=-3pt] at (0,2) {\scriptsize $i$};
      }[scale=\scl][(.5,1*\scl)]
      \;+\;
      \tikzpic{
        \fdot[.5][.2]\fcup[0][1]\fcap
        \node[left=-3pt] at (0,2) {\scriptsize $i$};
      }[scale=\scl][(.5,1*\scl)]
    \right)
    =
    \vare[1-1]\;
    \tikzpic{
        \fcup[0][1]\fcap
        \node[left=-3pt] at (0,2) {\scriptsize $i$};
      }[scale=\scl][(.5,1*\scl)]
    =0.
  \end{gather*}
  This explains why $\sfe_{\vare}$ can only be defined in the super case.
  Compatibility with dot migration and evaluation of shaded disks follows from the fact that $\vare$ does not depend on $i$.
  % \begin{gather*}
  %   \sfe\left(\;
  %   \tikzpic{
  %     \fdot\node at (.2,-.2) {\scriptsize $i$};
  %   }
  %   % \mspace{10mu}
  %   -
  %   \mspace{10mu}
  %   \tikzpic{
  %     \fdot[0][0][2]\node at (.5,-.3) {\scriptsize $i+1$};
  %   }
  %   \right)
  %   =0
  % \end{gather*}
  Compatibility with dot slide and evaluation of bubbles is straightforward. This concludes.
\end{proof}

\begin{lemma}
  \label{lem:commutator_super_case}
  The (super) commutators of the super derivation $\sfe_{\vare}$ (\cref{lem:generic_derivation_on_super_foam}) with the (super) derivations $\sff_{\varf}$ and $\sfh_{\delh,\varh}$ (\cref{lem:generic_derivations_on_graded_foam}; restricted to the super case) are
  \begin{gather*}
    [\sfe_{\vare},\sff_{\varf}] = \sfh_{0,(-\normafoam{}{(-1)^i}\vare\varf^i)_i},
    \qquad
    [\sfh_{\delh,\varh},\sfe_{\vare}]=\delh\sfe_{\vare}
    \qquad\an\qquad
    [\sfe_{\vare},\sfe_{\vare}]=0,
  \end{gather*}
  for any choice of (family of) parameters $\varf$, $(\delh,\varh)$ and $\vare$.
\end{lemma}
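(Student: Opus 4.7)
The plan is to mirror \cref{lem:commutator_graded_case}: by \cref{rem:derivation_uniquely_defn_on_generators_of_A} both sides of each claimed identity are super derivations on $\presfoam_d$, so it suffices to check equality on the generators of \cref{fig:string_generators_graded_foams}. The identity $[\sfe_\vare,\sfe_\vare]=0$ reduces to $2\sfe_\vare^2=0$ since $\sfe_\vare$ is odd; on non-dot generators this is immediate because $\sfe_\vare$ already kills them, and on the dot, $\sfe_\vare^2(\text{dot})=\sfe_\vare(\vare\,\id_\emptyset)=0$ because any derivation vanishes on an identity 2-morphism. For $[\sfh_{\delh,\varh},\sfe_\vare]=\delh\,\sfe_\vare$, both sides vanish on every non-dot generator (as $\sfe_\vare$ kills them and $\sfh_{\delh,\varh}$ acts diagonally), and on the dot a short computation gives $(\sfh_{\delh,\varh}\sfe_\vare-\sfe_\vare\sfh_{\delh,\varh})(\text{dot})=0-(-\delh\,\vare\,\id_\emptyset)=\delh\,\vare\,\id_\emptyset$, matching $\delh\,\sfe_\vare(\text{dot})$.

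The main identity to verify is $[\sfe_\vare,\sff_\varf]=\sfh_{0,(-\vare\varf^i)_i}$. Both derivations are odd, so the super commutator equals $\sfe_\vare\sff_\varf+\sff_\varf\sfe_\vare$. The term $\sff_\varf\sfe_\vare$ is always zero: on a non-dot generator $g$ because $\sfe_\vare(g)=0$, and on the dot because $\sff_\varf$ vanishes on identities. The remaining term $\sfe_\vare\sff_\varf$ is zero on crossings and on the dot, so it suffices to treat $g\in\{\text{cup},\text{cap},\text{zip},\text{unzip}\}$. For each such $g$, I would write the dotted generator appearing in $\sff_\varf(g)$ as the vertical composition of $g$ with a dot-on-strand, and then push $\sfe_\vare$ past $g$ via the graded Leibniz rule: the contribution from $\sfe_\vare$ acting on $g$ vanishes, while the contribution from $\sfe_\vare$ acting on the dot produces $\vare$ times the identity on the strand, scaled by the Koszul sign $\bilfoam((-1,-1),\deg g)$. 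Combining this sign with the scalar (involving $X$, $Y$, $Z$) appearing in $\sff_\varf(g)$, specialised to $X=Z=1$, $Y=-1$, a case-by-case inspection matches precisely the value of $\sfh_{0,(-\vare\varf^i)_i}(g)$ read off from the table of \cref{lem:generic_derivations_on_graded_foam}.

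The hard part is the sign bookkeeping in this last step. The negative factors $-XZ$ on the cap and $-YZ$ on the unzip appearing in $\sff_\varf$ interact non-trivially with the $\bilfoam$ signs produced by moving $\sfe_\vare$ across the generator, and they conspire to $-\vare\varf^i$ only once the super specialisation $Y=-1$ is imposed. This is exactly the mechanism that made $\sfe_\vare$ a super (rather than ordinary) derivation in \cref{lem:generic_derivation_on_super_foam}, and it explains why $\liee$ appears only in the odd specialisation; once one case (say, the cap) is worked out carefully, the pattern makes the remaining three essentially automatic.
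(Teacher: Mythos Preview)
Your proposal is correct and follows essentially the same approach as the paper: reduce to generators via \cref{rem:derivation_uniquely_defn_on_generators_of_A}, observe that $\sff_\varf\sfe_\vare$ and $\sfh_{\delh,\varh}\sfe_\vare$ vanish on generators (so the commutators collapse to a single term), and then evaluate $\sfe_\vare$ on the dotted generators appearing in $\sff_\varf(g)$. The paper is more terse and does not spell out the Leibniz-rule sign bookkeeping you describe in the last paragraph, but the underlying computation is identical.
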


\begin{proof}
  Thanks to \cref{rem:derivation_uniquely_defn_on_generators_of_A}, it suffices to check the equalities on generators.
  Checking $[\sfe_{\vare},\sfe_{\vare}]=0$ is straightforward, and the case of the commutator $[\sfh_{\delh,\varh},\sfe_{\vare}]$ follows from the equality
  \[[\sfh_{\delh,\varh},\sfe_{\vare}]=-\sfe_{\vare}\sfh_{\delh,\varh}.\]
  The equality $[\sfe_{\vare},\sff_{\varf}](\tikzpic{\fdot})=0$ is straightforward. For $D$ one of the remaining generators, we have $[\sfe_{\vare},\sff_{\varf}](D) = \sfe_{\vare}\sff_{\varf}(D)$, leading to the remaining equality.
\end{proof}

\begin{corollary}
  \label{cor:generic_action_on_super_foam}
  Let $\varf$, $\varh$, $\varh'$ and $\vare$ be choice of (family of) scalars in $\ringfoam$ as in \cref{lem:generic_derivations_on_graded_foam,lem:generic_derivation_on_super_foam}.
  If
  \[\varh^i+\varh^{'i} = \normafoam{-\vare\varf^i}{-(-1)^i\vare\varf^i}\quad\text{ for all }1\leq i\leq d-1,\]
  then the application
  \[\{\lief\mapsto \sff_{\varf},\lieh_1\mapsto\sfh_{1,\varh},\lieh_2\mapsto\sfh_{-1,\varh'},\liee\mapsto \sfe_{\vare}\}\]
  defines an action of $\gloo$ by derivation on $\presfoam_d$.
\end{corollary}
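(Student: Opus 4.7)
The plan is to invoke Remark~\ref{rem:action_uniquely_defn_by_generators_of_g}, which reduces the task to verifying that the assignment respects each defining relation of $\gloo$ listed in Example~\ref{ex:defn_gloo}. All the hard work is already done in the preceding lemmas; what remains is to match them up carefully.

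First I would dispense with the relations that do not mix $\sfe_\vare$ and $\sff_\varf$. The vanishings $[\sff_\varf,\sff_\varf]=0$, $[\sfh_{\delh,\varh},\sfh_{\delh',\varh'}]=0$ and $[\sfe_\vare,\sfe_\vare]=0$ are recorded directly in Lemmas~\ref{lem:commutator_graded_case} and~\ref{lem:commutator_super_case}. The weight relations $[\lieh_1,\liee]=\liee$ and $[\lieh_2,\liee]=-\liee$ follow by specialising $\delh$ to $1$ and $-1$ in the identity $[\sfh_{\delh,\varh},\sfe_\vare]=\delh\sfe_\vare$ of Lemma~\ref{lem:commutator_super_case}; likewise $[\lieh_1,\lief]=-\lief$ and $[\lieh_2,\lief]=\lief$ follow from $[\sfh_{\delh,\varh},\sff_\varf]=-\delh\sff_\varf$ of Lemma~\ref{lem:commutator_graded_case}. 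None of these impose any constraint on the parameters.

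The only relation whose verification actually uses the hypothesis is the mixed supercommutator $[\liee,\lief]=\lieh_1+\lieh_2$. By Lemma~\ref{lem:commutator_super_case}, $[\sfe_\vare,\sff_\varf]=\sfh_{0,(-\vare\varf^i)_i}$. On the other hand, a glance at the definition in Lemma~\ref{lem:generic_derivations_on_graded_foam} shows that the scalar by which $\sfh_{\delh,\varh}$ acts on each generator is linear in $(\delh,\varh)$, so that $\sfh_{1,\varh}+\sfh_{-1,\varh'}=\sfh_{0,\varh+\varh'}$. The required equality $[\sfe_\vare,\sff_\varf]=\sfh_{1,\varh}+\sfh_{-1,\varh'}$ then reduces to the componentwise identity $\varh^i+\varh^{'i}=-\vare\varf^i$ for $1\leq i\leq d-1$, which is precisely the hypothesis of the corollary.

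There is no real obstacle here. Once the commutators are assembled from Lemmas~\ref{lem:commutator_graded_case} and~\ref{lem:commutator_super_case}, the only nontrivial constraint on the four families of parameters comes from the single $[\liee,\lief]$ relation, and the result follows. The conceptual point worth emphasising is that the flexibility in the choice of $\varh$ and $\varh'$ is exactly what the single relation $\varh+\varh'=-\vare\varf$ records, and that this is the unique obstruction to promoting the separate derivations $\sff_\varf$, $\sfh_{\pm 1,\bullet}$ and $\sfe_\vare$ into a coherent $\gloo$-action.
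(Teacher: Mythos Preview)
Your proof is correct and follows essentially the same route as the paper's. The paper's version is slightly terser: rather than re-checking the relations among $\sff_\varf$ and $\sfh_{\delh,\varh}$ from Lemma~\ref{lem:commutator_graded_case}, it simply cites Corollary~\ref{cor:generic_action_on_graded_foam} for the $\grgl_2^{\leq}$-part and then invokes Lemma~\ref{lem:commutator_super_case} together with the linearity identity $\sfh_{1,\varh}+\sfh_{-1,\varh'}=\sfh_{0,\varh+\varh'}$ to handle the relations involving $\liee$; your unpacking of this is equivalent.
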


\begin{proof}
  This follows from \cref{cor:generic_action_on_graded_foam} and \cref{lem:commutator_super_case} (with the help of \cref{rem:action_uniquely_defn_by_generators_of_g}), using that
  $\sfh_{1,\varh}+\sfh_{-1,\varh'}=\sfh_{0,\varh+\varh'}$.
\end{proof}

We pick a standard choice of action, corresponding to the choice $\varf^i=\vare=1$, $\varh^i=0$ and $\varh^{'i}=-1$:

\begin{definition}
  \label{defn:action_on_foam_super_case}
  We view $\presfoam_d$ as a $\gloo$-2-category with the action of $\gloo$ by derivation given by:
  \begin{gather}
    \label{eq:simplest_derivation}
    \def\spc{2ex}
    \def\scl{.5}
    \begin{tabular}{@{}l@{\hskip 4ex}r*{4}{@{\hskip 3ex}r}@{}}
      &
      $\tikzpic{\fdot\node at (.2,-.2) {\scriptsize $i$};}[scale=\scl]$
      &
      $\tikzpic{\fcup\node at (1,.4) {\scriptsize $i$};}[scale=\scl]$
      &
      $\tikzpic{\fzip\node at (1,.4) {\scriptsize $i$};}[scale=\scl]$
      &
      $\tikzpic{\fcap\node at (1,.6) {\scriptsize $i$};}[scale=\scl]$
      &
      $\tikzpic{\funzip\node at (1,.6) {\scriptsize $i$};}[scale=\scl]$
      \\*[2ex]
      \midrule
      %%%%%%%%%%%%%%%%%%%%%%%%%%%%%%
      $\lief$
      &
      $0$
      &
      $\normafoam{}{}\tikzpic{\fcup\fdot[.5][.8]\node at (1,.4) {\scriptsize $i$};}[scale=\scl]$
      &
      $\normafoam{}{}\tikzpic{\fzip\fdot[-.3][.7]\node at (1,.4) {\scriptsize $i$};}[scale=\scl]$
      &
      $\normafoam{-\;}{?}\tikzpic{\fcap\fdot[.5][.2]\node at (1,.6) {\scriptsize $i$};}[scale=\scl]$
      &
      $\normafoam{}{}\tikzpic{\funzip\fdot[-.3][.3]\node at (1,.6) {\scriptsize $i$};}[scale=\scl]$
      \\*[\spc]
      %%%%%%%%%%%%%%%%%%%%%%%%%%%%%%
      $\liee$
      &
      $\normafoam{}{(-1)^i}\id_\emptyset$
      &
      $0$
      &
      $0$
      &
      $0$
      &
      $0$
      \\*[\spc]
      %%%%%%%%%%%%%%%%%%%%%%%%%%%%%%
      $\lieh_1$
      &
      $-\;\tikzpic{\fdot\node at (.2,-.2) {\scriptsize $i$};}[scale=\scl]$
      &
      $\;\tikzpic{\fcup\node at (1,.4) {\scriptsize $i$};}[scale=\scl]$
      &
      $0$
      &
      $0$
      &
      $-\;\tikzpic{\funzip\node at (1,.6) {\scriptsize $i$};}[scale=\scl]$
      \\*[\spc]
      %%%%%%%%%%%%%%%%%%%%%%%%%%%%%%
      $\lieh_2$
      &
      $\tikzpic{\fdot\node at (.2,-.2) {\scriptsize $i$};}[scale=\scl]$
      &
      $0$
      &
      $\;\tikzpic{\fzip\node at (1,.4) {\scriptsize $i$};}[scale=\scl]$
      &
      $-\;\tikzpic{\fcap\node at (1,.6) {\scriptsize $i$};}[scale=\scl]$
      &
      $0$
      % \\*[\spc]
      % \cmidrule(r){2-6}
      %%%%%%%%%%%%%%%%%%%%%%%%%%%%%%
      % $\lieh\coloneqq\lieh_1+\lieh_2$
      % $\lieh$
      % &
      % $0$
      % &
      % $-\;\tikzpic{\fcup\node at (1,.4) {\scriptsize $i$};}[scale=\scl]$
      % &
      % $-\;\tikzpic{\fzip\node at (1,.4) {\scriptsize $i$};}[scale=\scl]$
      % &
      % $\;\tikzpic{\fcap\node at (1,.6) {\scriptsize $i$};}[scale=\scl]$
      % &
      % $\;\tikzpic{\funzip\node at (1,.6) {\scriptsize $i$};}[scale=\scl]$
    \end{tabular}
  \end{gather}
  Note that in term of the $\bZ^2$-grading $(a,b)$, we have $h_1(D)=-b D$, $h_2(D)=a D$ and $h(D)=(a-b) D$.
\end{definition}

\begin{remark}
  \label{rem:gloo_is_almost_unique}
  Under certain reasonable assumptions, the $\gloo$-action is almost unique.
  Arguing as in \cref{rem:unicity_generic_graded}, it is reasonable to assume that each family of scalars is independent of $i$. We then view $\varf$, $\varh$ and $\varh'$ as three scalars.
  Any graded derivation on $\pregfoam_d$ of degree $(-1,-1)$ is of the form $\sfe$.
  following \cref{rem:unicity_generic_graded} and \cref{lem:commutator_super_case}, under this assumption any $\gloo$-action by derivation arises as in \cref{cor:generic_action_on_super_foam}.
  Assuming further that $\varf$ and $\vare$ are invertible, one can renormalize the action of $\lief$ and $\liee$, leaving only one parameter $\varh$, having necessarily $\varh'=-1-\varh$.
\end{remark}

\begin{example}
  Another choice compatible with the assumptions of \cref{rem:gloo_is_almost_unique} is $\varf^i=-1$, $\vare=1$, $\varh^i=\varh^{'i}=\frac{1}{2}$, assuming that $2$ is invertible in the ground ring. This gives:
  \begin{center}
    \def\spc{2ex}
    \def\scl{.5}
    \begin{tabular}{@{}l@{\hskip 4ex}r*{4}{@{\hskip 3ex}r}@{}}
      &
      $\tikzpic{\fdot\node at (.2,-.2) {\scriptsize $i$};}[scale=\scl]$
      &
      $\tikzpic{\fcup\node at (1,.4) {\scriptsize $i$};}[scale=\scl]$
      &
      $\tikzpic{\fzip\node at (1,.4) {\scriptsize $i$};}[scale=\scl]$
      &
      $\tikzpic{\fcap\node at (1,.6) {\scriptsize $i$};}[scale=\scl]$
      &
      $\tikzpic{\funzip\node at (1,.6) {\scriptsize $i$};}[scale=\scl]$
      \\*[2ex]
      \midrule
      %%%%%%%%%%%%%%%%%%%%%%%%%%%%%%
      $\lief$
      &
      $0$
      &
      $\normafoam{-\;}{}\tikzpic{\fcup\fdot[.5][.8]\node at (1,.4) {\scriptsize $i$};}[scale=\scl]$
      &
      $\normafoam{-\;}{}\tikzpic{\fzip\fdot[-.3][.7]\node at (1,.4) {\scriptsize $i$};}[scale=\scl]$
      &
      $\normafoam{}{?}\tikzpic{\fcap\fdot[.5][.2]\node at (1,.6) {\scriptsize $i$};}[scale=\scl]$
      &
      $\normafoam{-\;}{}\tikzpic{\funzip\fdot[-.3][.3]\node at (1,.6) {\scriptsize $i$};}[scale=\scl]$
      \\*[\spc]
      %%%%%%%%%%%%%%%%%%%%%%%%%%%%%%
      $\liee$
      &
      $\normafoam{}{(-1)^i}\id_\emptyset$
      &
      $0$
      &
      $0$
      &
      $0$
      &
      $0$
      \\*[\spc]
      %%%%%%%%%%%%%%%%%%%%%%%%%%%%%%
      $\lieh_1$
      &
      $-\;\tikzpic{\fdot\node at (.2,-.2) {\scriptsize $i$};}[scale=\scl]$
      &
      $\frac{1}{2}\;\tikzpic{\fcup\node at (1,.4) {\scriptsize $i$};}[scale=\scl]$
      &
      $-\frac{1}{2}\;\tikzpic{\fzip\node at (1,.4) {\scriptsize $i$};}[scale=\scl]$
      &
      $\frac{1}{2}\;\tikzpic{\fcap\node at (1,.6) {\scriptsize $i$};}[scale=\scl]$
      &
      $-\frac{1}{2}\;\tikzpic{\funzip\node at (1,.6) {\scriptsize $i$};}[scale=\scl]$
      \\*[\spc]
      %%%%%%%%%%%%%%%%%%%%%%%%%%%%%%
      $\lieh_2$
      &
      $\tikzpic{\fdot\node at (.2,-.2) {\scriptsize $i$};}[scale=\scl]$
      &
      $(-1-\frac{1}{2})\;\tikzpic{\fcup\node at (1,.4) {\scriptsize $i$};}[scale=\scl]$
      &
      $-\frac{1}{2}\;\tikzpic{\fzip\node at (1,.4) {\scriptsize $i$};}[scale=\scl]$
      &
      $\frac{1}{2}\;\tikzpic{\fcap\node at (1,.6) {\scriptsize $i$};}[scale=\scl]$
      &
      $-(-1-\frac{1}{2})\;\tikzpic{\funzip\node at (1,.6) {\scriptsize $i$};}[scale=\scl]$
      % \\*[\spc]
      % \cmidrule(r){2-6}
      %%%%%%%%%%%%%%%%%%%%%%%%%%%%%%
      % $\lieh\coloneqq\lieh_1+\lieh_2$
      % $\lieh$
      % &
      % $0$
      % &
      % $-\;\tikzpic{\fcup\node at (1,.4) {\scriptsize $i$};}[scale=\scl]$
      % &
      % $-\;\tikzpic{\fzip\node at (1,.4) {\scriptsize $i$};}[scale=\scl]$
      % &
      % $\;\tikzpic{\fcap\node at (1,.6) {\scriptsize $i$};}[scale=\scl]$
      % &
      % $\;\tikzpic{\funzip\node at (1,.6) {\scriptsize $i$};}[scale=\scl]$
    \end{tabular}
  \end{center}
  % Note that in term of the $\bZ^2$-grading $(a,b)$, we have $h_1(D)=-\frac{1}{2}(a+b) D$, $h_2(D)=\frac{1}{2}(3b-a) D$ and $h(D)=(b-a) D$.
\end{example}

\begin{example}
  A choice which is not compatible with the assumptions of \cref{rem:gloo_is_almost_unique} is $\vare=1$, $\varf^i=0$, $\varh^i=\frac{1}{2}$ and $\varh^{'i}=-\frac{1}{2}$, assuming again that $2$ is invertible in the ground ring. This gives:
  \begin{center}
    \def\spc{2ex}
    \def\scl{.5}
    \begin{tabular}{@{}l@{\hskip 4ex}r*{4}{@{\hskip 3ex}r}@{}}
      &
      $\tikzpic{\fdot\node at (.2,-.2) {\scriptsize $i$};}[scale=\scl]$
      &
      $\tikzpic{\fcup\node at (1,.4) {\scriptsize $i$};}[scale=\scl]$
      &
      $\tikzpic{\fzip\node at (1,.4) {\scriptsize $i$};}[scale=\scl]$
      &
      $\tikzpic{\fcap\node at (1,.6) {\scriptsize $i$};}[scale=\scl]$
      &
      $\tikzpic{\funzip\node at (1,.6) {\scriptsize $i$};}[scale=\scl]$
      \\*[2ex]
      \midrule
      %%%%%%%%%%%%%%%%%%%%%%%%%%%%%%
      $\lief$
      &
      $0$
      &
      $0$
      &
      $0$
      &
      $0$
      &
      $0$
      \\*[\spc]
      %%%%%%%%%%%%%%%%%%%%%%%%%%%%%%
      $\liee$
      &
      $\normafoam{}{(-1)^i}\id_\emptyset$
      &
      $0$
      &
      $0$
      &
      $0$
      &
      $0$
      \\*[\spc]
      %%%%%%%%%%%%%%%%%%%%%%%%%%%%%%
      $\lieh_1$
      &
      $-\;\tikzpic{\fdot\node at (.2,-.2) {\scriptsize $i$};}[scale=\scl]$
      &
      $\frac{1}{2}\;\tikzpic{\fcup\node at (1,.4) {\scriptsize $i$};}[scale=\scl]$
      &
      $-\frac{1}{2}\;\tikzpic{\fzip\node at (1,.4) {\scriptsize $i$};}[scale=\scl]$
      &
      $\frac{1}{2}\;\tikzpic{\fcap\node at (1,.6) {\scriptsize $i$};}[scale=\scl]$
      &
      $-\frac{1}{2}\;\tikzpic{\funzip\node at (1,.6) {\scriptsize $i$};}[scale=\scl]$
      \\*[\spc]
      %%%%%%%%%%%%%%%%%%%%%%%%%%%%%%
      $\lieh_2$
      &
      $\tikzpic{\fdot\node at (.2,-.2) {\scriptsize $i$};}[scale=\scl]$
      &
      $-\frac{1}{2}\;\tikzpic{\fcup\node at (1,.4) {\scriptsize $i$};}[scale=\scl]$
      &
      $\frac{1}{2}\;\tikzpic{\fzip\node at (1,.4) {\scriptsize $i$};}[scale=\scl]$
      &
      $-\frac{1}{2}\;\tikzpic{\fcap\node at (1,.6) {\scriptsize $i$};}[scale=\scl]$
      &
      $\frac{1}{2}\;\tikzpic{\funzip\node at (1,.6) {\scriptsize $i$};}[scale=\scl]$
      % \\*[\spc]
      % \cmidrule(r){2-6}
      %%%%%%%%%%%%%%%%%%%%%%%%%%%%%%
      % $\lieh\coloneqq\lieh_1+\lieh_2$
      % $\lieh$
      % &
      % $0$
      % &
      % $-\;\tikzpic{\fcup\node at (1,.4) {\scriptsize $i$};}[scale=\scl]$
      % &
      % $-\;\tikzpic{\fzip\node at (1,.4) {\scriptsize $i$};}[scale=\scl]$
      % &
      % $\;\tikzpic{\fcap\node at (1,.6) {\scriptsize $i$};}[scale=\scl]$
      % &
      % $\;\tikzpic{\funzip\node at (1,.6) {\scriptsize $i$};}[scale=\scl]$
    \end{tabular}
  \end{center}
  % Note that in term of the $\bZ^2$-grading $(a,b)$, we have $h_1(D)=-\frac{1}{2}(a+b) D$, $h_2(D)=\frac{1}{2}(a+b) D$ and $h(D)=0$.
\end{example}

\subsection{Twist on graded \texorpdfstring{$\glt$}{gl2}-foams}
\label{subsec:twist_foam}

In this subsection, we define webs with green markings and twists on graded $\glt$-foams, following the general framework of \cref{subsubsec:twisting_g2categories} and in analogy with \cite[section~5.1]{QRS+_SymmetriesEquivariantKhovanovRozansky_2023}.
We fix $\fg$ to be either $\fg=\grgl_2^\leq$ or $\fg=\gloo$.
Recall the notation $\fg\foam_d$ after \cref{defn:foam_and_sfoam}, denoting either $\gfoam_d$ or $\sfoam_d$.
Recall that we fixed a structure of $\fg$-2-category on $\fg\prefoam_d$ in \cref{defn:action_on_foam_graded_case} and in \cref{defn:action_on_foam_super_case}, which extends to $\fg\foam_d$.

Below is an example of a web with markings:
\begin{equation*}
  \tikzpic{
    \webid[0][1.5][1]\webs[1][1.5]\webmMarkedT[2][1.5][$(0,3,-3)$]\webid[3][1.5][1]
    \webm\webid[1][0][1]\webid[2][0][1]\websMarkedB[3][0][$(2,-1,-1)$]
  }[xscale=.5,yscale=.4][(0,1.25*.4)]
  =
  \tikzpic{
    \webid[0][1.5][1]\webs[1][1.5]\webm[2][1.5]\webid[3][1.5][1]
    \webm\webid[1][0][1]\webid[2][0][1]\websMarkedB[3][0][$2$]
  }[xscale=.5,yscale=.4][(0,1.25*.4)]
  \;\langle 2,-4\rangle
  \qquad
  \epsilon_{W^{\greenmarking}}(h_1)=2,\quad\epsilon_{W^{\greenmarking}}(h_2)=-4.
\end{equation*}
More formally, a \emph{web with markings $W^{\greenmarking}$} (or \emph{marked web}) is the data of a web $W$ together with markings 
$\greenmarking[2]$ on its edges of width one, each equipped with a triple of scalars in $\ringfoam$, generically denoted $(\alpha,\beta_1,\beta_2)$.
% When $\fg=\gloo$, we further assume that
% \[\alpha=\beta_1+\beta_2.\]
% (This assumption is not necessary for the following few paragraphs, but as we shall see \cref{lem:family_of_twists_foam}, it is necessary for the rest of the paper.)
For a marked web $W^{\greenmarking}$ and $i\in \{1,2\}$, we set $\epsilon_{W^{\greenmarking}}(h_i)$ to be the sum of the $i+1$st entries of all the markings on $W$.
See the example above; the notation of the second web is explained in \cref{rem:notation_bracket_twist}.
If $W_s^{\greenmarking}$ and $W_t^{\greenmarking}$ are two marked webs with $W_s$ and $W_t$ as underlying webs respectively, then any foam $F\colon W_s\to W_t$ defines a foam $F^{{\greenmarking}}\colon W_s^{\greenmarking}\to W_t^{\greenmarking}$.
If $F$ has quantum grading $\qdeg F$, then
\[\qdeg F^{\greenmarking}\coloneqq \qdeg F - (\epsilon_{W_s^{\greenmarking}}(h_2)-\epsilon_{W_s^{\greenmarking}}(h_1)) + (\epsilon_{W_t^{\greenmarking}}(h_2)-\epsilon_{W_t^{\greenmarking}}(h_1)).\]
In other words, adding a twist $\greenmarking$ to a web $W$ shifts it by $q^{\epsilon_{W^{\greenmarking}}(h_2)-\epsilon_{W^{\greenmarking}}(h_1)}$.
Denote $\fg\foam_d^{\text{pre-}\greenmarking}$ the $\fg$-2-category consisting of marked webs and the same Hom-categories as $\fg\foam_d$, restricting to foams preserving the quantum grading. 

We now define a family of twists for the $\fg$-2-category $\gfoam_d^{\text{pre-}\greenmarking}$.

\begin{definition}
  \label{defn:twist_foam}
  Let $\alpha,\beta_1,\beta_2\in\ringfoam$ be three parameters.
  Defines:
  \begin{IEEEeqnarray*}{rClcrClcrCl}
    \lief\left(\;\tikzpic{
      \draw[web1] (-1,0) to (1,0);
      \node[green_mark] at (0,0){};
      \node[below] at (0,0){\scriptsize $(\alpha,\beta_1,\beta_2)$};
    }[scale=.7][(0,0)]\;\right)
    &=&
    \alpha\;
    \tikzpic{
      \draw (-1,0) rectangle (1,1);
      \fdot[0][.5]
    }[scale=.7]
    &\quad&
    \lieh_{i}\left(\;\tikzpic{
      \draw[web1] (-1,0) to (1,0);
      \node[green_mark] at (0,0){};
      \node[below] at (0,0){\scriptsize $(\alpha,\beta_1,\beta_2)$};
    }[scale=.7][(0,0)]\;\right)
    &=&
    \beta_{i}\;
    \tikzpic{
      \draw (-1,0) rectangle (1,1);
    }[scale=.7]
    &\quad&
    \liee\left(\;\tikzpic{
      \draw[web1] (-1,0) to (1,0);
      \node[green_mark] at (0,0){};
      \node[below] at (0,0){\scriptsize $(\alpha,\beta_1,\beta_2)$};
    }[scale=.7][(0,0)]\;\right)
    &=&
    0
    \\
    \tau_{\alpha,\beta_1,\beta_2}(\lief)
    &=&
    \alpha\;
    \tikzpic{
      \draw (-1,0) rectangle (1,1);
      \fdot[0][.5]
    }[scale=.7]
    &&
    \tau_{\alpha,\beta_1,\beta_2}(\lieh)
    &=&
    \beta_{i}\;
    \tikzpic{
      \draw (-1,0) rectangle (1,1);
    }[scale=.7]
    &&
    \tau_{\alpha,\beta_1,\beta_2}(\liee)
    &=&
    0
  \end{IEEEeqnarray*}
  Extending this definition by the Leibniz rule defines for each marked web $W^{\greenmarking}$ a degree-preserving linear map
  \[\tau_{W^{\greenmarking}}\colon\fg\to\End_{\fg\foam_d^{\text{pre-}\greenmarking}}(W^{\greenmarking}).\]
\end{definition}

Note that $\tau_{W^{\greenmarking}}(h_i)=\epsilon_{W^{\greenmarking}}(h_i)\id_W$.

\begin{remark}
  \label{rem:twist-foam-front-back-composition}
  In the definition above, ``extending by the Leibniz rule'' should be understood both with respect to the horizontal composition and with respect to the front-back composition (see \cref{rem:monoidal-2-categorical-structure}).
  Below we sometimes use 2-categorical statements, although we should really be using monoidal 2-categorical statements, and take the front-back composition into account.
  % We shall refer to this remark when encountering such caveat.
\end{remark}

\begin{remark}
  \label{rem:notation_bracket_twist}
  Only the action of $\lief$ depends on the position of the dot. For that reason, we shall use the notation
  \begin{gather*}
    \tikzpic{
      \draw[web1] (-1,0) to (1,0);
      \node[green_mark] (B) at (0,0){};
      \node[below={-3pt} of B] {\scriptsize $(\alpha,\beta_1,\beta_2)$};
    }[scale=.7][(0,0)]
    =
    \tikzpic{
      \draw[web1] (-1,0) to (1,0);
      \node[green_mark] (B) at (0,0){};
      \node[below={-3pt} of B] {\scriptsize $\alpha$};
    }[scale=.7][(0,0)]
    \langle \beta_1,\beta_2 \rangle.
  \end{gather*}
  In particular, marking a green dot with a single scalar $\alpha$ is a notation for marking it with the triple $(\alpha,0,0)$, and the notation $W\langle \beta_1,\beta_2 \rangle$ means ``the web $W$ with an additional marking $(0,\beta_1,\beta_2)$ anywhere''. See the example above.
\end{remark}

Note that $\tau_{W^{\greenmarking}}(f)$ is a sum over the identity foam with a single dot.
We write $\epsilon_{W^{\greenmarking}}(f)$ the sum over all the scalars in front of these dotted identities.

\begin{lemma}
  \label{lem:family_of_twists_foam}
  The family $\tau$ given in \cref{defn:twist_foam} is a family of twists in the sense of \cref{defn:family_of_twists}:
  \begin{enumerate}[(i)]
    \item in the graded case, for any $\grgl_2^{\leq}$-action defined in \cref{cor:generic_action_on_graded_foam};
    \item in the super case, for any $\gloo$-action defined in \cref{cor:generic_action_on_super_foam}, provided that $\epsilon_{W^{\greenmarking}}(f)=\epsilon_{W^{\greenmarking}}(h_1)+\epsilon_{W^{\greenmarking}}(h_2)$.
  \end{enumerate}
\end{lemma}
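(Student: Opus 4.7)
My plan is to use \cref{rem:twist_only_check_on_generators}, which reduces both flatness and the graded-commutative image condition to a check on generators of 1-morphisms; the Leibniz rule (for both horizontal and front-back composition, cf.\ \cref{rem:twist-foam-front-back-composition}) is already built into \cref{defn:twist_foam}.

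I would start by observing that the generators of 1-morphisms in $\fg\foam_d^{\text{pre-}\greenmarking}$ split into two classes: the unmarked web generators (single/double identity strands, merges, splits), on which $\tau$ is identically zero and both conditions are trivial, and a single strand $W^{\greenmarking}$ carrying one marking $(\alpha,\beta_1,\beta_2)$, on which $\tau(\lief) = \alpha d$ (a dot on the identity foam), $\tau(\lieh_i) = \beta_i\,\id_W$, and, in the super case, $\tau(\liee) = 0$. Graded-commutativity of the image is then immediate: the image is spanned by the central element $\id_W$ and a single dot $d$ with $d^2 = 0$ by dot annihilation; since $\bilfoam((1,1),(1,1)) = XY$, the required identity $d^2 = XY\cdot d^2$ is trivial in both the even and the super specializations. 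For webs carrying multiple markings, dots on distinct facets graded-commute by the graded interchange law, so the same argument applies.

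For flatness, I would go through the defining bracket relations of $\fg$ on the marked generator $W^{\greenmarking}$. The relations $[\lieh_i,\lief] = \mp\lief$ follow by direct inspection of the action table in \cref{defn:action_on_foam_super_case} on the dot $d$, together with the vanishing of any derivation on the identity (\cref{rem:derivation_uniquely_defn_on_generators_of_A}). The abelian brackets $[\lief,\lief] = [\lieh_i,\lieh_j] = 0$, and in the super case $[\liee,\liee] = 0$ and $[\lieh_i,\liee] = \pm\liee$, all reduce to the vanishing of $\lief$ on dots, of derivations on identities, and of $\tau(\liee)$ itself. This finishes part (i) entirely.

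The main obstacle, and the only nontrivial case in part (ii), is the super bracket $[\liee,\lief] = \lieh_1+\lieh_2$. Its flatness equation reads
\begin{gather*}
    \bigl(\epsilon_{W^{\greenmarking}}(\lieh_1)+\epsilon_{W^{\greenmarking}}(\lieh_2)\bigr)\,\id_W
    \;=\; \liee\cdot\tau_W(\lief)
    \;=\; \epsilon_{W^{\greenmarking}}(\lief)\,\id_W,
\end{gather*}
using $\liee(d) = \vare\,\id_W$ (normalized to $\vare = 1$). This identifies the stated hypothesis as exactly the content of flatness for this single bracket; the summed form over a web with several markings then follows from the Leibniz rule, so no pointwise condition on each individual marking is forced.
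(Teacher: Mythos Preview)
Your argument is correct and follows essentially the same route as the paper: reduce via \cref{rem:twist_only_check_on_generators} to a single marking, run through the bracket relations of $\fg$, and isolate $[\liee,\lief]$ as the one case that forces the stated hypothesis. The paper carries out the check for the generic derivations $\sff_{\varf},\sfh_{\delh,\varh},\sfe_{\vare}$ (rather than the fixed action table you cite) and passes from the global hypothesis to a per-marking one via the redistribution of \cref{rem:notation_bracket_twist}, whereas you work with the standard table and instead observe that the summed condition already suffices by the Leibniz rule; these are presentational differences, not substantive ones.
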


\begin{proof}
  It is clear that $\tau$ verifies the Leibniz rule and has a graded-commutative image.
  Thanks to \cref{rem:notation_bracket_twist}, we can redistribute twists with respect to $h_1$ and $h_2$, so that if the condition is verified, we may assume it is verified at the level of each twist.
  Following \cref{rem:twist_only_check_on_generators} (bearing \cref{rem:twist-foam-front-back-composition} in mind), it suffices to check flatness locally, that is, a single green marking $\omega=(\alpha,\beta_1,\beta_2)$.
  In the graded case, flatness holds in fact for any $\sff_{\varf}$ and $\sfh_{\delh,\varh}$ defined in \cref{lem:generic_derivations_on_graded_foam}, using \cref{lem:commutator_graded_case} (here we write $\epsilon(\sfh_{\delh,\varh})=\beta$ and $\epsilon(\sfh_{\delh,\varh}')=\beta'$):
  \begin{IEEEeqnarray*}{rCl}
    \IEEEeqnarraymulticol{3}{l}{
      \sfh_{\delh,\varh}\cdot \tau(\sff_{\varf})-\bilfoam(\sfh_{\delh,\varh},\sff_{\varf})\sff_{\varf}\cdot\tau(\sfh_{\delh,\varh})
    }
    \\
    \mspace{80mu}
    &=&
    \sfh_{\delh,\varh}(\alpha\;\tikzpic{\fdot}) - \sff_{\varf}(\beta\id)
    = -\delh\alpha\;\tikzpic{\fdot}
    = \tau(-\delh\sff_{\varf})
    = \tau([\sfh_{\delh,\varh},\sff_{\varf}])
    \\[1ex]
    %%%%%%%%%%%%%%%%%%%%%%%%%%%%%%
    \IEEEeqnarraymulticol{3}{l}{
      \sfh_{\delh,\varh}\cdot \tau(\sfh_{\delh',\varh'})-\bilfoam(\sfh_{\delh,\varh},\sfh_{\delh',\varh'})\sfh_{\delh',\varh'}\cdot\tau(\sfh_{\delh,\varh})
    }
    \\
    &=& \sfh_{\delh,\varh}(\beta')-\sfh_{\delh',\varh'}(\beta\id)=0
    = \tau([\sfh_{\delh,\varh},\sfh_{\delh',\varh'}])
    \\[1ex]
    %%%%%%%%%%%%%%%%%%%%%%%%%%%%%%
    \IEEEeqnarraymulticol{3}{l}{
      \sff_{\varf}\cdot \tau(\sff_{\varf'})-\bilfoam(\sff_{\varf},\sff_{\varf'})\sff_{\varf'}\cdot\tau(\sff_{\varf})
    }
    \\
    &=&
    \sff_{\varf}(\alpha'\;\tikzpic{\fdot})-XY\sff_{\varf'}(\alpha\;\tikzpic{\fdot})=0
    = \tau([\sff_{\varf},\sff_{\varf'}])
  \end{IEEEeqnarray*}
  We do a similar computation in the super case, using \cref{lem:commutator_super_case}:
  \begin{IEEEeqnarray*}{rCl}
    \IEEEeqnarraymulticol{3}{l}{
      \sfh_{\delh,\varh}\cdot \tau(\sfe_{\vare})-\bilfoam(\sfh_{\delh,\varh},\sfe_{\vare})\sfe_{\vare}\cdot\tau(\sfh_{\delh,\varh})
    }
    \\
    \mspace{80mu}
    &=&
    - \sfe_{\vare}(\beta\id) = 0 = \tau(\delh\sfe_{\vare})
    = \tau([\sfh_{\delh,\varh},\sfe_{\vare}])
    \\[1ex]
    %%%%%%%%%%%%%%%%%%%%%%%%%%%%%%
    \IEEEeqnarraymulticol{3}{l}{
      \sfe_{\vare}\cdot \tau(\sff_{\varf})-\bilfoam(\sfe_{\vare},\sff_{\varf})\sff_{\varf}\cdot\tau(\sfe_{\vare})
    }
    \\
    &=&
    \alpha\normafoam{\vare}{(-1)^i\vare}
    \overset{?}{=}
    \beta\id
    = \tau(\sfh_{0,(-\normafoam{}{(-1)^i}\vare\varf^i)_i})
    =
    \tau([\sfe_{\vare},\sff_{\varf}])
    \\[1ex]
    %%%%%%%%%%%%%%%%%%%%%%%%%%%%%%
    \IEEEeqnarraymulticol{3}{l}{
      \sfe_{\vare}\cdot \tau(\sfe_{\vare'})-\bilfoam(\sfe_{\vare},\sfe_{\vare'})\sfe_{\vare'}\cdot\tau(\sfe_{\vare})
    }
    \\
    &=& 0
    = \tau([\sfe_{\vare},\sfe_{\vare'}])
  \end{IEEEeqnarray*}
  The only equality that does not hold formally is the condition coming from the commutation between $\sff$ and $\sfe$, as it requires
  $\alpha
  =
  \beta$, where here $\beta=\beta_1+\beta_2$.
\end{proof}

\begin{definition}
  \label{defn:twisted_foams}
  We denote $\fg\markedfoam[d]\coloneqq\left(\fg\foam_d^{\text{pre-}\greenmarking}\right)^\tau$, where $\tau$ is the family of twists defined in \cref{defn:twist_foam}.
\end{definition}

We conclude this subsection by gathering some properties of twists.

\begin{lemma}[$\lieh$-equivariance]
  \label{lem:h-equivariance}
  Suppose that $G\in \Hom_{\markedgfoam[d]} (W_s^{\greenmarking}, W_t^{\greenmarking}) $ is homogeneous of degree $\deg(G)=(a,b)$. Then the following statements are true:
  \begin{enumerate}[(i)]
    \item $G$ is $h_1$-equivariant if and only if $\tau_{W_t^{\greenmarking}}(h_2)-b-\tau_{W_s^{\greenmarking}}(h_2)=0$;
    \item $G$ is $h_2$-equivariant if and only if $\tau_{W_t^{\greenmarking}}(h_2)+a-\tau_{W_s^{\greenmarking}}(h_2)=0$.
  \end{enumerate}
  In particular, if $G\colon W_s^{\greenmarking}\to W_t^{\greenmarking}$ is $\fg$-equivariant, then $G\colon W_s^{\greenmarking}\langle a,b\rangle\to W_t^{\greenmarking}\langle a,b\rangle$ is $\fg$-equivariant.
\end{lemma}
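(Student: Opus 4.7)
The plan is to directly unpack the twisted action of $h_i$ on $G$ via Proposition~\ref{prop:family_of_twists_gives_g2cat}, exploiting two facts: the twists for $h_1$ and $h_2$ are scalar multiples of the identity, and the untwisted action of $h_i$ on any homogeneous foam recovers the $\bZ^2$-grading up to sign. Since $h_i$ has trivial $\bZ^2$-degree we have $\bilfoam(h_i,G)=1$, so the formula from Proposition~\ref{prop:family_of_twists_gives_g2cat} reads
\[h_i \cdot_\tau G \;=\; \tau_{W_t^{\greenmarking}}(h_i)\circ G \;+\; h_i\cdot G \;-\; G\circ \tau_{W_s^{\greenmarking}}(h_i).\]
Using $\tau_{W^{\greenmarking}}(h_i) = \epsilon_{W^{\greenmarking}}(h_i)\,\id_W$ from Definition~\ref{defn:twist_foam}, the boundary terms collapse and I obtain
\[h_i \cdot_\tau G \;=\; \bigl(\epsilon_{W_t^{\greenmarking}}(h_i) - \epsilon_{W_s^{\greenmarking}}(h_i)\bigr)\,G \;+\; h_i\cdot G.\]

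The key ingredient to establish next is that for any homogeneous $G$ of degree $(a,b)$, the untwisted action satisfies $h_1\cdot G = -b\,G$ and $h_2\cdot G = a\,G$. This is already observed on generators after Definition~\ref{defn:action_on_foam_super_case} by inspection of the table (dots, cups, caps, zips, unzips and crossings). To extend it to all $G$ I would induct on a presentation as a composite of generators: since $h_i$ is degree-preserving, the Leibniz rule carries no sign,
\[h_i(G_1\circ G_2) = h_i(G_1)\circ G_2 + G_1\circ h_i(G_2),\]
and the same holds for horizontal composition; since degrees add under both compositions, the scalar $(-b$ or $a)$ propagates correctly. Substituting into the display above then gives
\[h_1\cdot_\tau G = \bigl(\epsilon_{W_t^{\greenmarking}}(h_1) - b - \epsilon_{W_s^{\greenmarking}}(h_1)\bigr) G, \qquad h_2\cdot_\tau G = \bigl(\epsilon_{W_t^{\greenmarking}}(h_2) + a - \epsilon_{W_s^{\greenmarking}}(h_2)\bigr) G,\]
which immediately yields the equivalences in (i) and (ii).

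For the ``in particular'' clause, suppose $G\colon W_s^{\greenmarking}\to W_t^{\greenmarking}$ is $\fg$-equivariant. Adding a marking $(0,a,b)$ to each side shifts both $\epsilon_{W_s^{\greenmarking}}(h_i)$ and $\epsilon_{W_t^{\greenmarking}}(h_i)$ by the same scalar, so the two balance conditions above survive verbatim. Moreover, a marking of the form $(0,a,b)$ contributes $0$ to $\tau(\lief)$ (and $\tau(\liee)=0$ unconditionally), so the twisted actions of $\lief$ and $\liee$ are unchanged on the nose, preserving their equivariance. In the graded case $\grgl_2^{\le}$ there is no $\liee$ to worry about, and the argument is identical.

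The only non-trivial step is the grading formula $h_i\cdot G = (\text{scalar in degree})\cdot G$ on all of $\fg\foam_d$; I expect this to be the main obstacle only in the bookkeeping sense, as it reduces to a case check on generators plus an induction via the Leibniz rule. Everything else is symbol manipulation starting from Proposition~\ref{prop:family_of_twists_gives_g2cat}.
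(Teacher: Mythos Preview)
The paper states this lemma without proof, so there is no argument to compare against. Your approach is the natural one: unpack the twisted action via Proposition~\ref{prop:family_of_twists_gives_g2cat}, use that $\tau_{W^{\greenmarking}}(h_i)$ is a scalar multiple of the identity, and invoke the observation (recorded right after Definition~\ref{defn:action_on_foam_super_case}) that the untwisted $h_1$ and $h_2$ act on a degree-$(a,b)$ foam by $-b$ and $a$ respectively. The extension from generators to arbitrary homogeneous $G$ via the Leibniz rule is straightforward since $h_i$ has trivial degree, and your treatment of the ``in particular'' clause is correct: the $(0,a,b)$ marking shifts both $\epsilon(h_i)$ values equally and contributes nothing to $\tau(\lief)$ or $\tau(\liee)$.

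One point worth flagging: your computation yields
\[
h_1\cdot_\tau G = \bigl(\epsilon_{W_t^{\greenmarking}}(h_1) - b - \epsilon_{W_s^{\greenmarking}}(h_1)\bigr)\,G,
\]
with $h_1$ appearing in the twist terms, whereas the lemma as stated in the paper writes $\tau_{W^{\greenmarking}}(h_2)$ in part~(i). Your derivation is the correct one; the $h_2$ in the printed statement of~(i) appears to be a typo for $h_1$.
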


\begin{lemma}[$\lief$-equivariance]
  \label{lem:f-equivariance}
  Let $W_s^{\greenmarking}$ and $W_t^{\greenmarking}$ be two marked webs with the same underlying web $W$.
  Let $F\colon W_s^{\greenmarking}\to W_t^{\greenmarking}$ be a linear combination where each term is $\id_W$ decorated with a single dot.
  Let $G$ and $H$ be linear combinations of $\lief$-equivariant foams, suitably composable with $F$.
  If
  \[G\circ\tau_{W_s^{\greenmarking}}(f)\circ H=G\circ\tau_{W_t^{\greenmarking}}(f)\circ H,\]
  then $G\circ F\circ H$ is $f$-equivariant.
  In particular, if $\tau_{W_s^{\greenmarking}}(f)=\tau_{W_t^{\greenmarking}}(f)$, then $F$ is $\lief$-equivariant.
\end{lemma}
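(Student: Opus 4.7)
The plan is to verify $f \cdot_\tau (G \circ F \circ H) = 0$ by reducing, via the graded Leibniz rule, to a statement about $F$ alone that the hypothesis then handles. Since $\cA^\tau$ is itself a $\fg$-2-category (\cref{prop:family_of_twists_gives_g2cat}), the twisted action $\cdot_\tau$ satisfies the graded Leibniz rule. Applying it to $G \circ F \circ H$ and using that $G$ and $H$ are $f$-equivariant, all but one term vanish, leaving
\[
f \cdot_\tau (G \circ F \circ H) \;=\; \bilfoam(f, G)\, G \circ (f \cdot_\tau F) \circ H,
\]
so it suffices to show that the right-hand side vanishes.

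I will then compute $f \cdot_\tau F$ explicitly. Each term of $F$ is $\id_W$ decorated with a single dot; the action (\cref{defn:action_on_foam_super_case}) sends the dot generator to zero, and $\lief$ kills any identity $2$-morphism, so by the graded Leibniz rule $\lief \cdot F = 0$ (untwisted). Hence
\[
f \cdot_\tau F \;=\; \tau_{W_t^{\greenmarking}}(f) \circ F \;-\; \bilfoam(f, F)\, F \circ \tau_{W_s^{\greenmarking}}(f).
\]
The key observation is that $F$, $\tau_{W_s^{\greenmarking}}(f)$, and $\tau_{W_t^{\greenmarking}}(f)$ are all linear combinations of dotted identities on $W$, each of degree $(1,1)$; by the graded interchange law together with dot annihilation, any two such foams $X, Y$ satisfy $X \circ Y = \bilfoam(f, F)\, Y \circ X$. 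Since $\bilfoam(f, F)^2 = 1$ (both in the super and the graded case), I can commute $\tau_{W_s^{\greenmarking}}(f)$ past $F$ and rewrite the second term as $\bilfoam(f, F)\, F \circ \tau_{W_s^{\greenmarking}}(f) = \tau_{W_s^{\greenmarking}}(f) \circ F$, obtaining
\[
f \cdot_\tau F \;=\; \bigl(\tau_{W_t^{\greenmarking}}(f) - \tau_{W_s^{\greenmarking}}(f)\bigr) \circ F.
\]

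Substituting, $G \circ (f \cdot_\tau F) \circ H = G \circ (\tau_{W_t^{\greenmarking}}(f) - \tau_{W_s^{\greenmarking}}(f)) \circ F \circ H$. Applying graded interchange once more to relocate $F$ and then invoking the hypothesis (which asserts precisely that $(\tau_{W_t^{\greenmarking}}(f) - \tau_{W_s^{\greenmarking}}(f))$ vanishes sandwiched between $G$ and $H$) yields zero, as required. For the ``In particular'' statement, taking $G = \id_{W_t^{\greenmarking}}$ and $H = \id_{W_s^{\greenmarking}}$ (both automatically $f$-equivariant) reduces the hypothesis to $\tau_{W_s^{\greenmarking}}(f) = \tau_{W_t^{\greenmarking}}(f)$, whereupon the displayed formula for $f \cdot_\tau F$ gives zero directly.

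The main subtlety is the final step: the hypothesis features $(\tau_{W_t^{\greenmarking}}(f) - \tau_{W_s^{\greenmarking}}(f))$ directly sandwiched between $G$ and $H$, whereas the computation leaves an extra $F$ between the twist and $H$. Bridging this gap requires a careful application of graded interchange, exploiting that $F$ is itself a sum of dotted identities so that it commutes past the twists, with the signs tame because $\bilfoam(f, F)^2 = 1$.
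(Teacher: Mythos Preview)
The paper states this lemma without proof, so your attempt supplies content the paper omits rather than reproducing an existing argument.

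Your reduction is correct through the formula $f\cdot_\tau F = (\tau_{W_t^{\greenmarking}}(f) - \tau_{W_s^{\greenmarking}}(f))\circ F$: the Leibniz rule for $\cdot_\tau$ kills the $G$- and $H$-contributions, the untwisted $\lief\cdot F$ vanishes since $\lief$ annihilates both dots and identities, and graded-commutativity of dotted identities (with $\bilfoam((1,1),(1,1))^2=1$) collapses the twist terms as you describe. The ``In particular'' clause follows immediately from this displayed formula, and that is the case actually used in the paper's applications (e.g.\ \cref{lem:web_resolution_twist}).

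However, the general case has a genuine gap at the final step. You arrive at $G\circ(\tau_t-\tau_s)\circ F\circ H$ and need it to vanish, knowing only that $G\circ(\tau_t-\tau_s)\circ H=0$. Commuting $F$ past $(\tau_t-\tau_s)$ via graded interchange yields $\pm\, G\circ F\circ(\tau_t-\tau_s)\circ H$, but the factor $F$ is still present---you have relocated it, not eliminated it. The hypothesis asserts vanishing for the specific pair $(G,H)$, not for $(G\circ F,H)$ or $(G,F\circ H)$; replacing $G$ by $G$ with extra dots is a genuine change, and nothing in your argument rules out $G\circ\text{dot}_j\circ(\tau_t-\tau_s)\circ H\neq 0$. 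What is actually needed is that ``multiply by $\text{dot}_j$ at the level of $W$'' sends the vanishing foam $G\circ(\tau_t-\tau_s)\circ H$ to zero. This is true in practice because the hypothesis is verified via dot migration along connected 1-facets of $G\circ H$ (as the paper's remark following the lemma indicates), and inserting a dotted identity $F$ does not alter that facet structure---but this is a structural argument about foams, not a formal consequence of graded interchange. Your closing paragraph correctly flags the subtlety without resolving it.
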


In some sense, the condition states that ``globally'', i.e.\ when dots are allowed to move in $G\circ F\circ H$, the marked webs $W_s^{\greenmarking}$ and $W_t^{\greenmarking}$ have identical $f$-markings. In practice, one can move $f$-markings along connected components of the underlying unmarked web $W$, and across components if they happen to be connected in $G\circ F\circ H$.

\begin{lemma}[$\liee$-equivariance]
  \label{lem:e-equivariance}
  Let $W_s^{\greenmarking}$ and $W_t^{\greenmarking}$ be two marked webs with the same underlying web $W$.
  Let $F\colon W_s^{\greenmarking}\to W_t^{\greenmarking}$ be a linear combination where each term is $\id_W$ decorated with a single dot.
  Write $\epsilon(F)$ for the sum of coefficients in $F$.
  If
  \[\epsilon(F)=0,\]
  then $F$ is $\liee$-equivariant.
\end{lemma}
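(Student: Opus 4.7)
The plan is to reduce the statement to a direct computation, exploiting two simplifications: the twist trivializes for $\liee$, and $F$ has a very restricted form.

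First I would observe that $\liee$ acts as zero on every green marking by \cref{defn:twist_foam}. Since the family of twists $\tau$ is generated by the values on markings via the Leibniz rule (\cref{rem:twist_only_check_on_generators}), it follows that $\tau_{W^{\greenmarking}}(\liee) = 0$ for every marked web $W^{\greenmarking}$. Plugging this into the formula for the twisted action from \cref{prop:family_of_twists_gives_g2cat},
\[
\liee \cdot_\tau F = \tau_{W_t^{\greenmarking}}(\liee)\circ F + \liee \cdot F - \bilfoam(\liee,F)\, F\circ\tau_{W_s^{\greenmarking}}(\liee) = \liee \cdot F,
\]
so computing the twisted action reduces to computing the untwisted action on $F$.

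Next I would unpack $F = \sum_k c_k D_k$, where each $D_k$ is $\id_W$ decorated with a single dot (at various locations along edges of width one). Applying the super Leibniz rule to the horizontal and front-back decomposition of $D_k$, the only generator appearing is the dot, since $\liee$ vanishes on identity strands. From the standard action of \cref{defn:action_on_foam_super_case}, $\liee$ acts on a dot by $\id_\emptyset$. Hence $\liee\cdot D_k = \id_W$ for every $k$, and therefore
\[
\liee \cdot_\tau F \;=\; \liee \cdot F \;=\; \sum_k c_k\, \id_W \;=\; \epsilon(F)\, \id_W.
\]
The assumption $\epsilon(F)=0$ immediately yields $\liee\cdot_\tau F = 0$, which is the definition of $\liee$-equivariance (\cref{defn:g_equivariant}). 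There is no real obstacle here; the only thing to double-check is the bookkeeping about dot locations on differently-coloured strands, which is irrelevant because the chosen normalization makes $\liee$ act as $\id_\emptyset$ independently of the colour index $i$.
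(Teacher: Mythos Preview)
Your proof is correct. The paper states this lemma without proof, and your argument is exactly the intended one: since $\tau(\liee)=0$ by \cref{defn:twist_foam}, the twisted and untwisted actions of $\liee$ coincide, and then the untwisted action on each dotted identity is simply $\id_W$ by \cref{defn:action_on_foam_super_case} together with the whiskering axiom \eqref{eq:axiomC_gcategories}, so $\liee\cdot F=\epsilon(F)\,\id_W$.
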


Below we sometimes implicitly assume that $2$ is invertible in the ground ring.

\begin{notation}
We use the following notation:
\begin{equation*}
  \tikzpic{\draw (0,0) to (1,0);\node[round_mark] (A) at (.5,0) {};}
  \quad\coloneqq\quad
  \tikzpic{
    \coordinate (base) at (.5,-.5ex);
    \draw (0,0) to (1,0);\node[green_mark] (A) at (.5,0) {};
    \node[below={-3pt} of A] {\scriptsize $(-1,-\frac{1}{2},-\frac{1}{2})$};
  }[baseline=(base)]
\end{equation*}
\end{notation}

\begin{lemma}
  \label{lem:twist_dot_slide_cup_cap}
  \def\webscl{.6}
  The following are $\fg$-equivariant:
  \begin{gather*}
    \tikzpic{\websMarkedT[0][0][$\alpha$]}%
      [scale=\webscl][(.5,.5*\webscl)]
    =
    \tikzpic{\websMarkedB[0][0][$\alpha$]}%
      [scale=\webscl][(.5,.5*\webscl)]
    \qquad\an\qquad
    \tikzpic{\webmMarkedT[0][0][$\alpha$]}%
      [scale=\webscl][(.5,.5*\webscl)]
    =
    \tikzpic{\webmMarkedB[0][0][$\alpha$]}%
      [scale=\webscl][(.5,.5*\webscl)]
  \end{gather*}
\end{lemma}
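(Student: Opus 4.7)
The plan is to realize each claimed equality by the identity foam $\id_W$, viewed as a 2-morphism from the source marked web (marking $\alpha$ on the top single edge) to the target (marking $\alpha$ on the bottom single edge). By \cref{rem:notation_bracket_twist}, both markings are of the form $(\alpha,0,0)$; in particular $\epsilon_{W_s^{\greenmarking}}(\lieh_i) = \epsilon_{W_t^{\greenmarking}}(\lieh_i) = 0$ for $i=1,2$, so source and target agree in quantum grading and $\id_W$ is indeed a well-defined, degree-preserving $2$-morphism in $\fg\markedfoam[d]$.

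Using $g\cdot \id_W = 0$ for every generator $g$ of $\fg$ (a consequence of the graded Leibniz rule, as noted after \cref{defn:g_equivariant}) and the fact that $\bilfoam(g,\id_W) = 1$ since $\id_W$ has trivial degree, the definition of the twisted action unwinds to
\[g\cdot_\tau \id_W \;=\; \tau_{W_t^{\greenmarking}}(g) \;-\; \tau_{W_s^{\greenmarking}}(g),\]
reducing $\fg$-equivariance to checking $\tau_{W_t^{\greenmarking}}(g) = \tau_{W_s^{\greenmarking}}(g)$ on each generator. For $g = \lieh_1, \lieh_2$ this is immediate, since the trivial $(\beta_1,\beta_2)$-components force $\tau(\lieh_i) = 0$ on both sides. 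In the super case the generator $\liee$ is handled identically, since $\tau(\liee) = 0$ by definition of the twist.

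The essential step is $g = \lief$. Here $\tau_{W_s^{\greenmarking}}(\lief)$ equals $\alpha$ times $\id_W$ decorated by a single dot on the top single strand of the split/merge, while $\tau_{W_t^{\greenmarking}}(\lief)$ is the same with the dot on the bottom single strand. The two single strands of a split or merge lie in a common connected component of the underlying unmarked tangle $\undcomp(W)$ (they are joined through the vertex via the double edge), so the informal principle following \cref{lem:f-equivariance} already predicts the equality $\tau_{W_s^{\greenmarking}}(\lief) = \tau_{W_t^{\greenmarking}}(\lief)$. Concretely, this equality is realized by two successive applications of the dot migration relation of \cref{fig:rel_diagfoam_graded}: the dot on the top single facet of color $i$ migrates across the seam to a dot of color $i+1$ on the intermediate double facet, and then across the other seam to a dot on the bottom single facet.

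The only technical point is this final local foam identity; it is independent of the graded parameters $X,Y,Z$, and consequently the argument works uniformly for $\fg = \grgl_2^\leq$ and $\fg = \gloo$. I expect no other obstacle.
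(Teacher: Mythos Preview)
Your proof is correct and is precisely the argument the paper has in mind; the lemma is stated without proof there, so there is nothing to compare against beyond the obvious unwinding of the twisted action on $\id_W$ that you carry out. One small cleanup: in the identity foam of a split (or merge) web there is a \emph{single} seam, along which the two $1$-facets of colors $i$ and $i+1$ meet, and dots never live on the $2$-facet; so the equality $\tau_{W_s^{\greenmarking}}(\lief)=\tau_{W_t^{\greenmarking}}(\lief)$ is a single application of the dot migration relation $\textdot_i=\textdot_{i+1}$, not two.
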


\begin{lemma}
\label{lem:crossing_complex_equivariant}
  \def\webscl{.6}
  The following are $\fg$-equivariant:
  \begin{IEEEeqnarray*}{CcC}
    \tikzpic{\webid}[scale=\webscl][(.5,.5*\webscl)]
    \quad
    \xrightarrow{\tikzpic{\fzip}[scale=.6]}
    \quad
    \tikzpic{\websRoundMarkedB[0][0][]\webm[1][0]}%
      [scale=\webscl][(.5,.5*\webscl)]
    \;\langle \frac{1}{2},-\frac{1}{2}\rangle
    %%%%%%%%%%%%%%%%%%%%
    &\mspace{80mu}&
    %%%%%%%%%%%%%%%%%%%%
    \tikzpic{\websRoundMarkedB[0][0][]\webm[1][0]}%
      [scale=\webscl][(.5,.5*\webscl)]
    \;\langle -\frac{1}{2},\frac{1}{2}\rangle
    \quad
    \xrightarrow{\tikzpic{\funzip}[scale=.6]}
    \quad
    \tikzpic{\webid}[scale=\webscl][(.5,.5*\webscl)]
    %%%%%%%%%%%%%%%%%%%%
    \\[2ex]
    %%%%%%%%%%%%%%%%%%%%
    \tikzpic{\draw[web2] (-1,0) to (1,0);}%
      [scale=\webscl][(.5,0*\webscl)]
    \quad
    \xrightarrow{\tikzpic{\fcup}[scale=.6]}
    \quad
    \tikzpic{\webmRoundMarkedB[0][0][]\webs[1][0]}%
      [scale=\webscl][(.5,.5*\webscl)]
    \;\langle -\frac{1}{2},\frac{1}{2}\rangle
    %%%%%%%%%%%%%%%%%%%%
    &\mspace{80mu}&
    %%%%%%%%%%%%%%%%%%%%
    \tikzpic{\webmRoundMarkedB[0][0][]\webs[1][0]}%
      [scale=\webscl][(.5,.5*\webscl)]
    \;\langle \frac{1}{2},-\frac{1}{2}\rangle
    \quad
    \xrightarrow{\tikzpic{\fcap}[scale=.6]}
    \quad
    \tikzpic{\draw[web2] (-1,0) to (1,0);}%
      [scale=\webscl][(.5,0*\webscl)]
  \end{IEEEeqnarray*}
\end{lemma}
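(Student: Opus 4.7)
The plan is to verify $\fg$-equivariance generator by generator (for $\lieh_1,\lieh_2,\liee,\lief$) and map by map (zip, unzip, cup, cap), using Proposition \ref{prop:family_of_twists_gives_g2cat} to unfold $g\cdot_\tau G = \tau_{W_t^{\greenmarking}}(g)\circ G + g\cdot G - \bilfoam(g,G)\,G\circ\tau_{W_s^{\greenmarking}}(g)$, together with the explicit generator actions of Definition \ref{defn:action_on_foam_super_case} and the twist formulas of Definition \ref{defn:twist_foam}.

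First I would dispatch $\lieh_1$- and $\lieh_2$-equivariance via Lemma \ref{lem:h-equivariance}. The $\bZ^2$-degrees of zip, unzip, cup, cap are $(1,0),(0,1),(0,-1),(-1,0)$ respectively (Figure \ref{fig:string_generators_graded_foams}); the round marking $(-1,-1/2,-1/2)$ contributes $-1/2$ to both $\epsilon(\lieh_1)$ and $\epsilon(\lieh_2)$, and the bracket shifts $\langle\pm 1/2,\mp 1/2\rangle$ are designed precisely so that the differences $\epsilon_{W_t^{\greenmarking}}(\lieh_i) - \epsilon_{W_s^{\greenmarking}}(\lieh_i)$ balance the degree $(a,b)$ of the 2-morphism in each of the four cases. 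The $\liee$-equivariance is then immediate: $\liee$ acts as zero on each of zip, unzip, cup, cap by Definition \ref{defn:action_on_foam_super_case}, and $\tau_{W^{\greenmarking}}(\liee) = 0$ on markings by Definition \ref{defn:twist_foam}, so all three terms in $\liee\cdot_\tau G$ vanish.

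The substantive case is $\lief$-equivariance. After unfolding $\lief\cdot_\tau G$, every nonzero summand is, up to a scalar, the foam ``$G$ decorated with one additional dot on its single-strand facet''. Since this facet is connected in each of the four cobordisms, all such dotted foams coincide, and the check reduces to showing that a small weighted sum of scalars vanishes. Those scalars combine three inputs: the weight $\alpha = -1$ coming from the round marking; the sign of $\lief\cdot G$ (which is $-1$ for the cap and $+1$ for the three others, per the table in Definition \ref{defn:action_on_foam_super_case}); and the super pairing $\bilfoam(\lief, G) = (-1)^{b\bmod 2}$ with $b$ the second coordinate of $\deg G$. The four cases split naturally by which side carries the markings: for zip and cup the source is unmarked, so only the $\tau_{W_t}$ term and $\lief\cdot G$ survive; for unzip and cap the target is unmarked, so only $\lief\cdot G$ and the $\tau_{W_s}$ term survive. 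In each case the surviving pair cancels.

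The main obstacle is precisely the sign bookkeeping for $\lief$-equivariance: juggling the sign in the action table, the super-pairing sign (which flips between the zip/cap pair and the unzip/cup pair), and the $\alpha = -1$ from the round marking. The choice $\alpha = -1$ on the round marking is really what makes these four saddle-type cobordisms $\fg$-equivariant, and confirming the cancellation case by case is the content of the lemma.
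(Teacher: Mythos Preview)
Your proof is correct and is exactly the intended verification; the paper states this lemma without proof, leaving the direct check to the reader. Your organization via Lemma~\ref{lem:h-equivariance} for $\lieh_1,\lieh_2$, the trivial $\liee$ case, and the scalar bookkeeping for $\lief$ using Proposition~\ref{prop:family_of_twists_gives_g2cat} and the table in Definition~\ref{defn:action_on_foam_super_case} is precisely the computation the paper has in mind, and your identification of the key mechanism (the choice $\alpha=-1$ on the round marking cancelling the sign of $\lief\cdot G$ in each case) is on point.
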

 
\begin{lemma}
  \label{lem:web_defining_relations_equivariance}
  \def\webscl{.6}
  The following isomorphisms are $\fg$-equivariant:
  \begin{gather*}
    \stackanchor{W_{i,s_1}W_{j,s_2}\cong^\fg W_{j,s_2}W_{i,s_1}}{(\text{for all }s_1,s_2\in\{-,+\}\an \abs{i-j}>1)}
    \\[2ex]
    \tikzpic{
      \websRoundMarkedB[1][1]\webm[2][1]
      \webmRoundMarkedB\webs[3][0]
      \draw[web1] (0,2) to (1,2);\draw[web1] (3,2) to (4,2);
      \draw[web1] (1,0) to (3,0);
    }[scale=.4][(2,1.25*.4)]
    \cong^\fg
    \tikzpic{
      \coordinate (C) at (2,1);
      \draw[web1] (0,2) to (4,2);
      \draw[web2] (0,.5) to (4,.5);
    }[scale=.4][(2,1.25*.4)]
    \quad\an\quad
    \tikzpic{
      \webmRoundMarkedB[0][1]\webs[3][1]
      \websRoundMarkedB[1][0]\webm[2][0]
      \draw[web1] (0,0) to (1,0);\draw[web1] (3,0) to (4,0);
      \draw[web1] (1,2) to (3,2);
    }[scale=.4][(2,.75*.4)]
    \cong^\fg
    \tikzpic{
      \draw[web1] (0,0) to (4,0);
      \draw[web2] (0,1.5) to (4,1.5);
    }[scale=.4][(2,.75*.4)]
  \end{gather*}
  Furthermore, the following is a $\fg$-equivariant split short exact sequence:
  \begin{gather*}
    \tikzpic{
      \draw[web2] (0,0) to (2,0);
    }[scale=\webscl]
    \;\langle \frac{1}{2},-\frac{1}{2}\rangle
    \quad
    \xrightarrow{\tikzpic{\fcup}[scale=.6]}
    \quad
    \tikzpic{\webmRoundMarkedB[0][0][]\webs[1][0]}%
      [scale=\webscl][(0,.5*\webscl)]
    \quad
    \xrightarrow{\tikzpic{\fcap}[scale=.6]}
    \quad
    \tikzpic{
      \draw[web2] (0,0) to (2,0);
    }[scale=\webscl]
    \;\langle -\frac{1}{2},\frac{1}{2}\rangle
  \end{gather*}
\end{lemma}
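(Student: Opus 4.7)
The strategy is to exhibit, for each claim, explicit foams realizing the isomorphisms (or the two maps of the short exact sequence) and then verify $\fg$-equivariance using the criteria of \cref{lem:h-equivariance,lem:f-equivariance,lem:e-equivariance} together with the already-equivariant building blocks of \cref{lem:twist_dot_slide_cup_cap,lem:crossing_complex_equivariant}. Since $\fg$-equivariance is preserved by vertical composition, horizontal whiskering (axiom \eqref{eq:axiomC_gcategories}), and the addition of markings with opposite shifts on both sides (\cref{lem:h-equivariance}), the task reduces to specifying the building blocks and tracking markings.

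For the distant commutation $W_{i,s_1}W_{j,s_2}\cong W_{j,s_2}W_{i,s_1}$ with $\abs{i-j}>1$, the isomorphism is realized by the crossing generators of \cref{fig:string_generators_graded_foams}, mutually inverse by the braid-like relations. Every crossing has degree $(0,0)$ and is annihilated by every generator of $\fg$ (see \cref{defn:action_on_foam_graded_case,defn:action_on_foam_super_case}), so $\liee$- and $\lieh_i$-equivariance are immediate from \cref{lem:e-equivariance,lem:h-equivariance}. For $\lief$-equivariance, \cref{lem:f-equivariance} reduces the check to showing that the twist $\tau(\lief)$ passes through the crossing, which is precisely the content of the dot slide relation in \cref{fig:rel_diagfoam_graded} (available exactly when $\abs{i-j}>1$).

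For the two square-to-parallel isomorphisms, each direction is constructed as a short composition of a zip, a cup (or cap), and an unzip — standard invertibility in unmarked $\glt$-foams follows from the defining relations of \cref{fig:rel_diagfoam_graded}, and the round markings $\greenmarking$ contribute only the required grading shifts. Each elementary component is $\fg$-equivariant by \cref{lem:crossing_complex_equivariant}, so their composition is $\fg$-equivariant. For the split short exact sequence, the maps $\fcup$ and $\fcap$ are $\fg$-equivariant by \cref{lem:crossing_complex_equivariant} once the shift-invariance part of \cref{lem:h-equivariance} is applied to redistribute $\langle \pm\tfrac12,\mp\tfrac12\rangle$; exactness and the splitting follow from the evaluation-of-bubbles and neck-cutting relations in \cref{fig:rel_diagfoam_graded}, and the splitting maps themselves (built from cups, caps, and dots) are verified equivariant by another appeal to \cref{lem:e-equivariance,lem:f-equivariance,lem:h-equivariance}.

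The main obstacle is bookkeeping around $\liee$-equivariance in the super case: since $\liee$ acts nontrivially on single-dotted identities, each time neck-cutting introduces a sum of dot-decorated terms one must check via \cref{lem:e-equivariance} that the coefficients sum to zero. This is the step most sensitive to the specific scalar choices of \cref{defn:action_on_foam_super_case} and to the round-marking convention $(-1,-\tfrac12,-\tfrac12)$, but in every relevant instance the check reduces to a short finite computation.
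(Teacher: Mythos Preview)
The paper leaves this lemma unproved, treating it as a routine verification, and your overall strategy matches what is implicitly expected: build the isomorphisms from the generating foams and check equivariance using \cref{lem:h-equivariance,lem:f-equivariance,lem:e-equivariance,lem:crossing_complex_equivariant}. Two points deserve correction, however.

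First, your appeals to \cref{lem:e-equivariance} and \cref{lem:f-equivariance} for the distant-crossing isomorphism are misplaced: those lemmas only apply to morphisms $F$ that are linear combinations of singly-dotted identities on a fixed underlying web, and a crossing generator is not of that form. The correct argument is more direct. For $\liee$, one has $\tau(\liee)=0$ by \cref{defn:twist_foam} and $\liee\cdot(\text{crossing})=0$ by the action table, so $\liee\cdot_\tau(\text{crossing})=0$ immediately. For $\lief$, since the crossing has degree $(0,0)$ the twisted action reduces to $\tau_t(\lief)\circ(\text{crossing})-(\text{crossing})\circ\tau_s(\lief)$, and this vanishes because the markings on source and target are the same and dots slide through distant crossings by the dot-slide relation in \cref{fig:rel_diagfoam_graded}. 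Your $\lieh_i$ check via \cref{lem:h-equivariance} is fine, since that lemma applies to arbitrary homogeneous morphisms.

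Second, your final claim that the splitting maps of the short exact sequence are $\fg$-equivariant is false, and fortunately unnecessary. The splittings are dotted cups and caps (from neck-cutting), and for instance $\liee\cdot_\tau(\text{dotted cap})=\liee(\text{dot})\circ\text{cap}=\text{cap}\neq 0$. The phrase ``$\fg$-equivariant split short exact sequence'' in the lemma should be read as ``split short exact sequence whose forward maps are $\fg$-equivariant''; this is exactly how it is used downstream (e.g.\ in the proof of \cref{lem:invariance_Reidemeister_I} via \cref{lem:abstract_homotopy_equivalence}, where only the forward map needs equivariance). With these two fixes your argument goes through.
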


\section{Local actions on odd and covering Khovanov homology}
\label{sec:action_on_homology}

In this section, we describe actions on odd and covering Khovanov homology.
Following \cite{KR_PositiveHalfWitt_2016,QRS+_Symmetries$mathfrakgl_N$foams_2024a},
\cref{subsec:relative_homotopy_category} introduces the relative homotopy category, which gives the formal framework where the invariant is defined.
Our exposition is slightly different from \emph{op.\ cit.} (beyond using graded structures), as we avoid the use of triangulated categories.
We then define the marked tangle invariant in \cref{subsec:defn_tangle_invariant}.
Finally, \cref{subsec:topological_invariance} and \cref{subsec:marking_slide} show topological invariance and marking slide, respectively.

\subsection{The relative homotopy category}
\label{subsec:relative_homotopy_category}

\begin{convention}
  All chain complexes are bounded chain complexes.
\end{convention}

Let $A$ be a $\fg$-category.
A \emph{$\fg$-equivariant chain complex} is a chain complex in $A$ whose differential has $\fg$-equivariant components.
% In the sequel, we only consider $\fg$-equivariant chain complexes, referred to as ``(chain) complexes''.
A chain morphism is said to be \emph{$\fg$-equivariant} if each of its components is $\fg$-equivariant.
We denote $\Ch(A)$ the category of $\fg$-equivariant chain complexes and $\fg$-equivariant chain morphisms, and $\underline{\Ch}(A)$ the category of $\fg$-equivariant chain complexes and \emph{all} chain morphisms.
There is an embedding $\Ch(A)\hookrightarrow\underline{\Ch}(A)$.

Homotopies have the standard meaning; that is, homotopies for the pre-additive category underlying $A$. A \emph{$\fg$-equivariant homotopy equivalence} is a homotopy equivalence which is also $\fg$-equivariant as a chain morphism.
Note that if $f$ is a $\fg$-equivariant homotopy equivalence, its inverse needs not be $\fg$-equivariant.
A $\fg$-equivariant chain complex $C_\bullet$ is \emph{contractible} if it is contractible in the standard sense, that is, if the (necessarily $\fg$-equivariant) chain morphism $C_\bullet\to 0$ (or equivalently, $0\to C_\bullet$) is a homotopy equivalence.

\begin{definition}
  \label{defn:formal_relative_homotopy_category}
  Let $A$ be a $\fg$-category.
  The \emph{relative homotopy category} $\cK^\fg(A)$ is the localization of $\Ch(A)$ at $\fg$-equivariant homotopy equivalences.
  We denote $\simeq^\fg$ an isomorphism in $\cK^\fg(A)$.
\end{definition}

We unpack the definition; see \cite[section~III.2.2]{GM_MethodsHomologicalAlgebra_2003} for a more thorough review of localization.
Objects of $\cK^\fg(A)$ are the same objects as those in $\Ch(A)$; namely, $\fg$-equivariant chain complexes in $A$.
In this context, a \emph{path} is formal composition of arrows
\[u_0\overset{f_1}{\longrightarrow}u_1\overset{f_2}{\longrightarrow}\ldots\overset{f_n}{\longrightarrow}u_n,\]
where $f_i\colon u_{i-1}\to u_i$ is either a $\fg$-equivariant chain morphism or the inverse of a $\fg$-equivariant homotopy equivalence.
Two paths are \emph{equivalent} if they can be joined by a chain of the following elementary equivalences:
\begin{itemize}
  \item two consecutive arrows are replaced by their composition;
  \item the composition of a $\fg$-equivariant homotopy equivalence with its inverse is replaced by the identity.
\end{itemize}
Morphisms in $\cK^\fg(A)$ are equivalences classes of paths.

Denote $\cK(A)$ (resp.\ $\underline{\cK}(A)$) the homotopy category of $\Ch(A)$ (resp.\ $\underline{\Ch}(A)$), that is, the localization of $\Ch(A)$ (resp.\ $\underline{\Ch}(A)$) at $\fg$-equivariant homotopy equivalences with $\fg$-equivariant inverses (resp.\ at homotopy equivalences).
The relative homotopy category $\cK^\fg(A)$ can be understood as sitting in between $\cK(A)$ and $\underline{\cK}(A)$, inverting homotopy equivalence that are $\fg$-equivariant but may not have $\fg$-equivariant inverses.
Namely, there is a commutative diagram
\begin{center}
  \begin{tikzcd}
    \cK(A) \ar[dr]\ar[rr] && \underline{\cK}(A)
    \\
    & \cK^\fg(A)\ar[ur]
  \end{tikzcd}
\end{center}
with each arrow being the obvious quotient functor.
The category $\cK^\fg(A)$ satisfies the universal property that if $F\colon \Ch(A)\to T$ is a functor sending $\fg$-equivariant homotopy equivalences to isomorphisms, then the functor $F$ factors through the quotient $\Ch(A)\to \cK^\fg(A)$.

\begin{remark}
  As a triangulated category, the category $\cK^\fg(\ring)$ coincides with the relative homotopy category $\cC^\fg(\ring)$ as defined in \cite{QRS+_Symmetries$mathfrakgl_N$foams_2024a}.
\end{remark}

If $C_\bullet$ is a $\fg$-equivariant chain complex in $\gMod$, its homology $H_\bullet(C)$ is canonically endowed with a structure of $\fg$-module, preserving the homological grading.
If $f\colon C_\bullet\to D_\bullet$ is a $\fg$-equivariant chain morphism, it induces a linear map $[f]\colon H_\bullet(C)\to H_\bullet(D)$, preserving both the homological grading and the $\fg$-action.
Furthermore, if $f$ is a homotopy equivalence then $[f]$ an isomorphism.
Recall $\ugMod$ from \cref{ex:ugMod} and write $\cK^\fg(\ring)\coloneqq\cK^\fg(\ugMod)$.
It follows from the above discussion and the universal property of $\cK^\fg(\ring)$ that the homology functor $H_\bullet$ descends to a functor from $\cK^\fg(\ring)$ to $(\gMod)^\bZ$.
This justifies the definition of the relative homotopy category as the category capturing which $\fg$-equivariant complexes have the same homology, with the same induced $\fg$-action.

If $A$ is a $\fg$-category, any choice of object $w$ in $A$ defines a representable functor
\[\Hom_{A}(w,-)\colon A\to\ugMod.\]
It descends to the relative homotopy categories, leading to a homology functor:
\[\cK^\fg(A)\overset{\Hom_{A}(w,-)}{\longrightarrow} \cK^\fg(\ring)\overset{H_\bullet}{\longrightarrow}(\gMod)^\bZ.\]

\subsection{Definition of the marked tangle invariant}
\label{subsec:defn_tangle_invariant}

In this subsection, we adapt the construction of the tangle invariant in \cite{SV_OddKhovanovHomology_2023} to carry a $\fg$-action.
In this article, a tangle diagram always refers to a sliced tangle diagram.

\medbreak

In contrast with \cite{SV_OddKhovanovHomology_2023}, the construction in this article applies to ``marked tangles''.
A \emph{marked tangle diagram} is a tangle diagram with extra markings $\greenmarking[2]$, each labelled with a triple of scalars, as for webs.
For us, a \emph{marked tangle} is an equivalence class of marked tangle diagrams with respect to the standard relations on tangle diagrams, together with the relations (here $\omega=(\alpha,\beta_1,\beta_2)$ is a generic triple of scalars):
\begin{gather}
  \label{eq:marking_slide_through_cup_cap}
  \tikzpic{
    \draw[web1] (1,0) to[out=180,in=-90] (.5,.5) to[out=90,in=180] (1,1);
    \node[green_mark] (B) at (1-.35,1-.15) {};
    \node[above left={-5pt} of B] {\scriptsize $\omega$};
  }[xscale=.8,yscale=.6][(0,.5*.6)]
  \;\leftrightarrow\;
  \tikzpic{
    \draw[web1] (1,0) to[out=180,in=-90] (.5,.5) to[out=90,in=180] (1,1);
    \node[green_mark] (B) at (1-.35,.15) {};
    \node[below left={-5pt} of B] {\scriptsize $\omega$};
  }[xscale=.8,yscale=.6][(0,.5*.6)]
  \quad\an\quad
  \tikzpic{
    \draw[web1] (0,0) to[out=0,in=-90] (.5,.5) to[out=90,in=0] (0,1);
    \node[green_mark] (B) at (.35,1-.15) {};
    \node[above right={-5pt} of B] {\scriptsize $\omega$};
  }[xscale=.8,yscale=.6][(0,.5*.6)]
  \;\leftrightarrow\;
  \tikzpic{
    \draw[web1] (0,0) to[out=0,in=-90] (.5,.5) to[out=90,in=0] (0,1);
    \node[green_mark] (B) at (.35,.15) {};
    \node[below right={-5pt} of B] {\scriptsize $\omega$};
  }[xscale=.8,yscale=.6][(0,.5*.6)]
  \;.
\end{gather}
That is, markings can slide along strands, but (a priori) not over or under crossings.
One could give a topological description, with markings being points where the tangle is ``glued onto the plane'' or ``attached to the point at infinity'', depending on the topological model. We omit the details.

A \emph{(marked) tangled web} is a (marked) web where one may further use the following crossings:
\begin{gather*}
  \def\webscl{.6}
  \tikzpic{
    \webncr
  }[scale=\webscl][(0,.5*\webscl)]
  \qquad\an\qquad
  \tikzpic{
    \webpcr
  }[scale=\webscl][(0,.5*\webscl)]
  \;.
\end{gather*}
If $W$ is a marked tangled web, then $\undcomp(W)$ is a marked tangle diagram. As explained in \cite{SV_OddKhovanovHomology_2023} (and following \cite{LQR_KhovanovHomologySkew_2015}), we have that:

\begin{lemma}
  For any marked tangle diagram $T$, there exists a marked tangled web $W$ such that $\undcomp(W)=T$.
\end{lemma}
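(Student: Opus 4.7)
The plan is to reduce to the unmarked analogue, which is already established. Given a marked tangle diagram $T$, first use the slide relations \eqref{eq:marking_slide_through_cup_cap} to move each marking to a generic point on its strand, i.e.\ into the interior of a straight arc, away from cups, caps, and crossings; since we work up to the equivalence relation defining marked tangles, this does not change the isotopy class of $T$. Let $T_0$ denote the resulting underlying (unmarked) tangle diagram. By the unmarked version of the statement (see \cite{SV_OddKhovanovHomology_2023}, building on \cite{LQR_KhovanovHomologySkew_2015}), there exists a tangled web $W_0$ with $\undcomp(W_0) = T_0$.

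Next, I would attach the markings. Each marking of $T$ specifies a point $p$ on a strand of $T_0$ together with a triple of scalars $\omega$. Under the identification $\undcomp(W_0) = T_0$, and since $p$ was chosen away from the local gadgets used to implement crossings, cups, and caps in the LQR-style construction, the preimage of $p$ in $W_0$ lies on an edge of width one. Placing a marking there carrying the triple $\omega$ yields a marked tangled web $W$ with $\undcomp(W) = T$. The only (minor) obstacle is ensuring each marking lands on a single-width edge rather than on a double-width ladder rung or inside a crossing/cup/cap gadget; this is precisely arranged by the preliminary slide of markings into generic position, which is why that preparatory step is needed.
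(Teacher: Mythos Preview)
The paper does not give a proof of this lemma; it simply cites the unmarked case from \cite{SV_OddKhovanovHomology_2023} (following \cite{LQR_KhovanovHomologySkew_2015}) and treats the marked extension as immediate. Your reduction to the unmarked case is therefore exactly the intended approach.

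That said, your preliminary sliding step is both unnecessary and, taken literally, introduces a small gap. The lemma concerns a fixed marked tangle \emph{diagram} $T$, not the equivalence class it represents; after sliding markings you obtain a different diagram $T'$, and the web you build satisfies $\undcomp(W)=T'$, not $\undcomp(W)=T$. The fix is simply to drop the sliding step: since $\undcomp$ is ``forget the double lines and orientation'', the strands of $T_0$ are precisely the width-one edges of $W_0$, so every marking of $T$ already sits over a width-one edge of $W_0$ and can be placed there directly. No preparatory isotopy is needed.
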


To realise the above lemma in practice, it is useful to introduce mixed crossings:
\begin{gather*}
  \tikzpic{\webcrDS}[xscale=.6,yscale=.4]\coloneqq
  \tikzpic{
    \webm[0][1]\draw[web1] (1,2) to (2,2);
    \draw[web1] (0,0) to (1,0);\websRoundMarkedT[1][0]
  }[xscale=.6,yscale=.4]
  \qquad
  \tikzpic{\webcrSD}[xscale=.6,yscale=.4]\coloneqq
  \tikzpic{
    \draw[web1] (0,2) to (1,2);\websRoundMarkedT[1][1]
    \webm\draw[web1] (1,0) to (2,0);
  }[xscale=.6,yscale=.4]
  \qquad\an\qquad
  \tikzpic{\webcrDD}[xscale=.6,yscale=.4]\coloneqq
  \tikzpic{
    \draw[web2] (0,1) to (1,1);
    \draw[web2] (0,0) to (1,0);
  }[xscale=.6,yscale=.4]
  \;.
\end{gather*}
Two different tangled webs $D_1$ and $D_2$ can have the same underlying tangle diagram $\undcomp(D_1) =\undcomp(D_2)$. Indeed, we may have that $D_1\in\fg\foam_{d_1}$ and $D_2\in\fg\foam_{d_2}$ for $d_1\neq d_2$, as we can always add a double line on the top or bottom of a web; and even if $d_1=d_2$, the webs $D_1$ and $D_2$ may not have the same input and output coordinates; as we can always compose a web horizontally with a mixed crossing.
Instead, we would want to think of tangled webs
\begin{itemize}
  \item up to adding double lines on the top or bottom of the web
  \item and up to adding mixed crossings on the right or the left of the web.
\end{itemize}
This is formalize as follows.
On the one hand, adding a double line to a web (resp.\ a 2-facet to a foam) on top (resp.\ on the back) defines a $\fg$-2-functor $\fg\foam_d^{\greenmarking}\to\fg\foam^{\greenmarking}_{d+2}$ (see also the front-back composition from \cref{rem:monoidal-2-categorical-structure}). In fact, it follows from the basis theorem shown in \cite{Schelstraete_RewritingModuloDiagrammatic_2025} that these $\fg$-2-functors are embeddings. We refer to this type of $\fg$-2-functors as ``adding double lines''.
On the other hand, pre- and post-composing with mixed crossings define various $\fg$-2-functors $\fg\foam_d^{\greenmarking}(\lambda,\mu)\to\fg\foam_d^{\greenmarking}(\lambda',\mu')$, where $\lambda$ (resp.\ $\mu$) has the same number of $1$'s as $\lambda'$ (resp.\ $\mu'$). In fact, these $\fg$-2-functors are isomorphisms. We refer to this type of $\fg$-2-functors as ``changing the endpoints''.

\begin{definition}
  \label{defn:colimit_of_foam}
  The $\fg$-2-category $\fg\foam^{\greenmarking}$ is the colimit over ``adding double lines'' and ``changing the endpoints'' $\fg$-2-functors.
  % \begin{center}
  %   \begin{tikzcd}
  %     & \fg\foam^{\greenmarking}_d \ar[dl,hook']\ar[dr,hook]
  %     \\
  %     \fg\foam^{\greenmarking}_{d+2} && \fg\foam^{\greenmarking}_{d+2}
  %   \end{tikzcd}
  % \end{center}
  % where the arrows are embeddings given by adding a 2-facet at the back and at the front, respectively.
\end{definition}

The tangle invariant in \cite{SV_OddKhovanovHomology_2023} is defined as a certain tensor product of chain complexes in $\fg\foam$.
In this context, one needs a new notion of tensor product of chain complexes to account for the graded interchange law, which we review now.

In practice, it means that there exists a graded analogue of the Koszul rule, suitably compatible with homotopy equivalence; this was shown in the second author's master thesis \cite{Schelstraete_SupercategorificationKhovanovlikeTangle_2020}.
Recall that the usual tensor product of two chain complexes looks like a grid, and the Koszul rule is a way to assign signs to edges, such that each square has an odd number of signs; this makes the induced differential squares to zero.
In fact, one does not need to follow the Koszul rule: any two ways of assigning signs to edges such that each square has an odd number of signs lead to isomorphic chain complexes.

In a similar way, if $A_\bullet$ and $B_\bullet$ are two chain complexes with homogeneous differentials, the graded Koszul rule defines $A_\bullet\otimes B_\bullet$ by assigning invertible scalars to edges; denote it $\epsilon$, and view it as 1-cochain on the oriented grid.
For each square
\begin{center}
  \begin{tikzcd}
    \bullet \ar[r,"\alpha\otimes\id"] \ar[d,"\id\otimes\beta"'] & \bullet
    \ar[d,"\id\otimes\beta"]\\
    \bullet \ar[r,"\alpha\otimes\id"] & \bullet
  \end{tikzcd}
\end{center}
in the grid, the assignment $\epsilon$ is such that $\partial\epsilon = \bil(\alpha,\beta)$.
In fact, one does not need to follow the graded Koszul rule: any two ways of assigning invertible scalars to edges such that each square has this property lead to isomorphic chain complexes.
One can proceed inductively and define a tensor product for ``sufficiently homogeneous complexes'', called ``homogeneous polycomplexes''.
We refer to \cite{SV_OddKhovanovHomology_2023} for the precise definition.
As for the classical tensor product, this graded tensor product is suitably compatible with homotopy equivalence.

It is not hard to extend the above construction to the equivariant setting.
We omit the details, and summarize the main points in the following proposition:

\begin{proposition}
  \label{prop:tensor_product_chain_complexes}
  Let $\cA$ be an additive $\fg$-2-category.
  For a certain family of complexes called \emph{$\fg$-equi\-va\-riant homogeneous polycomplexes}, there exists a procedure that, given two $\fg$-equi\-va\-riant homogeneous polycomplexes $A_\bullet$ and $B_\bullet$, defines a $\fg$-equi\-va\-riant homogeneous polycomplex $A_\bullet\otimes B_\bullet$.
  We call it the \emph{graded tensor product of chain complexes}.
  This procedure is such that:
  \begin{equation*}
    A_\bullet\simeq^\fg C_\bullet
    \quad\an\quad
    B_\bullet\simeq^\fg D_\bullet
    \qquad\Rightarrow\qquad
    A_\bullet\otimes B_\bullet\simeq^\fg C_\bullet\otimes D_\bullet,
  \end{equation*} 
  where $\simeq^\fg$ denotes isomorphism in the relative homotopy category $\cK^\fg(\cA)$.
\end{proposition}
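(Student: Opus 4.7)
The plan is to reduce to the non-equivariant graded case established in the second author's master thesis \cite{Schelstraete_SupercategorificationKhovanovlikeTangle_2020} and then upgrade each step to be compatible with the $\fg$-action. First, I would recall the key notions: a \emph{homogeneous polycomplex} in $\cA$ consists of a $\bZ^n$-graded family of objects together with homogeneous differentials in each coordinate direction, anticommuting up to a fixed assignment of invertible scalars (a $1$-cochain on the polygrid whose coboundary matches the graded interchange defect). A \emph{$\fg$-equivariant homogeneous polycomplex} is one whose differential components are additionally $\fg$-equivariant morphisms of $\cA$. The tensor product $A_\bullet \otimes B_\bullet$ is then defined by concatenating polygradings, keeping the original differentials of $A_\bullet$ and $B_\bullet$, and fixing a valid scalar assignment on the new squares as dictated by the graded Koszul rule.

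Next, I would verify $\fg$-equivariance of the tensor product. Each component of the differential of $A_\bullet \otimes B_\bullet$ has, up to an invertible scalar, the form $\alpha \otimes \id$ or $\id \otimes \beta$ for components $\alpha$ of $d_A$ or $\beta$ of $d_B$. By the graded Leibniz rule together with the identity $g \cdot \id_w = 0$ (noted in the remark right after the definition of $\fg$-categories), horizontal whiskering with identities preserves $\fg$-equivariance, and multiplication by scalars in $\ringfoam$ commutes with the $\fg$-action; hence each component is $\fg$-equivariant. That two distinct valid scalar assignments yield isomorphic $\fg$-equivariant polycomplexes then follows exactly as in the non-equivariant case: the rescaling isomorphism is a scalar multiple of the identity on each object, hence automatically $\fg$-equivariant.

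For the homotopy invariance, I would observe that a $\fg$-equivariant chain morphism $f \colon A_\bullet \to C_\bullet$ induces $f \otimes \id_B \colon A_\bullet \otimes B_\bullet \to C_\bullet \otimes B_\bullet$, which is again a chain morphism by the standard Koszul argument and $\fg$-equivariant by the same whiskering reasoning as above. If $f$ is moreover a homotopy equivalence, then $f \otimes \id_B$ is one in the classical sense by the non-equivariant statement of \cite{Schelstraete_SupercategorificationKhovanovlikeTangle_2020}. By \cref{defn:formal_relative_homotopy_category}, any $\fg$-equivariant homotopy equivalence becomes an isomorphism in $\cK^\fg(\cA)$ — crucially, neither its homotopy inverse nor the witnessing homotopies need be $\fg$-equivariant, which matches the formal inversion in the construction of $\cK^\fg$. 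Combining with the symmetric argument in the second variable, $A_\bullet \simeq^\fg C_\bullet$ and $B_\bullet \simeq^\fg D_\bullet$ yield $A_\bullet \otimes B_\bullet \simeq^\fg C_\bullet \otimes B_\bullet \simeq^\fg C_\bullet \otimes D_\bullet$.

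The main obstacle is not conceptual but organisational: one must check that the whole inductive bookkeeping for polycomplexes (with multiple grading directions, possible interactions between them, and the choices of Koszul conventions made at each stage) transports verbatim to the $\fg$-equivariant world, without ever requiring the inversion of a morphism that fails to be $\fg$-equivariant. Since every change-of-convention isomorphism that appears acts as a scalar on each object and scalars commute with the $\fg$-action, this requirement is automatically met, and the non-equivariant proof carries through essentially unchanged.
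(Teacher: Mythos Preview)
Your proposal is correct and follows precisely the approach the paper outlines in the paragraph preceding the proposition: reduce to the non-equivariant construction of \cite{Schelstraete_SupercategorificationKhovanovlikeTangle_2020} and observe that every map appearing (differentials, change-of-convention rescalings, whiskered chain morphisms) is either a scalar multiple of an identity or a horizontal whiskering of a $\fg$-equivariant morphism, hence $\fg$-equivariant by \eqref{eq:axiomC_gcategories}. The paper itself gives no proof beyond the sentence ``It is not hard to extend the above construction to the equivariant setting. We omit the details,'' so your sketch in fact supplies more than the paper does, and along the same lines.
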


We can now define the tangle invariant:

\begin{definition} 
\label{defn:tangle_invariant}
  \def\webscl{.6}
  Assume $2$ is invertible in $\ringfoam$.
  Let $D$ be a diagram of tangled webs with markings.
  The complex $\fg\glKom(D)\in\Ch(\fg\markedfoam[])$ is defined on elementary marked webs as follows (the homological degree zero is underlined):
  \begin{gather*}
    \tikzpic{
      \draw[web1] (-1,0) to (1,0);
      \node[green_mark] (B) at (0,0){};
      \node[below={-3pt} of B] {\scriptsize $(\alpha,\beta_1,\beta_2)$};
    }[scale=.7][(0,0)]
    \mspace{10mu}\mapsto\mspace{10mu}
    \tikzpic{
      \draw[web1] (-1,0) to (1,0);
      \node[green_mark] (B) at (0,0){};
      \node[below={-3pt} of B] {\scriptsize $(\alpha,\beta_1,\beta_2)$};
      \draw[decorate,decoration=snake] (-1,-1) to (1,-1);
    }[scale=.7][(0,0)]
    \mspace{60mu}
    \tikzpic{
      \webm
    }[scale=\webscl][(0,.5*\webscl)]
    \mspace{10mu}\mapsto\mspace{10mu}
    \tikzpic{
      \webm
      \draw[decorate,decoration=snake] (0,-.5) to (1,-.5);
    }[scale=\webscl][(0,.5*\webscl)]
    \mspace{60mu}
    \tikzpic{
      \webs
    }[scale=\webscl][(0,.5*\webscl)]
    \mspace{10mu}\mapsto\mspace{10mu}
    \tikzpic{
      \websRoundMarkedB[0][0][]
      \draw[decorate,decoration=snake] (0,-.5) to (1,-.5);
    }[scale=\webscl][(0,.5*\webscl)]
    \\[2ex]
    %%%%%%%%%%%%%%%%%%%%%%%%%%%%%%
    %%%%%%%%%%%%%%%%%%%%%%%%%%%%%%
    \tikzpic{
      \webncr
    }[scale=\webscl][(0,.5*\webscl)]
    \quad\mapsto\quad
    \tikzpic{
      \websRoundMarkedB[0][0][]
      \webm[1][0]
    }[scale=\webscl][(0,.5*\webscl)]
    \;\langle-\frac{1}{2},\frac{1}{2}\rangle
    \quad
    \xrightarrow{\tikzpic{\funzip}[scale=.6]}
    \quad
    \tikzpic{
      \webid
      \draw[decorate,decoration=snake] (0,-.5) to (2,-.5);
    }[scale=\webscl][(0,.5*\webscl)]
    \\[2ex]
    %%%%%%%%%%%%%%%%%%%%%%%%%%%%%%
    %%%%%%%%%%%%%%%%%%%%%%%%%%%%%%
    \tikzpic{
      \webpcr
    }[scale=\webscl][(0,.5*\webscl)]
    \quad\mapsto\quad
    \tikzpic{
      \webid
    }[scale=\webscl][(0,.5*\webscl)]
    \;\langle-\frac{1}{2},\frac{1}{2}\rangle
    \quad
    \xrightarrow{\tikzpic{\fzip}[scale=.6]}
    \quad
    \tikzpic{
      \websRoundMarkedB[0][0][]
      \webm[1][0]
      \draw[decorate,decoration=snake] (0,-.5) to (2,-.5);
    }[scale=\webscl][(0,.5*\webscl)]
  \end{gather*}
  and extending to $D$ by taking graded tensor product of chain complexes, as given in \cref{prop:tensor_product_chain_complexes}.
\end{definition}

\begin{remark}
  Contrary to \cite{QRS+_SymmetriesEquivariantKhovanovRozansky_2023}, twists do not only arise from crossings. This should be explained by the fact that while both theories are oriented (ie use webs), the setting of \cite{SV_OddKhovanovHomology_2023} uses the more restricted setting of directed webs.
  At the time of writing, there is no non-directed model for odd (or covering) Khovanov homology.
\end{remark}

The next two subsections explore its property, namely topological invariance, and how markings can slide through crossings.

\subsection{Topological invariance}
\label{subsec:topological_invariance}

In this subsection, we prove topological invariance:

\begin{theorem}
  \label{thm:topological_invariance}
  Assume $2$ is invertible in $\ringfoam$.
  Let $T$ be a marked tangle and $D$ a marked tangled web presenting $T$.
  Denote $N_+$ (resp.\ $N_-$) the number of positive (resp.\ negative) crossings in $D$, and $w\coloneqq N_+-N_-$ its writhe.
  In the relative homotopy category $\cK^\fg(\fg\markedfoam[])$, the object
  \begin{gather*}
    \fg\glKh(D)\coloneqq t^{N_+}\fg\glKom(D)\langle\frac{w+N_+}{2},-\frac{w+N_+}{2}\rangle
  \end{gather*}
  only depends on $T$, up to isomorphism.
  We write $\glCKh(D)\coloneqq \fg\glKh(D)$ when $\fg=\grgl_2^\geq$ and $\glOKh(D)\coloneqq \fg\glKh(D)$ when $\fg=\gloo$.
\end{theorem}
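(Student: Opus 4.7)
The plan is to upgrade the standard topological invariance argument for the non-equivariant theory to the relative homotopy category $\cK^\fg(\fg\markedfoam[])$. Since $\fg\glKom(D)$ is defined by graded tensor product of elementary complexes whose differentials are $\fg$-equivariant after inserting the appropriate markings (\cref{lem:crossing_complex_equivariant}), the complex is $\fg$-equivariant by construction. What remains is to verify that the chain homotopy equivalences realising invariance can be chosen $\fg$-equivariant; note that in $\cK^\fg$ we do \emph{not} need these equivalences to admit $\fg$-equivariant inverses, which gives significant flexibility.

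The moves to check split into three classes: (a) changes between marked tangled web presentations of the same marked tangle (web isotopies and changes of endpoints); (b) marking slides, both along strands and through cups and caps as in \eqref{eq:marking_slide_through_cup_cap}; and (c) the Reidemeister moves R1, R2, R3. For (a), the required isomorphisms at the chain level are built from the $\fg$-equivariant web isomorphisms in \cref{lem:web_defining_relations_equivariance}, together with the colimit structure of $\fg\foam^{\greenmarking}$ from \cref{defn:colimit_of_foam}, which makes endpoint changes canonical. Moves (b) follow directly from \cref{lem:twist_dot_slide_cup_cap}, since sliding does not modify the total markings $\epsilon_{W^{\greenmarking}}(h_i)$ or $\epsilon_{W^{\greenmarking}}(f)$.

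For (c), the strategy is Bar-Natan-style Gaussian elimination performed $\fg$-equivariantly. The key input is the $\fg$-equivariant split short exact sequence in \cref{lem:web_defining_relations_equivariance}, which produces $\fg$-equivariant strong deformation retracts at the chain level. For R1, the extra loop produces a circle-like direct factor that decomposes into a degree-shifted identity and a contractible summand; aggregating across all crossings, the surviving shift matches exactly $t^{N_+}\langle(w+N_+)/2,-(w+N_+)/2\rangle$, and this shift is $\fg$-equivariantly absorbed into the markings thanks to \cref{lem:h-equivariance} and \cref{rem:notation_bracket_twist}. R2 proceeds similarly, using the same split SES to cancel a contractible summand and identify the remainder with the trivial complex. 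R3 reduces to two applications of R2 in a cubical diagram (or, equivalently, to the square relations from \cref{lem:web_defining_relations_equivariance} iterated along the cube of resolutions), each handled equivariantly. At every step, the connecting maps built from dotted identities are checked to be $\fg$-equivariant via \cref{lem:f-equivariance} and \cref{lem:e-equivariance}.

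The main obstacle will be R3, since it involves a tricolored cube of complexes and a homotopy equivalence with many components. However, once R2 is settled, the R3 decomposition into elementary Gaussian eliminations uses only the $\fg$-equivariant tools already in place, so the verification is essentially mechanical. The secondary concern is that marking positions can migrate during the Gaussian elimination process, which a priori risks spoiling the flatness condition $\epsilon(f)=\epsilon(h_1)+\epsilon(h_2)$ from \cref{lem:family_of_twists_foam}(ii); but since this condition is additive and invariant under the relevant moves, and since the marking redistribution of \cref{rem:notation_bracket_twist} lets us freely move $\langle a,b\rangle$-shifts into green dots at any point on a strand, the bookkeeping poses no genuine obstruction.
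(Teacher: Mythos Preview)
Your proposal is correct and follows essentially the same approach as the paper: work locally via the graded tensor product (\cref{prop:tensor_product_chain_complexes}), handle planar isotopies and marking slides via \cref{lem:twist_dot_slide_cup_cap} and \cref{lem:web_defining_relations_equivariance}, and then check R1--R3 by cancelling contractible pieces against the split short exact sequence, noting that only the forward map needs to be $\fg$-equivariant. The only presentational difference is that the paper packages your ``Gaussian elimination'' step as an abstract homotopy lemma (\cref{lem:abstract_homotopy_equivalence}): given $P_\bullet \to C_\bullet \xrightarrow{g} D_\bullet \to Q_\bullet$ split exact at the middle with $P_\bullet,Q_\bullet$ contractible, the map $g$ is a homotopy equivalence---this makes it transparent that the splitting (your retraction) need not be equivariant, which is exactly the point you flag in your first paragraph.
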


Here we remind the reader that as defined in the beginning of \cref{subsec:twist_foam}, every twist $(\alpha,\beta_1,\beta_2)$ carries a shift in the quantum grading by $q^{\beta_2-\beta_1}$.
Note that this theorem does not say anything on how markings can slide through crossings: this is discussed in the next subsection.

\begin{remark}
  \label{rem:local_invariant_2_invertible}
  If one restricts $\grgl^\geq_2$ to $\grsl^\geq_2$ as in \cref{ex:defn_covering_sl2} (resp.\ $\gloo$ to $\sloo$ as in \cref{ex:defn_sloo}), then one can do away with the condition that $2$ is invertible in the ground ring $\ringfoam$.
\end{remark}

The remainder of this subsection is devoted to the proof of \cref{thm:topological_invariance}.
The proof is adapted from the proof of invariance in \cite{SV_OddKhovanovHomology_2023}, incorporating $\fg$-equivariance as in the analogous proof in \cite{QRS+_SymmetriesEquivariantKhovanovRozansky_2023}.
Given our description of the relative homotopy category given in \cref{subsec:relative_homotopy_category}, finding an isomorphism in $\cK^\fg(\fg\foam^{\greenmarking})$ amounts to finding a zigzag of $\fg$-equivariant homotopy equivalences in $\underline{\Ch}(\fg\foam^{\greenmarking})$.

Thanks to the properties of the graded tensor product of complexes (\cref{prop:tensor_product_chain_complexes}), we can work locally.
We first show invariance under planar isotopy in \cref{lem:invariance_planar_isotopies}, where a \emph{planar isotopy} between two marked tangles is a planar isotopy between the underlying tangles, such that markings do not slide through crossings. (In other words, it consists of the usual planar isotopy relations, together with \eqref{eq:marking_slide_through_cup_cap}).
We then show invariance under Reidemeister I, Reidemeister II and Reidemeister III in \cref{lem:invariance_Reidemeister_I}, \cref{lem:invariance_Reidemeister_II} and \cref{lem:invariance_Reidemeister_III}, respectively.

\begin{lemma}
  \label{lem:invariance_planar_isotopies}
  Let $D_1$ and $D_2$ be two marked tangled webs. If $\undcomp(D_1)$ and $\undcomp(D_2)$ are planar isotopic, then there is an equivariant isomorphism between $\fg\glKom(D_1)$ and $\fg\glKom(D_2)$ in $\Ch(\fg\markedfoam[])$.
\end{lemma}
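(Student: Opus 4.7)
My plan is to reduce the statement to a finite checklist of elementary local moves and exhibit an explicit $\fg$-equivariant chain isomorphism for each. Because $\fg\glKom$ is built by iterated graded tensor product of the elementary complexes of \cref{defn:tangle_invariant}, and \cref{prop:tensor_product_chain_complexes} guarantees that tensoring preserves $\fg$-equivariant isomorphism, it is enough to treat each local move on the subdiagram where it occurs. Note that I am working in $\Ch(\fg\markedfoam[])$, not the relative homotopy category, so I need genuine chain isomorphisms rather than homotopy equivalences — but all the structural maps I will use are in fact isomorphisms.

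First I would dispose of the moves not involving markings, which are exactly the planar isotopy moves between tangled webs (pulling a web piece through a strand, commuting distant webs, unzipping a cup-on-top-of-zip digon, the square-switch for disjoint colors, etc.). The underlying non-equivariant chain isomorphisms are already constructed in \cite{SV_OddKhovanovHomology_2023}; my task is to verify that they can be upgraded to $\fg$-equivariant ones. This is supplied by \cref{lem:web_defining_relations_equivariance}, which gives the square-switch and digon isomorphisms equivariantly, together with the $\fg$-equivariance of zips, unzips, cups and caps (with the appropriate twists) from \cref{lem:crossing_complex_equivariant}. Since the elementary crossing complexes of \cref{defn:tangle_invariant} are themselves assembled from these $\fg$-equivariant building blocks, propagating equivariance through the standard invariance argument is formal.

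Next I would handle the moves involving markings, namely the slide relations \eqref{eq:marking_slide_through_cup_cap}. At the tangled-web level, a tangle cup or cap decomposes into a web cup/cap composed with an (un)zip and further strands, so the relevant local move becomes that of sliding a green marking along a strand through a split or merge web; this is exactly the content of \cref{lem:twist_dot_slide_cup_cap}. Degenerate cases (identity strand, or a cup/cap that is already a web cup/cap) follow directly from the Leibniz-rule definition of the twist $\tau$ in \cref{defn:twist_foam}. Pre- and post-composing with the already-$\fg$-equivariant morphisms produced above then assembles the isomorphism between the complexes on either side of the slide.

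The main obstacle I anticipate is not logical but combinatorial: tracking the quantum-degree shifts $\langle\beta_1,\beta_2\rangle$ induced by marking twists across each local move, particularly for crossing complexes where a marking can redistribute a shift when it slides past an (un)zip inside a mixed crossing. This bookkeeping must be done carefully so that the proposed chain isomorphisms remain quantum-degree preserving. The guiding principle is that $\tau_{W^{\greenmarking}}(h_i)=\epsilon_{W^{\greenmarking}}(h_i)\id_W$, so the total $h_i$-twist is additive in the web and unchanged by redistributing markings along strands, while the $f$-twist is controlled by \cref{lem:f-equivariance}, which allows markings to migrate freely within a connected component of the underlying web.
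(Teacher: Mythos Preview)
Your proposal is correct and follows essentially the same approach as the paper's proof: reduce to local moves via \cref{prop:tensor_product_chain_complexes}, handle marking slides with \cref{lem:twist_dot_slide_cup_cap}, and upgrade the non-equivariant planar isotopy isomorphisms from \cite{SV_OddKhovanovHomology_2023} using the equivariant web isomorphisms of \cref{lem:web_defining_relations_equivariance}. The paper's proof is more terse and additionally singles out that interchanges of distant elementary tangles are realized by foam crossings (which are $\fg$-equivariant since the action is zero on crossings), and gives an explicit pitchfork example; your final paragraph of quantum-degree bookkeeping is not needed, as the shifts are automatically accounted for by \cref{lem:h-equivariance} once the underlying maps are identified.
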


\begin{proof}
  We have already seen in \cref{lem:twist_dot_slide_cup_cap} that markings can slide through cups and caps.
  Hence, it suffices to prove invariance under elementary planar isotopies (see \cite[Figure~3.2]{SV_OddKhovanovHomology_2023}), following the proof of Lemma 3.8 in \cite{SV_OddKhovanovHomology_2023}.
  On the one hand,
  invariance under planar isotopies interchanging two elementary tangles is realised by foam crossings, which are always $\fg$-equivariant;
  on the other hand,
  invariance under zigzags isotopies and pitchfork isotopies essentially use the isomorphisms given in \cref{lem:web_defining_relations_equivariance}, which we showed to be $\fg$-equivariant.
  For instance, the isomorphism for one of the pitchfork isotopies is given as follows:
  \begin{center}
    \begin{tikzpicture}
      \def\vspc{2}\def\hspc{5}
      \def\webxscl{.4}\def\webyscl{.3}
      \node (A1) at (0,\vspc) {
        \tikzpic{
          \websRoundMarkedT[1][0]
          \webpcr[0][1]
          \draw[web1] (0,0) to (1,0);
          \draw[web1] (1,2) to (2,2);
        }[xscale=\webxscl,yscale=\webyscl]
      };
      \node (A2) at (\hspc,\vspc) {
        \tikzpic{
          \websRoundMarkedT[1][0]
          \draw[web1] (1,2) to (2,2);
        }[xscale=\webxscl,yscale=\webyscl]
        $\langle -\frac{1}{2},\frac{1}{2} \rangle$
      };
      \node (A3) at (2*\hspc,\vspc) {
        \tikzpic{
          \websRoundMarkedT[1][0]
          \websRoundMarkedT[-1][1]\webm[0][1]
          \draw[web1] (0,0) to (1,0);
          \draw[web1] (1,2) to (2,2);
          \draw[web1] (-1,0) to (0,0);
        }[xscale=\webxscl,yscale=\webyscl]
      };
      \node (B1) at (0,0) {
        \tikzpic{
          \websRoundMarkedB[1][1]\webm[2][1]
          \webncr[0][0]\websRoundMarkedB[3][0]
          \draw[web1] (0,2) to (1,2);
          \draw[web1] (3,2) to (4,2);
          \draw[web1] (1,0) to (3,0);
        }[xscale=\webxscl,yscale=\webyscl]
      };
      \node (B2) at (\hspc,0) {
        \tikzpic{
          \websRoundMarkedB[1][1]\webm[2][1]
          \websRoundMarkedB[-1][0]\webm[0][0]\websRoundMarkedB[3][0]
          \draw[web1] (-1,2) to (1,2);
          \draw[web1] (3,2) to (4,2);
          \draw[web1] (1,0) to (3,0);
        }[xscale=\webxscl,yscale=\webyscl]$\langle -\frac{1}{2},\frac{1}{2} \rangle$
      };
      \node (B3) at (2*\hspc,0) {
         \tikzpic{
          \websRoundMarkedB[1][1]\webm[2][1]
          \websRoundMarkedB[3][0]
          \draw[web1] (3,2) to (4,2);
          \draw[web1] (1,0) to (3,0);
        }[xscale=\webxscl,yscale=\webyscl]
      };
      \draw[->] (A2) to (A3);
      \node[above] at (1.5*\hspc,\vspc) {
        \tikzpic{\fzip[-.5][0][2]\frst[1][0]}[scale=.4]
      };
      \draw[->] (B2) to (B3);
      \node[above] at (1.5*\hspc,0) {
        \tikzpic{\funzip[-.5][0]\frst[1][0][2]\flst[2][0][2]\frst[2.5][0]}[scale=.4]
      };
      \draw[->] (B2) to (A2);
      \node[left] at (\hspc,.5*\vspc) {$\lambda\;
        \tikzpic{
          \clip (-2,0) rectangle (2,2);
          \funzip[0][0][2]
          \begin{scope}[scale=2]
            \fcap[-.25][0]
          \end{scope}
          \draw[foamdraw1] (-1.5,0) to (-1.5,2);
        }[scale=.4]$
      };
      \draw[->] (B3) to (A3);
      \node[right] at (2*\hspc,.5*\vspc) {$\id$};
      \node[right={5pt} of A1] {:};
      \node[right={5pt} of B1] {:};
    \end{tikzpicture}
  \end{center}
  where $\lambda$ is some scalar that we do not need to compute; here we use the squeezing relation. This concludes.
\end{proof}

Before proving invariance under Reidemeister moves, we recall the following homological fact.

\begin{lemma}
  \label{lem:abstract_homotopy_equivalence}
  Let $A$ be an additive category and let
  \[P_\bullet\overset{f}{\longrightarrow} C_\bullet\overset{g}{\longrightarrow} D_\bullet\overset{h}{\longrightarrow}Q_\bullet\]
  be a chain complex in $A$ which is split exact at $C_\bullet$ and $D_\bullet$. If $P_\bullet$ and $Q_\bullet$ are contractible, then $g$ is a homotopy equivalence with inverse given by the splitting.
\end{lemma}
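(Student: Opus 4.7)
The plan is to reduce the statement to two applications of the following standard key lemma from homological algebra: if $0 \to A_\bullet \to B_\bullet \to C_\bullet \to 0$ is a short exact sequence of chain complexes which is split in each degree, and $A_\bullet$ is contractible, then $B_\bullet \to C_\bullet$ is a homotopy equivalence, with inverse obtained from the degree-wise splittings. (The proof of the key lemma is the standard perturbation argument: the degree-wise splitting realises $B \cong A \oplus C$ as graded objects, the off-diagonal piece of the differential is a chain map $\delta\colon C \to A[1]$, contractibility of $A$ makes $\delta$ null-homotopic, and any chain homotopy for $\delta$ gives a change of basis making the decomposition chain-level.)

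Concretely, setting $I_\bullet := \mathrm{im}(g)$ viewed as a sub-chain-complex of $D_\bullet$ (well-defined since $g$ is a chain map), the split exactness at $C_\bullet$ and $D_\bullet$ furnishes two short exact sequences of chain complexes, each split in every degree:
\[ 0 \to \mathrm{im}(f) \to C_\bullet \to I_\bullet \to 0 \quad\text{and}\quad 0 \to I_\bullet \to D_\bullet \to D_\bullet/I_\bullet \to 0. \]
The map $g$ factors as the surjection from the first composed with the inclusion from the second, and the candidate inverse $g'$ prescribed by the splittings is the retraction from the second composed with the section from the first.

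To apply the key lemma to each piece, I need $\mathrm{im}(f)$ and $D/I$ to be contractible. This is where the contractibility of $P_\bullet$ and $Q_\bullet$ enters: the surjection $P_\bullet \twoheadrightarrow \mathrm{im}(f)$ (coming from $f$) and the injection $\bar h\colon D_\bullet/I_\bullet \hookrightarrow Q_\bullet$ (coming from the factorisation of $h$ through the cokernel of $g$ forced by $hg=0$) exhibit, under the implicit splittings, $\mathrm{im}(f)$ and $D/I$ as chain-level retracts of $P$ and $Q$ respectively. Conjugating the contracting homotopies of $P$ and $Q$ through these retracts then produces contracting homotopies of $\mathrm{im}(f)$ and $D/I$. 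Applying the key lemma twice yields homotopy equivalences $C_\bullet \simeq I_\bullet \simeq D_\bullet$ whose composite is $g$, with homotopy inverse $g'$.

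The main obstacle will be making precise the implicit splittings in the last step: ``split exact at $C$ and $D$'' must be understood as providing, or at least implying, compatible chain-level sections of $P \twoheadrightarrow \mathrm{im}(f)$ and retractions of $D/I \hookrightarrow Q$, so that the contractibility genuinely transfers. In the applications of this lemma elsewhere in the paper, such sections and retractions are immediate from the local geometric data (e.g.\ the specific foams witnessing the degree-wise decomposition), so the ``inverse given by the splitting'' is literally a concrete chain map already in hand; this is enough for the invocations in the Reidemeister invariance arguments to go through.
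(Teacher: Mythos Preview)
Your approach is workable but takes a considerably longer route than the paper.  The paper's proof is a direct three-line computation: letting $\ov{f}, \ov{g}$ denote the splitting maps at $C_\bullet$ (so that $f\ov{f} + \ov{g}g = \id_{C_\bullet}$) and $h^P$ the contracting homotopy of $P_\bullet$, one simply verifies that $f \circ h^P \circ \ov{f}$ is a chain homotopy from $\ov{g}g$ to $\id_{C_\bullet}$, using only that $f$ and $\ov{f}$ commute with the differentials and that $h^P d_P + d_P h^P = \id_P$.  The dual argument using $Q_\bullet$ handles $g\ov{g} \simeq \id_{D_\bullet}$.  There is no factorisation through the image, no auxiliary key lemma applied twice, and no need to transfer contractibility to $\im(f)$ or $D/I$.

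The extra complexity in your proposal comes largely from reading ``split exact'' as merely degree-wise split, which forces you into perturbation arguments and the ``main obstacle'' you flag at the end.  The paper's computation uses that the splitting map $\ov{f}$ is itself a chain map (the step $\ov{f}\, d_C = d_P\, \ov{f}$ is needed), and in the Reidemeister-move applications the splittings are indeed explicit chain-level foam maps.  Under that reading your obstacle evaporates --- but so does the motivation for the two-step factorisation, since the single homotopy $f h^P \ov{f}$ already does the whole job.  Your structural decomposition buys modularity (and would survive weaker hypotheses on the splitting), but for the lemma as stated and used it is heavier than necessary.
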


\begin{proof}
  Let $\ov{f}$ and $\ov{g}$ the maps giving the splitting at $C_\bullet$, so that $f\circ\ov{f}+\ov{g}\circ g=\id_{C_\bullet}$.
  Let $h^P$ be the homotopy between $\id_{P_\bullet}$ and $0$, so that $h^P\circ d_P+d_P\circ h^P=\id_P$.
  The map $f\circ h^P\circ \ov{f}$ defines a homotopy between $\ov{g}\circ g$ and $\id_{C_\bullet}$, as one can check that:
  \begin{align*}
    (f\circ h^P\circ \ov{f})\circ d_{C_\bullet}+d_{C_\bullet}\circ (f\circ h^P\circ \ov{f})
    &= 
    f\circ h^P\circ d_{P_\bullet}\circ \ov{f}+f\circ d_{P_\bullet}\circ h^P\circ \ov{f}
    \\
    &=
    f\circ\ov{f}
    =
    \id_{C_\bullet}-\ov{g}\circ g.
  \end{align*}
  A similar argument gives a homotopy between $g\circ\ov{g}$ and $\id_{D_\bullet}$.
\end{proof}

\begin{lemma}
  \label{lem:invariance_Reidemeister_I}
  Let $D$ be a marked tangled web.
  In the relative homotopy category $\cK^\fg(\fg\foam^{\greenmarking})$, the object $\fg\glKh(D)$ is invariant under Reidemeister I moves, up to isomorphism.
\end{lemma}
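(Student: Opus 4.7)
The plan is to work locally via \cref{prop:tensor_product_chain_complexes}: it suffices to show that the two-term complex $\fg\glKom(K_\pm)$ associated to a small kink is $\fg$-equivariantly homotopy equivalent to the identity strand, with a quantum and homological shift matching the writhe correction built into $\fg\glKh$. By \cref{lem:invariance_planar_isotopies} I may freely reposition the kink, so I fix a positive kink $K_+$ consisting of a positive crossing with a cap closing off two adjacent endpoints (the negative case $K_-$ will be symmetric, with the roles of zip and unzip, cups and caps exchanged).

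Expanding the crossing using \cref{defn:tangle_invariant} and simplifying each of the two smoothings by the pitchfork and zigzag isomorphisms of \cref{lem:web_defining_relations_equivariance}, I would rewrite $\fg\glKom(K_+)$ as
\begin{equation*}
\bigl[\;S_0 \xrightarrow{\;\partial\;} S_1\;\bigr],
\end{equation*}
where $S_1$ is a strand (coming from zigzag cancellation in the merge--split smoothing) and $S_0$ is a strand carrying a closed shaded disk picked up from the resolution. Applying the neck-cutting relation decomposes $S_0 \cong S_0^{\bullet} \oplus S_0^{\mathrm{empty}}$ as a direct sum of a dotted and an empty strand, and a direct inspection using the evaluation-of-bubbles and squeezing relations shows that the component of $\partial$ landing on one of these summands is an isomorphism to $S_1$ (up to a unit scalar in $\ringfoam$).

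With the decomposition in place, I would invoke \cref{lem:abstract_homotopy_equivalence} on the short exact sequence extracting the contractible two-term subcomplex formed by this isomorphism, leaving a single surviving summand isomorphic to the straight strand with certain twist shifts. The inclusion and projection realizing this homotopy equivalence are built from cups, caps, and the zip/unzip appearing in \cref{defn:tangle_invariant}; their $\fg$-equivariance follows from \cref{lem:crossing_complex_equivariant}, combined with \cref{lem:h-equivariance} to match the $(a,b)$-grading shift against the $\lieh_i$-twist of the target strand, \cref{lem:f-equivariance} to move the $\lief$-component of the round marking across the cap via \cref{lem:twist_dot_slide_cup_cap}, and, in the super case, \cref{lem:e-equivariance} which ensures $\liee$-equivariance from the trace-zero condition on the coefficients appearing in the two neck-cutting terms.

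The main obstacle, and the reason this is not entirely automatic, is the bookkeeping of the round marking $(-1,-\tfrac{1}{2},-\tfrac{1}{2})$ inserted by the crossing formula: after being absorbed by the cap through the marking-slide relation \eqref{eq:marking_slide_through_cup_cap} it must combine cleanly with the explicit shifts $\langle -\tfrac{1}{2},\tfrac{1}{2}\rangle$ of the crossing complex and the writhe correction $t^{N_+}\langle \tfrac{w+N_+}{2}, -\tfrac{w+N_+}{2}\rangle$ so that the surviving strand appears \emph{without} residual twist. This is precisely where the flatness condition $\alpha = \beta_1+\beta_2$ of \cref{lem:family_of_twists_foam}(ii) is needed; it guarantees that the $\lief$- and $\lieh_i$-contributions of the inherited marking cancel against the $\lief$-action on the dotted cup, so that the homotopy equivalence is genuinely $\fg$-equivariant and not merely graded-equivariant.
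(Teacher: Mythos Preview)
Your strategy is essentially the paper's: sandwich the kink complex in a split exact sequence whose ends are contractible, then invoke \cref{lem:abstract_homotopy_equivalence}. The paper does this by building the vertical sequence directly---the degree where the smoothing carries an extra circle fits into the $\fg$-equivariant split exact sequence of \cref{lem:web_defining_relations_equivariance}, and in the other degree the merge--split--merge--split web maps isomorphically to a bare strand via the squeezing relation; the resulting quotient complex is literally $\Cone(\id)$. This avoids first simplifying the merge--split smoothing and then separately neck-cutting the other one.

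There are two places where your write-up wobbles. First, neck-cutting does \emph{not} give a $\fg$-equivariant direct-sum decomposition $S_0\cong S_0^\bullet\oplus S_0^{\mathrm{empty}}$: the forward maps (undotted cup and cap) are equivariant by \cref{lem:crossing_complex_equivariant}, but the splittings (dotted cup and cap) are not. This is exactly why one works in the relative homotopy category: only the middle map $g$ in \cref{lem:abstract_homotopy_equivalence} must be equivariant, and its homotopy inverse need not be. So you should not assert that both ``inclusion and projection'' are equivariant---only the one built from the undotted cup (negative kink) or undotted cap (positive kink) is, and that suffices. Second, your ``main obstacle'' paragraph is off-target: the flatness condition $\alpha=\beta_1+\beta_2$ is part of the definition of the twist family (\cref{lem:family_of_twists_foam}) and hence of $\fg\foam^{\greenmarking}$ itself; it is not invoked anew for Reidemeister~I. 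Once \cref{lem:crossing_complex_equivariant} is available, the equivariance of the relevant cup or cap with the indicated twist shifts is already established, and the only remaining work is the grading/twist bookkeeping you allude to, which is elementary.
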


\begin{proof}
  We can proceed locally.
  We must check that, in the relative homotopy category:
  \begin{gather*}
    \def\webxscl{.4}\def\webyscl{.3}
    \tikzpic{
      \draw[web1] (0,2) to (1,2);\webncr[1][1]\draw[web1] (2,2) to (3,2);
      \webm[0][0]\draw[web1] (1,0) to (2,0);\websRoundMarkedB[2][0]
    }[xscale=\webxscl,yscale=\webyscl][(0,1.25*\webyscl)]
    \;\simeq^\fg\;
    \tikzpic{
      \draw[web1] (0,2) to (3,2);
      \draw[web2] (0,.5) to (3,.5);
    }[xscale=\webxscl,yscale=\webyscl]
    \langle \frac{1}{2},-\frac{1}{2} \rangle
    \quad\an\quad
    \tikzpic{
      \draw[web1] (0,2) to (1,2);\webpcr[1][1]\draw[web1] (2,2) to (3,2);
      \webm[0][0]\draw[web1] (1,0) to (2,0);\websRoundMarkedB[2][0]
    }[xscale=\webxscl,yscale=\webyscl][(0,1.25*\webyscl)]
    \;\simeq^\fg\;
    t^{-1}
    \tikzpic{
      \draw[web1] (0,2) to (3,2);
      \draw[web2] (0,.5) to (3,.5);
    }[xscale=\webxscl,yscale=\webyscl]
    \langle -1,1 \rangle
    \;.
  \end{gather*}
  Using a split exact sequence in the spirit of \cref{lem:web_defining_relations_equivariance}, we can fit each left-hand side in a sequence which is split exact at the two middle chain complexes (we omit labelling the arrows in the second case):
  \begin{center}
    \begin{tikzpicture}
      \def\vspc{2}\def\hspc{4.5}
      \def\webxscl{.4}\def\webyscl{.3}
      \node (A2) at (\hspc,\vspc) {
        \tikzpic{
          \draw[web1] (0,1.5) to (3,1.5);
          \draw[web2] (0,0) to (3,0);
        }[xscale=\webxscl,yscale=\webyscl]
      };
      \node[left={-8pt} of A2] {\footnotesize $t^{-1}$};
      \node (A3) at (2*\hspc,\vspc) {
        \tikzpic{
          \draw[web1] (0,1.5) to (3,1.5);
          \draw[web2] (0,0) to (3,0);
        }[xscale=\webxscl,yscale=\webyscl]
      };
      \node[right={-8pt} of A3] {\footnotesize $\langle -\frac{1}{2},\frac{1}{2} \rangle$};
      \node (B2) at (\hspc,0) {
        \tikzpic{
          \websRoundMarkedB[1][1]\webm[2][1]
          \webm[0][0]\websRoundMarkedB[3][0]
          \draw[web1] (0,2) to (1,2);
          \draw[web1] (3,2) to (4,2);
          \draw[web1] (1,0) to (3,0);
        }[xscale=\webxscl,yscale=\webyscl]
      };
      \node[left={-8pt} of B2] {\footnotesize $t^{-1}$};
      \node (B3) at (2*\hspc,0) {
        \tikzpic{
          \draw[web1] (0,2) to (1,2);\webid[1][1][1]\draw[web1] (2,2) to (3,2);
          \webm[0][0]\draw[web1] (1,0) to (2,0);\websRoundMarkedB[2][0]
        }[xscale=\webxscl,yscale=\webyscl]
      };
      \node (C3) at (2*\hspc,-\vspc) {
        \tikzpic{
          \draw[web1] (0,1.5) to (3,1.5);
          \draw[web2] (0,0) to (3,0);
        }[xscale=\webxscl,yscale=\webyscl]
      };
      \node[right={-8pt} of C3] {\footnotesize $\langle \frac{1}{2},-\frac{1}{2} \rangle$};
      \node (D3) at (2*\hspc,-2*\vspc) {$0$};
      %
      % EDGES
      \draw[->] (A2) to (A3);
      \node[above] at (1.6*\hspc,\vspc) {\tiny $\id$};
      \draw[->] (B2) to (B3);
      \node[above] at (1.6*\hspc,0) {\tiny 
        \tikzpic{
          \flst[-.5][0]\funzip[0][0][2]\frst[1.5][0]
        }[scale=.3]
      };
      \draw[->] ($(B2.north)+(.1,0)$) to ($(A2.south)+(.1,0)$);
      \draw[<-,dashed] ($(B2.north)-(.1,0)$) to ($(A2.south)-(.1,0)$);
      \node[right] at (\hspc,.5*\vspc) {
        \tikzpic{
          \clip (-1,0) rectangle (2,2);
          \funzip[0][0][2]
          \begin{scope}[scale=2]
            \fcap[-.25][0]
          \end{scope}
        }[scale=.2]
      };
      \node[left] at (\hspc,.5*\vspc) {\tiny $Z^{-1}
        \tikzpic{
          \clip (-1,0) rectangle (2,2);
          \funzip[0][0][2]
          \begin{scope}[scale=2]
            \fcap[-.25][0]
          \end{scope}
        }[scale=.2,yscale=-1]\;$
      };
      \draw[->] ($(B3.north)+(.1,0)$) to ($(A3.south)+(.1,0)$);
      \draw[<-,dashed] ($(B3.north)-(.1,0)$) to ($(A3.south)-(.1,0)$);
      \node[right] at (2*\hspc,.5*\vspc) {
        \tikzpic{\fcap}[scale=.3]
      };
      \node[left] at (2*\hspc,.5*\vspc) {\tiny $
        \tikzpic{\fcup\fdot[.5][.8][2][1.5]}[scale=.3]
        +XY
        \tikzpic{\fcup\fdot[.5][.8][3][1.5]}[scale=.3]\;$
      };
      \draw[->] ($(C3.north)+(.1,0)$) to ($(B3.south)+(.1,0)$);
      \draw[<-,dashed] ($(C3.north)-(.1,0)$) to ($(B3.south)-(.1,0)$);
      \node[right] at (2*\hspc,-.5*\vspc) {
        \tikzpic{\fcup}[scale=.3]
      };
      \node[left] at (2*\hspc,-.5*\vspc) {\tiny $
        \tikzpic{\fcap\fdot[.5][.2][2][1.5]}[scale=.3]
        -
        \tikzpic{\fcap\fdot[.5][.2][3][1.5]}[scale=.3]\;$
      };
      \draw[->] (D3) to (C3);
      \node[right={-8pt} of B2,fill=white,inner sep=1pt] {\footnotesize $\langle -\frac{1}{2},\frac{1}{2} \rangle$};
      \node[right={-8pt} of A2,fill=white,inner sep=1pt] {\footnotesize $\langle -\frac{1}{2},\frac{1}{2} \rangle$};
    \end{tikzpicture}
    \hspace{1cm}
    \begin{tikzpicture}
      \def\vspc{2}\def\hspc{3.5}
      \def\webxscl{.4}\def\webyscl{.3}
      \node (A2) at (\hspc,\vspc) {
        \tikzpic{
          \draw[web1] (0,1.5) to (3,1.5);
          \draw[web2] (0,0) to (3,0);
        }[xscale=\webxscl,yscale=\webyscl]
      };
      \node[left={-8pt} of A2] {\footnotesize $t^{-1}$};
      \node (A3) at (2*\hspc,\vspc) {
        \tikzpic{
          \draw[web1] (0,1.5) to (3,1.5);
          \draw[web2] (0,0) to (3,0);
        }[xscale=\webxscl,yscale=\webyscl]
      };
      \node (B2) at (\hspc,0) {
        \tikzpic{
          \draw[web1] (0,2) to (1,2);\webid[1][1][1]\draw[web1] (2,2) to (3,2);
          \webm[0][0]\draw[web1] (1,0) to (2,0);\websRoundMarkedB[2][0]
        }[xscale=\webxscl,yscale=\webyscl]
      };
      \node[left={-8pt} of B2] {\footnotesize $t^{-1}$};
      \node (B3) at (2*\hspc,0) {
        \tikzpic{
          \websRoundMarkedB[1][1]\webm[2][1]
          \webm[0][0]\websRoundMarkedB[3][0]
          \draw[web1] (0,2) to (1,2);
          \draw[web1] (3,2) to (4,2);
          \draw[web1] (1,0) to (3,0);
        }[xscale=\webxscl,yscale=\webyscl]
      };
      \node (C2) at (\hspc,-\vspc) {
        \tikzpic{
          \draw[web1] (0,1.5) to (3,1.5);
          \draw[web2] (0,0) to (3,0);
        }[xscale=\webxscl,yscale=\webyscl]
      };
      \node[left={-8pt} of C2] {\footnotesize $t^{-1}$};
      \node[right={-8pt} of C2] {\footnotesize $\langle -1,1\rangle$};
      \node (D2) at (\hspc,-2*\vspc) {$0$};
      \draw[->] (A2) to (A3);
      \draw[->] (B2) to (B3);
      \draw[->] (A2) to (B2);
      \draw[->] (B2) to (C2);
      \draw[->] (C2) to (D2);
      \draw[->] (A3) to (B3);
      \node[right={-8pt} of B2,fill=white,inner sep=1pt] {\footnotesize $\langle -\frac{1}{2},\frac{1}{2} \rangle$};
    \end{tikzpicture}
  \end{center}
  Colours $\textdot[1]$, $\textdot[2]$ and $\textdot[3]$ are labels $1$, $2$ and $3$, respectively.
  The top chain complex is the cone of an identity while the bottom chain complex is zero: we are in the situation of \cref{lem:abstract_homotopy_equivalence}.
  Finally, the middle chain morphism is $\fg$-equivariant, so that it defines a $\fg$-equivariant homotopy equivalence.
\end{proof}

\begin{lemma}
  \label{lem:invariance_Reidemeister_II}
  Let $D$ be a marked tangled web.
  In the relative homotopy category $\cK^\fg(\fg\foam^{\greenmarking})$, the object $\fg\glKh(D)$ is invariant under Reidemeister II moves, up to isomorphism.
\end{lemma}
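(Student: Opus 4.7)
The plan is to argue locally, paralleling the proof of \cref{lem:invariance_Reidemeister_I}. Using \cref{lem:invariance_planar_isotopies} and symmetry, it suffices to establish the homotopy equivalence for one oriented instance of Reidemeister II (say, a positive crossing stacked on top of a negative crossing); the remaining case is analogous, exchanging the roles of zip and unzip. Expanding each crossing according to \cref{defn:tangle_invariant} gives a two-dimensional cube of resolutions: the $(0,0)$-corner is the identity web (two parallel lines), the $(1,0)$- and $(0,1)$-corners are the partially resolved webs, and the $(1,1)$-corner is the digonal web
\[
\tikzpic{
  \websRoundMarkedB[1][1]\webm[2][1]
  \webmRoundMarkedB\webs[3][0]
  \draw[web1] (0,2) to (1,2);\draw[web1] (3,2) to (4,2);
  \draw[web1] (1,0) to (3,0);
}[scale=.4][(2,1.25*.4)]
\]
with all connecting differentials given by zips, unzips, and identities tensored in, each of which is $\fg$-equivariant by \cref{lem:crossing_complex_equivariant}.

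The crucial input is the pair of $\fg$-equivariant isomorphisms of \cref{lem:web_defining_relations_equivariance}: the digonal corner decomposes $\fg$-equivariantly as a direct sum of two copies of the double-line web (shifted appropriately), and moreover these summands fit into $\fg$-equivariant split short exact sequences with the neighboring web that lets one isolate contractible pieces. Once this decomposition is applied, the four-term cube splits, up to $\fg$-equivariant isomorphism, into (i) a contractible piece pairing one digonal summand with the $(1,0)$-corner, (ii) a contractible piece pairing the other digonal summand with the $(0,1)$-corner, and (iii) the identity web at $(0,0)$. A direct computation of the shifts (the two $\langle -\tfrac{1}{2}, \tfrac{1}{2} \rangle$-twists from the crossings, the shifts of the digonal decomposition, and the homological/twist normalization of \cref{thm:topological_invariance}) confirms that the surviving summand is exactly $\fg\glKh$ of the identity tangle.

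To turn this into a morphism in $\cK^\fg$, I would package the previous step into a four-term sequence
\[
P_\bullet \longrightarrow \fg\glKh(D_{\mathrm{RII}}) \longrightarrow \fg\glKh(D_{\mathrm{id}}) \longrightarrow Q_\bullet
\]
of $\fg$-equivariant chain morphisms that is split exact at each of the two middle terms, in which $P_\bullet$ and $Q_\bullet$ are cones on identity maps between $\fg$-equivariant complexes and are therefore contractible. The splitting and projection maps come verbatim from the two isomorphisms in \cref{lem:web_defining_relations_equivariance} and are $\fg$-equivariant by that same lemma. Applying \cref{lem:abstract_homotopy_equivalence} then yields a $\fg$-equivariant homotopy equivalence between $\fg\glKh$ of the RII diagram and of the trivial tangle, hence an isomorphism in $\cK^\fg(\fg\foam^{\greenmarking})$.

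The main obstacle I anticipate is the twist bookkeeping rather than the equivariance itself. The inverse of a $\fg$-equivariant homotopy equivalence need not be $\fg$-equivariant (as noted after \cref{defn:formal_relative_homotopy_category}), and the splitting maps produced by \cref{lem:web_defining_relations_equivariance} involve caps and dotted cups whose individual twist labels must conspire so that each cancellation lands on the correct degree shift and on a $\fg$-equivariant chain-level isomorphism. Verifying this amounts to combining \cref{lem:h-equivariance}, \cref{lem:f-equivariance}, and \cref{lem:e-equivariance} with the explicit definition of $\tau$ (\cref{defn:twist_foam}), and relying on the constraint $\alpha = \beta_1 + \beta_2$ imposed in the super case by \cref{lem:family_of_twists_foam} to ensure that the $\liee$-twist introduced by a marking is compatible with the $\lieh_i$-twists it carries. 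This is the precise reason why the invariant requires only working up to $\fg$-equivariant homotopy equivalence, rather than up to $\fg$-equivariant chain isomorphism.
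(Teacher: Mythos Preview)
Your overall strategy—work locally, fit the RII complex into a four-term sequence split exact at the two middle terms, and then invoke \cref{lem:abstract_homotopy_equivalence}—is exactly the paper's approach, and you correctly point to \cref{lem:crossing_complex_equivariant} as the source of $\fg$-equivariance for the relevant chain map.

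There is, however, a concrete misidentification in your sketch. The web you display as the $(1,1)$-corner is the one from the first isomorphism of \cref{lem:web_defining_relations_equivariance}, a $(1,2)\to(1,2)$ web that shows up in the Reidemeister~I and pitchfork arguments. The actual $(1,1)$-corner of the RII cube is the horizontal composite of two dumbbell resolutions, i.e.\ the ``double dumbbell'' $SMSM$. Consequently the relevant decomposition is not ``into two copies of the double-line web'': the paper instead uses a split exact sequence (built from cup and cap foams) relating the double dumbbell to two \emph{single} dumbbells, and it is these that pair with the $(1,0)$- and $(0,1)$-corners to form the contractible piece. In the paper's four-term sequence one of the outer complexes is simply $0$ and only the other is a cone on an identity, so your picture with both $P_\bullet$ and $Q_\bullet$ nontrivial cones is slightly too symmetric. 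None of this breaks the architecture of the argument, but you should correct which web relation is being invoked before writing down the foams.
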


\begin{proof}
  We can proceed locally.
  We must check that, in the relative homotopy category:
  \begin{gather*}
    \tikzpic{\webpcr[0][0]\webncr[1][0]}[scale=.4]
    \simeq^{\fg}
    t^{-1}
    \tikzpic{\webid}[scale=.4]
    \langle -\frac{1}{2},\frac{1}{2}\rangle
    \simeq^{\fg}
    \tikzpic{\webncr[0][0]\webpcr[1][0]}[scale=.4]
    \;.
  \end{gather*}
  We focus on the first isomorphism, the other one being the same up to reordering direct sums.
  Using a split exact sequence in the spirit of \cref{lem:web_defining_relations_equivariance}, we can fit the left-hand side in a sequence which is split exact at the two middle chain complexes:
  \begin{center}
    \begin{tikzpicture}
      \def\vspc{2.5}\def\hspc{6}
      \def\webxscl{.4}\def\webyscl{.3}
      %
      % VERTICES
      \node (Z2) at (\hspc,2*\vspc) {$0$};
      \node (A1) at (0,\vspc) {$0$};
      \node (A2) at (\hspc,\vspc) {
        $\tikzpic{\webid}[xscale=\webxscl,yscale=\webyscl][(0,.5*\webyscl)]
        \;\langle -\frac{1}{2},\frac{1}{2} \rangle$
      };
      \node (A3) at (2*\hspc,\vspc) {$0$};
      \node (B1) at (0,0) {
        $\tikzpic{\websRoundMarkedB\webm[1][0]}[xscale=\webxscl,yscale=\webyscl][(0,.5*\webyscl)]
        \;\langle -1,1 \rangle$
      };
      \node (B2) at (\hspc,0) {
        $\tikzpic{\webid}[xscale=\webxscl,yscale=\webyscl][(0,.5*\webyscl)]
        \;\langle -\frac{1}{2},\frac{1}{2} \rangle
        \;\oplus\;
        \tikzpic{
          \websRoundMarkedB\webm[1][0]\websRoundMarkedB[2][0]\webm[3][0]
        }[xscale=\webxscl,yscale=\webyscl][(0,.5*\webyscl)]
        \;\langle -\frac{1}{2},\frac{1}{2} \rangle$
      };
      \node (B3) at (2*\hspc,0) {
        $\tikzpic{\websRoundMarkedB\webm[1][0]}[xscale=\webxscl,yscale=\webyscl][(0,.5*\webyscl)]$
      };
      \node (C1) at (0,-\vspc) {
        $\tikzpic{\websRoundMarkedB\webm[1][0]}[xscale=\webxscl,yscale=\webyscl][(0,.5*\webyscl)]
        \;\langle -1,1 \rangle$
      };
      \node (C2) at (\hspc,-\vspc) {
        $\tikzpic{
          \websRoundMarkedB\webm[1][0]
        }[xscale=\webxscl,yscale=\webyscl][(0,.5*\webyscl)]
        \;\langle -1,1 \rangle
        \;\oplus\;
        \tikzpic{
          \websRoundMarkedB\webm[1][0]
        }[xscale=\webxscl,yscale=\webyscl][(0,.5*\webyscl)]$
      };
      \node (C3) at (2*\hspc,-\vspc) {
        $\tikzpic{
          \websRoundMarkedB\webm[1][0]
        }[xscale=\webxscl,yscale=\webyscl][(0,.5*\webyscl)]$
      };
      %
      % EDGES
      % horizontal
      \draw[->] (A1) to (A2);
      \draw[->] (A2) to (A3);
      \draw[->] (B1) to (B2);
      \node[above] at (.35*\hspc,0) {\scriptsize
        $\left(\!\begin{array}{c}
          \tikzpic{\funzip}[scale=.2] \\
          \tikzpic{\frst[0][0]\flst[1][0]\fzip[2][0]}[scale=.2]
        \end{array}\!\right)$
      };
      \draw[->] (B2) to (B3);
      \node[above] at (1.7*\hspc,0) {\scriptsize
        $\left(\!\begin{array}{cc}
          \tikzpic{\fzip}[scale=.2] &
          -Z\;\tikzpic{\funzip[0][0]\frst[2][0]\flst[3][0]}[scale=.2] 
        \end{array}\!\right)$
      };
      \draw[->] (C1) to (C2);
      \node[above] at (.5*\hspc,-\vspc) {$
      \left(\!\begin{array}{c}
        1 \\ 0
      \end{array}\!\right)$};
      \draw[->] (C2) to (C3);
      \node[above] at (1.5*\hspc,-\vspc) {$
      \left(\!\begin{array}{cc}
        0 & 1
      \end{array}\!\right)$};
      %
      % vertical
      \draw[<-] (Z2) to (A2);
      \draw[<-] ($(A2.south)+(.1,0)$) to ($(B2.north)+(.1,0)$);
      \draw[->,dashed] ($(A2.south)-(.1,0)$) to ($(B2.north)-(.1,0)$);
      \node[right] at (1*\hspc,.5*\vspc) {\scriptsize
        $\left(\begin{array}{cc}
          1 & -Z^{-3}\;\tikzpic{
          \fcap[0][0][1]
          \draw[foamdraw1] (-.5,0) to (-.5,1);
          \draw[foamdraw1] (1.5,0) to (1.5,1);
          \begin{scope}[scale=2]
            \funzip[-.25][.5]
          \end{scope}
          }[scale=.2]
        \end{array}\right)$
      };
      \node[left] at (1*\hspc,.5*\vspc) {\scriptsize
        $\left(\begin{array}{c}
          1\\
          \tikzpic{
          \fcap[0][0][1]
          \draw[foamdraw1] (-.5,0) to (-.5,1);
          \draw[foamdraw1] (1.5,0) to (1.5,1);
          \begin{scope}[scale=2]
            \funzip[-.25][.5]
          \end{scope}
          }[scale=.2,yscale=-1]
        \end{array}\right)$
      };
      \draw[<-] ($(B1.south)+(.1,0)$) to ($(C1.north)+(.1,0)$);
      \draw[->,dashed] ($(B1.south)-(.1,0)$) to ($(C1.north)-(.1,0)$);
      \node[right] at (0,-.5*\vspc) {$\id$};
      \node[left] at (0,-.5*\vspc) {$\id$};
      \draw[<-] ($(B2.south)+(.1,0)$) to ($(C2.north)+(.1,0)$);
      \draw[->,dashed] ($(B2.south)-(.1,0)$) to ($(C2.north)-(.1,0)$);
      \node[right] at (1*\hspc,-.5*\vspc) {\scriptsize
        $\left(\!\begin{array}{cc}
          \tikzpic{\funzip}[scale=.2] & 0 \\
          \tikzpic{\frst[0][0]\flst[1][0]\fzip[2][0]}[scale=.2] 
          & -Z^{-3}\;\tikzpic{
            \fcap[0][0][1]
            \draw[foamdraw1] (-.5,0) to (-.5,1);
            \draw[foamdraw1] (1.5,0) to (1.5,1);
          }[scale=.2,yscale=-1]
        \end{array}\!\right)$
      };
      \node[left] at (1*\hspc,-.5*\vspc) {\scriptsize
        $\left(\!\begin{array}{cc}
          0 & \tikzpic{
            \fcap[0][0][1]
            \draw[foamdraw1] (-.5,0) to (-.5,1);
            \draw[foamdraw1] (1.5,0) to (1.5,1);
          }[scale=.2]
          \\
          \tikzpic{\fzip}[scale=.2]
          &
          -Z\;\tikzpic{\funzip[0][0]\frst[2][0]\flst[3][0]}[scale=.2] 
        \end{array}\!\right)$
      };
      \draw[<-] ($(B3.south)+(.1,0)$) to ($(C3.north)+(.1,0)$);
      \draw[->,dashed] ($(B3.south)-(.1,0)$) to ($(C3.north)-(.1,0)$);
      \node[right] at (2*\hspc,-.5*\vspc) {$\id$};
      \node[left] at (2*\hspc,-.5*\vspc) {$\id$};
    \end{tikzpicture}
  \end{center}
  % \ls{split thanks to super 4tu (see notes)}
  We omitted the homological degree: the middle column is in homological degree zero.
  The top chain complex is zero while the bottom chain complex is the cone of an identity: we are in the situation of \cref{lem:abstract_homotopy_equivalence}.
  Moreover, the middle chain morphism is $\fg$-equivariant thanks to \cref{lem:crossing_complex_equivariant}, so that it defines a $\fg$-equivariant homotopy equivalence.
\end{proof}

\begin{lemma}
  \label{lem:invariance_Reidemeister_III}
  Let $D$ be a marked tangled web.
  In the relative homotopy category $\cK^\fg(\fg\foam^{\greenmarking})$, the object $\fg\glKh(D)$ is invariant under Reidemeister III moves, up to isomorphism.
\end{lemma}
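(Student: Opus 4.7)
The plan is to follow the template of \cref{lem:invariance_Reidemeister_I,lem:invariance_Reidemeister_II}: work locally, exhibit a $\fg$-equivariant chain morphism between the two sides of the R3 move, and fit it into a split exact sequence whose flanking terms are contractible so that \cref{lem:abstract_homotopy_equivalence} applies. Using the already established $\fg$-equivariant R2 invariance, it suffices to check one braid-like variant of R3 (say the all-positive one); the remaining variants reduce to it by combination with R2 moves.

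For the chosen R3 configuration, the complex associated to each side is built from three crossings via the graded tensor product of \cref{prop:tensor_product_chain_complexes}, yielding a cube of resolutions indexed by $\{0,1\}^3$. Following the categorified Kauffman trick, I would identify on each side a subcomplex and a quotient complex of ``Reidemeister II type'', which are $\fg$-equivariantly contractible by the same Gaussian-elimination argument used in \cref{lem:invariance_Reidemeister_II}. The connecting chain morphism between the two reduced complexes is assembled from zip/unzip saddles carrying the twists introduced in \cref{defn:tangle_invariant} together with web-level isomorphisms; each component is $\fg$-equivariant by \cref{lem:crossing_complex_equivariant,lem:web_defining_relations_equivariance,lem:twist_dot_slide_cup_cap}, so the composite morphism is $\fg$-equivariant.

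The hard part, as already in the proof of \cref{lem:invariance_Reidemeister_II}, will be the bookkeeping of shifts and scalars: the twist shifts $\langle \beta_1, \beta_2 \rangle$ introduced at each marked crossing by \cref{defn:tangle_invariant}, the graded-Koszul signs arising from the tensor product in \cref{prop:tensor_product_chain_complexes}, and the overall homological and quantum shifts in $\fg\glKh$ must all match between the two sides of the move. Since R3 is performed away from any green marking, the total twist contribution on either side is identical, so no genuine $\fg$-equivariance obstruction arises beyond the local checks already encapsulated in the lemmas of \cref{subsec:twist_foam}. Once these shifts are reconciled, the $\fg$-equivariance of the resulting homotopy equivalence follows exactly the same pattern as for R2, with no new identity needed in $\fg\foam^{\greenmarking}$.
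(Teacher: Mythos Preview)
Your sketch is correct and matches the paper's approach: the paper simply states that the proof is an equivariant version of the argument in \cite{SV_OddKhovanovHomology_2023}, which in turn follows Bar-Natan's strategy \cite{Bar-Natan_KhovanovsHomologyTangles_2005}, and your outline is precisely that---reduce to a single R3 variant via the already-established equivariant R2, simplify each cube by peeling off contractible pieces of Reidemeister~II type, and check that the remaining chain maps are $\fg$-equivariant using \cref{lem:crossing_complex_equivariant,lem:web_defining_relations_equivariance}. The only minor remark is that in Bar-Natan's R3 argument the two simplified complexes end up \emph{isomorphic} (not merely connected by a further homotopy equivalence), so \cref{lem:abstract_homotopy_equivalence} is really only needed for the contraction steps, not for the final comparison.
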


\begin{proof}
    The proof is an equivariant version of the proof in \cite{SV_OddKhovanovHomology_2023}, following the general strategy of Bar-Natan \cite{Bar-Natan_KhovanovsHomologyTangles_2005}.
\end{proof}

%%%%%%%%%%%%%%%%%%%%%%%%%%%%%%
%%%%%%%%%%%%%%%%%%%%%%%%%%%%%%
%%%%%%%%%%%%%%%%%%%%%%%%%%%%%%
\subsection{Marking slide}
\label{subsec:marking_slide}

In this subsection, we prove the following ``marking slide'' lemma:

\begin{lemma}[marking slide lemma]
  \label{lem:dot_slide_lemma}
  Let $\omega=(\alpha,\beta_1,\beta_2)$ be a generic local twist.
  The identity chain map induces an isomorphism in the relative homotopy category $\cK^{\gloo}(\sfoam^{\greenmarking})$:
  \begin{gather*}
    \tikzpic{
      \webncr
      \node[green_mark] (B) at (.3,.15) {};
      \node[below={-1pt} of B] {\scriptsize $\omega$};
    }[scale=.7][(0,.5*.7)]
    \simeq^{\gloo}
    \tikzpic{
      \webncr
      \node[green_mark] (B) at (1-.3,1-.15) {};
      \node[above={-1pt} of B] {\scriptsize $\omega$};
    }[scale=.7][(0,.5*.7)]
    \;.
  \end{gather*}
  If one considers $\sfoam'$ (see \cref{rem:variant_graded_gl2_foams}) instead, then the roles of the overcrossing and the undercrossing are swapped.
\end{lemma}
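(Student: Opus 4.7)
My plan is to unfold both sides as explicit two-term chain complexes via \cref{defn:tangle_invariant} and to verify that the identity foam at each homological level defines a $\gloo$-equivariant chain map. Since the two marked complexes have identical underlying chain data (the $\omega$-marking does not interact with the differential $\funzip$), any such equivariant chain map is automatically a $\gloo$-equivariant chain isomorphism, hence a $\gloo$-equivariant homotopy equivalence, hence an isomorphism in $\cK^{\gloo}(\sfoam^{\greenmarking})$.

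Concretely, both complexes take the shape
\[
W_0\,\langle-\tfrac{1}{2},\tfrac{1}{2}\rangle \xrightarrow{\funzip} W_1,
\]
where $W_0$ is the merge--split resolution of the negative crossing (equipped with the round marking appearing in \cref{defn:tangle_invariant}) and $W_1$ is the identity resolution; the two complexes differ only in whether $\omega$ sits at the lower or the upper end of the distinguished strand passing through the crossing.

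Next, I would verify $\gloo$-equivariance generator by generator. The $\liee$ and $\lieh_j$ cases follow from \cref{lem:h-equivariance}: the twist $\tau(\liee)$ vanishes, and $\tau(\lieh_j) = \beta_j\,\id$ depends only on the scalar $\beta_j$, not on the marking's position. The essential content is $\lief$: its twist places an $\alpha$-weighted dot at the marking position. At level $W_0$, the two marking positions lie on the same connected component threading through the merge, the internal double line, and the split; using dot slide, dot migration, and the neck-cutting/squeezing relations of \cref{fig:rel_diagfoam_graded}, the two dots agree as elements of $\End(W_0)$, so $\lief$-equivariance at this level follows from \cref{lem:f-equivariance} with $G = H = \id$. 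At level $W_1$ the two positions sit on distinct strands of the identity web; here I would invoke \cref{lem:f-equivariance} with $G$ or $H$ drawn from the $\gloo$-equivariant zip/unzip of \cref{lem:crossing_complex_equivariant}, so that the discrepancy between the two dots is absorbed after composition with the complex's differential.

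The main obstacle is precisely this last step. Since a dot on the left strand of $W_1$ is genuinely distinct from a dot on the right strand as an endomorphism of $W_1$, $\lief$-equivariance at that level is not pointwise and has to be established by a careful application of \cref{lem:f-equivariance} through the complex differential, or equivalently by constructing an explicit equivariant correction absorbing the obstruction. The $\sfoam'$ variant follows by the same reasoning: by \cref{rem:variant_graded_gl2_foams}, the passage from $\sfoam$ to $\sfoam'$ exchanges $\fzip$ and $\funzip$ in the crossing resolutions, which swaps the ``freely-slideable'' role of the overcrossing and the undercrossing.
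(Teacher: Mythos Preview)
Your approach has a genuine gap at the $W_1$ level that cannot be repaired along the lines you suggest.

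You correctly observe that at $W_1$ (the identity resolution) the two marking positions lie on \emph{distinct} strands, so $\tau_{W_1^{\text{source}}}(\lief)$ and $\tau_{W_1^{\text{target}}}(\lief)$ are genuinely different endomorphisms: one places an $\alpha$-dot on the left strand, the other on the right. Consequently
\[
\lief\cdot_\tau \id_{W_1} \;=\; \tau_{W_1^{\text{target}}}(\lief) - \tau_{W_1^{\text{source}}}(\lief) \;\neq\; 0,
\]
and the identity on $W_1$ is \emph{not} $\lief$-equivariant. Your proposed rescue via \cref{lem:f-equivariance} does not apply: that lemma concerns morphisms $F$ that are sums of \emph{dotted} identities, and its conclusion is that $G\circ F\circ H$ is equivariant --- not $F$ itself. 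Taking $G$ or $H$ to be the differential $\funzip$ would at best reprove that $\funzip$ is equivariant, which you already knew; it says nothing about the component of the chain map at degree zero. For a chain map to lie in $\Ch(\sfoam^{\greenmarking})$, every component must be $\fg$-equivariant, and no amount of composing with the differential changes that.

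The phrase ``the identity chain map induces an isomorphism in $\cK^{\gloo}$'' does \emph{not} assert that the identity is a morphism in $\Ch(\sfoam^{\greenmarking})$; it asserts that there is an isomorphism in $\cK^{\gloo}$ whose image in $\underline{\cK}$ is the identity. The paper produces this by a zigzag: it introduces an auxiliary circle carrying the $\omega$-twist (the functor $\Phi_\omega$), builds intermediate complexes ${}_\omega C_\bullet$ and $C_\bullet^\omega$ with $\gloo$-equivariant homotopy equivalences to ${}_\omega D_\bullet$ and $D_\bullet^\omega$ via the partial resolution of \cref{lem:web_resolution_twist} and \cref{lem:abstract_homotopy_equivalence}, and then exhibits an explicit $\gloo$-equivariant isomorphism ${}_\omega C_\bullet \cong C_\bullet^\omega$ using the evaluation of shaded disks. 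The composite of this zigzag is the identity in $\underline{\cK}$, but the individual legs are what live in $\cK^{\gloo}$. The distinction between $\sfoam$ and $\sfoam'$ enters precisely in that last isomorphism, where the two variants of the shaded-disk relation give opposite signs --- this is also why your one-line reduction of the $\sfoam'$ case is too quick.
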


\begin{remark}
  \label{rem:marking_cannot_undercross}
  By considering the example 
  \begin{gather*}
  \tikzpic{
  \webpcr[0][0]\webncr[1][0]
  \webs[2][1]\webs[2][-1]
  \webm[-1][1]\webm[-1][-1]
  \draw[web1] (0,2) to (2,2);
  \draw[web1] (0,-1) to (2,-1);
  \node[green_mark] (B) at (1,-1) {};
  \node[above={-1pt} of B] {\scriptsize $-\omega$};
  \node[green_mark] (C) at (1,1) {};
  \node[above={-1pt} of C] {\scriptsize $\omega$};
  }[scale=.5]
  \;,
\end{gather*}
  one can check that indeed, the analogue statement for the other crossing does not hold:
  \begin{gather*}
    \tikzpic{
      \webpcr
      \node[green_mark] (B) at (.3,.15) {};
      \node[below={-1pt} of B] {\scriptsize $\omega$};
    }[scale=.7][(0,.5*.7)]
    \not\simeq^{\gloo}
    \tikzpic{
      \webpcr
      \node[green_mark] (B) at (1-.3,1-.15) {};
      \node[above={-1pt} of B] {\scriptsize $\omega$};
    }[scale=.7][(0,.5*.7)]
    \;,
  \end{gather*}
  and vice-versa if one considers $\sfoam'$. Indeed, if both dot slides did hold, then we would have 
  \begin{gather*}
  \tikzpic{
  %\webpcr[0][0]\webncr[1][0]
  \webs[2][1]\webs[2][-1]
  \webm[-1][1]\webm[-1][-1]
  \draw[web1] (0,2) to (2,2);
  \draw[web1] (0,-1) to (2,-1);
  \draw[web1] (0,1) to (2,1);
  \draw[web1] (0,0) to (2,0);
  }[scale=.5][(0,.5*.5)]
  \simeq^{\gloo}
  \tikzpic{
  \webs[2][1]\webs[2][-1]
  \webm[-1][1]\webm[-1][-1]
  \draw[web1] (0,2) to (2,2);
  \draw[web1] (0,-1) to (2,-1);
  \draw[web1] (0,1) to (2,1);
  \draw[web1] (0,0) to (2,0);
  \node[green_mark] (B) at (1,-1) {};
  \node[above={-1pt} of B] {\scriptsize $-2\omega$};
  \node[green_mark] (C) at (1,2) {};
  \node[above={-1pt} of C] {\scriptsize $2\omega$};
  }[scale=.5][(0,.5*.5)]
  \;.
\end{gather*}
\end{remark}

Before giving the proof, we discuss some consequences.

\begin{lemma}
  \label{lem:csq_dot_slide_lemma}
  Let $\omega=(\alpha,\beta_1,\beta_2)$ be a generic local twist.
  For each of the following cases, the identity chain map induces an isomorphism in the relative homotopy category $\cK^{\gloo}(\sfoam^{\greenmarking})$:
  \begin{gather*}
    \tikzpic{
      \webpcr
      \node[green_mark] (B) at (.3,.15) {};
      \node[below={-1pt} of B] {\scriptsize $\omega$};
    }[scale=.7][(0,.5*.7)]
    \simeq^{\gloo}
    \tikzpic{
      \webpcr
      \node[green_mark] (B) at (1-.3,1-.15) {};
      \node[above={-1pt} of B] {\scriptsize $-\omega$};
      \node[green_mark] (C) at (1-.3,.15) {};
      \node[below={-1pt} of C] {\scriptsize $2\omega$};
    }[scale=.7][(0,.5*.7)]
    \;,\quad
    \tikzpic{
      \webncr
      \node[green_mark] (B) at (.3,1-.15) {};
      \node[above={-1pt} of B] {\scriptsize $\omega$};
    }[scale=.7][(0,.5*.7)]
    \simeq^{\gloo}
    \tikzpic{
      \webncr
      \node[green_mark] (B) at (1-.3,.15) {};
      \node[below={-1pt} of B] {\scriptsize $-\omega$};
      \node[green_mark] (C) at (1-.3,1-.15) {};
      \node[above={-1pt} of C] {\scriptsize $2\omega$};
    }[scale=.7][(0,.5*.7)]
    \quad\an\quad
    \tikzpic{
      \webpcr
      \node[green_mark] (B) at (.3,1-.15) {};
      \node[above={-1pt} of B] {\scriptsize $\omega$};
    }[scale=.7][(0,.5*.7)]
    \simeq^{\gloo}
    \tikzpic{
      \webpcr
      \node[green_mark] (B) at (1-.3,.15) {};
      \node[below={-1pt} of B] {\scriptsize $\omega$};
    }[scale=.7][(0,.5*.7)]
    \;.
  \end{gather*}
  If one considers $\sfoam'$ instead, then the roles of the overcrossing and the undercrossing are swapped.
\end{lemma}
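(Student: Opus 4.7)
The strategy is to derive each of the three relations from the main marking slide lemma \cref{lem:dot_slide_lemma} by inserting a Reidemeister~II cancellation adjacent to the crossing in question and applying the main lemma within the extended configuration. Since the Reidemeister~II homotopy equivalence of \cref{lem:invariance_Reidemeister_II} and the planar isotopies of \cref{lem:invariance_planar_isotopies} are both $\gloo$-equivariant, each intermediate step produces an isomorphism in $\cK^{\gloo}(\sfoam^{\greenmarking})$.

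First I would dispose of the third relation, which is the cleanest. Starting from a positive crossing with $\omega$ at its top-left, I compose on the right with a negative crossing to form a pair that is Reidemeister~II-equivalent to the identity via \cref{lem:invariance_Reidemeister_II}. The top-left of the positive crossing sits on its under-strand, which continues as the under-strand of the inserted negative crossing all the way to its top-right. Applying the main lemma to this inserted negative crossing slides $\omega$ along the extended under-strand; combining with planar isotopy on the identity web obtained from Reidemeister~II and then contracting the inserted pair deposits $\omega$ at the bottom-right of the original positive crossing, which is the desired equivalence.

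The first and second relations use the same insertion trick, but now the marking starts on an over-strand. As $\omega$ is pushed through the inserted Reidemeister~II pair using the main lemma, the over-strand interacts with the under-strand through the zip/unzip morphisms in the chain complexes of the two crossings and through the neck-cutting and squeezing relations from \cref{fig:rel_diagfoam_graded}. This interaction produces both the sign flip---the $-\omega$ at the opposite end of the over-strand---and the doubling onto the under-strand---the $+2\omega$ at the adjacent corner. As a consistency check, these coefficients satisfy $\beta_i = -\beta_i + 2\beta_i$ and $\alpha = -\alpha + 2\alpha$, which is exactly the numerical input needed by \cref{lem:h-equivariance,lem:e-equivariance} to guarantee $\lieh_i$- and $\liee$-equivariance on the nose.

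The main obstacle is pinning down the precise $-\omega$ and $2\omega$ coefficients when tracking an over-strand marking through the inserted Reidemeister~II pair, which ultimately requires the explicit computation with neck-cutting and zip/unzip redistribution sketched above and essentially parallels the core bookkeeping already carried out in the proof of \cref{lem:dot_slide_lemma}. Once the first relation is established, the second relation follows by reflecting the diagram, and the corresponding statements in $\sfoam'$ are obtained by swapping the roles of over- and undercrossings, consistent with the analogous swap already noted in \cref{lem:dot_slide_lemma}.
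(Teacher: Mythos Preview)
Your plan is more complicated than needed, and for the first two relations it is not actually a proof: you flag the $-\omega$, $2\omega$ coefficients as the ``main obstacle'' and then defer to unspecified ``explicit computation with neck-cutting and zip/unzip redistribution'' that ``essentially parallels'' the proof of \cref{lem:dot_slide_lemma}. The numerical check you offer only confirms that those particular values would be compatible with $\lieh_i$- and $\liee$-equivariance, not that they arise from your Reidemeister~II maneuver; nothing in your argument produces them.

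The paper's route avoids Reidemeister moves and foam relations entirely. The ingredient you are missing is the elementary observation that the identity chain map already gives
\[
(\text{crossing},\ -\omega\ \text{top-left},\ \omega\ \text{bottom-left})\ \simeq^{\gloo}\ (\text{crossing},\ -\omega\ \text{top-right},\ \omega\ \text{bottom-right}),
\]
since on each of the two resolutions these configurations put the same total twist on every connected component (top-left is joined to top-right and bottom-left to bottom-right in the parallel-strands resolution, and all four corners are joined in the dumbbell). Combined with additivity of markings and \cref{lem:dot_slide_lemma}, this yields the second relation at once: split $\omega$ at top-left as the pair $(\omega\ \text{top-left},\,-\omega\ \text{bottom-left})$ plus an extra $\omega$ at bottom-left, slide the pair to the right column by the displayed fact, and slide the extra $\omega$ from bottom-left to top-right by \cref{lem:dot_slide_lemma}. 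The first and third relations then follow by planar isotopy, since bending a strand with a cup and a cap converts a positive crossing into a negative one and cyclically permutes the four corners. Your Reidemeister~II trick for the third relation can be made rigorous, but it is a detour compared to this.
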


\begin{proof}
  The follows from \cref{lem:dot_slide_lemma} using invariance under planar isotopies \cref{lem:invariance_planar_isotopies} and the fact that in general, the identity chain map induces an isomorphism in $\cK^\gloo(\sfoam)$:
  \begin{gather*}
    \tikzpic{
      \webncr
      \node[green_mark] (B) at (.3,1-.15) {};
      \node[above={-1pt} of B] {\scriptsize $-\omega$};
      \node[green_mark] (B) at (.3,.15) {};
      \node[below={-1pt} of B] {\scriptsize $\omega$};
    }[scale=.7][(0,.5*.7)]
    \simeq^\fg
    \tikzpic{
      \webncr
      \node[green_mark] (B) at (1-.3,.15) {};
      \node[below={-1pt} of B] {\scriptsize $\omega$};
      \node[green_mark] (B) at (1-.3,1-.15) {};
      \node[above={-1pt} of B] {\scriptsize $-\omega$};
    }[scale=.7][(0,.5*.7)]
  \end{gather*}
  This concludes.
\end{proof}

The remainder of this subsection is devoted to the proof of \cref{lem:dot_slide_lemma}.
The proof is inspired by the proof of the analogue result in \cite[lemma~4.4]{QRS+_SymmetriesEquivariantKhovanovRozansky_2023}; see also \cite[Lemma~5.2]{Roz_$mathfraksl_2$ActionLink_2023}. Their proof originates from \cite{KR_PositiveHalfWitt_2016}.

We begin with an outline.
Write $D_\bullet$ for the complex associated to the crossing, and ${}_\omega D_\bullet$ (resp.\ $D_\bullet^\omega$) for the complex with the additional $\omega$-twist at the top left (resp.\ bottom right).
We aim to show that ${}_\omega D_\bullet\simeq^\fg D_\bullet^\omega$.
The main idea is to add a circle to the web, and move the $\omega$-twist to that circle. More formally, we define in \cref{lem:web_resolution_twist} a partial resolution of a generic $\omega$-twist via $\omega$-twisted circles.
Applying this to ${}_\omega D_\bullet$ gives another complex ${}_\omega C_\bullet$ together with a $\gloo$-equivariant homotopy equivalence ${}_\omega C_\bullet\to {}_\omega D_\bullet$; similarly, we find a $\gloo$-equivariant homotopy equivalence $C_\bullet^\omega\to D_\bullet^\omega$.
We are then able to give an $\gloo$-equivariant isomorphism between ${}_\omega C_\bullet$ and $C_\bullet^\omega$.
This leads to a zigzag of $\gloo$-equivariant homotopies between ${}_\omega D_\bullet$ and $D_\bullet^\omega$, and hence an isomorphism in the relative homotopy category.

\medbreak

We now give the details, beginning with some preliminary definitions.
For a web $W$, we shall write $\Phi(W)$ the web obtained by extending $W$ forward with a marked circle, as shown below.
\begin{gather*}
  \Phi(W)
  \coloneqq\quad
  \tikzpic{
    \node[anchor=mid,align=center] at (.4,1.7) {\footnotesize $t(W)$};
    \node[anchor=mid,align=center] at (1.5,1.7) {$W$};
    \node[anchor=mid,align=center] at (2.6,1.7) {\footnotesize $s(W)$};
    \draw[web1,dashed] (1,0) to (2,0);
    \draw[web1,dashed] (1,1) to (2,1);
    \webmRoundMarkedB
    \webs[2][0]
    \draw[dotted] (1,-.3) to (1,2);
    \draw[dotted] (2,-.3) to (2,2);
  }
  \qquad\an\qquad
  \Phi_\omega(W)
  \coloneqq\quad
  \tikzpic{
    \node[anchor=mid,align=center] at (.4,1.7) {\footnotesize $t(W)$};
    \node[anchor=mid,align=center] at (1.5,1.7) {$W$};
    \node[anchor=mid,align=center] at (2.6,1.7) {\footnotesize $s(W)$};
    \draw[web1,dashed] (1,0) to (2,0);
    \draw[web1,dashed] (1,1) to (2,1);
    \webmRoundMarkedB
    \websMarkedB[2][0][$\omega$]
    \draw[dotted] (1,-.3) to (1,2);
    \draw[dotted] (2,-.3) to (2,2);
  }\;.
\end{gather*}
More formally, we define a family of superfunctors $\Phi\colon\sfoam^{\greenmarking}(n,m)\to\sfoam^{\greenmarking}(n,m)$, defined on objects as
\[\Phi(W)\coloneqq(M^{\roundmarking}\mathrel{\square}\id_W)\mathrel{\square} (\id_{(1,1)}\otimes W)\otimes (S\mathrel{\square}\id_W),\]
where $M^{\roundmarking}$ is a merge web with an extra marking $\roundmarking[2]$ and $S$ is a split web, and $\square$ is the front-back composition from \cref{rem:monoidal-2-categorical-structure}.
We also define the variant $\Phi_\omega(W)$ where the circle carries an extra local marking $\omega$.

Consider an identity web $W\in\sfoam^{\greenmarking}$ with a distinguished strand $i$:
\begin{gather*}
  W = \tikzpic{
    \node at (1,.8){};
    \node at (1,-.8){};
    \node at (1,.5) {$\vdots$};
    \draw[web1] (0,0) to (2,0);
    \node at (1,-.3) {$\vdots$};
    \node[right] at (2,0) {\footnotesize $i$};
  }
\end{gather*}
Below we consider $\Phi(W)$ and $\Phi_\omega(W)$, using the color blue ($\textdot$) for the label of the added circle and the color red ($\textdot[2]$) for the label of the distinguished strand $i$ in $W$.

\begin{lemma}
  \label{lem:web_resolution_twist}
  Let $\omega=(\alpha,\beta_1,\beta_2)$ be a generic local  marking.
  The following is a sequence in $\sfoam^{\greenmarking}$, split exact at the two middle vertices, and with each forward (plain) arrow being $\gloo$-equivariant:
  \begin{center} 
  \begin{tikzpicture}
    \def\hsh{3}
    \def\wxscl{.5}
    \def\wyscl{.35}
    % \node (L) at (-\hsh+1*\wxscl,.5) {$\ldots$};
    % \node at (-\hsh+3*\wxscl,1.5*\wyscl) {\footnotesize $\langle -3,3\rangle$};
    % %
    % \begin{scope}[xscale=\wxscl,yscale=\wyscl]
    %   \node[shape=rectangle,anchor=center] at (1,3.7) {$\vdots$};
    %   \draw[web1] (0,2.5) to (2,2.5);
    %   \node[shape=rectangle,anchor=center] at (1,2) {$\vdots$};
    %   \webmRoundMarkedB
    %   \websMarkedB[1][0][$\omega$]
    %   \node at (3,1.5) {\footnotesize $\langle -2,2\rangle$};
    % \end{scope}
    %
    \begin{scope}[shift={(\hsh,0)},xscale=\wxscl,yscale=\wyscl]
      \node[shape=rectangle,anchor=center] at (1,3.7) {$\vdots$};
      \draw[web1] (0,2.5) to (2,2.5);
      \node[shape=rectangle,anchor=center] at (1,2) {$\vdots$};
      \webmRoundMarkedB
      \websMarkedB[1][0][$\omega$]
      \node at (3,1.5) {\footnotesize $\langle -\frac{3}{2},\frac{3}{2}\rangle$};
    \end{scope}
    \begin{scope}[shift={(2*\hsh,0)},xscale=\wxscl,yscale=\wyscl]
      \node[shape=rectangle,anchor=center] at (1,3.7) {$\vdots$};
      \draw[web1] (0,2.5) to (2,2.5);
      \node[shape=rectangle,anchor=center] at (1,2) {$\vdots$};
      \webmRoundMarkedB
      \websMarkedB[1][0][$\omega$]
      \node at (3,1.5) {\footnotesize $\langle -\frac{1}{2},\frac{1}{2}\rangle$};
    \end{scope}
    \begin{scope}[shift={(3*\hsh,0)},xscale=\wxscl,yscale=\wyscl]
      \node[shape=rectangle,anchor=center] at (1,3.7) {$\vdots$};
      \draw[web1] (0,2.5) to (2,2.5);
      \node[green_mark=2.5pt] (M) at (1,2.5) {};
      \node[below right={-1pt} of M] {\scriptsize $\omega$};
      \node[shape=rectangle,anchor=center] at (1,1.8) {$\vdots$};
      \draw[web2] (0,.5) to (2,.5);
    \end{scope}
    \node (R) at (3.7*\hsh,.5) {$0$};
    % Arrow
    % \draw[->,thick] (-\hsh+2*\wxscl,1.5) to[out=45,in=135] (0,1.5);
    % \draw[->,thick] (2*\wxscl,1.5) to[out=45,in=135] (\hsh,1.5);
    \draw[->,thick] (\hsh+2*\wxscl,1.5) to[out=45,in=135] (2*\hsh,1.5);
    \draw[->,thick] (2*\hsh+2*\wxscl,1.5) to[out=45,in=135] (3*\hsh,1.5);
    \draw[->,thick] (3*\hsh+2*\wxscl+.2,.5) -- (R);
    %
    % \node[above] at (-.5*\hsh+\wxscl,2) {%
    %   $\tikzpic{
    %     \flst\frst[1][0]\fdot[.5][.5]
    %   }[scale=.5,xscale=.6]
    %   -
    %   \tikzpic{
    %     \flst\frst[1][0]\fdot[.5][.5][2]
    %   }[scale=.5,xscale=.6]
    %   $};
    % \node[above] at (.5*\hsh+\wxscl,2) {%
    %   $\tikzpic{
    %     \flst\frst[1][0]\fdot[.5][.5]
    %   }[scale=.5,xscale=.6]
    %   -
    %   \tikzpic{
    %     \flst\frst[1][0]\fdot[.5][.5][2]
    %   }[scale=.5,xscale=.6]
    %   $};
    \node[above] at (1.5*\hsh+\wxscl,2) {%
      $\tikzpic{
        \flst\frst[1][0]\fdot[.5][.5]
      }[scale=.5,xscale=.6]
      -
      \tikzpic{
        \flst\frst[1][0]\fdot[.5][.5][2]
      }[scale=.5,xscale=.6]
      $};
    \node[above] at (2.5*\hsh+\wxscl,2) {
      $\tikzpic{
        \fcap\fdot[.5][.3]
      }[scale=.5,xscale=.6]
      -
      \tikzpic{
        \fcap\fdot[.5][.3][2]
      }[scale=.5,xscale=.6]
      $};
    %
    % \draw[<-,thick,dashed] (-\hsh+2*\wxscl,-.5) to[out=-45,in=-135] (0,-.5);
    % \draw[<-,thick,dashed] (2*\wxscl,-.5) to[out=-45,in=-135] (\hsh,-.5);
    \draw[<-,thick,dashed] (\hsh+2*\wxscl,-.5) to[out=-45,in=-135] (2*\hsh,-.5);
    \draw[<-,thick,dashed] (2*\hsh+2*\wxscl,-.5) to[out=-45,in=-135] (3*\hsh,-.5);
    %
    % \node[below=15pt] at (-.5*\hsh+\wxscl,-.5) {%
    %   $\tikzpic{
    %     \fcap\fcup[0][1]
    %   }[scale=.3,xscale=1]
    %   $};
    % \node[below=15pt] at (.5*\hsh+\wxscl,-.5) {%
    %   $\tikzpic{
    %     \fcap\fcup[0][1]
    %   }[scale=.3,xscale=1]
    %   $};
    \node[below=15pt] at (1.5*\hsh+\wxscl,-.5) {%
      $\tikzpic{
        \fcap\fcup[0][1]
      }[scale=.3,xscale=1]
      $};
    \node[below=15pt] at (2.5*\hsh+\wxscl,-.5) {%
      $\tikzpic{
        \fcup
      }[scale=.3,xscale=1]
      $};
  \end{tikzpicture}
  \end{center}
\end{lemma}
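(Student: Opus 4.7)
The plan is to verify directly the three parts of the claim by computing in $\sfoam^{\greenmarking}$ with the defining relations of \cref{fig:rel_diagfoam_graded}, and to derive $\gloo$-equivariance from the general equivariance lemmas of \cref{subsec:twist_foam}. The argument parallels the analogous statement in \cite[Lemma~4.4]{QRS+_SymmetriesEquivariantKhovanovRozansky_2023} (see also \cite[Lemma~5.2]{Roz_$mathfraksl_2$ActionLink_2023}).

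First, to confirm that the sequence is a chain complex, I would compute the composition of the two forward arrows. It is a sum of four terms given by the cap foam decorated with two dots, placed on the blue (circle) facet or on the red (strand $i$) facet of $\Phi_\omega(W)$. After applying dot annihilation, dot migration, and the evaluation of dotted bubbles ($\tikzpic{\fdot[.5][1]\fcap[0][1]\fcup} = 1$ while $\tikzpic{\fcap[0][1]\fcup} = 0$), these four terms cancel pairwise. The composition of the third forward arrow with the terminal zero map is trivial.

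Second, for split exactness at the two middle vertices, I would verify the expected splitting identities. At the rightmost middle vertex, composing the forward map $\tikzpic{\fcap\fdot[.5][.3]} - \tikzpic{\fcap\fdot[.5][.3][2]}$ with the cup splitting $\tikzpic{\fcup}$ yields a difference of dotted bubbles, which by the bubble relations evaluates to $1 - 0$ and recovers the identity on the target. At the other middle vertex, one must show that $\bar{f}\circ f + \bar{g}\circ g = \id$ on $\Phi_\omega(W)$, where $f, \bar{f}, g, \bar{g}$ denote the incoming forward and outgoing backward maps; the key input is neck-cutting applied to the circle in $\Phi_\omega(W)$, which writes the identity foam as a sum of two terms, one with a dot on the top hemisphere and one on the bottom, matching respectively $\bar{g}\circ g$ and $\bar{f}\circ f$ after dot slide and migration.

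Third, for $\gloo$-equivariance of the forward arrows, I would invoke the lemmas of \cref{subsec:twist_foam}. Both forward maps are expressed as a difference of single dotted identity foams with coefficient sum $1 + (-1) = 0$, so \cref{lem:e-equivariance} yields $\liee$-equivariance. Source and target share the same underlying marked web (differing only by the quantum shift, which does not affect $\tau(\lief)$), hence \cref{lem:f-equivariance} yields $\lief$-equivariance. Finally, the bidegree of each forward arrow matches the $q$-shift gap between source and target, so \cref{lem:h-equivariance} yields $\lieh_i$-equivariance. The main obstacle will be the middle-vertex splitting: identifying the neck-cut decomposition term-by-term with the compositions $\bar{f}\circ f$ and $\bar{g}\circ g$ requires careful tracking of signs in the super setting ($Y=-1$) and of the placement of dots on the blue versus red facets after dot migration, though the computation remains local and insensitive to the ambient web $W$ and to the global $\omega$-marking.
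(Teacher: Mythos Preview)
Your approach matches the paper's: verify splitting by direct computation with the foam relations, and deduce $\gloo$-equivariance from the lemmas of \cref{subsec:twist_foam}. There is, however, a gap in your equivariance argument for the second forward arrow.

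You assert that ``both forward maps are expressed as a difference of single dotted identity foams'' and that ``source and target share the same underlying marked web''. This is true for the first arrow but false for the second: the map $\tikzpic{\fcap\fdot[.5][.3]} - \tikzpic{\fcap\fdot[.5][.3][2]}$ is a difference of dotted \emph{cap} foams, and its source $\Phi_\omega(W)\langle -\frac{1}{2},\frac{1}{2}\rangle$ and target (the web with $\omega$ on strand $i$ and a double line) have different underlying webs. Hence \cref{lem:e-equivariance} and \cref{lem:f-equivariance} do not apply to this map directly, since both lemmas require $F$ to be a linear combination of identity foams with a single dot between marked webs sharing the same underlying web.

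The fix is to factor the second arrow as a composition: first the $\fg$-equivariant difference of dotted identities on $\Phi_\omega(W)$ (handled exactly as you describe for the first arrow), then the undotted cap. The cap is $\fg$-equivariant by \cref{lem:crossing_complex_equivariant}, which the paper explicitly invokes for this purpose. With the cap playing the role of the $\lief$-equivariant foam $G$ in \cref{lem:f-equivariance}, the hypothesis $G\circ\tau_{W_s^{\greenmarking}}(\lief)=G\circ\tau_{W_t^{\greenmarking}}(\lief)$ holds because the $\omega$-marking, originally on the circle, lands on the same connected component as strand $i$ after capping. Once this factorisation is in place, your argument goes through.
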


\begin{proof}
  The fact that the sequence split is a direct computation.
  Equivariance with respect to $\lieh_1$ and $\lieh_2$ can be checked using \cref{lem:h-equivariance}.
  Equivariance with respect to $\liee$ and $\lief$ follows from
  \cref{lem:crossing_complex_equivariant} and respectively \cref{lem:e-equivariance} and \cref{lem:f-equivariance}.
\end{proof}

Using this partial resolution, we construct a partial resolution in $\underline{\Ch}(\sfoam^{\greenmarking})$ of ${}_\omega D_\bullet$, as pictured in \cref{fig:proof_dot_slide}.
The fact that the complexes are $\gloo$-equivariant was checked already in \cref{lem:web_resolution_twist}. Note that up to scalar, we have ${}_\omega C_\bullet\cong\Cone(F)$ for $F\colon\Phi_\omega(C_\bullet)\to\Phi_\omega(C_\bullet)$ a $\gloo$-equivariant chain map consisting of dots.
Note moreover that $P_\bullet=\Cone(\id_{\Phi_\omega(D_\bullet)}\langle-\frac{3}{2},\frac{3}{2}\rangle)$. In particular, the complex $P_\bullet$ is contractible.
We are in the situation of \cref{lem:abstract_homotopy_equivalence}, and conclude that the chain map ${}_\omega C_\bullet\to{}_\omega D_\bullet$ is a ($\gloo$-equivariant) homotopy equivalence.

Use the colour brown ($\textdot[3]$) for  the label of the backmost strand amongst to two strands involved in $D_\bullet$.
The very same argument applies to $D_\bullet^\omega$, only replacing red dots ($\textdot[{2}]$) with brown dots ($\textdot[{3}]$), and swapping the dots from left to right in the two vertical arrows in the middle of the diagram.
We get a ($\gloo$-equivariant) homotopy equivalence $C_\bullet^\omega\to D_\bullet^\omega$.

\begin{figure}[p]
  \begin{tikzpicture}
    \def\hsh{5}\def\vsh{5}\def\hoplussh{.5}\def\voplussh{1}
    \def\wxscl{.35}\def\wyscl{.35}
    %%%%%%%%%%%%%%%%%%%%%%%%%%%%%%
    % SOME DIAGRAMS
    \def\tempwebA{
      \webid[0][1.5][1]\websRoundMarkedB[1][1.5]\webm[2][1.5]\webid[3][1.5][1]
      \webmRoundMarkedB\webid[1][0][1]\webid[2][0][1]\websMarkedB[3][0][$\omega$]
    }
    \def\tempwebB{
      \webid[0][1.5][1]\webid[1][1.5]\webid[3][1.5][1]
      \webmRoundMarkedB\webid[1][0][1]\webid[2][0][1]\websMarkedB[3][0][$\omega$]
    }
    \newcommand{\tempfoamid}{
    \tikzpic{
        \flst\frst[1][0]\fdot[.5][.5]
      }[scale=.7,xscale=.5]
      -
      \tikzpic{
        \flst\frst[1][0]\fdot[.5][.5][2]
      }[scale=.7,xscale=.5]
    }
    \newcommand{\tempfoamdbid}{
      \tikzpic{
        \draw[fill,foamshade2] (1-.3,0) rectangle (1+.3,2);
        \draw[foamdraw2] (1-.3,0) to (1-.3,2);
        \draw[foamdraw2] (1+.3,0) to (1+.3,2);
        \draw[fill,foamshade1] (-.5,0) rectangle (0,2);
        \draw[foamdraw1] (0,0) to (0,2);
        \draw[fill,foamshade1] (2.5,0) rectangle (2,2);
        \draw[foamdraw1] (2,0) to (2,2);
        \fdot[.4][1][1]
      }[scale=.35]
      -
      \tikzpic{
        \draw[fill,foamshade2] (1-.3,0) rectangle (1+.3,2);
        \draw[foamdraw2] (1-.3,0) to (1-.3,2);
        \draw[foamdraw2] (1+.3,0) to (1+.3,2);
        \draw[fill,foamshade1] (-.5,0) rectangle (0,2);
        \draw[foamdraw1] (0,0) to (0,2);
        \draw[fill,foamshade1] (2.5,0) rectangle (2,2);
        \draw[foamdraw1] (2,0) to (2,2);
        \fdot[.4][1][2]
      }[scale=.35]
    }
    \newcommand{\tempfoamdbunzip}{
      \tikzpic{
        \funzip[.5][0][2]
        \draw[fill,foamshade1] (-.5,0) rectangle (0,2);
        \draw[foamdraw1] (0,0) to (0,2);
        \draw[fill,foamshade1] (2.5,0) rectangle (2,2);
        \draw[foamdraw1] (2,0) to (2,2);
      }[scale=.35]
    }
    %%%%%%%%%%%%%%%%%%%%%%%%%%%%%%
    % VERTICES
    %
    % top layer
    \node[] (T0) at (\hsh-\hoplussh,\vsh) {$\tikzpic{
      \webid[0][1.5][1]\websRoundMarkedB[1][1.5]\webm[2][1.5]\webid[3][1.5][1]
      \node[green_mark] (M) at (.5,1.5) {};
      \node[below={-3pt} of M] {\scriptsize $\omega$};
      \draw[web2] (0,.5) to (4,.5);
    }[xscale=\wxscl,yscale=\wyscl]$};
    \node[] (R0) at (2*\hsh,\vsh) {$\tikzpic{
      \webid[0][1.5][4]
      \node[green_mark] (M) at (.5,1.5) {};
      \node[below={-3pt} of M] {\scriptsize $\omega$};
      \draw[web2] (0,.5) to (4,.5);
    }[xscale=\wxscl,yscale=\wyscl]$};
    %
    % middle layer
    \node[] (L1) at (0,0) {$\tikzpic{\tempwebA}[xscale=\wxscl,yscale=\wyscl]$};
    \node[] (T1) at (\hsh-\hoplussh,\voplussh) {$\tikzpic{\tempwebA}[xscale=\wxscl,yscale=\wyscl]$};
    \node[anchor=mid] at (\hsh,.1) {$\bigoplus$};
    \node[] (B1) at (\hsh+\hoplussh,-\voplussh) {$\tikzpic{\tempwebB}[xscale=\wxscl,yscale=\wyscl]$};
    \node[] (R1) at (2*\hsh,0) {$\tikzpic{\tempwebB}[xscale=\wxscl,yscale=\wyscl]$};
    %
    % bottom layer
    \node[] (L2) at (0,-\vsh) {$\tikzpic{\tempwebA}[xscale=\wxscl,yscale=\wyscl]$};
    \node[] (T2) at (\hsh-\hoplussh,\voplussh-\vsh) {$\tikzpic{\tempwebA}[xscale=\wxscl,yscale=\wyscl]$};
    \node[anchor=mid] at (\hsh,.1-\vsh) {$\bigoplus$};
    \node[] (B2) at (\hsh+\hoplussh,-\voplussh-\vsh) {$\tikzpic{\tempwebB}[xscale=\wxscl,yscale=\wyscl]$};
    \node[] (R2) at (2*\hsh,-\vsh) {$\tikzpic{\tempwebB}[xscale=\wxscl,yscale=\wyscl]$};
    %
    %%%%%%%%%%%%%%%%%%%%%%%%%%%%%%
    % EDGES
    % top layer
    \draw[->] (T0) to[out=0,in=180] (R0);
    \node[above] at (1.5*\hsh-.5*\hoplussh,\vsh) {\tikzpic{\funzip[0][0][2]}[scale=.7]};
    %
    % mid-top
    \draw[->] (T1) to (T0);
    \draw[->] (R1) to (R0);
    \node[left=15pt] at (\hsh,.6*\vsh) {$
      \tikzpic{
        \clip (-.5,0) rectangle (2.5,2);
        \begin{scope}[scale=2]
          \fcap
        \end{scope}
        \draw[fill,foamshade2] (1-.3,0) rectangle (1+.3,2);
        \draw[foamdraw2] (1-.3,0) to (1-.3,2);
        \draw[foamdraw2] (1+.3,0) to (1+.3,2);
        \fdot[.4][.5][1]
      }[xscale=1,scale=.4]
      -
      \tikzpic{
        \clip (-.5,0) rectangle (2.5,2);
        \begin{scope}[scale=2]
          \fcap
        \end{scope}
        \draw[fill,foamshade2] (1-.3,0) rectangle (1+.3,2);
        \draw[foamdraw2] (1-.3,0) to (1-.3,2);
        \draw[foamdraw2] (1+.3,0) to (1+.3,2);
        \fdot[.4][.5][2]
      }[xscale=1,scale=.4]
    $};
    \node[right] at (2*\hsh,.5*\vsh) {$
      \tikzpic{
        \fcap\fdot[.5][.3]
      }[scale=.7,xscale=.5]
      -
      \tikzpic{
        \fcap\fdot[.5][.3][2]
      }[scale=.7,xscale=.5]
    $};
    %
    % T2 to T1
    \draw[->] ($(T2.north)+(-.2,0)$) to ($(T1.south)+(-.2,0)$);
    %
    % middle layer
    \draw[->] (L1) to[out=20,in=180] (T1);
    \draw[->] (T1) to[out=0,in=160] (R1);
    \draw[->] (B1) to[out=0,in=-160] (R1);
    \draw[{preaction={draw,white,line width=5pt}},->] (L1) to[out=-20,in=180] (B1);
    \node[fill=white,inner sep=1pt] at (.4*\hsh-.5*\hoplussh,.25*\vsh) {$
      \tempfoamdbid$};
    \node[fill=white,inner sep=1pt] at (.4*\hsh+.5*\hoplussh,-.2*\vsh) {$-\;
      \tempfoamdbunzip$};
    \node[fill=white,inner sep=1pt] at (1.5*\hsh-.5*\hoplussh,.25*\vsh) {$-\;
      \tempfoamdbunzip$};
    \node[fill=white,inner sep=1pt] at (1.6*\hsh+.5*\hoplussh,-.2*\vsh) {$\tempfoamid$};
    %
    % bot-mid
    \draw[->] (L2) to (L1);
    \draw[->] (R2) to (R1);
    \node[right] at (2*\hsh,-.5*\vsh) {$\tempfoamid$};
    \node[left,fill=white,inner sep=1pt] at (1*\hsh,-.45*\vsh) {$\tempfoamdbid$};
    \node[left] at (0,-.5*\vsh) {$\id$};
    %
    % bottom layer
    \draw[->] (L2) to[out=20,in=180] (T2);
    \draw[->,{preaction={draw,white,line width=5pt}}] (L2) to[out=-20,in=180] (B2);
    \draw[->] (T2) to[out=0,in=160] (R2);
    \draw[->] (B2) to[out=0,in=-160] (R2);
    %
    % SHIFTS
    \node[right={-3pt} of T0,fill=white,inner sep=1pt] {\scriptsize $\langle -\frac{1}{2},\frac{1}{2}\rangle$};
    \node[right={-9pt} of L1,fill=white,inner sep=1pt] {\scriptsize $\langle -2,2\rangle$};
    \node[right={-12pt} of T1,fill=white,inner sep=1pt] {\scriptsize $\langle -1,1\rangle$};
    \node[right={-12pt} of B1,fill=white,inner sep=1pt] {\scriptsize $\langle -\frac{3}{2},\frac{3}{2}\rangle$};
    \node[right={-9pt} of R1,fill=white,inner sep=1pt] {\scriptsize $\langle -\frac{1}{2},\frac{1}{2}\rangle$};
    \node[right={-12pt} of L2] {\scriptsize $\langle -2,2\rangle$};
    \node[right={-9pt} of T2,fill=white,inner sep=1pt] {\scriptsize $\langle -2,2\rangle$};
    \node[right={-12pt} of B2,fill=white,inner sep=1pt] {\scriptsize $\langle -\frac{3}{2},\frac{3}{2}\rangle$};
    \node[right={-9pt} of R2] {\scriptsize $\langle -\frac{3}{2},\frac{3}{2}\rangle$};
    \node[fill=white,inner sep=1pt] at (.4*\hsh-.5*\hoplussh,.2*\vsh-\vsh) {$
      \id$};
    \node[fill=white,inner sep=1pt] at (.4*\hsh+.5*\hoplussh,-.25*\vsh-\vsh) {$
      -\;\tempfoamdbunzip$};
    \node[fill=white,inner sep=1pt] at (1.5*\hsh-.5*\hoplussh,.25*\vsh-\vsh) {$
      \tempfoamdbunzip$};
    \node[fill=white,inner sep=1pt] at (1.6*\hsh+.5*\hoplussh,-.2*\vsh-\vsh) {$\id$};
    %
    % B2 to B1
    \draw[{preaction={draw,white,line width=5pt}},->] ($(B2.north)+(.2,0)$) to ($(B1.south)+(.2,0)$);
    \node[right=-8pt] at (\hsh+\voplussh,-.4*\vsh-\voplussh) {$\id$};
    %
    % below-bot
    % \draw[->] ($(L2.south)-(0,.3*\vsh)$) to (L2);
    % \draw[dashed] ($(L2.south)-(0,.5*\vsh)$) to[] ($(L2.south)-(0,.3*\vsh)$);
    % \draw[->] ($(B2.south)+(.2,-.3*\vsh)$) to ($(B2.south)+(.2,0)$);
    % \draw[dashed] ($(B2.south)+(.2,-.5*\vsh)$) to[] ($(B2.south)+(.2,-.3*\vsh)$);
    % \draw[dashed] ($(T2.south)+(-.2,-.5*\vsh)$) to[] ($(T2.south)+(-.2,-.3*\vsh)$);
    % \draw[->] ($(R2.south)-(0,.3*\vsh)$) to (R2);
    % \draw[dashed] ($(R2.south)-(0,.5*\vsh)$) to[] ($(R2.south)-(0,.3*\vsh)$);
    %
    %%%%%%%%%%%%%%%%%%%%%%%%%%%%%%
    % RIGHT BAR
    \node (zero) at (2.6*\hsh,1.5*\vsh) {$0$};
    \node (C) at (2.6*\hsh,\vsh) {${}_\omega D_\bullet$};
    \node (P0) at (2.6*\hsh,0) {${}_\omega C_\bullet$};
    \node (P1) at (2.6*\hsh,-\vsh) {$P_\bullet$};
    \draw[->] (C) to (zero);
    \draw[->] (P0) to (C);
    \draw[->] (P1) to (P0);
    % \draw[->] ($(P1.south)-(0,.3*\vsh)$) to (P1);
    % \draw[dashed] ($(P1.south)-(0,.5*\vsh)$) to[] ($(P1.south)-(0,.3*\vsh)$);
    \draw[dotted] (R0) to (C);
    \draw[dotted] (R1) to (P0);
    \draw[dotted] ($(R2.east)+(.7,0)$) to (P1);
  \end{tikzpicture}
  \caption{Partial resolution of the marked crossing ${}_\omega D_\bullet$. Colour blue ($\textdot[1]$) corresponds to the label of the foremost strand on the circle and colour red ($\textdot[2]$) corresponds to the label of the foremost strand amongst to two strands involved in $D_\bullet$.}
  \label{fig:proof_dot_slide}
\end{figure}
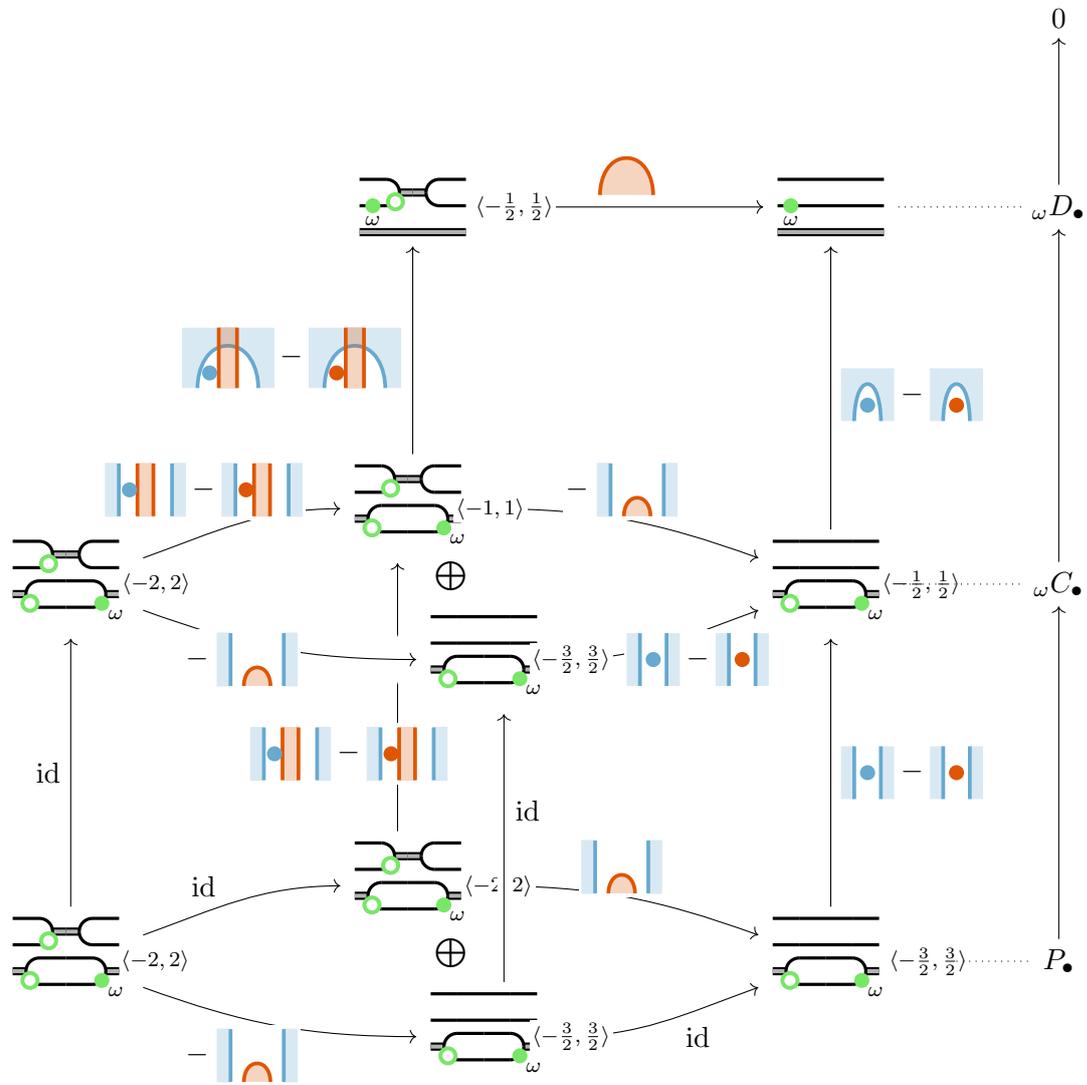

Finally, we construct a $\gloo$-equivariant isomorphism ${}_\omega C_\bullet\to C_\bullet^\omega$, as follows:
\begin{equation}
  \label{eq:proof_dot_slide_isomorphism}
  \begin{tikzpicture}
    \def\hsh{5}\def\vsh{5}\def\hoplussh{.5}\def\voplussh{1}
    \def\wxscl{.35}\def\wyscl{.35}
    %%%%%%%%%%%%%%%%%%%%%%%%%%%%%%
    % SOME DIAGRAMS
    \def\tempwebA{
      \webid[0][1.5][1]\websRoundMarkedB[1][1.5]\webm[2][1.5]\webid[3][1.5][1]
      \webmRoundMarkedB\webid[1][0][1]\webid[2][0][1]\websMarkedB[3][0][$\omega$]
    }
    \def\tempwebB{
      \webid[0][1.5][1]\webid[1][1.5]\webid[3][1.5][1]
      \webmRoundMarkedB\webid[1][0][1]\webid[2][0][1]\websMarkedB[3][0][$\omega$]
    }
    \newcommand{\tempfoamid}{
    \tikzpic{
        \flst\frst[1][0]\fdot[.5][.5]
      }[scale=.7,xscale=.5]
      -
      \tikzpic{
        \flst\frst[1][0]\fdot[.5][.5][2]
      }[scale=.7,xscale=.5]
    }
    \newcommand{\tempfoamdbid}{
      \tikzpic{
        \draw[fill,foamshade2] (1-.3,0) rectangle (1+.3,2);
        \draw[foamdraw2] (1-.3,0) to (1-.3,2);
        \draw[foamdraw2] (1+.3,0) to (1+.3,2);
        \draw[fill,foamshade1] (-.5,0) rectangle (0,2);
        \draw[foamdraw1] (0,0) to (0,2);
        \draw[fill,foamshade1] (2.5,0) rectangle (2,2);
        \draw[foamdraw1] (2,0) to (2,2);
        \fdot[.4][1][1]
      }[scale=.35]
      -
      \tikzpic{
        \draw[fill,foamshade2] (1-.3,0) rectangle (1+.3,2);
        \draw[foamdraw2] (1-.3,0) to (1-.3,2);
        \draw[foamdraw2] (1+.3,0) to (1+.3,2);
        \draw[fill,foamshade1] (-.5,0) rectangle (0,2);
        \draw[foamdraw1] (0,0) to (0,2);
        \draw[fill,foamshade1] (2.5,0) rectangle (2,2);
        \draw[foamdraw1] (2,0) to (2,2);
        \fdot[.4][1][2]
      }[scale=.35]
    }
    \newcommand{\tempfoamdbunzip}{
      \tikzpic{
        \funzip[.5][0][2]
        \draw[fill,foamshade1] (-.5,0) rectangle (0,2);
        \draw[foamdraw1] (0,0) to (0,2);
        \draw[fill,foamshade1] (2.5,0) rectangle (2,2);
        \draw[foamdraw1] (2,0) to (2,2);
      }[scale=.35]
    }
    %%%%%%%%%%%%%%%%%%%%%%%%%%%%%%
    % VERTICES
    %
    % top layer
    \node[] (L1) at (0,0) {$\tikzpic{\tempwebA}[xscale=\wxscl,yscale=\wyscl]$};
    \node[] (T1) at (\hsh-\hoplussh,\voplussh) {$\tikzpic{\tempwebA}[xscale=\wxscl,yscale=\wyscl]$};
    \node[anchor=mid] at (\hsh,.1) {$\bigoplus$};
    \node[] (B1) at (\hsh+\hoplussh,-\voplussh) {$\tikzpic{\tempwebB}[xscale=\wxscl,yscale=\wyscl]$};
    \node[] (R1) at (2*\hsh,0) {$\tikzpic{\tempwebB}[xscale=\wxscl,yscale=\wyscl]$};
    %
    % bottom layer
    \node[] (L2) at (0,-\vsh) {$\tikzpic{\tempwebA}[xscale=\wxscl,yscale=\wyscl]$};
    \node[] (T2) at (\hsh-\hoplussh,\voplussh-\vsh) {$\tikzpic{\tempwebA}[xscale=\wxscl,yscale=\wyscl]$};
    \node[anchor=mid] at (\hsh,.1-\vsh) {$\bigoplus$};
    \node[] (B2) at (\hsh+\hoplussh,-\voplussh-\vsh) {$\tikzpic{\tempwebB}[xscale=\wxscl,yscale=\wyscl]$};
    \node[] (R2) at (2*\hsh,-\vsh) {$\tikzpic{\tempwebB}[xscale=\wxscl,yscale=\wyscl]$};
    %
    %%%%%%%%%%%%%%%%%%%%%%%%%%%%%%
    % EDGES
    %
    % T2 to T1
    \draw[->] ($(T2.north)+(-.3,0)$) to ($(T1.south)+(-.3,0)$);
    \node[,fill=white,inner sep=1pt] at (.78*\hsh,-.4*\vsh) {$\id$};
    %
    % top layer
    \draw[->] (L1) to[out=20,in=180] (T1);
    \draw[->] (T1) to[out=0,in=160] (R1);
    \draw[->] (B1) to[out=0,in=-160] (R1);
    \draw[{preaction={draw,white,line width=5pt}},->] (L1) to[out=-20,in=180] (B1);
    \node[right={-12pt} of L1] {\scriptsize $\langle -2,2\rangle$};
    \node[right={-12pt} of T1,fill=white,inner sep=1pt] {\scriptsize $\langle -1,1\rangle$};
    \node[right={-12pt} of B1,fill=white,inner sep=1pt] {\scriptsize $\langle -\frac{3}{2},\frac{3}{2}\rangle$};
    \node[right={-12pt} of R1,fill=white,inner sep=1pt] {\scriptsize $\langle -\frac{1}{2},\frac{1}{2}\rangle$};
    \node[fill=white,inner sep=1pt] at (.4*\hsh-.5*\hoplussh,.25*\vsh) {$
      \tikzpic{
        \draw[fill,foamshade2] (1-.3,0) rectangle (1+.3,2);
        \draw[foamdraw2] (1-.3,0) to (1-.3,2);
        \draw[foamdraw2] (1+.3,0) to (1+.3,2);
        \draw[fill,foamshade1] (-.5,0) rectangle (0,2);
        \draw[foamdraw1] (0,0) to (0,2);
        \draw[fill,foamshade1] (2.5,0) rectangle (2,2);
        \draw[foamdraw1] (2,0) to (2,2);
        \fdot[1.6][1][1]
      }[scale=.35]
      -
      \tikzpic{
        \draw[fill,foamshade2] (1-.3,0) rectangle (1+.3,2);
        \draw[foamdraw2] (1-.3,0) to (1-.3,2);
        \draw[foamdraw2] (1+.3,0) to (1+.3,2);
        \draw[fill,foamshade1] (-.5,0) rectangle (0,2);
        \draw[foamdraw1] (0,0) to (0,2);
        \draw[fill,foamshade1] (2.5,0) rectangle (2,2);
        \draw[foamdraw1] (2,0) to (2,2);
        \fdot[1.6][1][3]
      }[scale=.35]$};
    \node[fill=white,inner sep=1pt] at (.4*\hsh+.5*\hoplussh,-.2*\vsh) {$-\;
      \tempfoamdbunzip$};
    \node[fill=white,inner sep=1pt] at (1.5*\hsh-.5*\hoplussh,.25*\vsh) {$-\;
      \tempfoamdbunzip$};
    \node[fill=white,inner sep=1pt] at (1.6*\hsh+.5*\hoplussh,-.2*\vsh) {$
      \tikzpic{
        \flst\frst[1][0]\fdot[.5][.5]
      }[scale=.7,xscale=.5]
      -
      \tikzpic{
        \flst\frst[1][0]\fdot[.5][.5][3]
      }[scale=.7,xscale=.5]
    $};
    %
    % bot-top
    \draw[->] (L2) to (L1);
    \draw[->] (R2) to (R1);
    \node[right] at (2*\hsh,-.5*\vsh) {$\id$};
    \node[left] at (0,-.5*\vsh) {$\id$};
    %
    % bottom layer
    \draw[->] (L2) to[out=20,in=180] (T2);
    \draw[->,{preaction={draw,white,line width=5pt}}] (L2) to[out=-20,in=180] (B2);
    \draw[->] (T2) to[out=0,in=160] (R2);
    \draw[->] (B2) to[out=0,in=-160] (R2);
    \node[right={-12pt} of L2] {\scriptsize $\langle -2,2\rangle$};
    \node[right={-12pt} of T2,fill=white,inner sep=1pt] {\scriptsize $\langle -1,1\rangle$};
    \node[right={-12pt} of B2,fill=white,inner sep=1pt] {\scriptsize $\langle -\frac{3}{2},\frac{3}{2}\rangle$};
    \node[right={-12pt} of R2,fill=white,inner sep=1pt] {\scriptsize $\langle -\frac{1}{2},\frac{1}{2}\rangle$};
    \node[fill=white,inner sep=1pt] at (.4*\hsh-.5*\hoplussh,.25*\vsh-\vsh) {$
      \tempfoamdbid$};
    \node[fill=white,inner sep=1pt] at (.4*\hsh+.5*\hoplussh,-.25*\vsh-\vsh) {$
      -\;\tempfoamdbunzip$};
    \node[fill=white,inner sep=1pt] at (1.5*\hsh-.5*\hoplussh,.25*\vsh-\vsh) {$
      -\;\tempfoamdbunzip$};
    \node[fill=white,inner sep=1pt] at (1.6*\hsh+.5*\hoplussh,-.2*\vsh-\vsh) {$\tempfoamid$};
    %
    % B2 to B1
    \draw[{preaction={draw,white,line width=5pt}},->] ($(B2.north)+(.3,0)$) to ($(B1.south)+(.3,0)$);
    \node[right=-8pt] at (\hsh+\voplussh,-.4*\vsh-\voplussh) {$\id$};
    \draw ($(B2.north)+(.3,1)$) to[out=90,in=-90] ($(T1.south)+(-.3,-1.4)$);
    \node[fill=white,inner sep=1pt] at (\hsh,-.5*\vsh) {$\tikzpic{
        \fzip[.5][1][2]
        \draw[fill,foamshade1] (-.5,0) rectangle (0,2);
        \draw[foamdraw1] (0,0) to (0,2);
        \draw[fill,foamshade1] (2.5,0) rectangle (2,2);
        \draw[foamdraw1] (2,0) to (2,2);
      }[scale=.3]$};
  \end{tikzpicture}
\end{equation}
Equivariance follows from \cref{lem:crossing_complex_equivariant}, and the fact that this is indeed an isomorphism of complexes follows from the following two computations (in the first case, we additionally use dot migration to change from $\textdot[2]$ to $\textdot[3]$):
\begin{IEEEeqnarray*}{rCll}
  \tikzpic{
    \draw[fill,foamshade2] (1-.3,0) rectangle (1+.3,2);
    \draw[foamdraw2] (1-.3,0) to (1-.3,2);
    \draw[foamdraw2] (1+.3,0) to (1+.3,2);
    \draw[fill,foamshade1] (-.5,0) rectangle (0,2);
    \draw[foamdraw1] (0,0) to (0,2);
    \draw[fill,foamshade1] (2.5,0) rectangle (2,2);
    \draw[foamdraw1] (2,0) to (2,2);
    \fdot[1.6][1][1]
  }[scale=.35]
  -
  \tikzpic{
    \draw[fill,foamshade2] (1-.3,0) rectangle (1+.3,2);
    \draw[foamdraw2] (1-.3,0) to (1-.3,2);
    \draw[foamdraw2] (1+.3,0) to (1+.3,2);
    \draw[fill,foamshade1] (-.5,0) rectangle (0,2);
    \draw[foamdraw1] (0,0) to (0,2);
    \draw[fill,foamshade1] (2.5,0) rectangle (2,2);
    \draw[foamdraw1] (2,0) to (2,2);
    \fdot[1.6][1][3]
  }[scale=.35]
  &=&
  \tikzpic{
    \draw[fill,foamshade2] (1-.3,0) rectangle (1+.3,2);
    \draw[foamdraw2] (1-.3,0) to (1-.3,2);
    \draw[foamdraw2] (1+.3,0) to (1+.3,2);
    \draw[fill,foamshade1] (-.5,0) rectangle (0,2);
    \draw[foamdraw1] (0,0) to (0,2);
    \draw[fill,foamshade1] (2.5,0) rectangle (2,2);
    \draw[foamdraw1] (2,0) to (2,2);
    \fdot[.4][1][1]
  }[scale=.35]
  -
  \tikzpic{
    \draw[fill,foamshade2] (1-.3,0) rectangle (1+.3,2);
    \draw[foamdraw2] (1-.3,0) to (1-.3,2);
    \draw[foamdraw2] (1+.3,0) to (1+.3,2);
    \draw[fill,foamshade1] (-.5,0) rectangle (0,2);
    \draw[foamdraw1] (0,0) to (0,2);
    \draw[fill,foamshade1] (2.5,0) rectangle (2,2);
    \draw[foamdraw1] (2,0) to (2,2);
    \fdot[.4][1][2]
  }[scale=.35]
  \mspace{10mu}-\mspace{10mu}
  \tikzpic{
    \funzip[.5][0][2]
    \fzip[.5][1][2]
    \draw[fill,foamshade1] (-.5,0) rectangle (0,2);
    \draw[foamdraw1] (0,0) to (0,2);
    \draw[fill,foamshade1] (2.5,0) rectangle (2,2);
    \draw[foamdraw1] (2,0) to (2,2);
  }[scale=.35]
  %%%%%%%%%%%%%%%%%%%%
  &
  %%%%%%%%%%%%%%%%%%%%
  \quad
  \text{thanks to}
  \quad
  \tikzpic{
    \funzip[.5][0][2]
    \fzip[.5][1][2]
  }[scale=.35]
  =
  \tikzpic{
    \draw[fill,foamshade2] (0,0) rectangle (1,2);
    \draw[foamdraw2] (0,0) to (0,2);
    \draw[foamdraw2] (1,0) to (1,2);
    \fdot[1.5][1][2]
  }[scale=.35]
  \;-\;
  \tikzpic{
    \draw[fill,foamshade2] (0,0) rectangle (1,2);
    \draw[foamdraw2] (0,0) to (0,2);
    \draw[foamdraw2] (1,0) to (1,2);
    \fdot[-.5][1][2]
  }[scale=.35]
  %%%%%%%%%%%%%%%%%%%%
  %%%%%%%%%%%%%%%%%%%%
  %%%%%%%%%%%%%%%%%%%%
  \\
  \tikzpic{
    \flst\frst[1][0]\fdot[.5][.5]
  }[scale=.7,xscale=.5]
  -
  \tikzpic{
    \flst\frst[1][0]\fdot[.5][.5][2]
  }[scale=.7,xscale=.5]
  &=&
  \tikzpic{
    \flst\frst[1][0]\fdot[.5][.5]
  }[scale=.7,xscale=.5]
  -
  \tikzpic{
    \flst\frst[1][0]\fdot[.5][.5][3]
  }[scale=.7,xscale=.5]
  \mspace{10mu}-\mspace{10mu}
  \tikzpic{
    \funzip[.5][1][2]
    \fzip[.5][0][2]
    \draw[fill,foamshade1] (-.5,0) rectangle (0,2);
    \draw[foamdraw1] (0,0) to (0,2);
    \draw[fill,foamshade1] (2.5,0) rectangle (2,2);
    \draw[foamdraw1] (2,0) to (2,2);
  }[scale=.35]
  %%%%%%%%%%%%%%%%%%%%
  &
  %%%%%%%%%%%%%%%%%%%%
  \quad
  \text{thanks to}
  \quad
  \tikzpic{
    \funzip[.5][2][2]
    \fzip[.5][1][2]
  }[scale=.35]
  =
  \tikzpic{
    \fdot[0][0][2]
  }[scale=.35]
  \;-\;
  \tikzpic{
    \fdot[0][0][3]
  }[scale=.35]
\end{IEEEeqnarray*}
This gives a zigzag of $\gloo$-equivariant homotopy equivalences between ${}_\omega D_\bullet$ and $D_\bullet^\omega$. One checks that their composition (or their inverse, using the splitting given in \cref{lem:web_resolution_twist}) is the identity.
The last statement in \cref{lem:dot_slide_lemma} is discussed in the following remark.\hfill\qed

\begin{remark}
  Let us try to prove the understrand variant of \cref{lem:dot_slide_lemma}, namely that:
  \[\tikzpic{
      \webncr
      \node[green_mark] (B) at (.3,1-.15) {};
      \node[above={-1pt} of B] {\scriptsize $\omega$};
    }[scale=.5][(0,.5*.5)]
  \;\simeq\;
  \tikzpic{
    \webncr
    \node[green_mark] (B) at (1-.3,.15) {};
    \node[below={-1pt} of B] {\scriptsize $\omega$};
  }[scale=.5][(0,.5*.5)]
  \;.\]
  The beginning of the proof would go through, and we could try to build an isomorphism as in \eqref{eq:proof_dot_slide_isomorphism}.
  The only difference would be that red dots ($\textdot[2]$) are swapped with brown dots ($\textdot[3]$).
  To get a chain map, we would need the following relations, with $\lambda$ some invertible scalar:
  \begin{IEEEeqnarray*}{rCl}
    \tikzpic{
      \draw[fill,foamshade2] (1-.3,0) rectangle (1+.3,2);
      \draw[foamdraw2] (1-.3,0) to (1-.3,2);
      \draw[foamdraw2] (1+.3,0) to (1+.3,2);
      \draw[fill,foamshade1] (-.5,0) rectangle (0,2);
      \draw[foamdraw1] (0,0) to (0,2);
      \draw[fill,foamshade1] (2.5,0) rectangle (2,2);
      \draw[foamdraw1] (2,0) to (2,2);
      \fdot[1.6][1][1]
    }[scale=.35]
    -
    \tikzpic{
      \draw[fill,foamshade2] (1-.3,0) rectangle (1+.3,2);
      \draw[foamdraw2] (1-.3,0) to (1-.3,2);
      \draw[foamdraw2] (1+.3,0) to (1+.3,2);
      \draw[fill,foamshade1] (-.5,0) rectangle (0,2);
      \draw[foamdraw1] (0,0) to (0,2);
      \draw[fill,foamshade1] (2.5,0) rectangle (2,2);
      \draw[foamdraw1] (2,0) to (2,2);
      \fdot[1.6][1][2]
    }[scale=.35]
    &\overset{?}{=}&
    \tikzpic{
      \draw[fill,foamshade2] (1-.3,0) rectangle (1+.3,2);
      \draw[foamdraw2] (1-.3,0) to (1-.3,2);
      \draw[foamdraw2] (1+.3,0) to (1+.3,2);
      \draw[fill,foamshade1] (-.5,0) rectangle (0,2);
      \draw[foamdraw1] (0,0) to (0,2);
      \draw[fill,foamshade1] (2.5,0) rectangle (2,2);
      \draw[foamdraw1] (2,0) to (2,2);
      \fdot[.4][1][1]
    }[scale=.35]
    -
    \tikzpic{
      \draw[fill,foamshade2] (1-.3,0) rectangle (1+.3,2);
      \draw[foamdraw2] (1-.3,0) to (1-.3,2);
      \draw[foamdraw2] (1+.3,0) to (1+.3,2);
      \draw[fill,foamshade1] (-.5,0) rectangle (0,2);
      \draw[foamdraw1] (0,0) to (0,2);
      \draw[fill,foamshade1] (2.5,0) rectangle (2,2);
      \draw[foamdraw1] (2,0) to (2,2);
      \fdot[.4][1][3]
    }[scale=.35]
    \mspace{10mu}-\mspace{10mu}\lambda\;
    \tikzpic{
      \funzip[.5][0][2]
      \fzip[.5][1][2]
      \draw[fill,foamshade1] (-.5,0) rectangle (0,2);
      \draw[foamdraw1] (0,0) to (0,2);
      \draw[fill,foamshade1] (2.5,0) rectangle (2,2);
      \draw[foamdraw1] (2,0) to (2,2);
    }[scale=.35]
    %%%%%%%%%%%%%%%%%%%%
    %%%%%%%%%%%%%%%%%%%%
    %%%%%%%%%%%%%%%%%%%%
    \\
    \tikzpic{
      \flst\frst[1][0]\fdot[.5][.5]
    }[scale=.7,xscale=.5]
    -
    \tikzpic{
      \flst\frst[1][0]\fdot[.5][.5][3]
    }[scale=.7,xscale=.5]
    &\overset{?}{=}&
    \tikzpic{
      \flst\frst[1][0]\fdot[.5][.5]
    }[scale=.7,xscale=.5]
    -
    \tikzpic{
      \flst\frst[1][0]\fdot[.5][.5][2]
    }[scale=.7,xscale=.5]
    \mspace{10mu}-\mspace{10mu}\lambda\;
    \tikzpic{
      \funzip[.5][1][2]
      \fzip[.5][0][2]
      \draw[fill,foamshade1] (-.5,0) rectangle (0,2);
      \draw[foamdraw1] (0,0) to (0,2);
      \draw[fill,foamshade1] (2.5,0) rectangle (2,2);
      \draw[foamdraw1] (2,0) to (2,2);
    }[scale=.35]
  \end{IEEEeqnarray*}
  The first identity imposes $\lambda=1$, while the second imposes $\lambda=-1$: we do not have a chain map.
  However, if we instead work with $\sfoam'$, then the bubble evaluation is replaced by the relation
  \begin{gather*}
    \tikzpic{
      \funzip[.5][2][2]
      \fzip[.5][1][2]
    }[scale=.35]
    =
    \;-\;
    \tikzpic{
      \fdot[0][0][2]
    }[scale=.35]
    \;+\;
    \tikzpic{
      \fdot[0][0][3]
    }[scale=.35]
    \;,
  \end{gather*}
  so that setting $\lambda=1$ works.
  In other words, if we work with $\sfoam$, \cref{lem:dot_slide_lemma} (overstrand) works but not its understrand variant; while if we work with $\sfoam'$, the understrand variant of \cref{lem:dot_slide_lemma} holds, but not \cref{lem:dot_slide_lemma} itself.
\end{remark}

\section{A global \texorpdfstring{$\gloo$}{gl11}-action on odd Khovanov homology}
\label{sec:global_action_odd_khovanov}

In this section, we describe a \texorpdfstring{$\gloo$}{gl11}-action on odd Khovanov homology using the original definition of odd Khovanov homology \cite{ORS_OddKhovanovHomology_2013}, and show that it coincides with the local $\gloo$-action defined in the previous section, restricted to links.
This can be seen as an equivariant version of \cite[Theorem 3.4]{SV_OddKhovanovHomology_2023}.

For this section, we refer to the original construction as \emph{odd $\slt$-Khovanov homology}, and denote $\slOKh^X(D)$ (resp.\ $\slOKh^Y(D)$) the construction using type X (resp.\ type Y).
In contrast, the construction in \cref{defn:tangle_invariant} is referred to as \emph{odd $\glt$-Khovanov homology}.
We review $\gloo$-representations in \cref{subsec:gloo_basic_definitions} and define a $\gloo$-representation on the exterior algebra in \cref{subsec:gloo_action_exterior_alg}.
Then, with the $\gloo$-action on $\slOKh^X(D)$ and $\slOKh^Y(D)$ defined in \cref{subsec:defn_global_action_odd_Kh}, we show in \cref{subsec:comparison_local_global} that:

\begin{theorem}
  \label{thm:equivalence_local_global}
  Let $W$ be a marked closed tangled web and $D=\undcomp(W)$ its underlying marked link diagram.
  Denote $\emptyset$ the empty web in $\sfoam^{\greenmarking}$.
  There is an $\gloo$-equivariant isomorphism of complexes
  \begin{equation*}
    H_\bullet\Hom_{\sfoam^{\greenmarking}}(\emptyset,\glOKh(W))\cong^{\gloo}\slOKh^Y(D)
  \end{equation*}
  and similarly when working with $\sfoam'$ and type X.
\end{theorem}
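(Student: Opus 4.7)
The plan is to bootstrap off the non-equivariant isomorphism already established in \cite[Theorem~3.4]{SV_OddKhovanovHomology_2023}, which identifies the underlying chain complexes $H_\bullet\Hom_{\sfoam^{\greenmarking}}(\emptyset,\glOKh(W))$ and $\slOKh^Y(D)$. The only content of the present theorem, beyond that comparison, is that the isomorphism intertwines the two $\gloo$-actions. So the task splits into (a) describing both actions in comparable terms at each resolution of $D$, and (b) checking they agree on generators of $\gloo$ and that they commute with the saddle/merge/split maps connecting resolutions.

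First I would unpack the representable functor $H_\bullet \Hom_{\sfoam^{\greenmarking}}(\emptyset,-)$. For each vertex $W_r$ of the cube of resolutions, the hom-space $\Hom_{\sfoam^{\greenmarking}}(\emptyset,W_r)$ is a free module generated by dotted identity foams on the circles of $W_r$, and under the SV isomorphism this is identified, circle by circle, with the exterior algebra generators used in the global construction of \cref{subsec:gloo_action_exterior_alg}. Passing the local action of \cref{defn:action_on_foam_super_case} (together with the twist from \cref{defn:twist_foam}) through $\Hom(\emptyset,-)$ turns $\lief$, $\liee$, $\lieh_1$, $\lieh_2$ into explicit endomorphisms of this exterior algebra: $\lief$ adds a dot on a strand (weighted by the $\alpha$-markings), $\liee$ removes a dot (acting nontrivially only on ``dotted'' generators), and the $\lieh_i$ act as weighted counts of dots and circles. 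These formulas should match, on the nose, the action to be defined on $\slOKh^Y(D)$ in \cref{subsec:defn_global_action_odd_Kh} once one uses the dictionary between ``dotted cycle of a circle'' and ``empty cycle.''

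Next I would verify that the saddle maps of the cube of resolutions are $\gloo$-equivariant in both descriptions. On the foam side, saddles (zips/unzips) are $\gloo$-equivariant up to twists by \cref{lem:crossing_complex_equivariant}, so their image under $\Hom(\emptyset,-)$ commutes with the induced $\gloo$-action. On the classical side, equivariance of the differential is essentially a restatement of the formulas; the twists induced by \cref{defn:tangle_invariant} (the $\langle -\tfrac12,\tfrac12\rangle$ shifts attached to crossings) correspond to the global Bockstein-type corrections built into the chain-level definition of the $\gloo$-action on $\slOKh^Y(D)$. So the comparison reduces to a finite check on the five generators ($\lief$, $\liee$, $\lieh_1$, $\lieh_2$ versus the zero 2-morphism) at a single resolution, and a check of compatibility with one saddle.

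The main obstacle will be the sign/twist bookkeeping, which is where type X versus type Y first manifests: the action of $\lief$ involves a sign (the $-1$ in front of the dotted cap in \cref{tab:definition_action_foam_intro}) that has to be reconciled with the ordering conventions used to define the exterior algebra in the original construction. Concretely, I expect to need a careful choice of ordering of circles at each resolution, compatible with the ladybug sign chosen by $\sfoam$ (resp.\ $\sfoam'$), so that the standard exterior-algebra sign that appears when $\lief$ ``jumps over'' existing dotted generators matches the sign produced by the graded Leibniz rule on foams. Once this ordering is fixed, everything reduces to the non-equivariant SV comparison, and the statement for $\sfoam'$ / type X follows by the same argument after swapping the ladybug sign.
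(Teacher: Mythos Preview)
Your strategy is broadly correct and matches the paper's: reduce to the non-equivariant comparison of \cite[Theorem~3.4]{SV_OddKhovanovHomology_2023} and then check that the $\gloo$-actions agree resolution by resolution. However, two points deserve sharpening.

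First, the actions do \emph{not} match ``on the nose'' at the level of state spaces. The paper isolates this as a separate lemma (\cref{lem:equivalence_local_global_state_space}): for a closed marked web with $n$ circles, the hom-space $\Hom_{\sfoam^{\greenmarking}}(\emptyset,W^{\greenmarking})$ is isomorphic to $V^{\nu,z}(x_1,\ldots,x_n)$ when $n$ is even and to $\overline{V}^{\nu,z}$ (same module, but with $\lief,\liee$ multiplied by $-1$) when $n$ is odd. This parity-dependent sign is genuine and will not be removed by a clever ordering of circles. The paper's proof of that lemma proceeds by first checking a canonical web $\widetilde{W}$ (a row of marked circles) explicitly, and then exhibiting a $\gloo$-equivariant isomorphism $W^{\greenmarking}\cong^{\gloo}\widetilde{W}^{\greenmarking}$ in $\sfoam^{\greenmarking}$ using \cref{lem:twist_dot_slide_cup_cap}, \cref{lem:web_defining_relations_equivariance}, and an additional circle-swap lemma. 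You should expect to do something like this rather than compare arbitrary webs directly.

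Second, and relatedly, the way the sign discrepancy is resolved is not by adjusting the foam side but by exploiting the freedom on the \emph{global} side. Recall from \cref{lem:merge_split_equivariant} that the split map only \emph{anti}-commutes with $\lief$ and $\liee$, so the global $\gloo$-action on $\slOKh^Y(D)$ already requires a Manion-type sign assignment (\cite[Proposition~2.2]{Manion_SignAssignmentTotally_2014}), and that assignment is only determined up to an essentially unique choice. The paper simply \emph{chooses} those signs so that the state-space isomorphisms of \cref{lem:equivalence_local_global_state_space} become $\gloo$-equivariant; the global signs from \cite{SV_OddKhovanovHomology_2023} that make the diagram a chain isomorphism are then layered on top and do not disturb equivariance. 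Your sketch frames this as verifying that two independently fixed actions coincide, which would force you into much heavier sign bookkeeping than is actually needed.
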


Here a link diagram $D$ is \emph{marked} if it is marked as a tangle diagram; see \cref{subsec:defn_tangle_invariant}.
Note that we used the homology functor described at the end of \cref{subsec:relative_homotopy_category}.

\medbreak

This $\gloo$-action has appeared in various guised in the literature; we discuss this in the following remarks.
The reader may wish to come back to them after reading the main definitions of the section.

\begin{remark}
  \label{rem:literature_comparison_Shumakovitch}
  Over the field $\bZ/2\bZ$,
  the action of $\liee$ recovers Shumakovitch's operation \cite{Shumakovitch_TorsionKhovanovHomology_2014} and when the marking is only a base point, the action of $\lief$ recovers Khovanov's differential \cite{Khovanov_PatternsKnotCohomology_2003}.
  We note that Shumakovitch's operation was recently extended to equivariant Khovanov homology over $\bZ$ \cite{KS_SymmetriesEquivariantKhovanov_2025}.
\end{remark}

\begin{remark}
  \label{rem:literature_comparison_Migdail_Wehrli}
  The $\lief$-part of the global action was already studied by Manion \cite{Manion_SignAssignmentTotally_2014}, although with a different perspective. This action is furthered studied in Migdail's PhD thesis \cite{Migdail_FunctorialityOddKhovanov_2025}, who realize it as an action of the coloring module or, when restricting to reduced odd Khovanov homology, as an action of the first homology of the branched double cover of the link. In particular, they point out that contrary to what is claimed in \cite{Manion_SignAssignmentTotally_2014}, markings (or dot action, in their perspective) cannot both overslide and underslide; via \cref{thm:equivalence_local_global}, this is in agreement with our result (\cref{lem:dot_slide_lemma}).
  This action is further studied in work in progress by Migdail and Wehrli \cite{MW_ModuleStructureOdd_}.

  As noted in the introduction, we learned about their work while working on this manu\-script; at the ``Conference on Modern Developments in Low-Dimensional Topology'' (Trieste, June 2025) and through private communication following that.
  This motivated us to precisely compare our action with the original definition of odd Khovanov homology, and hence compare with their work.
  Furthermore, as we learned in private communication, at least part of the work in progress of Migdail and Wehrli appeared already in Migdail's PhD thesis; while unpublished at that time, it was posted on the arXiv \cite{Migdail_FunctorialityOddKhovanov_2025} (see e.g.\ Theorem 5 for the definition of the action) at about the same time we posted our article.
\end{remark}

\begin{remark}
  \label{rem:literature_comparison_Grigsby_Wehrli}
  The $\gloo$-action on odd annular Khovanov homology defined by Grigsby and Wehrli \cite{GW_Action$mathfrakgl1|1$Odd_2020} closely resembles ours.
  Writing $x$ both for a circle and its associated variable, the action of $\lief$ in \cite{GW_Action$mathfrakgl1|1$Odd_2020} is an alternating sum of $(-)\wedge x$, and the action of $\liee$ is a sum over $(-)\mathbin{\llcorner}x$, where each sum is over essential circles.
  Apart from the difference of convention that their action is on the right, our definition does not allow twisting the action of $\liee$, so that the sum is always over all circles.
  If we did however, their action should be a special case of ours, with the annular structure inducing a canonical choice of markings, and hence a canonical $\gloo$-action.
\end{remark}

Throughout we assume $2$ is invertible in $\ringfoam$, although the analogue of \cref{rem:local_invariant_2_invertible} applies.

\subsection{Review of \texorpdfstring{$\gloo$}{gl11}-representations}

\label{subsec:gloo_basic_definitions}

% A \emph{super vector space} is a $\bZ/2\bZ$-graded vector space $V=V_{\ov{0}}\oplus V_{\ov{1}}$.
% For $v$ a non-zero homogeneous vector in $V$, we write $\abs{v}\in\bZ/2\bZ$ its $\bZ/2\bZ$-degree, and called it its \emph{parity}.
% Given two super vector spaces $V$ and $W$, the space $\HOM_\ringfoam(V,W)$ of all linear maps has a canonical $\bZ/2\bZ$-grading, where $\deg(f) = g$ if and only if $f(V_{\ov{i}})\subset V_{\ov{i}+g}$.

% A \emph{Lie superalgebra} is a super vector space $\fg$ endowed with a bilinear degree-preserving map $[-,-]\colon\fg\otimes \fg\to\fg$, satisfying the following axioms:
% \begin{IEEEeqnarray*}{Cl}
%   [v,w] = -(-1)^{\abs{v}\abs{w}}[w,v]&\text{graded symmetry}\\{}
%   [u,[v,w]] + (-1)^{\abs{u}(\abs{v}+\abs{w})}
%   [v,[w,u]] + (-1)^{\abs{w}(\abs{u}+\abs{v})}[w,[u,v]] 
%   = 0
%   \qquad&\text{graded Jacobi identity}
% \end{IEEEeqnarray*}
% Given a super vector space $V$, the super vector space $\END_\ringfoam(V)$ is canonically as Lie superalgebra, setting $[f,g] \coloneqq f\circ g - (-1)^{\abs{f}\abs{g}}g\circ f$.
% A \emph{representation} of a Lie superalgebra $\fg$ is the data of a Lie superalgebra morphism $\fg\to\END_\ringfoam(V)$ for some super vector space $V$.
% Given two representations $V$ and $W$ of $\fg$, their tensor product $V\otimes W$ is endowed with a representation of $\fg$ as follows:
% \begin{gather*}
%   x\cdot (v\otimes w) = (x\cdot v)\otimes w + (-1)^{\abs{x}\abs{v}} v\otimes (x\cdot w).
% \end{gather*}

Recall the notion of a super Lie algebra from \cref{ex:Lie_algebra_as_graded_algebra} and the example of $\gloo$ from \cref{ex:defn_gloo}.
Recall that a \emph{weight $\gloo$-representation} is a $\gloo$-representation whose underlying super vector space splits as a direct sum over the simultaneous eigenspaces of $h_1$ and $h_2$.
Elements of these eigenspaces are called \emph{weight vectors}, and their pair of $h_1$- and $h_2$-eigenvalue their \emph{weight}.

\medbreak

We partially follow \cite{GW_Action$mathfrakgl1|1$Odd_2020}.
Fix $\ring$ a generic commutative ring.
One-dimensional $\gloo$-representations are parame\-trized by $\nu\in\ring$, with $e$ and $f$ acting as zero and $h_1$ and $h_2$ acting as multiplication by $\nu$ and $-\nu$, respectively. We denote $L^{(1)}(\nu)$ this representation.
For $(r,s)\in\ring^2$, we define two-dimensional $\gloo$-representations $P^{(2)}(r,s)$ and $I^{(2)}(r,s)$ whose underlying super vector space has basis $\{v_1,v_x\}$ with $p(v_1)=\ov{0}$ and $p(v_x)=\ov{1}$, and actions:
\begin{gather*}
  P^{(2)}(r,s)\coloneqq
  \qquad
  \begin{gathered}
    \liee\cdot v_1 = 0,\quad \liee\cdot v_x = (r+s)v_1,\quad \lief\cdot v_1 = v_x,\quad\lief\cdot v_x = 0,\\
    \lieh_1\cdot v_1 = rv_1,\quad\lieh_1\cdot v_x = (r-1)v_x,\quad\lieh_2\cdot v_1 = s v_1,\quad\lieh_2\cdot v_x = (s+1)v_x
  \end{gathered}
\end{gather*}
and
\begin{gather*}
  I^{(2)}(r,s)\coloneqq
  \qquad
  \begin{gathered}
    \liee\cdot v_1 = 0,\quad \liee\cdot v_x = v_1,\quad \lief\cdot v_1 = (r+s)v_x,\quad\lief\cdot v_x = 0,\\
    \lieh_1\cdot v_1 = rv_1,\quad\lieh_1\cdot v_x = (r-1)v_x,\quad\lieh_2\cdot v_1 = s v_1,\quad\lieh_2\cdot v_x = (s+1)v_x,
  \end{gathered}
\end{gather*}
respectively.
These representations are irreducible if and only if $r+s\neq 0$, and in which case once has $L^{(2)}(r,s)\coloneqq P^{(2)}(r,s)\cong I^{(2)}(r,s)$.
We summarize these representations as follows:
\begin{IEEEeqnarray*}{CcCcCcC}
  \begin{tikzcd}[ampersand replacement=\&]
    v_x \arrow[loop above,"h_1"]\arrow[loop below,"h_2"]
    \arrow[r,bend left,"e"]
    \&
    v_1 \arrow[loop above,"h_1"]\arrow[loop below,"h_2"] \arrow[l,bend left,"f"]
  \end{tikzcd}
  \coloneqq
  &&
  \begin{tikzcd}[ampersand replacement=\&]
    \bullet \arrow[loop above,"\nu"]\arrow[loop below,"-\nu"]
  \end{tikzcd}
  &&
  \begin{tikzcd}[ampersand replacement=\&]
    v_x \arrow[loop above,"r-1"]\arrow[loop below,"s+1"]
    \arrow[r,bend left,"r+s"]
    \&
    v_1 \arrow[loop above,"r"]\arrow[loop below,"s"] \arrow[l,bend left,"1"]
  \end{tikzcd}
  &&
  \begin{tikzcd}[ampersand replacement=\&]
    v_x \arrow[loop above,"r-1"]\arrow[loop below,"s+1"]
    \arrow[r,bend left,"1"]
    \&
    v_1 \arrow[loop above,"r"]\arrow[loop below,"s"] \arrow[l,bend left,"r+s"]
  \end{tikzcd}
  \\
  &\mspace{50mu}&L^{(1)}(\nu)&\qquad&P^{(2)}(r,s)&\qquad&I^{(2)}(r,s)
\end{IEEEeqnarray*}

\subsection{\texorpdfstring{$\gloo$}{gl11}-action on the exterior algebra}
\label{subsec:gloo_action_exterior_alg}

Let $n\in\bN$.
Denote $\wedge_{\ringfoam}(x_1,\ldots,x_n)$ the exterior algebra on $n$ generators $x_1,\ldots,x_n$ over ${\ringfoam}$.
In other words, $\wedge_{\ringfoam}(x_1,\ldots,x_n)$ is the quotient of the free ${\ringfoam}$-algebra on generators $x_1,\ldots,x_n$ by the relations
\[
x^2_i = 0\quad 1\leq i \leq n
\qquad\an\qquad
x_ix_j= -x_jx_i\quad 1\leq i,j\leq n.
\]
A \emph{word} is a formal wedge product $x_{i_1}\wedge\ldots\wedge x_{i_k}$ where the indices $1\leq i_1,\ldots,i_k\leq n$ are pairwise distinct; words generate $\wedge_{\ringfoam}(x_1,\ldots,x_n)$ as a ${\ringfoam}$-module.
We equip $\wedge_{\ringfoam}(x_1,\ldots,x_n)$ with a $\bZ$-grading, setting $\abs{x_{i_1}\wedge\ldots\wedge x_{i_k}}=k$. It descends to a $\bZ/2\bZ$-grading viewing the $\bZ$-grading modulo two.
We write
\[\epsilon(\lambda_1x_1+\ldots+\lambda_nx_n) =\lambda_1+\ldots+\lambda_n.\]

Each choice of index $1\leq i\leq n$ defines a ${\ringfoam}$-linear map
\[x_i\inprod (-)\colon \wedge_{\ringfoam}(x_1,\ldots,x_n)\to \wedge_{\ringfoam}(x_1,\ldots,x_n),\]
called the \emph{inner product}, and defined on words as
\begin{gather*}
  x_i\inprod (x_{i_1}\wedge\ldots\wedge x_{i_k}) =
  \begin{cases}
    (-1)^{j-1}x_{i_1}\wedge\ldots \widehat{x}_{i_{j}} \ldots\wedge x_{i_k} & \text{if }i_j =i\text{ for some }1\leq j\leq k,\\
    0 & \text{else.}
  \end{cases}
\end{gather*}
Here the notation $\widehat{x}_{i_{j}}$ indicates that the letter $x_{i_j}$ is omitted.

\begin{lemma}
  \label{lem:properties_inner_product}
  Let $n\in\bN$. For each $v,w\in \wedge_{\ringfoam}(x_1,\ldots,x_n)$, we have:
  \begin{gather*}
    x_i\inprod (x_i \inprod v) = 0
    \quad\an\quad
    x_i\inprod (x_j\inprod v) = - x_j\inprod (x_i\inprod v),
    \\
    x_i\inprod(v\wedge w)=(x_i\inprod v)\wedge w+(-1)^{\abs{v}}v\wedge(x_i\inprod w),
    \\
    x_i\inprod(x_i\wedge v)+x_i\wedge(x_i\inprod v)=v
    \quad\an\quad
    x_i\inprod(x_j\wedge v)+x_j\wedge(x_i\inprod v)=0
    \quad\text{ if }i\neq j.
  \end{gather*}
\end{lemma}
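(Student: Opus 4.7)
The plan is to verify each identity by reducing to the case where $v$ (and $w$) is a single word $x_{i_1}\wedge\ldots\wedge x_{i_k}$, using $\ringfoam$-linearity of both sides and the fact that words generate $\wedge_{\ringfoam}(x_1,\ldots,x_n)$ as a $\ringfoam$-module. Once on words, everything reduces to keeping track of which letters appear and in which positions, together with the sign $(-1)^{j-1}$ coming from the definition of the inner product.

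First I would dispatch the two identities on the first line. For $x_i\inprod(x_i\inprod v)$, observe that a word contains each letter $x_i$ at most once, so after applying $x_i\inprod(-)$ once the letter $x_i$ is no longer present in any summand, hence the second application vanishes. For anticommutativity $x_i\inprod(x_j\inprod v)=-x_j\inprod(x_i\inprod v)$ with $i\neq j$, reduce to a word containing both $x_i$ and $x_j$ (otherwise both sides are zero), located at positions $a<b$; a direct sign count shows both compositions produce the word with $x_i$ and $x_j$ deleted, decorated with signs $(-1)^{a-1}(-1)^{b-2}$ and $(-1)^{b-1}(-1)^{a-1}$ respectively, which are opposite.

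The main work is the super-Leibniz rule on the second line. Since $v\wedge w=0$ whenever $v$ and $w$ share a letter, I may take $v$ and $w$ to be words on disjoint letter sets. The right-hand side is then computed by distinguishing whether $x_i$ appears in $v$, in $w$, or in neither. If $x_i$ appears in $v$ at position $a$, then $x_i\inprod(v\wedge w)$ removes $x_i$ with sign $(-1)^{a-1}$, while $x_i\wedge(x_i\inprod w)=0$; the Leibniz equation holds since $|v|=k$ and the second term $(-1)^{|v|}v\wedge(x_i\inprod w)$ vanishes. If $x_i$ appears in $w$ at position $b$, then in $v\wedge w$ it sits at position $|v|+b$, so $x_i\inprod(v\wedge w)=(-1)^{|v|+b-1}(v\wedge \widehat{w})=(-1)^{|v|}v\wedge(x_i\inprod w)$, while $(x_i\inprod v)\wedge w=0$. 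If $x_i$ appears in neither, all three terms vanish. This is the step with the most bookkeeping, and will be the main obstacle; it is essentially a standard exercise, and I would present it as the three cases above.

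Finally, the two identities on the third line are immediate consequences of the super-Leibniz rule: take $v\to x_j$ in property (3) and use $x_i\inprod x_j=\delta_{ij}$ together with $|x_j|=1$, obtaining $x_i\inprod(x_j\wedge v)=\delta_{ij}\,v-x_j\wedge(x_i\inprod v)$, which is exactly the stated relations in the two cases $i=j$ and $i\neq j$. This concludes.
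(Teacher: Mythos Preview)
The paper does not include a proof of this lemma; it is stated and immediately followed by the next definition, so it is treated as a standard fact about contraction on the exterior algebra. Your proposal is therefore already more detailed than what the paper provides, and your overall approach (reduce by linearity to words, track positions and signs, then deduce the last line from the Leibniz rule) is the standard one and is correct.

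One small gap: in the Leibniz step you write ``Since $v\wedge w=0$ whenever $v$ and $w$ share a letter, I may take $v$ and $w$ to be words on disjoint letter sets.'' Vanishing of the left-hand side does not by itself let you discard that case; you still need to check that the right-hand side $(x_i\inprod v)\wedge w+(-1)^{|v|}v\wedge(x_i\inprod w)$ also vanishes when $v$ and $w$ share a letter. This is easy: if the shared letter is $x_\ell$ and $i\neq\ell$, both summands still contain $x_\ell$ in each factor and hence vanish; if $i=\ell$, a short sign check (e.g.\ on $v=x_1\wedge x_2$, $w=x_2\wedge x_3$) shows the two summands cancel. Adding a sentence to this effect would make the argument complete.
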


\begin{definition}
  \label{defn:gloo_rep_exterior_alg}
  Given a linear combination of element $z=\lambda_1x_1+\ldots+\lambda_nx_n$ and a choice of scalar $\nu\in{\ringfoam}$, we endow $\wedge_{\ringfoam}(x_1,\ldots,x_n)$ with a structure of a weight $\gloo$-representation, denoted $V^{\nu;z}(x_1,\ldots,x_n)$, as follows:
  \begin{gather*}
    \lief(\ul{x}) = z\wedge \ul{x},
    \quad
    \liee(\ul{x}) = \sum_{j=1}^n (x_j\inprod \ul{x}), 
    \\
    \lieh_1(\ul{x}) = (\epsilon(z)-\abs{\ul{x}}-\nu)\ul{x}
    \quad\an\quad
    \lieh_2(\ul{x}) = (\abs{\ul{x}}+\nu) \ul{x}.
  \end{gather*}
\end{definition}

\begin{proof}
  We have $\liee(z)=\epsilon(z)$, and using that $x_i\inprod{}$ acts as a derivation (\cref{lem:properties_inner_product}), we have
  \[\sum_{j=1}^nx_j\inprod(z\wedge w) = \epsilon(z)v-\sum_{j=1}^nz\wedge(x_j\inprod w).\]
  It follows that $[\liee,\lief](w) = \epsilon(w)$.
\end{proof}

\begin{lemma}
  Write $z=\lambda_1x_1+\ldots+\lambda_nx_n$.
  In the terminology of \cref{subsec:gloo_basic_definitions}, we have
  \begin{gather*}
    V^{\nu;z}(x_1,\ldots,x_n)\cong I^{(2)}(\lambda_1,0)\otimes\ldots\otimes I^{(2)}(\lambda_n,0)\otimes L^{(1)}(-\nu).
  \end{gather*} 
  The weight of a word $\ul{x}$ is $(\epsilon(z)-\abs{\ul{x}}-\nu,\abs{\ul{x}}+\nu)$.\hfill\qed
\end{lemma}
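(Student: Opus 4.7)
The plan is to exhibit an explicit $\gloo$-equivariant isomorphism by leveraging the super-algebra decomposition of the exterior algebra. Recall that there is a canonical isomorphism of super-commutative super-algebras
\[\wedge_{\ringfoam}(x_1,\ldots,x_n)\;\cong\;\wedge_{\ringfoam}(x_1)\otimes\cdots\otimes\wedge_{\ringfoam}(x_n),\]
where each factor $\wedge_{\ringfoam}(x_i)$ is the super vector space with even basis $\{1\}$ and odd basis $\{x_i\}$, and the tensor product is taken in the symmetric monoidal category of super $\ringfoam$-modules. Under this iso, an ordered word $x_{i_1}\wedge\ldots\wedge x_{i_k}$ (with $i_1<\ldots<i_k$) is sent to the pure tensor whose $j$-th factor is $x_j$ if $j\in\{i_1,\ldots,i_k\}$ and $1$ otherwise. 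The plan is to upgrade this to a $\gloo$-equivariant isomorphism
\[V^{\nu;z}(x_1,\ldots,x_n)\;\cong\;V^{0;\lambda_1 x_1}(x_1)\otimes\cdots\otimes V^{0;\lambda_n x_n}(x_n)\otimes V^{\nu;0}(\emptyset),\]
and then to identify each factor with the claimed irreducible/indecomposable.

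The central step is to verify that each generator of $\gloo$ acts on the left-hand side via the tensor-product rule coming from the super-Hopf structure on $\cU(\gloo)$, recalled in \cref{subsubsec:g-categories} for weight zero generators and extended by the super Leibniz rule for odd elements. For $\lief$, which acts by left multiplication with $z=\sum_j\lambda_j x_j$, this is the statement that left multiplication by $x_j$ (living purely in the $j$-th tensor factor) satisfies the super Koszul rule, which holds because $\wedge_{\ringfoam}(x_1,\ldots,x_n)$ is super-commutative. For $\liee=\sum_j x_j\inprod$, the same holds thanks to \cref{lem:properties_inner_product}: each $x_j\inprod$ is an odd derivation that vanishes on factors $\wedge_{\ringfoam}(x_i)$ for $i\ne j$, so globally it acts via the super Leibniz rule concentrated on the $j$-th factor. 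For $\lieh_1,\lieh_2$, the action is diagonal in the tensor basis and the weights add across factors, matching the prescribed scalars exactly.

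Next I would identify the one-variable factors. For $V^{0;\lambda_i x_i}(x_i)$, a direct comparison of the four generator actions on the ordered basis $(1,x_i)$ with the formulas defining $I^{(2)}(\lambda_i,0)$ in \cref{subsec:gloo_basic_definitions} yields the identification $1\leftrightarrow v_1$, $x_i\leftrightarrow v_x$; in particular $\lief\cdot 1=\lambda_i x_i$ matches $\lief\cdot v_1=(r+s)v_x$ with $(r,s)=(\lambda_i,0)$, and $\liee\cdot x_i=1$ matches $\liee\cdot v_x=v_1$. For the empty-word factor $V^{\nu;0}(\emptyset)$, both $\liee$ and $\lief$ act trivially and $(\lieh_1,\lieh_2)$ acts by $(-\nu,\nu)$, which is precisely $L^{(1)}(-\nu)$.

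Finally, the weight of a word $\ul{x}$ follows by summing weights across tensor factors: each letter contributes $(\lambda_j-1,1)$ for positions in $\ul{x}$ and $(\lambda_j,0)$ otherwise, plus the constant $(-\nu,\nu)$ from $L^{(1)}(-\nu)$, giving the stated $(\epsilon(z)-\abs{\ul{x}}-\nu,\abs{\ul{x}}+\nu)$. The only step requiring genuine care is bookkeeping the super signs: one must confirm that the signs produced by the super Leibniz rule on the tensor product agree with the Koszul signs appearing in $z\wedge\ul{x}$ and in $x_j\inprod\ul{x}$. This matching is exactly what super-commutativity of $\wedge_{\ringfoam}(x_1,\ldots,x_n)$ guarantees, and is the only potential obstacle.
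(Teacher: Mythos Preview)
Your argument is correct and is exactly the natural verification one would make: decompose the exterior algebra as a tensor product of one-variable pieces, check that the super Leibniz rule on the tensor product reproduces the global actions of $\lief$ and $\liee$ (using super-commutativity for $\lief$ and the derivation property from \cref{lem:properties_inner_product} for $\liee$), and identify each factor with $I^{(2)}(\lambda_i,0)$ or $L^{(1)}(-\nu)$ by inspection. The paper itself gives no proof at all for this lemma (it is marked with a bare \qed), so your write-up simply fills in what the authors regarded as a routine check; there is nothing to compare.
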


The following extends \cite[Lemma~2.1]{Manion_SignAssignmentTotally_2014}:

\begin{lemma}
  \label{lem:merge_split_equivariant}
  Fix $n\in\bN$, scalar $\nu\in{\ringfoam}$ and elements $z\in\wedge (y_1,y_2,x_1,\ldots,x_n)$ and $z'\in \wedge(y,x_1,\ldots,x_n)$ of homogeneous degree $1$.
  \begin{enumerate}[(i)]
    \item Consider the $\bZ$-linear map
    \begin{align*}
      M_{y_1,y_2;y}\colon V^{\nu;z}(y_1,y_2,x_1,\ldots,x_n)&\to
      V^{\nu;z'}(y,x_1,\ldots,x_n)
      \\
      \ul{x}&\mapsto \ul{x}\vert_{y_1,y_2\mapsto y}
    \end{align*}
    Here $y_1,y_2\mapsto y$ means that we replace each instance of $y_1$ and $y_2$ by $y$ in $\ul{x}$.
    If $z\vert_{y_1,y_2\mapsto y}=z'$, then $M_{y_1,y_2;y}$ is a morphism of $\gloo$-representations.

    \item Consider the $\bZ$-linear map
    \begin{align*}
      S_{y;y_1,y_2}\colon V^{\nu+1;z'}(y,x_1,\ldots,x_n)&\to
      V^{\nu;z}(y_1,y_2,x_1,\ldots,x_n)
      \\
      \ul{x} &\mapsto (y_1- y_2)\wedge\ul{x}\vert_{y\mapsto y_1} =  (y_1- y_2)\wedge\ul{x}\vert_{y\mapsto y_2}
    \end{align*}
    If $z\vert_{y_1,y_2\mapsto y}=z'$, then $S_{y;y_1,y_2}$ commutes with the action of $\lieh_1$ and $\lieh_2$, and anti-commutes with the action of $\lief$ and $\liee$.
  \end{enumerate}
\end{lemma}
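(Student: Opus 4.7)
The plan is to verify each part by direct computation on words, exploiting structural observations to minimise case work.

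For part (i), the key observation is that $M_{y_1,y_2;y}$ is an algebra homomorphism of exterior algebras: the assignment $y_1, y_2 \mapsto y$, $x_i \mapsto x_i$ respects $y_i^2 = 0$ and $y_1 y_2 + y_2 y_1 = 0$ (both map to $y^2 + y^2 = 0$). Equivariance for $\lieh_1$ and $\lieh_2$ is then a scalar-matching check: $M$ either annihilates a word (when both $y_1$ and $y_2$ occur in it) or preserves its length, and the hypothesis $z\vert_{y_1,y_2\mapsto y} = z'$ yields $\epsilon(z) = \epsilon(z')$. Equivariance for $\lief$ is immediate from the algebra-map property, $M(z\wedge \ul x) = z'\wedge M(\ul x)$. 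For $\liee$, a case analysis on whether $\ul x$ contains $y_1$, $y_2$, both, or neither reduces matters to the identity $M(y_1 \inprod \ul x) + M(y_2 \inprod \ul x) = y \inprod M(\ul x)$: when both $y_1$ and $y_2$ appear the left-hand side cancels by antisymmetry (and $M(\ul x)=0$), while otherwise the single non-zero term on each side matches directly. The remaining summands $x_l \inprod \ul x$ for $l\notin\{y_1,y_2\}$ intertwine with $M$ via the derivation rule of \cref{lem:properties_inner_product}.

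For part (ii), first note that $S_{y;y_1,y_2}$ is well-defined thanks to the identity $(y_1 - y_2) \wedge y_1 = (y_1 - y_2) \wedge y_2 = y_1 \wedge y_2$. The $\lieh_i$ commutation is bookkeeping: $S$ raises word length by one while the parameter changes from $\nu+1$ to $\nu$, and $\epsilon(z) = \epsilon(z')$, so the eigenvalues agree. For $\lief$, decompose $z = \lambda_1 y_1 + \lambda_2 y_2 + r$ and $z' = (\lambda_1+\lambda_2) y + r$ where $r$ is the common $x$-part; a direct expansion shows that both $(y_1 - y_2) \wedge z'\vert_{y \to y_1}$ and $-z \wedge (y_1 - y_2)$ equal $(\lambda_1+\lambda_2) y_1\wedge y_2 + (y_1 - y_2) \wedge r$, which yields $S\lief = -\lief S$ after wedging with $\ul x\vert_{y \to y_1}$. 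For $\liee$, apply the graded derivation rule for $\inprod$ (\cref{lem:properties_inner_product}) to each term in $\liee\,S(\ul x) = \sum_{j \in \{y_1,y_2,x_1,\ldots,x_n\}} x_j \inprod \bigl((y_1-y_2) \wedge \ul x\vert_{y \to y_1}\bigr)$; the contributions $y_1 \inprod (y_1 - y_2) = 1$ and $y_2 \inprod (y_1 - y_2) = -1$ produce cancelling copies of $\ul x\vert_{y \to y_1}$, and the remaining correction terms collapse to $-(y_1 - y_2) \wedge (\liee\,\ul x)\vert_{y \to y_1}$ using $y_2 \inprod \ul x\vert_{y \to y_1} = 0$ (since $\ul x\vert_{y\to y_1}$ has no $y_2$) and $y_1 \inprod \ul x\vert_{y \to y_1} = (y \inprod \ul x)\vert_{y \to y_1}$.

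The main obstacle will be sign bookkeeping in part (ii): the cancellations in the $\liee$ anti-commutation depend on a delicate balance between the graded Leibniz signs of the inner product and the minus sign in $y_1 - y_2$, so careful attention is needed to check that the correction terms assemble with the correct overall sign. Beyond this, both statements reduce to mechanical identities in the exterior algebra.
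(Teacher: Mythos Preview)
Your proposal is correct and follows essentially the same approach as the paper's proof: both reduce the $\liee$-equivariance in (i) to the identity $M(y_1\inprod\ul x)+M(y_2\inprod\ul x)=y\inprod M(\ul x)$, and the $\liee$-anticommutation in (ii) to the derivation rule together with $(y_1\inprod{}+y_2\inprod{})(y_1-y_2)=0$. You give more detail than the paper --- in particular spelling out the $\lief$-anticommutation in (ii) via the decomposition $z=\lambda_1 y_1+\lambda_2 y_2+r$, which the paper simply declares ``clear'' --- but the underlying argument is the same.
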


\begin{proof}
  Equivariance (up to sign) with respect to $\lieh_1$, $\lieh_2$ and $\lief$ is clear.
  Consider case (i).
  It is clear that $M_{y_1,y_2;y}$ is equivariant with respect to $x_i\inprod{}$.
  Equivariance (up to sign) with respect to $\liee$ then follows from the identity
  \begin{gather*}
    y\inprod (\ul{x}\vert_{y_1,y_2\mapsto y}) = (y_1\inprod\ul{x})\vert_{y_1,y_2\mapsto y} + (y_2\inprod\ul{x})\vert_{y_1,y_2\mapsto y}.
  \end{gather*}
  Similarly, case (ii) reduces to the identity
  \begin{align*}
    (y_1- y_2)\wedge(y\inprod \ul{x})\vert_{y\mapsto y_1} 
    &=
    (y_1- y_2)\wedge((y_1\inprod{}+y_2\inprod{}) \ul{x}\vert_{y\mapsto y_1})
    \\
    &= -(y_1\inprod{} + y_2\inprod{})\big((y_1- y_2)\wedge\ul{x}\vert_{y\mapsto y_1}\big),
  \end{align*}
  using distributivity and $(y_1\inprod{}+y_2\inprod{})(y_1-y_2)=0$.
  % note that these two identities do not hold when the inner product is replaced by the wedge product, taking eg x=1.
  % \begin{IEEEeqnarray*}{rCl}
  %   \IEEEeqnarraymulticol{3}{l}{
  %     \sum_{t=y_1,y_2,x_1,\ldots,x_n}t\inprod \big(\ul{x}\vert_{y\mapsto y_1}\wedge (y_1-y_2)\big)
  %   }
  %   \\
  %   \qquad&=&
  %   \sum_{t=y_1,y_2,x_1,\ldots,x_n}(t\inprod \ul{x}\vert_{y\mapsto y_1})\wedge (y_1-y_2)
  %   +
  %   \ul{x}\wedge\sum_{t=y_1,y_2,x_1,\ldots,x_n}t\inprod(y_1-y_2)
  %   \\
  %   &=&
  %   \sum_{t=y,x_1,\ldots,x_n}(t\inprod \ul{x})\vert_{y\mapsto y_1}
  %   \wedge (y_1-y_2)
  %   \IEEEQEDhere
  % \end{IEEEeqnarray*}
\end{proof}

\subsection{\texorpdfstring{$\gloo$}{gl11}-action on odd Khovanov homology}
\label{subsec:defn_global_action_odd_Kh}

We sketch how the dot action from \cite{Manion_SignAssignmentTotally_2014} extends to a $\gloo$-action on the original definition of odd Khovanov homology \cite{ORS_OddKhovanovHomology_2013}.
To get a proper invariant of marked oriented link, one should further shift and twist using the orientation, as in \cref{thm:topological_invariance}; we ignore that.

Let $D$ be a marked link diagram with $N$ crossings.
For a resolution $r\in\{0,1\}^N$ of $D$, denote $c(r)$ the number of connected components in $r$, and let
\[\nu(D;r)\coloneqq\frac{1}{2}(N-\abs{r}-c(r)).
% \frac{1}{2}(2N_- - N_+-\abs{r}-c(r)) when considering orientation
\]
We associate to $r$ the state space
\[
V(D;r)\coloneqq V^{\nu(D;r);z(D;r)}(x_1,\ldots,x_{c(r)}).
\]
Here $z(D;r)=\epsilon_1(f)x_i+\ldots+\epsilon_n(f)x_n$, where $x_i$ is the variable associated to the $i$-circle and $\epsilon_i(f)$ is the sum of all the $\lief$-scalars associated to marked points on the $i$-circle.
One then constructs a complex $\slOKh^Y(D)$
using the ${\ringfoam}$-linear maps $M_{y_1,y_2;y}$ and $S_{y;y_1,y_2}$, respectively corresponding to a ``merge cobordism'' and to a ``split cobordism''.
Finally, one fixes the signs, either using a type X or a type Y sign assignment; here we use type Y sign assignment.
By \cref{lem:merge_split_equivariant}, it carries an action of $\gloo$, up to some signs.
These signs can be fixed in an essentially unique way, following \cite[Proposition~2.2]{Manion_SignAssignmentTotally_2014}.
This defines a chain complex $\slOKh^Y(D)$ endowed with a $\gloo$-action.
Note that the quantum grading is precisely twice the eigenvalue of $\lieh_2$: $\lieh_2(v) = \frac{1}{2}\qdeg(v)\;v$.

% \begin{corollary}
%   \label{cor:global_invariance}
%   Let $L$ be an oriented link and $D$ an oriented link diagram of $L$.
%   Let $\cM$ be a choice of markings on $D$.
%   Then the $\gloo$-action on $\slOKh^Y(D,\cM)$ only depends on the position of markings, which furthermore satisfy the following:
%   \begin{gather*}
%     \tikzpic{
%       \coordinate (base) at (.5,.5);
%       \webpcr
%       \node[green_mark] (B) at (.3,.15) {};
%       \node[below={-1pt} of B] {\scriptsize $\lambda$};
%     }[baseline=(base),scale=.7]
%     \simeq
%     \tikzpic{
%       \coordinate (base) at (.5,.5);
%       \webpcr
%       \node[green_mark] (B) at (1-.3,1-.15) {};
%       \node[above={-1pt} of B] {\scriptsize $\lambda$};
%     }[baseline=(base),scale=.7]
%     \quad\an\quad
%     \tikzpic{
%       \coordinate (base) at (.5,.5);
%       \webncr
%       \node[green_mark] (B) at (.3,.15) {};
%       \node[below={-1pt} of B] {\scriptsize $\lambda$};
%     }[baseline=(base),scale=.7]
%     \simeq
%     \tikzpic{
%       \coordinate (base) at (.5,.5);
%       \webncr
%       \node[green_mark] (B) at (1-.3,1-.15) {};
%       \node[above={-1pt} of B] {\scriptsize $-\lambda$};
%       \node[green_mark] (C) at (1-.3,.15) {};
%       \node[below={-1pt} of C] {\scriptsize $2\lambda$};
%     }[baseline=(base),scale=.7]
%     \;.
%   \end{gather*}
%   If one considers type X instead, then the roles of the overcrossing and the undercrossing are swapped.
% \end{corollary}

\begin{remark}
  We work over a ring where $2$ is invertible.
  One could avoid this condition by either restricting to an action of $\sloo$, or by adding $\frac{1}{2}c(L)$ to $\nu(D;r)$, where $c(L)$ denotes the number of components of $L$.
\end{remark}

\begin{lemma}
  \label{lem:action_on_reduced}
  Let $D$ be a marked oriented link diagram.
  Reduced odd Khovanov homology can be identified with the kernel (or image) of $\liee$:
  \[\redOKh_\slt^Y(D)\cong \ker e=\im e.\]
  Write $\epsilon(f)$ the sum of scalars over all markings.
  Furthermore, if $\epsilon(f)=0$, then the $\gloo$-action on $\OKh(D)$ descends to a $\fgl_{1|1}^{\leq 0}$-action on $\redOKh(D)$.
\end{lemma}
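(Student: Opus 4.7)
The plan is to work resolution-by-resolution. At each resolution $r$ of $D$ the state space $V(D;r) = V^{\nu(D;r); z(D;r)}(x_1, \ldots, x_n)$ with $n = c(r) \geq 1$ is an exterior algebra on which $\liee$ acts as $e = y \inprod$, where $y = x_1 + \cdots + x_n$.

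For the equality $\ker e = \im e$, I would change basis in the degree-one part of the exterior algebra to $(y, \tilde x_2, \ldots, \tilde x_n)$, which identifies $V(D;r) \cong \wedge(y, \tilde x_2, \ldots, \tilde x_n)$ and $e$ with interior product by the basis element $y$. A direct verification then yields $\ker(y \inprod) = \wedge(\tilde x_2, \ldots, \tilde x_n) = \im(y \inprod)$. Since $\liee$ is a chain endomorphism of $\slOKh^Y(D)$ by \cref{lem:merge_split_equivariant}, both $\ker e$ and $\im e$ are subcomplexes, and the resolution-wise equality extends globally.

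To identify $\ker e$ with $\redOKh_\slt^Y(D)$, I would fix a basepoint and let $p(r)$ denote the circle through it at resolution $r$. Taking the standard quotient model $\redOKh_\slt^Y(D;r) = V(D;r)/x_{p(r)} V(D;r)$, a direct check on merges and splits shows that $x_{p(r)} V(D;r)$ is preserved, so the projection $\pi\colon V(D;r) \twoheadrightarrow V(D;r)/x_{p(r)} V(D;r)$ is a chain map. The restriction $\pi|_{\ker e}$ is then an isomorphism at each resolution: for injectivity, I would show $\ker e \cap x_{p(r)} V = 0$ by expanding $e(x_p \wedge w) = w - x_p \wedge(y \inprod w)$ and noting that $v = x_p \wedge w \in \ker e$ forces $w \in x_p V$, whence $v = 0$ by $x_p \wedge x_p = 0$; surjectivity follows from a rank count, as both modules are free of rank $2^{n-1}$.

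For the descent when $\epsilon(f) = 0$: the relations $[\lieh_1, \liee] = \liee$ and $[\lieh_2, \liee] = -\liee$ ensure $\lieh_1, \lieh_2$ always preserve $\ker e$, since for $v \in \ker e$ one has $e(\lieh_i v) = \lieh_i(ev) \mp ev = 0$. The super-commutator $[\liee, \lief] = \lieh_1 + \lieh_2$ acts on $V^{\nu;z}$ as multiplication by $\epsilon(z) = \epsilon(f)$, so for $v \in \ker e$ we find $e(\lief v) = \epsilon(f) v$; hence $\lief$ preserves $\ker e$ iff $\epsilon(f) = 0$. Under this hypothesis the full $\gloo^{\leq}$-action on $\slOKh^Y(D)$ restricts to $\ker e$, yielding the stated action on $\redOKh_\slt^Y(D)$. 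The main subtlety I anticipate is verifying that $\liee$ is genuinely a chain map (rather than merely equivariant up to signs), since \cref{lem:merge_split_equivariant} gives commutation with merges but only anti-commutation with splits; reconciling this with the type Y sign assignment is a careful but essentially standard sign fix in the spirit of \cite{Manion_SignAssignmentTotally_2014}.
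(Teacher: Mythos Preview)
Your outline is correct but there is a small slip in the basis change: with $y=x_1+\cdots+x_n$ and $\tilde x_i\in\ker\epsilon$, the family $(y,\tilde x_2,\ldots,\tilde x_n)$ has change-of-basis determinant $n$ (check $n=2,3$), so it is not a $\bZ$-basis and you cannot conclude $\ker(y\inprod)=\wedge(\tilde x_2,\ldots,\tilde x_n)$ integrally. The painless fix is to use $(x_1,\,x_2-x_1,\ldots,x_n-x_1)$ instead; this has determinant $1$, the functional $\epsilon=\sum x_i^*$ is then exactly the first dual basis vector, and your argument goes through verbatim.

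The paper takes a different route at both steps. For $\ker e=\im e$ it simply observes that $e^2=0$ admits a contracting homotopy: the identity $\liee\lief+\lief\liee=\epsilon(z)\,\id$ with any $z$ satisfying $\epsilon(z)=1$ (for instance $z=x_1$) does the job, with no basis bookkeeping. For the identification with the reduced theory, rather than your basepoint quotient model $V/x_{p}V$, the paper uses the ORS subalgebra model $\widetilde{\wedge}\subset\wedge(x_1,\ldots,x_n)$ generated by $\ker\epsilon$ and sandwiches $\im e\subset\widetilde{\wedge}\subset\ker e$ (the first inclusion by an explicit formula $e(x_{i_1}\cdots x_{i_k})=(x_{i_2}-x_{i_1})\cdots(x_{i_k}-x_{i_1})$, the second since $e=\epsilon$ on degree one and $e$ is a derivation); this is basepoint-free and matches the original ORS definition of the reduced complex. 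Your descent argument when $\epsilon(f)=0$ and your closing remark on the sign fix for $\liee$ as a chain map both agree with what the paper does.
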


Comparing with the work in progress of Migdail and Wehrli (see \cref{rem:literature_comparison_Migdail_Wehrli}), this is the same statement that the action of the coloring module descends to an action of the reduced coloring module on reduced odd Khovanov homology. 

\begin{proof}
  By definition, the following holds on $\slOKh^Y(D)$:
  \[\liee\circ\lief+\lief\circ\liee = \epsilon(f)\id.\]
  In particular, if $\epsilon(f)=1$, then $(\liee\circ\lief+\lief\circ\liee)(v) =v $ for all $v$; this shows that $\im e=\ker e$.
  This also shows that if $\epsilon(f)=0$, then $\lief(\ker e)\subset\ker e$, so that the $\fgl_{1|1}^{\leq 0}$-action restricts to $\ker e$.

  The reduced state space $\widetilde{\wedge}\subset\wedge(x_1,\ldots,x_n)$
  is defined in \cite[section~4]{ORS_OddKhovanovHomology_2013} as the subalgebra generated by $\ker\epsilon$.
  On homogeneous elements of degree one, we have $\liee=\epsilon$, so that $\widetilde{\wedge}\subset\ker\liee$.
  Moreover:
  \[\liee(x_{i_1}\ldots x_{i_k})=\sum_{j=1}^k(-1)^jx_{i_1}\ldots\widehat{x}_{i_j}\ldots x_{i_k} = (x_{i_2}-x_{i_1})\ldots(x_{i_k}-x_{i_1})\in\widetilde{\wedge},\]
  so $\im e\subset\widetilde{\wedge}$. The fact that $\im e=\ker e$ concludes.
  % \cite[Lemma~3.10]{Vaz_NotEvenKhovanov_2020}.
\end{proof}

% \begin{remark}
%   \ls{to clean:}
%   Consider the action of $\lief$ on reduced odd Khovanov homology as above, assuming $\epsilon(\cM)=0$.
%   To be non-trivial, we must have that for some homological degree $i$, there exists a quantum degree $j$ such that $\redOKh_{i,j}(L,{\ringfoam})\neq 0$ and $\redOKh_{i,j+2}(L,{\ringfoam})\neq 0$.
%   In particular,the link $L$ must be \emph{${\ringfoam} OH$-thick} (see \cite{Shumakovitch_PatternsOddKhovanov_2011}).
%   Quasi-alternating links, and in particular non-split alternating links like $T(2,n)$, are ${\ringfoam} OH$-thin for any ring ${\ringfoam}$.
%   In particular, the DG-structure is trivial.
%   The first $\bZ OH$-thick knot is $8_{19}$. The knot $15^n_{41127}$ is $\bQ OH$-thick; this seems to be the first such knot.
% \ls{to see if the action is non-trivial, we would need to compute (at least) for this 15 crossings knot...}
% \end{remark}

% Are there computations of either Szabo's homology, and the Heegaard-Floer homology of the branched double cover, for $8_{19}$ and $15^n_{41127}$?

\subsection{Comparison with the local action}
\label{subsec:comparison_local_global}

In this subsection, we prove \cref{thm:equivalence_local_global}.
We begin with the isomorphism at the level of state spaces.
Write $\ov{V}^{\nu,z}(x_1,\ldots,x_n)$ the $\gloo$-representation identical to $V^{\nu,z}(x_1,\ldots,x_n)$, except that the action of $\lief$ and $\liee$ is multiplied by $-1$.

\begin{lemma}
  \label{lem:equivalence_local_global_state_space}
  Let $W^{\greenmarking}$ be a marked closed web.
  Order the components of $\undcomp(W)$ from $1$ to $n$.
  Denote $\tau_i(\lief)$ the total $\lief$-marking on the $i$th component, $\#_i\text{split}$ the number of split webs in the $i$th component and $\#\text{split}$ the total number of splits.
  Let
  \[z=\epsilon_1x_1+\ldots+\epsilon_nx_n\]
  for $\epsilon_i = \tau_i(\lief)+\#_i\text{split}$, and
  \[\nu=\tau_{W^{\greenmarking}}(\lieh_2)+\frac{1}{2}\#\text{split}-\frac{n}{2}.\]
  Then:
  \begin{gather*}
    \Hom_{\sfoam^{\greenmarking}}(\emptyset,W^{\greenmarking}) 
    \cong \begin{cases}
      V^{\nu,z}(x_1,\ldots,x_n) & \text{if $n$ is even},\\
      \ov{V}^{\nu,z}(x_1,\ldots,x_n) & \text{if $n$ is odd},\\
    \end{cases}
  \end{gather*}
  as $\gloo$-representations, where the isomorphism is the one used in the proof of Theorem 3.4 in \cite[subsubsection~3.3.3]{SV_OddKhovanovHomology_2023}, which shows the isomorphism between odd $\slt$- and $\glt$-Khovanov homology.
\end{lemma}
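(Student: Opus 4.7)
The plan is to verify that the isomorphism of $\ringfoam$-modules built in \cite[subsubsection~3.3.3]{SV_OddKhovanovHomology_2023} is in fact $\gloo$-equivariant, up to an overall sign $(-1)^n$ twisting the action of the odd generators. Recall that this isomorphism is constructed in two stages. First, one reduces the closed marked web $W^{\greenmarking}$ to a disjoint union of $n$ labelled circles by iteratively applying the web isomorphisms of \cref{lem:web_defining_relations_equivariance}. Second, one identifies the Hom space from $\emptyset$ into this disjoint union with $\wedge_{\ringfoam}(x_1,\ldots,x_n)$ by neck-cutting each circle and evaluating empty/dotted bubbles, so that a dot placed on the $i$th circle corresponds to the letter $x_i$ and an undecorated facet corresponds to $1$.

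Because the reduction isomorphisms of \cref{lem:web_defining_relations_equivariance} are already $\gloo$-equivariant, it suffices to verify equivariance on the disjoint union of $n$ circles, carrying the total twist inherited from $W^{\greenmarking}$ together with the twist data accumulated through the reduction (the round markings on the splits, the cups/caps from the squeezing and adjunction relations, and the shifts coming from neck-cutting). I would then check each generator of $\gloo$ separately on this reduced form. The diagonal generators $\lieh_1$ and $\lieh_2$ act by the two coordinates of the $\bZ^2$-grading plus the corresponding twist; a direct count shows that a dotted foam representing a word $\ul{x}$ has $\lieh_2$-eigenvalue $|\ul{x}|+\nu$ once one accounts for the $+\tfrac{1}{2}\#\text{split}$ contribution from the round markings and the $-\tfrac{n}{2}$ from the neck-cuts. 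The $\lieh_1$-eigenvalue then follows since $\lieh_1+\lieh_2$ acts by the global scalar $\epsilon(z)$ forced by the twist compatibility of \cref{lem:family_of_twists_foam}.

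For $\lief$, the twist acts on the reduced form by placing a dot on each marked component, scaled by its $\alpha$-coefficient, while each round marking inserted on a split web contributes a further dot scaled by $-1$; using the dot-migration and dot-slide relations (\cref{fig:rel_diagfoam_graded} and \cref{lem:twist_dot_slide_cup_cap}), these accumulate on the $i$th circle as a total coefficient equal to $\tau_i(\lief)+\#_i\text{split}$, so that $\lief$ corresponds to wedging with $z$ as stated. For $\liee$, the only nonzero value on generators is $\liee$ of a dot, which is the identity on the empty web; applying the super Leibniz rule to a foam representing a word $x_{i_1}\wedge\cdots\wedge x_{i_k}$ yields a signed sum over the positions $j$ where one deletes the $j$th dot, with sign $(-1)^{j-1}$ from the super interchange law, producing exactly $\sum_j x_j\inprod\ul{x}$.

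The main obstacle will be the sign analysis. The SV reduction uses a graded Koszul rule to combine the circles successively, and each combination introduces a sign governed by the parity of the number of components remaining. These signs commute with the diagonal operators $\lieh_1,\lieh_2$, but when composed with the odd operators $\lief$ and $\liee$ they produce, after summing contributions, a global $(-1)^n$ twist, which is exactly what distinguishes $V^{\nu;z}$ from $\overline{V}^{\nu;z}$. Fixing once and for all a compatible ordering of the components and propagating it through the reduction should show that this twist is the only discrepancy, yielding $\overline{V}^{\nu;z}$ in the odd-$n$ case and $V^{\nu;z}$ in the even-$n$ case.
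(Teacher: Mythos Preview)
Your overall strategy matches the paper's: reduce $W^{\greenmarking}$ equivariantly to a standard disjoint union of circles, then verify the $\gloo$-action on that standard form. However, two steps do not go through as written.

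First, the reduction is incomplete. The isomorphisms of \cref{lem:web_defining_relations_equivariance} (and \cref{lem:twist_dot_slide_cup_cap}) let you simplify local digons and slide markings along strands, but they do not let you move entire circles past one another into the specific ordered standard form $\widetilde W^{\greenmarking}$. The paper needs, and proves, an additional sub-lemma exhibiting an explicit $\gloo$-equivariant foam isomorphism that slides a circle across a neighbouring strand; this is a separate computation (checking $\lief$-equivariance of a particular two-term dotted foam) which you have not supplied.

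Second, your bookkeeping for $\lief$ does not add up. A round marking has $\alpha=-1$, so ``each round marking inserted on a split web contributes a further dot scaled by $-1$'' would yield $\tau_i(\lief)-\#_i\text{split}$, not $\tau_i(\lief)+\#_i\text{split}$. The paper does not try to track accumulated twist through the reduction; instead it verifies the formula directly on the explicit standard form $\widetilde W^{\greenmarking}$ (where $\#_i\text{split}=1$ and the $+1$ comes from the intrinsic $\lief$-action on the single cup in the cup foam $\beta^{\widetilde W}$, not from a round marking), and then observes that $z$ and $\nu$ are invariant under each of the equivariant reduction isomorphisms. Your explanation of the global sign $(-1)^n$ is also too loose: in the paper this sign is built into the chosen basis isomorphism via the explicit factor $(-1)^{|\delta|n}$ on dotted cup foams, rather than arising from a Koszul-rule argument on the reduction.
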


\begin{proof}
  We first verify the lemma when $W^{\greenmarking}=\widetilde{W}^{\greenmarking}$ is of the form
  \begin{gather*}
    \widetilde{W}^{\greenmarking} = 
    \tikzpic{
      \webmMarkedB[0][0][$\omega_1$]\webs[1][0]
      \begin{scope}[shift={(2,0)}]
        \webmMarkedB[0][0][$\omega_2$]\webs[1][0]
      \end{scope}
      \node at (5,.5) {\ldots};
      \begin{scope}[shift={(6,0)}]
        \webmMarkedB[0][0][$\omega_n$]\webs[1][0]
      \end{scope}
    }[scale=.5][(0,.5*.5)]
    \;.
  \end{gather*}
  To describe the isomorphism mentioned in the statement, one arbitrarily chooses (i) a ``cup foam'' $\beta^{W}\colon\emptyset\to W$, whose underlying surface is a union of disks, and (ii) an ordering on the components of $\undcomp(W^{\greenmarking})$.
  For $\widetilde{W}$, we choose $\beta^{\widetilde{W}}$ as
  \begin{gather*}
    \beta^{\widetilde{W}} \;\coloneqq\;
    \tikzpic{
      \fill[foamshade1] (-.5,2) to 
        (0,2) to[out=-90,in=180] ++(.5,-.7) to[out=0,in=-90] ++(.5,.7)
        to ++(1,0) to[out=-90,in=180] ++(.5,-1) to[out=0,in=-90] ++(.5,1)
        to ++(2,0) to[out=-90,in=180] ++(.5,-1.3) to[out=0,in=-90] ++(.5,1.3)
        to ++(.5,0) to ++(0,-2) to (-.5,0) to (-.5,2);
      \draw[foamdraw1] (0,2) to[out=-90,in=180] ++(.5,-.7) to[out=0,in=-90] ++(.5,.7);
      \draw[foamdraw1] (2,2) to[out=-90,in=180] ++(.5,-1) to[out=0,in=-90] ++(.5,1);
      \draw[foamdraw1] (5,2) to[out=-90,in=180] ++(.5,-1.3) to[out=0,in=-90] ++(.5,1.3);
      \node at (4,1.5) {$\ldots$};
    }[scale=.6]\;,
  \end{gather*}
  and the ordering from left to right when reading $\widetilde{W}$.
  For $\epsilon\in\{0,1\}$, write $\tikzpic{\fhollowdot}_{\epsilon}$ for $\tikzpic{\fdot}$ if $\epsilon=1$, and nothing otherwise.
  Explicitly, the isomorphism is given on basis elements by (here $\delta\in\{0,1\}^n$ and $\abs{\delta}=\delta_1+\ldots+\delta_n$)
  \begin{gather*}
    \tikzpic{
      \fill[foamshade1] (-.5,2) to 
        (0,2) to[out=-90,in=180] ++(.5,-.7) to[out=0,in=-90] ++(.5,.7)
        to ++(1,0) to[out=-90,in=180] ++(.5,-1) to[out=0,in=-90] ++(.5,1)
        to ++(2,0) to[out=-90,in=180] ++(.5,-1.3) to[out=0,in=-90] ++(.5,1.3)
        to ++(.5,0) to ++(0,-2) to (-.5,0) to (-.5,2);
      \draw[foamdraw1] (0,2) to[out=-90,in=180] ++(.5,-.7) to[out=0,in=-90] ++(.5,.7);
      \draw[foamdraw1] (2,2) to[out=-90,in=180] ++(.5,-1) to[out=0,in=-90] ++(.5,1);
      \draw[foamdraw1] (5,2) to[out=-90,in=180] ++(.5,-1.3) to[out=0,in=-90] ++(.5,1.3);
      \node at (4,1.5) {$\ldots$};
      \fill[foamshade1] (-.5,2) rectangle (0,3);
      \draw[foamdraw1] (0,2) to (0,3);
      \draw[foamdraw1] (1,2) to (1,3);
      \fill[foamshade1] (1,2) rectangle (2,3);
      \draw[foamdraw1] (2,2) to (2,3);
      \draw[foamdraw1] (3,2) to (3,3);
      \fill[foamshade1] (3,2) rectangle (5,3);
      \draw[foamdraw1] (5,2) to (5,3);
      \draw[foamdraw1] (6,2) to (6,3);
      \fill[foamshade1] (6,2) rectangle (6.5,3);
      \fhollowdot[.5][2.2]\node[below right=-3pt] at (.5,2.2) {\footnotesize $\delta_1$};
      \fhollowdot[2.5][2.4]\node[below right=-3pt] at (2.5,2.5) {\footnotesize $\delta_2$};
      \fhollowdot[5.5][2.8]\node[below right=-3pt] at (5.5,2.8) {\footnotesize $\delta_n$};
    }[scale=.6]
    \;\mapsto\;
    (-1)^{\abs{\delta}n}
    x_n^{\delta_n}\ldots x_2^{\delta_2}x_1^{\delta_1}\;.
  \end{gather*}
  One checks that the $\gloo$-action on super $\glt$-foams coincides with the $\gloo$-action in $V^{\nu,z}$, up to an extra sign $(-1)^n$ for $\liee$ and $\lief$.
  For instance, assume $\omega_i=\roundmarking$ for each $i$, or in other words that $\tau_i(f)=-\#_i\text{split}=-1$ and $\tau(h_2)=-\frac{1}{2}\#\text{split}=-\frac{n}{2}$.
  Then the action of $\lief$ on foams is zero, in agreement with $z=0$; and the action of $\lieh_2(\beta^{\widetilde{W}^{\greenmarking}})=-\frac{n}{2}\beta^{\widetilde{W}^{\greenmarking}}$, in agreement with $\nu=-\frac{n}{2}$.

  We now show the lemma for generic $W^{\greenmarking}$.
  First, note that there is a $\gloo$-equivariant isomorphism $W^{\greenmarking}\cong^{\gloo}\widetilde{W}^{\greenmarking}$ in $\sfoam^{\greenmarking}$.
  Without the equivariance and ignoring markings, this statement was shown in \cite[subsubsection~6.3.3]{Schelstraete_OddKhovanovHomology_2024} (see also \cite[Lemma~2.13]{SV_OddKhovanovHomology_2023}).
  To lift it to include equivariance and markings, we use the $\gloo$-equivariant isomorphisms from \cref{lem:twist_dot_slide_cup_cap} and \cref{lem:web_defining_relations_equivariance} (note that $z$ and $\nu$ do not change under these isomorphisms), together with the following lemma:

  \begin{lemma}
    Let $\omega=(\alpha,\beta_1,\beta_2)$ be a generic local twist.
    In $\sfoam^{\greenmarking}$, there exists a $\gloo$-equivariant isomorphism
    \begin{gather*}
      \tikzpic{
        \webmMarkedB[0][1][$\omega$]\webs[1][1]\webm[2][1]\draw[web1] (3,2) to (4,2);
        \draw[web1] (0,0) to (3,0);\webs[3][0]
        % \draw[web2] (4,.5) to (6,.5);
      }[xscale=.4,yscale=.4][(0,1*.4)]
      \;\cong^{\gloo}
      \tikzpic{
        % \draw[web2] (0,1.5) to (2,1.5);
        \webm[2][1]\draw[web1] (3,2) to (6,2);
        \draw[web1] (2,0) to (3,0);\webs[3][0]\webmMarkedB[4][0][$\omega$]\webs[5][0]
      }[xscale=.4,yscale=.4][(0,1*.4)]
      \;.
    \end{gather*}
  \end{lemma}

  \begin{proof}
    The $\gloo$-equivariant isomorphism is given by the linear combination
    \begin{gather*}
      \tikzpic{
        \fill[foamshade2] (-.5,0) to (-.5,3) to (0,3) to[out=-90,in=90] (2,0) to (1,0) to[out=90,in=0] (.5,.7) to[out=180,in=90] (0,0);
        \draw[foamdraw2] (0,3) to[out=-90,in=90] (2,0);
        \draw[foamdraw2] (1,0) to[out=90,in=0] (.5,.7) to[out=180,in=90] (0,0);
        \fill[foamshade1] (3,0) to[out=90,in=-90] (1,3) to (2,3) to[out=-90,in=180] (2.5,2.3) to[out=0,in=-90] (3,3) to (3.5,3) to (3.5,0) to (3,0);
        \draw[foamdraw1] (3,0) to[out=90,in=-90] (1,3);
        \draw[foamdraw1] (2,3) to[out=-90,in=180] (2.5,2.3) to[out=0,in=-90] (3,3);
        \fdot[.5][.2][2]
      }[scale=.4]
      \;+\;
      \tikzpic{
        \fill[foamshade2] (-.5,0) to (-.5,3) to (0,3) to[out=-90,in=90] (2,0) to (1,0) to[out=90,in=0] (.5,.7) to[out=180,in=90] (0,0);
        \draw[foamdraw2] (0,3) to[out=-90,in=90] (2,0);
        \draw[foamdraw2] (1,0) to[out=90,in=0] (.5,.7) to[out=180,in=90] (0,0);
        \fill[foamshade1] (3,0) to[out=90,in=-90] (1,3) to (2,3) to[out=-90,in=180] (2.5,2.3) to[out=0,in=-90] (3,3) to (3.5,3) to (3.5,0) to (3,0);
        \draw[foamdraw1] (3,0) to[out=90,in=-90] (1,3);
        \draw[foamdraw1] (2,3) to[out=-90,in=180] (2.5,2.3) to[out=0,in=-90] (3,3);
        \fdot[2.5][2.8][1]
      }[scale=.4]
      \;,
      \quad\text{ whose inverse is }\quad
      \tikzpic{
        \fill[foamshade2] (-.5,0) to (-.5,3) to (0,3) to[out=-90,in=90] (2,0) to (1,0) to[out=90,in=0] (.5,.7) to[out=180,in=90] (0,0);
        \draw[foamdraw2] (0,3) to[out=-90,in=90] (2,0);
        \draw[foamdraw2] (1,0) to[out=90,in=0] (.5,.7) to[out=180,in=90] (0,0);
        \fill[foamshade1] (3,0) to[out=90,in=-90] (1,3) to (2,3) to[out=-90,in=180] (2.5,2.3) to[out=0,in=-90] (3,3) to (3.5,3) to (3.5,0) to (3,0);
        \draw[foamdraw1] (3,0) to[out=90,in=-90] (1,3);
        \draw[foamdraw1] (2,3) to[out=-90,in=180] (2.5,2.3) to[out=0,in=-90] (3,3);
        \fdot[.5][.2][2]
      }[scale=.4,yscale=-1]
      \;+\;
      \tikzpic{
        \fill[foamshade2] (-.5,0) to (-.5,3) to (0,3) to[out=-90,in=90] (2,0) to (1,0) to[out=90,in=0] (.5,.7) to[out=180,in=90] (0,0);
        \draw[foamdraw2] (0,3) to[out=-90,in=90] (2,0);
        \draw[foamdraw2] (1,0) to[out=90,in=0] (.5,.7) to[out=180,in=90] (0,0);
        \fill[foamshade1] (3,0) to[out=90,in=-90] (1,3) to (2,3) to[out=-90,in=180] (2.5,2.3) to[out=0,in=-90] (3,3) to (3.5,3) to (3.5,0) to (3,0);
        \draw[foamdraw1] (3,0) to[out=90,in=-90] (1,3);
        \draw[foamdraw1] (2,3) to[out=-90,in=180] (2.5,2.3) to[out=0,in=-90] (3,3);
        \fdot[2.5][2.8][1]
      }[scale=.4,yscale=-1]
      \;.
    \end{gather*}
    One checks that both 2-morphisms are $\gloo$-equivariant. For instance:
    \def\fscl{.3}
    \begin{gather*}
      \lief\cdot
      \left(
        \tikzpic{
          \fill[foamshade2] (-.5,0) to (-.5,3) to (0,3) to[out=-90,in=90] (2,0) to (1,0) to[out=90,in=0] (.5,.7) to[out=180,in=90] (0,0);
          \draw[foamdraw2] (0,3) to[out=-90,in=90] (2,0);
          \draw[foamdraw2] (1,0) to[out=90,in=0] (.5,.7) to[out=180,in=90] (0,0);
          \fill[foamshade1] (3,0) to[out=90,in=-90] (1,3) to (2,3) to[out=-90,in=180] (2.5,2.3) to[out=0,in=-90] (3,3) to (3.5,3) to (3.5,0) to (3,0);
          \draw[foamdraw1] (3,0) to[out=90,in=-90] (1,3);
          \draw[foamdraw1] (2,3) to[out=-90,in=180] (2.5,2.3) to[out=0,in=-90] (3,3);
          \fdot[.5][.2][2]
        }[scale=\fscl]
        \;+\;
        \tikzpic{
          \fill[foamshade2] (-.5,0) to (-.5,3) to (0,3) to[out=-90,in=90] (2,0) to (1,0) to[out=90,in=0] (.5,.7) to[out=180,in=90] (0,0);
          \draw[foamdraw2] (0,3) to[out=-90,in=90] (2,0);
          \draw[foamdraw2] (1,0) to[out=90,in=0] (.5,.7) to[out=180,in=90] (0,0);
          \fill[foamshade1] (3,0) to[out=90,in=-90] (1,3) to (2,3) to[out=-90,in=180] (2.5,2.3) to[out=0,in=-90] (3,3) to (3.5,3) to (3.5,0) to (3,0);
          \draw[foamdraw1] (3,0) to[out=90,in=-90] (1,3);
          \draw[foamdraw1] (2,3) to[out=-90,in=180] (2.5,2.3) to[out=0,in=-90] (3,3);
          \fdot[2.5][2.8][1]
        }[scale=\fscl]
      \right)
      \;=\;
      \\
      \alpha
      \left(
        \tikzpic{
          \fill[foamshade2] (-.5,0) to (-.5,3) to (0,3) to[out=-90,in=90] (2,0) to (1,0) to[out=90,in=0] (.5,.7) to[out=180,in=90] (0,0);
          \draw[foamdraw2] (0,3) to[out=-90,in=90] (2,0);
          \draw[foamdraw2] (1,0) to[out=90,in=0] (.5,.7) to[out=180,in=90] (0,0);
          \fill[foamshade1] (3,0) to[out=90,in=-90] (1,3) to (2,3) to[out=-90,in=180] (2.5,2.3) to[out=0,in=-90] (3,3) to (3.5,3) to (3.5,0) to (3,0);
          \draw[foamdraw1] (3,0) to[out=90,in=-90] (1,3);
          \draw[foamdraw1] (2,3) to[out=-90,in=180] (2.5,2.3) to[out=0,in=-90] (3,3);
          \fdot[.5][.2][2]
          \fdot[2.5][3.4][1]
        }[scale=\fscl][(0,1.5*\fscl)]
        \;+\;
        \tikzpic{
          \fill[foamshade2] (-.5,0) to (-.5,3) to (0,3) to[out=-90,in=90] (2,0) to (1,0) to[out=90,in=0] (.5,.7) to[out=180,in=90] (0,0);
          \draw[foamdraw2] (0,3) to[out=-90,in=90] (2,0);
          \draw[foamdraw2] (1,0) to[out=90,in=0] (.5,.7) to[out=180,in=90] (0,0);
          \fill[foamshade1] (3,0) to[out=90,in=-90] (1,3) to (2,3) to[out=-90,in=180] (2.5,2.3) to[out=0,in=-90] (3,3) to (3.5,3) to (3.5,0) to (3,0);
          \draw[foamdraw1] (3,0) to[out=90,in=-90] (1,3);
          \draw[foamdraw1] (2,3) to[out=-90,in=180] (2.5,2.3) to[out=0,in=-90] (3,3);
          \fdot[2.5][2.8][1]
          \fdot[2.5][3.4][1]
        }[scale=\fscl][(0,1.5*\fscl)]
      \right)
      +
      \left(
        {}-
        \tikzpic{
          \fill[foamshade2] (-.5,0) to (-.5,3) to (0,3) to[out=-90,in=90] (2,0) to (1,0) to[out=90,in=0] (.5,.7) to[out=180,in=90] (0,0);
          \draw[foamdraw2] (0,3) to[out=-90,in=90] (2,0);
          \draw[foamdraw2] (1,0) to[out=90,in=0] (.5,.7) to[out=180,in=90] (0,0);
          \fill[foamshade1] (3,0) to[out=90,in=-90] (1,3) to (2,3) to[out=-90,in=180] (2.5,2.3) to[out=0,in=-90] (3,3) to (3.5,3) to (3.5,0) to (3,0);
          \draw[foamdraw1] (3,0) to[out=90,in=-90] (1,3);
          \draw[foamdraw1] (2,3) to[out=-90,in=180] (2.5,2.3) to[out=0,in=-90] (3,3);
          \fdot[.5][.2][2]
          \fdot[2.5][2.8][1]
        }[scale=\fscl][(0,1.5*\fscl)]
        \;+\;
        \tikzpic{
          \fill[foamshade2] (-.5,0) to (-.5,3) to (0,3) to[out=-90,in=90] (2,0) to (1,0) to[out=90,in=0] (.5,.7) to[out=180,in=90] (0,0);
          \draw[foamdraw2] (0,3) to[out=-90,in=90] (2,0);
          \draw[foamdraw2] (1,0) to[out=90,in=0] (.5,.7) to[out=180,in=90] (0,0);
          \fill[foamshade1] (3,0) to[out=90,in=-90] (1,3) to (2,3) to[out=-90,in=180] (2.5,2.3) to[out=0,in=-90] (3,3) to (3.5,3) to (3.5,0) to (3,0);
          \draw[foamdraw1] (3,0) to[out=90,in=-90] (1,3);
          \draw[foamdraw1] (2,3) to[out=-90,in=180] (2.5,2.3) to[out=0,in=-90] (3,3);
          \fdot[2.5][2.8][1]
          \fdot[.5][.2][2]
        }[scale=\fscl][(0,1.5*\fscl)]
      \right)
      -
      \alpha
      \left(
        \tikzpic{
          \fill[foamshade2] (-.5,0) to (-.5,3) to (0,3) to[out=-90,in=90] (2,0) to (1,0) to[out=90,in=0] (.5,.7) to[out=180,in=90] (0,0);
          \draw[foamdraw2] (0,3) to[out=-90,in=90] (2,0);
          \draw[foamdraw2] (1,0) to[out=90,in=0] (.5,.7) to[out=180,in=90] (0,0);
          \fill[foamshade1] (3,0) to[out=90,in=-90] (1,3) to (2,3) to[out=-90,in=180] (2.5,2.3) to[out=0,in=-90] (3,3) to (3.5,3) to (3.5,0) to (3,0);
          \draw[foamdraw1] (3,0) to[out=90,in=-90] (1,3);
          \draw[foamdraw1] (2,3) to[out=-90,in=180] (2.5,2.3) to[out=0,in=-90] (3,3);
          \fdot[.5][.2][2]
          \fdot[.5][-.4][2]
        }[scale=\fscl][(0,1.5*\fscl)]
        \;+\;
        \tikzpic{
          \fill[foamshade2] (-.5,0) to (-.5,3) to (0,3) to[out=-90,in=90] (2,0) to (1,0) to[out=90,in=0] (.5,.7) to[out=180,in=90] (0,0);
          \draw[foamdraw2] (0,3) to[out=-90,in=90] (2,0);
          \draw[foamdraw2] (1,0) to[out=90,in=0] (.5,.7) to[out=180,in=90] (0,0);
          \fill[foamshade1] (3,0) to[out=90,in=-90] (1,3) to (2,3) to[out=-90,in=180] (2.5,2.3) to[out=0,in=-90] (3,3) to (3.5,3) to (3.5,0) to (3,0);
          \draw[foamdraw1] (3,0) to[out=90,in=-90] (1,3);
          \draw[foamdraw1] (2,3) to[out=-90,in=180] (2.5,2.3) to[out=0,in=-90] (3,3);
          \fdot[2.5][2.8][1]
          \fdot[.5][-.4][2]
        }[scale=\fscl][(0,1.5*\fscl)]
      \right)
      = \;0\;.
    \end{gather*}
    This concludes.
  \end{proof}

  Denote $\gamma\colon \widetilde{W}^{\greenmarking}\to W^{\greenmarking}$ this $\gloo$-equivariant isomorphism.
  By composition, it induces an isomorphism with either $V^{\nu,z}$ or $\ov{V}^{\nu,z}$, depending on $n$.
  Finally, this isomorphism has the expected form, as the composition $\gamma\circ\beta^{\widetilde{W}}$ gives a choice of ``cup foam'' for $W$.
\end{proof}

We can now prove \cref{thm:equivalence_local_global}:

\begin{proof}[Proof of \cref{thm:equivalence_local_global}]
  Let $W$ be a marked closed tangled web with $N$ crossings and $D=\undcomp(W)$ its underlying marked link diagram.
  In the construction of $\glKom(W)$, one associates to $r\in\{0,1\}^N$ a certain resolution of $W$, denoted $\langle W;r\rangle$, with extra marking $\roundmarking[2]$ for each split and extra shift $\langle-\frac{N-\abs{r}}{2},\frac{N-\abs{r}}{2}\rangle$.
  It follows from \cref{lem:equivalence_local_global_state_space} that $\Hom_{\sfoam^{\greenmarking}}(\emptyset,\langle W;r\rangle)$ is isomorphic as a $\gloo$-representation to $V(D;r)$, up to some additional sign on $\lief$ and $\liee$.

  Recall that in the definition of $\slOKh^Y(D)$, one must add signs to the action of $\lief$ and $\liee$ to get equivariance, and doing so is essentially unique.
  We choose these signs so that the isomorphism defined by \cref{lem:equivalence_local_global_state_space} becomes $\gloo$-equivariant.
  Finally, it was shown in \cite{SV_OddKhovanovHomology_2023} how one can add global signs to these isomorphisms an isomorphism of complexes; this does not affect $\gloo$-equivariance. Considering $\sfoam'$ instead, one gets an isomorphism with type X. This concludes.
\end{proof}

\newpage

\section{Torsion in pretzel links}
\label{sec:computation}

\definecolor{pretzelcol1}{rgb}{0.8,0,0.3}
\definecolor{pretzelcol2}{rgb}{0.3,0,0.8}

\NewDocumentCommand{\respretzel}{O{0}O{0}O{1}O{1}O{1}}{
  \ifnum#3=0
    \webid[#1-1][#2-2.5]
  \else
    \webs[#1-1][#2-2.5]\webm[#1][#2-2.5]
  \fi
  \ifnum#4=0
    \webid[#1-1][#2-.5]
  \else
    \webs[#1-1][#2-.5]\webm[#1][#2-.5]
  \fi
  \ifnum#5=0
    \webid[#1-1][#2+1.5]
  \else
    \webs[#1-1][#2+1.5]\webm[#1][#2+1.5]
  \fi
  \draw[web1] (#1-1,#2-1.5) to[out=180,in=180] (#1-1,#2-.5);
  \draw[web1] (#1-1,#2+.5) to[out=180,in=180] (#1-1,#2+1.5);
  \draw[web1] (#1-1,#2-2.5) to[out=180,in=180] (#1-1,#2+2.5);
  \draw[web1] (#1+1,#2-1.5) to[out=0,in=0] (#1+1,#2-.5);
  \draw[web1] (#1+1,#2+.5) to[out=0,in=0] (#1+1,#2+1.5);
  \draw[web1] (#1+1,#2-2.5) to[out=0,in=0] (#1+1,#2+2.5);
}

\tikzset{gaussmark/.style={circle,fill=white,draw=black,line width=.5pt,inner sep=1pt}}

In this section, we partially compute the odd Khovanov homology of the pretzel links $P(n,n,-n)$ and prove \cref{thm:main_pretzel}.

First, we take advantage of the extension to tangles, and compute the $n$th crossing twist:

\begin{lemma}
  \label{lem:n-crossing-twist}
  \def\webscl{.3}
  \def\intspc{5mu}
  Let $n\in\bN$.
  The following are isomorphisms in the relative homology category $\cK^{\gloo}(\sfoam^{\greenmarking})$ (the wiggly lines indicate homological degree zero):
  \begin{gather*}
    \scriptstyle
    \underbrace{
    \tikzpic{
      \webncr
      \node at (2,.5) {$\ldots$};
      \webncr[3][0]
    }[scale=\webscl][(0,.5*\webscl)]
    }_n
    \quad\simeq^{\gloo}\quad
    \tikzpic{
      \websRoundMarkedB[0][0][]
      \webm[1][0]
    }[scale=\webscl][(0,.5*\webscl)]
    \;\langle-\frac{n}{2},\frac{n}{2}\rangle
    \mspace{\intspc}
    \xrightarrow{
      \tikzpic{\frst\flst[1][0]\fdot[-.7][.5]}[scale=.4,xscale=.5]
      \;-\;
      \tikzpic{\frst\flst[1][0]\fdot[1.7][.5]}[scale=.4,xscale=.5]
    }
    \mspace{\intspc}
    \tikzpic{
      \websRoundMarkedB[0][0][]
      \webm[1][0]
    }[scale=\webscl][(0,.5*\webscl)]
    \;\langle-\frac{(n-1)}{2},\frac{(n-1)}{2}\rangle
    \mspace{\intspc}
    {\textstyle \ldots}
    \mspace{\intspc}
    \xrightarrow{
      \tikzpic{\frst\flst[1][0]\fdot[-.7][.5]}[scale=.4,xscale=.5]
      \;-\;
      \tikzpic{\frst\flst[1][0]\fdot[1.7][.5]}[scale=.4,xscale=.5]
    }
    \mspace{\intspc}
    \tikzpic{
      \websRoundMarkedB[0][0][]
      \webm[1][0]
    }[scale=\webscl][(0,.5*\webscl)]
    \;\langle-\frac{1}{2},\frac{1}{2}\rangle
    \mspace{\intspc}
    \xrightarrow{\tikzpic{\funzip}[scale=.4,xscale=.7]}
    \mspace{\intspc}
    \tikzpic{
      \webid
      \draw[decorate,decoration=snake] (0,-.5) to (2,-.5);
    }[scale=\webscl][(0,.5*\webscl)]
    \\[2ex]
    %%%%%%%%%%%%%%%%%%%%%%%%%%%%%%
    %%%%%%%%%%%%%%%%%%%%%%%%%%%%%%
    \scriptstyle
    \underbrace{
    \tikzpic{
      \webpcr
      \node at (2,.5) {$\ldots$};
      \webpcr[3][0]
    }[scale=\webscl][(0,.5*\webscl)]
    }_n
    \quad\simeq^{\gloo}\quad
    \tikzpic{
      \webid
    }[scale=\webscl][(0,.5*\webscl)]
    \;\langle-\frac{n}{2},\frac{n}{2}\rangle
    \mspace{\intspc}
    \xrightarrow{\tikzpic{\fzip}[scale=.4,xscale=.7]}
    \mspace{\intspc}
    \tikzpic{
      \websRoundMarkedB[0][0][]
      \webm[1][0]
    }[scale=\webscl][(0,.5*\webscl)]
    \;\langle-\frac{(n-1)}{2},\frac{(n-1)}{2}\rangle
    \mspace{\intspc}
    \xrightarrow{
      \tikzpic{\frst\flst[1][0]\fdot[-.7][.5]}[scale=.4,xscale=.5]
      \;-\;
      \tikzpic{\frst\flst[1][0]\fdot[1.7][.5]}[scale=.4,xscale=.5]
    }
    \mspace{\intspc}
    {\textstyle \ldots}
    \mspace{\intspc}
    \tikzpic{
      \websRoundMarkedB[0][0][]
      \webm[1][0]
    }[scale=\webscl][(0,.5*\webscl)]
    \;\langle-\frac{1}{2},\frac{1}{2}\rangle
    \mspace{\intspc}
    \xrightarrow{
      \tikzpic{\frst\flst[1][0]\fdot[-.7][.5]}[scale=.4,xscale=.5]
      \;-\;
      \tikzpic{\frst\flst[1][0]\fdot[1.7][.5]}[scale=.4,xscale=.5]
    }
    \mspace{\intspc}
    \tikzpic{
      \websRoundMarkedB[0][0][]
      \webm[1][0]
      \draw[decorate,decoration=snake] (0,-.5) to (2,-.5);
    }[scale=\webscl][(0,.5*\webscl)]
  \end{gather*}
\end{lemma}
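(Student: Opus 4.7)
The plan is to proceed by induction on $n$, using the single-crossing complex of \cref{defn:tangle_invariant} as the base case and simplifying the resulting bicomplex at each inductive step by Gaussian elimination.

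The base case $n=1$ is exactly \cref{defn:tangle_invariant}. For the inductive step I would view the $n$-crossing twist $T_n$ as the horizontal composition of an $(n-1)$-crossing twist $T_{n-1}$ with a single crossing $T_1$, so that by \cref{prop:tensor_product_chain_complexes}
\[
  \glOKh(T_n)\;\simeq^{\gloo}\;\glOKh(T_{n-1})\otimes \glOKh(T_1)
\]
in $\cK^{\gloo}(\sfoam^{\greenmarking})$. Applying the inductive hypothesis replaces the right-hand side by the graded tensor product of the length-$(n-1)$ linear complex appearing in the statement with the two-term complex of \cref{defn:tangle_invariant}. The result is a bicomplex whose webs are repeated horizontal compositions of identity strands and split--merge pieces.

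The core of the argument will be to iteratively simplify this bicomplex to the claimed length-$n$ linear complex via Gaussian elimination. The key local moves are the ``digon'' isomorphisms from \cref{lem:web_defining_relations_equivariance}, which collapse two adjacent split--merge factors into a direct sum of simpler webs; the shaded-disk evaluation and neck-cutting relations of \cref{fig:rel_diagfoam_graded}, which rewrite a composition of a zip with an adjacent unzip as a sum of dotted identities; and dot migration, which puts each dot-addition map into the asserted ``dot on the left minus dot on the right'' form. Together these exhibit contractible direct summands in the bicomplex whose Gaussian elimination collapses it onto the claimed linear complex, with shifts $\langle-k/2,k/2\rangle$ matching those given by induction plus one further contribution from \cref{defn:tangle_invariant}. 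Every step remains $\gloo$-equivariant thanks to \cref{lem:twist_dot_slide_cup_cap,lem:crossing_complex_equivariant,lem:web_defining_relations_equivariance,lem:f-equivariance,lem:e-equivariance}, together with the fact that the graded tensor product preserves $\cK^{\gloo}$-isomorphism.

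The hard part I expect to be purely combinatorial: tracking the Koszul signs produced by the graded tensor product and verifying that the inductively produced dot-difference differentials have the precise normalization stated. I would work out the first two or three inductive steps in full detail to establish the sign pattern, and then argue the general case by an explicit description of the surviving summands at each stage of the elimination. The positive-crossing statement will follow from the same argument, up to the exchange of the roles of zip and unzip consistent with the ``type X versus type Y'' asymmetry discussed in \cref{rem:variant_graded_gl2_foams}.
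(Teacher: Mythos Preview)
Your approach is essentially the same as the paper's: both proceed by induction on $n$, viewing the $n$-twist as the cone on an equivariant chain map obtained by resolving the last crossing, and then simplifying the two pieces. The paper packages the simplification of the ``$(n-1)$-twist composed with a split--merge'' side by invoking the deformation retracts already constructed in the proof of Reidemeister~I invariance (\cref{lem:invariance_Reidemeister_I}), whereas you unpack this into explicit Gaussian elimination via the digon split exact sequence of \cref{lem:web_defining_relations_equivariance} and the shaded-disk relation. These are the same simplification: the Reidemeister~I retract is built precisely from those relations, so citing it directly saves you the bookkeeping you anticipate in tracking signs through several inductive steps.

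One small correction: your closing remark that the positive-crossing case follows by ``the type~X versus type~Y asymmetry'' of \cref{rem:variant_graded_gl2_foams} is misplaced. Both statements in the lemma live in the same super-2-category $\sfoam$; the positive-crossing case is obtained by the same induction with zip and unzip interchanged because the single-crossing complex in \cref{defn:tangle_invariant} is already defined that way, not because of the $\sfoam$/$\sfoam'$ dichotomy.
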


\begin{proof}
  This can be shown by induction.
  Resolving the last crossing expresses the right-hand side as a cone over an equivariant chain morphism $f$.
  On the one hand, using the deformation retracts given in the proof of invariance under Reidemeister I (\cref{lem:invariance_Reidemeister_I}), one can simplify the source (resp.\ the target) of $f$;
  on the other hand, the target (resp.\ source) can be simplified using induction.
  This concludes.
\end{proof}

From now on, we ignore gradings.

The preztel link $P(n,n,-n)$ has three ``crossing bridges''; given \cref{lem:n-crossing-twist}, the associated complex can be identified with an $(n+1)\times (n+1)\times (n+1)$ hypercube.
Two slices of this hypercube are depicted in \cref{subfig:preztel_3_square_resolutions}, in the case $n=3$.
They correspond to fixing a resolution for the bottom crossing bridge.

To proceed, we work with state spaces; that is, we apply a representable functor as in \cref{thm:equivalence_local_global}. Furthermore, we restrict to reduced odd Khovanov homology; see \cref{lem:action_on_reduced}.
This means that if a state has $k$ circles, its associated state space is $2^{k-1}$-dimensional.
\Cref{subfig:pretzel_3_schematic} gives a schematic of the $4\times 4\times 4$ hypercube associated to $P(3,3,-3)$, for a suitable choice of basis elements in each state space. One recognizes the four slices of the hypercube, given by fixing a resolution for the bottom crossing bridge; the first slice corresponds to the first slice in \cref{subfig:preztel_3_square_resolutions}, while the other slices correspond to the second slice in \cref{subfig:preztel_3_square_resolutions}.
Each point is a basis element, or rather the copy of $\bZ$ it generates, and each (dashed) line is (minus) an identity, always reading from left to right.
For instance, here are the basis elements for the following state space (here $\omega=(1,\frac{1}{2},\frac{1}{2})$):
\begin{gather*}
  \Hom_{\sfoam^{\greenmarking}}\left(\emptyset,
   \tikzpic{
    \respretzel[0][0][1][1][1]
    \node[green_mark] at (-2,2) {};
    \node[left=-1pt] at (-2,2) {\scriptsize $\omega$};
    \node[green_mark] at (2,2) {};
    \node[right=-1pt] at (2,2) {\scriptsize $-\omega$};
  }[scale=.18]\right)\cong\langle 1,t\rangle_{\ringfoam}
  \quad\text{depicted as}\quad
  \tikzpic{
    \node[fill=black,circle,inner sep=1pt] at (0,0) {};
    \node[left] at (0,0) {$1$};
    \node[fill=black,circle,inner sep=1pt] at (0,1) {};
    \node[left] at (0,1) {$t$};
    \draw[thick,red,->] (0,0) to[out=0,in=-90] (.5,.5) to[out=90,in=0] (0,1);
  }\;.
\end{gather*}
More explicitly, the isomorphism is given as follows.
Let $W$ be the state and $\beta^W$ an undotted cup foam for $W$ (see \cref{subsec:comparison_local_global}).
Let $\id_W^L$ (resp.\ $\id_W^R$) be the identity of $W$ with an additional dot on the thickening of the left (resp.\ right) circle. Then the isomorphism identifies $\beta^W\leftrightarrow 1$ and $(\id_W^L-\id_W^R)\circ\beta^W\leftrightarrow t$.
The red arrow denotes the action of $\lief$; one of them is depicted in \cref{subfig:pretzel_3_schematic}.
Note that up to a homological shift, the $\lief$-action coincides with the dot multiplication map $d$ appearing in \cref{lem:n-crossing-twist}; in the hypercube, dot maps appearing is the last three slices (see the second slice in \cref{subfig:preztel_3_square_resolutions}) and in between the last three slices, as some of the almost-horizontal gray lines in \cref{subfig:pretzel_3_schematic}.
One can choose basis elements for the remaining state space and the get the full schematic of \cref{subfig:pretzel_3_schematic}.

Two connected components are highlighted in \cref{subfig:pretzel_3_schematic}; we compute their contribution to homology using gaussian elimination.
As we shall see, the red connected component contributes with $\bZ\oplus\bZ/3\bZ$, while the blue component contributes with $\bZ\oplus\bZ$.
Moreover, we can identify these copies with specific copies in the chain complex, as pictured in \cref{subfig:pretzel_3_schematic}.
Importantly, one copy of $\bZ$ lies ``below'' the copy of $\bZ/3\bZ$, with the $\lief$-action pictured as a red arrow; the action survives in homology.

\begin{figure}
\tikzset{pretzelshading1/.style={draw=pretzelcol1!20,line width=3pt}}
\tikzset{pretzelshading2/.style={draw=pretzelcol2!20,line width=3pt}}
\begin{subfigure}{\textwidth}
  \centering
  \begin{tikzpicture}[scale=.15]
    \def\str{6}
    \foreach \x in {0,\str,2*\str}{
      \foreach \y in {0,\str,2*\str}{
        \respretzel[-\x+\y][2*\x+2*\y][0][1][1]
      }
    }
    \foreach \t in {0,\str,2*\str}{
        \respretzel[-3*\str+\t][6*\str+2*\t][0][0][1]
    }
    \foreach \t in {0,\str,2*\str}{
        \respretzel[3*\str-\t][6*\str+2*\t][0][1][0]
    }
    \respretzel[0][6*2*\str][0][0][0]
    \foreach \t in {0,\str,2*\str,3*\str}{
      \draw[,shorten >=20pt,shorten <=20pt,->] (-3*\str+\t,6*\str+2*\t) 
        to node[above right=-3pt]{$m$} (-2*\str+\t,2*2*\str+2*\t);
    }
    \foreach \t in {0,\str,2*\str,3*\str}{
      \draw[shorten >=20pt,shorten <=20pt,<-] (3*\str-\t,6*\str+2*\t) 
        to node[above left=-3pt]{$s$} (2*\str-\t,2*2*\str+2*\t);
    }
  \end{tikzpicture}
  \hspace*{1cm}
  \begin{tikzpicture}[scale=.15]
    \def\str{6}
    \foreach \x in {0,\str,2*\str}{
      \foreach \y in {0,\str,2*\str}{
        \respretzel[-\x+\y][2*\x+2*\y][1][1][1]
      }
    }
    \foreach \t in {0,\str,2*\str}{
        \respretzel[-3*\str+\t][6*\str+2*\t][1][0][1]
    }
    \foreach \t in {0,\str,2*\str}{
        \respretzel[3*\str-\t][6*\str+2*\t][1][1][0]
    }
    \respretzel[0][6*2*\str][1][0][0]
    \foreach \t in {0,\str,2*\str}{
      \foreach \s in {0,\str}{
        \draw[,shorten >=20pt,shorten <=20pt,->] 
        (-\s+\t-\str,2*\s+2*\t+2*\str) 
        to node[above right=-3pt]{$d$}
        (-\s+\t,2*\s+2*\t);
      }
    }
    \foreach \t in {0,\str}{
      \foreach \s in {0,\str,2*\str}{
        \draw[,shorten >=20pt,shorten <=20pt,<-] 
        (-\s+\t+\str,2*\s+2*\t+2*\str) 
        to node[above left=-3pt]{$d$}
        (-\s+\t,2*\s+2*\t);
      }
    }
    \foreach \t in {0,\str,2*\str}{
      \draw[,shorten >=20pt,shorten <=20pt,->] 
        (-3*\str+\t,6*\str+2*\t) 
        to node[above right=-3pt]{$s$} 
        (-2*\str+\t,2*2*\str+2*\t);
    }
    \draw[,shorten >=20pt,shorten <=20pt,->] 
      (-\str,10*\str) 
      to node[above left=-3pt]{$s$} 
      (0,12*\str);
    \foreach \t in {0,\str,2*\str}{
      \draw[shorten >=20pt,shorten <=20pt,<-]
        (3*\str-\t,6*\str+2*\t) 
        to node[above left=-3pt]{$m$} 
        (2*\str-\t,2*2*\str+2*\t);
    }
    \draw[,shorten >=20pt,shorten <=20pt,->] 
      (0,12*\str) 
      to node[above right=-3pt]{$m$} 
      (\str,10*\str);
  \end{tikzpicture}
  \caption{Two slices in the hypercube associated to $P(3,3,-3)$; they correspond to taking the 0- or 1- resolution for the bottom crossing bridge in $P(3,3,-3)$. The labels $m$, $s$ and $d$ refer to a merge, a split or a dot multiplication maps, respectively.}
  \label{subfig:preztel_3_square_resolutions}
\end{subfigure}

\vspace*{1cm}

\begin{subfigure}{\textwidth}
  \centering
  \begin{tikzpicture}[scale=.45]
    % SHADED PART 1
    \foreach \z/\x/\y in {0/5/4, 8/4/2,8/4/6, 16/3/0,16/3/4,16/3/8, 24/2/2,24/2/6,24/2/10}
      {\draw[pretzelshading1] (\x+\z,\y) to (\x+\z+1,\y+3);}
    \foreach \z/\x/\y in {8/5/5, 16/4/3,16/4/7,16/4/11, 24/3/1,24/3/5,24/3/9}
      {\draw[pretzelshading1] (\x+\z,\y) to (\x+\z-1,\y+1);}
    \foreach \z/\x/\y in {8/5/5,8/5/9, 16/4/3,16/4/7, 24/3/1,24/3/5,24/3/9}
      {\draw[pretzelshading1] (\x+\z,\y) to (\x+\z-8,\y-1);}
    % SHADED PART 2
    \foreach \z/\x/\y in {8/5/4, 16/4/2,16/4/6, 24/3/0,24/3/4,24/3/8}
      {\draw[pretzelcol2!20,line width=3pt] (\x+\z,\y) to (\x+\z+1,\y+3);}
    \foreach \z/\x/\y in {16/5/5, 24/4/3,24/4/7,24/4/11}
      {\draw[pretzelcol2!20,line width=3pt] (\x+\z,\y) to (\x+\z-1,\y+1);}
    \foreach \z/\x/\y in {8/6/7, 16/5/5, 24/4/3,24/4/7}
      {\draw[pretzelcol2!20,line width=3pt] (\x+\z,\y) to (\x+\z-8,\y-1);}
    %
    % LINES
    % first square
    \foreach \x/\y in {5/4,4/6,3/8}
      {\draw (\x,\y) to (\x+1,\y+3);}
    \foreach \x/\y in {1/4,2/6,3/8}
      {\draw[dashed] (\x,\y) to (\x-1,\y+2);}
    \draw (2,10) to (3,12);
    \draw (2,11) to (2.5,12.5);
    \draw (4,11) to (3,12);
    \draw[dashed,out=45,in=-45] (4,10) to (3.5,12.5);
    % other squares
    \foreach \z in {8,16,24} {
      \foreach \x/\y in {3/0,2/2,4/2,1/4,3/4,5/4,2/6,4/6,3/8,2/10}
      {\draw (\x+\z,\y) to (\x+\z+1,\y+3);}
      \foreach \x/\y in {3/1,2/3,4/3,1/5,3/5,5/5,2/7,4/7,3/9,4/11}
      {\draw (\x+\z,\y) to (\x+\z-1,\y+1);}
    }
    %
    % INTERSQUARES
    % first intersquare
    \foreach \x/\y in {0/6,1/8,2/10}
      {\draw[draw=black!20,out=-30,in=-180] (\x,\y) to (\x+8,\y);}
    \foreach \x/\y in {6/6,5/8,4/10}
      {\draw[draw=black!20] (\x,\y) to (\x+8,\y+1);}
    \draw[draw=black!20] (3.5,12.5) to ++(7.5,-.5);
    \draw[draw=black!20] (3,13) to ++(8,0);
    \draw[dashed,draw=black!20] (3,12) to ++(8,1);
    % other intersquares
    \foreach \z in {8,16,24} {
      \foreach \x/\y in {3/1,2/3,4/3,1/5,3/5,5/5,2/7,4/7,3/9}
      {\draw[draw=black!20] (\x+\z,\y) to (\x+\z-8,\y-1);}
    }
    %
    % POINTS
    % first square
    \foreach \x/\y in {
      3/0,2/2,4/2,1/4,3/4,5/4,2/6,4/6,3/8,
      0/6,0/7,1/8,1/9,2/10,2/11,
      6/6,6/7,5/8,5/9,4/10,4/11,
      3/12,2.5/12.5,3.5/12.5,3/13
    }
    {\node[fill=black,circle,inner sep=.5pt] at (\x,\y){};}
    % other squares
    \foreach \z in {8,16,24} {
      \foreach \x/\y in {3/1,2/3,4/3,1/5,3/5,5/5,2/7,4/7,3/9,4/11}
      {\node[fill=black,circle,inner sep=.5pt] at (\x+\z,\y){};
      \node[fill=black,circle,inner sep=.5pt] at (\x+\z-1,\y+1){};}
      \foreach \x/\y in {3/0,4/2,5/4,3/1,4/3,5/5,6/7,5/9,3/13}
      {\node[fill=black,circle,inner sep=.5pt] at (\x+\z,\y){};}
    }
    \node[below] at (6,6) {\tiny $\bZ$};
    \node[below] at (12,6) {\tiny $\bZ$};
    \node[below] at (24+3,8) {\tiny $\bZ$};
    \node[above] at (24+3,9) {\tiny $\bZ_3$};
    \draw[red,->] (24+3,8) to[out=0,in=-90] (24+4,8.5) to[out=90,in=0] (24+3,9);
  \end{tikzpicture}
  \caption{A schematic for the hypercube associated to $P(3,3,-3)$. (Dashed) lines are (resp.\ minus) identities between copies of $\bZ$; homological degree goes from left to right. Two connected components are highlighted; labels ``$\bZ$'' and ``$\bZ_3$'' indicate how they contribute to the homology. A red arrow indicates the $\lief$-action induced on homology.}
  \label{subfig:pretzel_3_schematic}
\end{subfigure}

\caption{The proof of \cref{thm:main_pretzel} in the case of $P(3,3,-3)$.}
\label{fig:pretzel_3}
\end{figure}
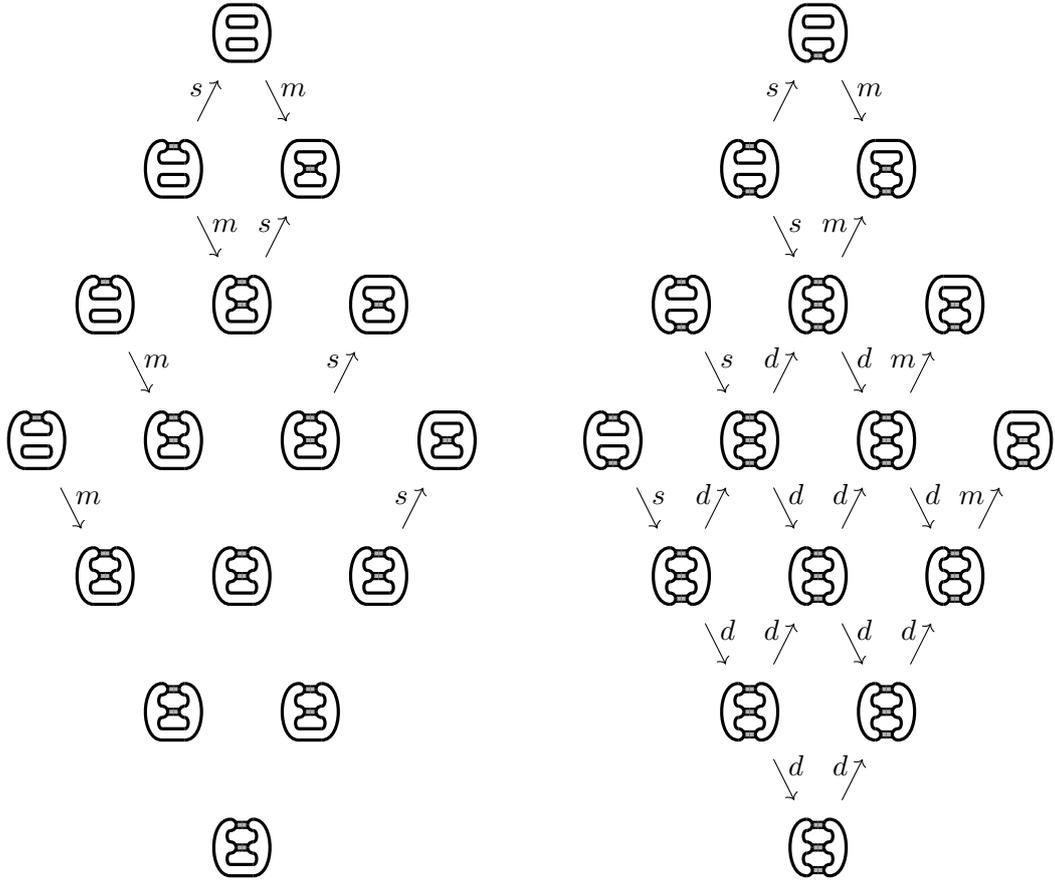
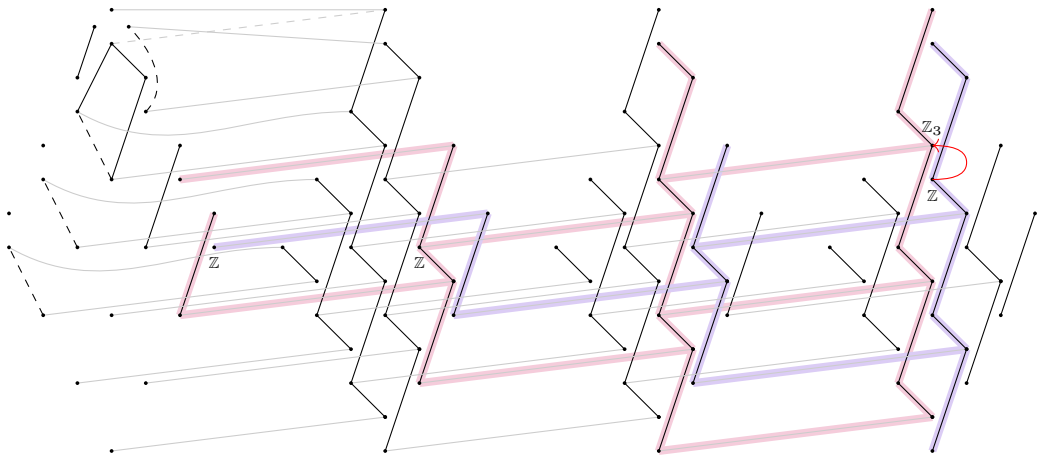

We aim to show these claims for generic $n\in\bN$.
It is not hard to extend the schematic; the relevant connected components are as follows:
\begin{center}
  \begin{tikzpicture}[scale=.3]
    \tikzset{pretzelshading1/.style={draw=pretzelcol1!20,line width=2pt}}
    \tikzset{pretzelshading2/.style={draw=pretzelcol2!20,line width=2pt}}
    % PART 1
    % big slanted line
    \foreach \x/\y in {
      7/2, 35/10,
      42/-8,42/-4,42/0,42/4,42/8
    }
      {\draw[pretzelshading1] (\x,\y) to (\x+1,\y+3);}
    % marked big slanted line
    \foreach \x/\y in {
      0/0, 7/-2, 14/-4,14/0,14/4,
      21/-6,21/-2,21/2,21/6, 28/-8,28/-4,28/0,28/4,28/8,
      35/-10,35/-6,35/-2,35/2,35/6,
      42/12
    }
      {\draw[pretzelshading1] (\x,\y) to node[gaussmark] {} (\x+1,\y+3);}
    %
    % small slanted line
    \foreach \x/\y in {
      0/0, 7/-2,7/2, 14/-4,14/0,14/4,
      21/-6,21/-2,21/2,21/6, 28/-8,28/-4,28/0,28/4,28/8,
      35/10
    }
      {\draw[pretzelshading1] (\x+8,\y+1) to (\x-1+8,\y+1+1);}
    % marked small slanted line
    \foreach \x/\y in {
      28/12,
      35/-10,35/-6,35/-2,35/2,35/6
    }
      {\draw[pretzelshading1] (\x+8,\y+1) to node[gaussmark] {} (\x-1+8,\y+1+1);}
    %
    % connecting lines
    \foreach \x/\y in {
      0/0, 7/-2,7/2, 14/-4,14/0,14/4,
      21/-6,21/-2,21/2,21/6,
      28/-8,28/-4,28/0,28/4,28/8,
      35/-10,35/-6,35/-2,35/2,35/6,35/10,
      0/4
    }
      {\draw[pretzelshading1] (\x,\y) to (\x+8,\y+1);}
    % marked connecting lines
    \foreach \x/\y in {
      0/4
    }
      {\draw[pretzelshading1] (\x,\y) to node[gaussmark] {} (\x+8,\y+1);}
    %
    % PART 2
    \begin{scope}[shift={(1,-2)}]
      % big slanted line
      \foreach \x/\y in {
        42/12
      }
        {\draw[pretzelshading2] (\x,\y) to (\x+1,\y+3);}
      % marked big slanted line
      \foreach \x/\y in {
        7/2, 14/0, 21/-2, 28/-4, 35/-6,
        14/4,
        21/2,21/6, 28/0,28/4,28/8,
        35/-2,35/2,35/6,35/10,
        42/-8,42/-4,42/0,42/4,42/8
      }
        {\draw[pretzelshading2] (\x,\y) to node[gaussmark] {} (\x+1,\y+3);}
      %
      % small slanted line
      \foreach \x/\y in {
        7/2, 14/0,14/4,
        21/-2,21/2,21/6, 28/-4,28/0,28/4,28/8,
        35/-6,35/-2,35/2,35/6,35/10
      }
        {\draw[pretzelshading2] (\x+8,\y+1) to (\x-1+8,\y+1+1);}
      % marked small slanted line
      \foreach \x/\y in {
        35/14
      }
        {\draw[pretzelshading2] (\x+8,\y+1) to node[gaussmark] {} (\x-1+8,\y+1+1);}
      %
      % connecting lines
      \foreach \x/\y in {
        0/4, 7/2, 14/0, 21/-2, 28/-4, 35/-6,
        14/4,
        21/2,21/6,
        28/0,28/4,28/8,
        35/-2,35/2,35/6,35/10,
        0/4
      }
        {\draw[pretzelshading2] (\x,\y) to (\x+8,\y+1);}
    \end{scope}
    \node[circle,fill=pretzelcol1!50,inner sep=2pt] (A1) at (7,2) {};
    \node[below left={-5pt}of A1] {\footnotesize $A$};
    \node[circle,fill=pretzelcol1!50,inner sep=2pt] (A2) at (35,10) {};
    \node[left={-3pt}of A2] {\footnotesize $B$};
    \node[circle,fill=pretzelcol1!50,inner sep=2pt] (A3) at (43,11) {};
    \node[right={-3pt}of A3] {\footnotesize $C$};
    \node[circle,fill=pretzelcol2!50,inner sep=2pt] (B) at (43,10) {};
    \node[right={-3pt}of B] {\footnotesize $D$};
    \node[circle,fill=pretzelcol2!50,inner sep=2pt] (E) at (1,2) {};
    \node[right={-3pt}of E] {\footnotesize $E$};
    \draw[pretzelcol1!50,line width=2pt] (A1) to[out=45,in=165] node[black,above=-1pt,pos=.2]{\footnotesize $n$} (A3);
    \draw[pretzelcol1!50,line width=2pt] (A2) to[out=45,in=165] node[black,above=-1pt,pos=.12]{\footnotesize $n$}(A3);
    \foreach \x/\y in {
      14/0,21/-2,28/-4,28/-8,35/-10,
      42/-8,42/-4,42/0,42/4,42/8
    }
      {\draw[thick,dashed] (\x,\y) to (\x+1,\y+3);}
    \foreach \x/\y in {
      28/-4,
      42/-8,42/-4,42/0,42/4,42/8
    }
      {\draw[thick,dashed] (\x,\y) to (\x+1,\y-1);}
    \foreach \x/\y in {
      7/2,14/0,21/-2,28/-8,35/-10
    }
      {\draw[thick,dashed] (\x,\y) to (\x+8,\y+1);}
  \end{tikzpicture}
\end{center}
We perform gaussian elimination on the arrows marked with $\tikzpic{\node[circle,fill=white,draw=black,line width=1pt,inner sep=3pt] at (0,0){};}$ .
The only surviving vertices are $A$, $B$, $C$, $D$ and $E$, as depicted. Moreover, this happens away from $C$ and $D$, and hence leaves the action of $\lief$ from $C$ to $D$ unaffected.
Gaussian elimination may induce maps between these vertices; to find these maps, one must compute the number of paths between two vertices, alternating between marked and unmarked edges.
For the blue connected component, no such path exists, and so $D$ and $E$ each contribute with a copy of $\bZ$ to homology.

There are paths from $B$ to $C$; they consist in going down a certain number of steps, then right, then up the same number of steps. This makes $n$ such paths.
Similarly, there are $n$ paths from $A$ to $C$, consisting of going down-right the second-to-last ``stair'' a certain number of steps, then going down, then going down-right the last ``stair'' until reaching the bottom-right, and finally going up to $C$; one of such paths is depicted in dashed lines.
Computing homology will (say) kill the vertex $B$ and make $C$ a copy of $\bZ/n\bZ$; the arrow from $A$ to $C$ is then zero, leaving $A$ to contribute with a copy of $\bZ$ to homology.

This concludes the proof of \cref{thm:main_pretzel}.

\newpage
% \nocite{*}
\printbibliography[heading=bibintoc]

\end{document}